\documentclass[a4paper,oneside,11pt]{amsart}

\input{preamble.sty}

\begin{document}

\title[Conformal groups of Lorentzian manifolds]{The conformal group of a compact simply connected Lorentzian manifold}
\author{Karin Melnick and Vincent Pecastaing}
\date{\today}

\begin{abstract}
We prove that the conformal group of a closed, simply connected, real analytic Lorentzian manifold is compact.  D'Ambra proved in 1988 that the isometry group of such a manifold is compact \cite{dambra.lorisom}.  Our result implies the Lorentzian Lichnerowicz Conjecture for real analytic Lorentzian manifolds with finite fundamental group.
\end{abstract}

\thanks{MSC 53C50, 57S20 \\
This project was initiated during an extended visit by Pecastaing to the University of Maryland supported by GEAR Network grant DMS-1107452, 1107263, 1107367 from the NSF.  Melnick gratefully acknowledges support from NSF grant DMS-1255462 and from the Max-Planck-Institut f\"ur Mathematik in Bonn, where she was a Visiting Scientist during much of the research for this paper.  Pecastaing was also partially supported by FNR grants INTER/ANR/15/11211745 and OPEN/16/11405402 while a Postdoctoral Researcher at the University of Luxembourg during much of the research and the writing of this paper.  We thank A. Zeghib for suggesting this problem many years ago.  We thank A. Chatzistamatiou and C. Frances for helpful conversations.}

\maketitle

\tableofcontents

\setlength{\parskip}{1ex}

\section{Introduction}
Transformation groups of manifolds equipped with geometric structures depend on the topology of the manifold and the nature of the geometric structure.  An ambitious program initiated by Zimmer and Gromov in the 1980s aims to roughly classify compact manifolds with rigid geometric structures admitting a noncompact transformation group \cite{gromov.rgs, dag.rgs}.  Lorentzian manifolds are intriguing in this context, as they exhibit some unique phenomena among semi-Riemannian signatures.  In this paper, we prove:

\begin{theorem}
\label{thm:main}
If $(M,g)$ is a closed, simply connected, real analytic Lorentzian manifold, then its conformal group is compact.
\end{theorem}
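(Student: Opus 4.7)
The plan is to argue by contradiction: assume $G = \Conf(M,g)$ is non-compact, and exploit the dichotomy between \emph{essential} and \emph{inessential} conformal actions. In the inessential case some metric $g' \in [g]$ is $G$-invariant, so $G \subseteq \Isom(M,g')$; since $(M,g')$ inherits from $(M,g)$ the hypotheses of D'Ambra's theorem \cite{dambra.lorisom}, the group $\Isom(M,g')$ is compact, contradicting non-compactness of $G$. The bulk of the proof is therefore devoted to ruling out the essential case.

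For the essential case, compactness of $M$ together with non-compactness of $G$ forces the $G$-action to be non-proper, and one extracts a sequence $f_n \to \infty$ in $G$ with $x_n \to x$ and $f_n(x_n) \to y$. Passing to the normal Cartan geometry $(P \to M, \omega)$ modelled on the Einstein universe $\Ein^{1,n-1} = \PO(2,n)/P$, this non-properness lifts to an unbounded \emph{holonomy sequence} $\{h_n\} \subset \PO(2,n)$ attached to the data $(f_n, x_n)$. I would then classify, via the Jordan normal form of $h_n$, the possible asymptotic types and read off the geometric constraint each type imposes near $x$: stable and unstable foliations for $f_n$, algebraic degeneracies of the Weyl tensor, and, in most cases, identical vanishing of the conformal curvature on a nonempty open subset of $M$. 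Real analyticity then propagates any such open-set conclusion globally.

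The expected output of this dynamical analysis is that $(M,[g])$ is conformally flat. Once this is in hand, the developing map yields a conformal local diffeomorphism $\delta \colon M \to \widetilde{\Ein}^{1,n-1}$. Compactness of $M$ makes $\delta$ proper, hence a covering map; simple connectedness of $\widetilde{\Ein}^{1,n-1}$ forces $\delta$ to be a diffeomorphism onto the target, contradicting the non-compactness of $\widetilde{\Ein}^{1,n-1} \cong \BR \times S^{n-1}$ (in dimension $\geq 3$; in lower dimension, Euler-characteristic obstructions rule out closed simply connected Lorentzian manifolds entirely).

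The main obstacle will be the essential case itself: one has to process every possible dynamical type of unbounded holonomy sequence and show that each type either builds a $G$-invariant metric in $[g]$ (returning to the inessential case) or locally forces the Weyl tensor to vanish. The most delicate situations are mixed Jordan types, where no single element of $\{h_n\}$ manifestly kills the conformal curvature, so one must track the cumulative effect of the non-proper dynamics on the Weyl tensor carefully to fill out an open set on which it vanishes. Once such an open set is secured, analyticity combined with the simple-connectedness argument above closes the proof.
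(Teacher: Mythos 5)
Your overall architecture---use D'Ambra to dispose of the inessential case, use holonomy sequences attached to the Cartan connection to process the essential case, finish with the developing-map/monodromy argument (Proposition B in the paper)---does match the paper's skeleton, and the final developing-map step is exactly the paper's Proposition B. But two substantial gaps remain, and they are not merely technical.

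First, the inessential case. You propose to apply D'Ambra's theorem to $(M,g')$ for a $G$-invariant metric $g' \in [g]$. D'Ambra's theorem requires a \emph{real-analytic} Lorentzian metric; the invariant $g'$ is obtained by a smooth construction (e.g.\ rescaling by an invariant volume) and there is no reason for it to be $C^\omega$, so the theorem does not apply as a black box. The paper avoids this by not producing an invariant metric at all: it works with a maximal connected abelian subgroup $H \le \Conf(M,g)$, shows $H$ acts freely on an open dense subset via Gromov's stratification and the Frances--Melnick normal forms, and deduces compactness of $H$ directly from the existence of a finite $H$-invariant volume on \emph{any} open subset (Proposition \ref{prop:proof_with_finite_volume}), using Gromov's theory applied to the conformal structure itself (which \emph{is} real analytic). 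Your reduction to an invariant metric also loses the localization to open subsets that the paper needs: the negation of the invariant-volume condition is only a dense set of points where $\Delta_{h_k}(x_k) \to 0$ (condition~(\ref{eqn:negation_finite_volume})), not a globally essential action.

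Second and more seriously, in the essential case you expect the classification of holonomy types to always force $W \equiv 0$ on a nonempty open set, whence global conformal flatness by analyticity. That dichotomy does not hold, and the paper's harder cases proceed quite differently. Only the \emph{contracting} holonomy case (Proposition \ref{prop:contracting_proof}) yields $W = 0$ on an open set. In the \emph{balanced} case, the approximately stable set is a degenerate hyperplane, the Weyl curvature lands in $\mathbb{V}_{+2}$ but does not vanish, and what comes out is an integrable distribution of lightlike hyperplanes and hence a real-analytic codimension-one foliation; the contradiction then comes from Haefliger's theorem (a closed simply connected real-analytic manifold admits no real-analytic codimension-one foliation), not from conformal flatness. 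In the \emph{mixed and bounded} cases (the bulk of Section \ref{sec:end_of_proof}), the holonomy sequences are not even linear, there is no stable foliation in the classical sense, and the paper constructs a foliation only on an open dense set, extends it across closed isotropic orbits, and finally deduces the existence of a nowhere-vanishing isotropic conformal vector field that nevertheless vanishes at an isotropic orbit, contradicting Frances' theorem on causal conformal vector fields. None of this is captured by ``track the cumulative effect on the Weyl tensor to fill out an open set on which it vanishes''; as written, your essential case stalls exactly at the balanced and mixed types, which are where the real work is. To repair the plan you would at minimum need to (i) reduce to the maximal abelian $H$, (ii) import Gromov's stratification and the Frances--Melnick normal forms theorem to control isotropy, (iii) bring in Haefliger's theorem and the curvature-module analysis (Proposition \ref{prop:zariski_closed_subspaces} and Corollary \ref{cor:weyl_determines_lines}) to build the degenerate-hyperplane foliation in the non-contracting cases, and (iv) supply Frances' theorem on causal conformal vector fields for the final contradiction.
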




An immediate corollary is that for $(M,g)$ closed, real-analytic Lorentzian, with finite fundamental group, $\Conf(M,g)$ is compact, because the conformal group of the universal cover of $M$ is an extension of $\Conf(M,g)$ by $\pi_1(M)$.
There exist a multitude of closed, analytic Lorentzian manifolds with infinite fundamental group having noncompact conformal group.  
Two important examples appearing below are the Lorentzian Einstein space $\Ein^{1,n-1}$ (see example \ref{ex:einstein_universe} and section \ref{sec:cartan_connxn_holonomy}) and the Hopf manifolds of example \ref{ex:hopf_manifold}.
See \cite{frances.lorentz.klein} for more examples, of infinitely-many distinct topological types, in all dimensions at least 3.

\begin{example}
Real analytic Lorentzian metrics on closed, simply connected manifolds can be constructed as follows. Choose a nonvanishing analytic vector field $X$ on an odd-dimensional sphere ${\bf S}^{2n+1}$, equipped with any Riemannian metric $g_R$. Let $H$ be the orthogonal distribution to $X$ with respect to $g_R$.  Then, define an analytic Lorentzian metric $g_L$ by declaring $X$ and $H$ orthogonal with respect to $g_L$; $g_L(X,X)=-1$; and  $g_L = g_R$ on $H \otimes H$.  We can moreover form the product of $({\bf S}^{2n+1},g_L)$ with any closed, simply connected, analytic  Riemannian manifold.
\end{example}

\subsection{D'Ambra's Theorem on Lorentzian isometries}

An important result in the Zimmer-Gromov program, as well as an important motivation for our work, is the following theorem of D'Ambra from 1988:

\begin{theoremA*}[D'Ambra 1988 \cite{dambra.lorisom}]
If $(M,g)$ is a closed, simply connected, real-analytic Lorentzian manifold, then its isometry group is compact.
\end{theoremA*}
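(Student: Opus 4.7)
The plan is to suppose for contradiction that $G = \Isom(M,g)$ is non-compact, use the non-compact direction to produce a nontrivial global Killing field on $M$, and then derive a topological contradiction. Since $G$ acts properly and freely on the orthonormal frame bundle $\OO(M,g)$, non-compactness of $G$ yields a sequence $(f_n) \subset G$ whose lifted orbits leave every compact subset of $\OO(M,g)$. Using compactness of $M$, one can choose $x_n \to x$ with $f_n(x_n) \to y$, so that the linear derivatives $Df_n$ read in fixed orthonormal frames are unbounded in the structure group $\OO(1,n-1)$. Applying the $KAK$ decomposition and passing to a subsequence, after absorbing bounded factors one obtains a ``hyperbolic'' diagonal element with eigenvalue $\lambda_n \to \infty$. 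Thus $f_n$ is asymptotically hyperbolic at $x$: it contracts some null direction $\ell^-$ and dilates another $\ell^+$.

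The next step is to convert this asymptotic invariance into a genuine infinitesimal symmetry. A suitable renormalization of the $f_n$ near $x$ should produce, in the limit, a nontrivial local Killing vector field (or at least a nontrivial invariant structure such as an invariant null line field) on an open neighborhood of $x$. To globalize, I would exploit the real-analytic hypothesis: by Nomizu's extension theorem, any local Killing field on a simply connected real-analytic manifold extends to a global Killing field, and more broadly Gromov's theory of rigid geometric structures in the analytic setting provides a systematic way to propagate infinitesimal symmetries, via its open-dense orbit theorem and the identification of the centralizer of a local automorphism group. The target is a nontrivial global Killing field $X$ on $M$ directly tied to the original unbounded sequence, or at least a global invariant null line field.

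The final step is to derive a contradiction from the existence of such $X$ on a compact, simply connected, analytic Lorentzian manifold. The zero set of $X$ is analytic and totally geodesic; the causal type of $X$ on its complement (timelike, null, or spacelike) governs the flow's global dynamics, and recurrence on the compact $M$ together with the non-precompactness coming from the construction should yield the contradiction. The main obstacle I expect lies precisely here. In the Riemannian case the argument is swift: the closure of the flow is a torus, simple connectivity forces it to have fixed points, and linearization at a fixed point gives an immediate contradiction with non-compactness. In the Lorentzian case, however, the three causal types of $X$ yield genuinely different geometric pictures, and, as the introduction emphasizes, dropping simple connectivity produces genuine counter-examples --- so any successful argument must leverage the topological hypothesis in an essential way, case by case.
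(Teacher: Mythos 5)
Your proposal correctly identifies the toolkit --- $KAK$ decomposition of unbounded derivatives, Nomizu extension and Gromov's rigid-structure theory to globalize infinitesimal data, and an essential use of simple connectivity --- but it has a genuine gap exactly where you locate it, and also a construction that does not work in the isometric category.

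First, the ``suitable renormalization'' that would turn the divergent sequence $(f_n)$ into a nontrivial \emph{local} Killing field is not available for isometries. Rescaling is a conformal (or similarity) device; isometries admit no normalization that blows up small neighborhoods while staying in the pseudogroup. Moreover, the Killing algebra is already determined: once one knows, via Gromov's results in the compact real-analytic setting, that $\Isom(M,g)$ has finitely many components, noncompactness immediately gives a one-parameter subgroup of $\Isom^0(M,g)$ whose closure is noncompact, hence a Killing field $X$ with noncompact flow --- no renormalization needed, and also no \emph{new} infinitesimal symmetry to be gained from the sequence.

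Second, and decisively, the step you flag as the ``main obstacle'' is precisely the theorem. You never give an argument for why such an $X$ (or an invariant null line field) cannot exist on a closed, simply connected, analytic Lorentzian manifold, and the heuristic about the zero set being totally geodesic and ``recurrence vs.\ non-precompactness'' is not a proof: Lorentzian isotropy lies in $\OO(1,n-1)$, which is noncompact, so even a fixed point of the flow, obtained by some Lefschetz-type argument, does not immediately contradict noncompactness, unlike the Riemannian case you compare against. D'Ambra's actual argument takes a different route, which this paper summarizes and adapts in its Section~2: reduce to a maximal connected abelian $H \leq \Isom(M,g)$; use Gromov's stratification to get a free action on an open dense $\Omega_f$ with analytic complement (this is where the linear-algebra lemma about $f|_V = \mathrm{id} = f|_{\BR^{1,n-1}/V} \Rightarrow f = \mathrm{id}$ enters, in place of any ``causal type of $X$'' analysis); then exploit the \emph{finite invariant volume} and Gromov's Proposition~3.7.B to force $H.x = L.x$ on $\Omega_f$, so $H$ equals its maximal compact $L$. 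This is a measure-theoretic orbit-structure argument, not a flow-dynamics one. The other known proof (Zeghib's, also described in the introduction) likewise avoids a single Killing field: it builds a codimension-one, totally geodesic, degenerate, \emph{analytic} foliation from the unbounded sequence via approximately stable hyperplanes, and contradicts Haefliger's theorem on simply connected analytic manifolds. Your sketch is closer in spirit to Zeghib's, but the object you would need to produce and globalize is a foliation (or its tangent hyperplane field), not a Killing field, and the contradiction you would need is Haefliger's, not a recurrence argument.
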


The isometry group of a compact Riemannian manifold is always compact, by the classical theorems of Myers and Steenrod, but compact pseudo-Riemannian manifolds can have noncompact isometry groups.  For example, the bi-invariant Cartan-Killing metric on $\SL(2,\BR)$ is Lorentzian, and descends to the quotient by any cocompact lattice, on which $\PSL(2,\BR)$ survives as isometry group.  Noncompact, connected isometry groups of closed Lorentzian manifolds have been classified by Adams and Stuck \cite{as.lorisom1, as.lorisom2} and Zeghib \cite{zeghib.lorisom1, zeghib.lorisom2}. 

D'Ambra's theorem is not true for higher signature: also in \cite{dambra.lorisom} she constructed an analytic metric of signature $(2,7)$ on ${\bf S}^3 \times {\bf S}^3 \times {\bf S}^3$ with noncompact isometry group.  As $\Conf(M,g)$ always contains $\Isom(M,g)$, her example shows that our theorem does not hold in higher signature, either.  Another such example appears in the next section.

D'Ambra's proof is a \emph{tour de force} of Gromov's theory of rigid geometric structures, which has maximum strength for \emph{compact, simply connected} manifolds with \emph{real analytic}, \emph{unimodular}, rigid geometric structure (of algebraic type) \cite{gromov.rgs}.  The finite volume left invariant by $\Isom(M,g)$ plays a fundamental role in D'Ambra's proof.  A group of conformal transformations of a closed manifold preserves a finite volume if and only if it is isometric for a metric in the conformal class, in which case it is called \emph{inessential}.  D'Ambra's theorem thus says we may assume $\Conf(M,g)$ is essential, but it provides only limited assistance beyond that.

We do not know whether our theorem or D'Ambra's theorem holds for smooth metrics.



\begin{example} 
\label{ex:hopf_manifold}
The Lorentzian \emph{Hopf manifold} is the quotient of Minkowski space minus the origin by the similarities $\{ 2^k \cdot \mbox{Id} \}_{k \in \BZ}$; it is diffeomorphic to $S^1 \times S^{n-1}$.  The similarity group of $\Min^{1,n-1} \backslash \{ 0 \}$ is $\BR^* \times \SO(1,n-1)$ (modulo $- \mbox{Id}$ if $n$ is even), which descends to $S^1 \times \mbox{O}(1,n-1)$ on the quotient.  All point stabilizers in the Hopf torus are linearizable with unimodular isotropy, but no noncompact subgroup of $S^1 \times \mbox{O}(1,n-1)$ preserves a global volume.
\end{example}

\subsection{Conformal transformations of compact Lorentzian manifolds}
\label{sec:intro_llc}

Another motivation for our work is the:

\begin{conjecture*}[Lorentzian Lichnerowicz Conjecture]
If $(M,g)$ is a closed Lorentzian manifold of dimension at least 3 admitting an essential conformal group, then $(M,g)$ is conformally flat.
\end{conjecture*}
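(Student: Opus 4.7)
The plan is to work with the normal Cartan connection $\omega$ associated to the conformal structure, defined on a principal $P$-bundle $\hat{M} \to M$, where the flat model is $\Ein^{1,n-1} = G/P$ with $G = \PO(2,n)$ and $P$ the stabilizer of an isotropic line. Conformal flatness of $(M,g)$ is equivalent to vanishing of the curvature $\kappa$ of $\omega$, whose nontrivial component is the Weyl tensor, valued in a $P$-module $W$. The group $\Conf(M,g)$ lifts canonically to the group $\Aut(\hat{M},\omega)$ of Cartan-geometric automorphisms, and the question becomes showing $\kappa \equiv 0$ under the hypothesis that this lifted action is not relatively compact modulo some compact normal subgroup.

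First I would convert essentiality into a dynamical input in the spirit of Frances--Zeghib. Under the assumption that no metric in the conformal class is preserved, I would extract a sequence $(\hat{f}_n)$ in $\Aut(\hat{M},\omega)$, base points $\hat{x}_n \to \hat{x}$, and a sequence $(p_n) \subset P$ with $p_n \to \infty$, such that $\hat{f}_n \hat{x}_n \to \hat{y}$ and the holonomy of $\hat{f}_n$ relative to a fixed trivialization near $\hat{x}_n$ is $p_n$. Equivariance of $\kappa$ gives $\kappa(\hat{x}_n) = p_n^{-1} \cdot \kappa(\hat{f}_n \hat{x}_n)$; compactness of $M$ and continuity of $\kappa$ then force $\kappa(\hat{x})$ into the subset $W^\infty(p_n) \subset W$ of vectors whose forward $p_n$-orbit stays bounded. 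By a Jordan--Langlands decomposition of $P$, every unbounded sequence escapes either along a one-parameter subgroup of the Cartan (with $W^\infty$ a proper sum of negative weight spaces) or through the nilpotent radical (with a strictly smaller stable subvariety). In each regime, using that the Weyl module has no nonzero $P$-invariants, one gets a nontrivial linear vanishing condition on $\kappa(\hat{x})$.

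The second phase is to propagate these pointwise vanishing conditions to all of $\hat{M}$. I would try to exploit the real-analytic case first: if the recurrent set of $\Aut(\hat{M},\omega)$ on $M$ meets an open set, then vanishing of $\kappa$ on that open set extends globally by analyticity, giving $(M,g)$ the structure of a $(G,\Ein^{1,n-1})$-manifold. To identify enough attractor points, I would combine \emph{several} distinct escape sequences---each killing a different part of $W$---and show that their associated attractor sets cover a dense subset of $M$. A potentially useful input here is Theorem~\ref{thm:main} applied to finite covers: if the conformal group had a cover with many recurrent orbits, global structure theory of Cartan-geometric dynamics (stability neighborhoods, development maps) would carry flat regions across $M$.

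The hard part is precisely this propagation. In Lorentzian signature the stable subvariety $W^\infty(p_n)$ of a generic escaping sequence is not a single point but a proper subspace, so an individual attractor only forces partial vanishing of the Weyl tensor. One must carefully combine the dynamics of different sequences, control how the recurrent set sits inside $M$, and handle the possibility that the attractor of every sequence is a thin lower-dimensional subset (as happens on $\Ein^{1,n-1}$ itself, where the conformal group preserves a lightcone). Moreover, the whole strategy presupposes enough regularity to turn pointwise vanishing into open vanishing; in the smooth case, even after all pointwise conditions are extracted, a genuinely new propagation mechanism is needed. Overcoming these obstacles is why the conjecture remains open in full generality.
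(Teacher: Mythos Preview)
The statement you are asked to address is explicitly labeled a \emph{conjecture} in the paper; there is no proof of it in the paper to compare against. The paper's contribution is Theorem~\ref{thm:main}, which establishes the conjecture only in the special case of closed, simply connected, real-analytic Lorentzian manifolds, and the authors emphasize that the general conjecture remains open.

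Your proposal is not a proof either, and you say so yourself in the final sentence. What you have written is a reasonable high-level sketch of the holonomy-sequence philosophy, but the ``second phase'' is where the actual content lies, and you correctly identify the gap: in Lorentzian signature, a single unbounded holonomy sequence only forces the Weyl curvature into a proper submodule, not to zero, and there is no general mechanism to combine enough sequences or to propagate partial vanishing across $M$. The paper's proof in the simply connected real-analytic case does \emph{not} close this gap by finding more holonomy sequences; rather, it exploits the special hypotheses to rule out the non-flat possibilities one by one---D'Ambra's arguments and Gromov stratification reduce to an abelian group with specific holonomy types, the contracting and balanced cases force flatness directly, and the remaining mixed/bounded cases are handled by constructing a codimension-one analytic foliation whose existence contradicts Haefliger's theorem, or by producing an isotropic conformal vector field with a zero and invoking Frances' result. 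None of these steps generalizes in any obvious way to the smooth or non-simply-connected setting, which is why your sketch, absent those hypotheses, cannot be completed as stated.
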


In Riemannian signature, a stronger result, the Ferrand-Obata Theorem \cite{lf.lich, obata.lich} holds.  In higher signature $(p,q)$, with $\mbox{min} \{p,q \} \geq 2$, the above statement is false, by examples of Frances \cite{frances.pq.counterexs}.  The following conformally homogeneous spaces are the basic examples of compact semi-Riemannian manifolds with essential conformal group (The Ferrand-Obata Theorem says it is the only example in Riemannian signature.)

\begin{example}
\label{ex:einstein_universe}
For $p,q$ between $0$ and $n$, and $p+q=n$, the conformal semi-Riemannian spaces $({\bf S}^p \times {\bf S}^q, -g_{{\bf S}^p} \oplus g_{{\bf S}^q} ) $ admit $O(p+1,q+1)$ as conformal group and are conformally flat.  The quotients by $(-\mbox{Id} \times -\mbox{Id})$ are projective varieties called $\Ein^{p,q}$, the \textit{semi-Riemannian Einstein spaces}.  Only the Lorentzian Einstein space has infinite fundamental group, and our Theorem \ref{thm:main} does not hold in any other signature.
\end{example}


A consequence of the special topology of $\Ein^{1,n-1}$ is the following:
\begin{propositionB*}
\label{prop:non_flat}
A compact, simply connected Lorentzian manifold cannot be conformally flat.
\end{propositionB*}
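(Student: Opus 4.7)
The plan is to argue by contradiction using the developing map of a flat conformal Cartan geometry. First, note that the case $\dim M = 2$ is vacuous: the only closed simply connected surface is $\mathbf{S}^{2}$, which carries no continuous line field and hence no Lorentzian metric. Thus one may assume $n = \dim M \geq 3$.

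In dimension $n \geq 3$, conformal flatness of a Lorentzian metric is equivalent to flatness of its canonical normal conformal Cartan geometry, whose homogeneous model is $\OO(2,n)/P \cong \Ein^{1,n-1}$. Flatness on a simply connected base then yields a global \emph{developing map}
\[ \delta \colon M \longrightarrow \Ein^{1,n-1}, \]
which is a local conformal diffeomorphism, unique up to the left action of $\OO(2,n)$. My first technical step would be to upgrade $\delta$ to a covering map. Since $\delta$ is a local diffeomorphism between manifolds of equal dimension, $\delta(M)$ is open in $\Ein^{1,n-1}$; since $M$ is compact and $\Ein^{1,n-1}$ is Hausdorff, $\delta(M)$ is also closed, so by connectedness $\delta$ is surjective. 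The fibres of $\delta$ are discrete and closed in compact $M$, hence finite, and a standard argument (pick a common neighbourhood inside the intersection of the chart images and outside the compact image of the complement of those charts) produces evenly covered neighbourhoods, exhibiting $\delta$ as a covering map.

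The finishing move is purely topological. By Example \ref{ex:einstein_universe}, $\Ein^{1,n-1}$ is the $\BZ/2$-quotient of $\mathbf{S}^{1} \times \mathbf{S}^{n-1}$, so its universal cover is $\widetilde{\Ein}^{1,n-1} \cong \BR \times \mathbf{S}^{n-1}$, which is non-compact for every $n \geq 3$. Because a connected, simply connected covering of a connected manifold is its universal cover, $\delta$ would realize $M$ as homeomorphic to $\widetilde{\Ein}^{1,n-1}$, contradicting the compactness of $M$. The only non-routine ingredient is the existence of the developing map: for $n \geq 4$ this rests on the classical equivalence between vanishing of the Weyl tensor and the existence of a conformally flat atlas with values in $\Ein^{1,n-1}$; for $n = 3$ one appeals instead to the Cotton--York tensor or, uniformly in all dimensions $n \geq 3$, to the normal parabolic Cartan geometry modelled on $(\OO(2,n), P)$. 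I do not foresee any further obstacle; once the developing map is in place, the argument is a clean application of the non-simple-connectedness of the Einstein universe, which is precisely the feature that makes $\Ein^{1,n-1}$ special within the family of Einstein spaces.
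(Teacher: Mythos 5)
Your argument is essentially the same as the paper's: obtain the developing map $\delta\colon M \to \Ein^{1,n-1}$ from conformal flatness and simple connectedness, use compactness of $M$ to promote $\delta$ to a covering map, and then contradict the fact that $\Ein^{1,n-1}$ has noncompact universal cover. You simply spell out the properness-implies-covering step and the topology of $\Ein^{1,n-1}$ that the paper leaves implicit, and like the paper you correctly dispose of the $n=2$ case by noting $\mathbf{S}^2$ carries no Lorentzian metric.
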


A simply connected, conformally flat Lorentzian manifold $M$ has, by a classical monodromy argument, a developing map $f: M \rightarrow \Ein^{1,n-1}$, which is in particular a local diffeomorphism.  If $M$ is moreover closed, then $f$ is a covering map---a contradiction.

The $2$-sphere admits no Lorentzian metric.
 Our main result partially confirms the Lorentzian Lichnerowicz Conjecture, thanks to D'Ambra's Theorem A and this proposition B.
Previous global results supporting the Lorentzian Lichnerowicz Conjecture, and related global rigidity results on conformal transformations in other signatures, are for algebraically large groups,  such as semisimple groups \cite{bn.simpleconf, fz.simpleconf, pecastaing.smooth.sl2r, pecastaing.ssconf} or nilpotent groups of a maximal nilpotence degree \cite{fm.nilpconf}.

A previous local rigidity result will play an important role below.

\begin{theorem*}[Frances--Melnick 2013 \cite{fm.champsconfs}]
Let $(M,g)$ be a real-analytic Lorentzian manifold of dimension at least $3$.  Let $X$ be an analytic conformal vector field on $M$, vanishing at a point $p$.  Then at least one of the following holds:
\begin{itemize}
\item $X$ is analytically linearizable in a neighborhood of $p$
\item $(M,g)$ is conformally flat.
\end{itemize}
\end{theorem*}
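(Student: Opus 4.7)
I would lift everything to the canonical normal Cartan bundle $\pi: \hat M \to M$ for the conformal structure, with Cartan connection $\omega$ taking values in $\mathfrak{g} = \so(2,n)$, equipped with its conformal $|1|$-grading $\mathfrak{g} = \mathfrak{g}_{-1} \oplus \mathfrak{g}_0 \oplus \mathfrak{g}_1$ where $\mathfrak{g}_0 \simeq \co(1,n-1)$, $\mathfrak{p} = \mathfrak{g}_0 \oplus \mathfrak{g}_1$, and $G/P \simeq \Ein^{1,n-1}$. Recall that $(M,g)$ is conformally flat if and only if the curvature $\kappa$ of $\omega$ vanishes identically. The conformal vector field $X$ admits a unique analytic lift $\tilde X$ to $\hat M$ preserving $\omega$; because $X(p)=0$, the $\tilde X$-orbit of any $\hat p \in \pi^{-1}(p)$ stays in the fiber, so after normalizing $\hat p$ we obtain $\tilde \phi^t_X(\hat p) = \hat p \cdot \exp(t X^{\hol})$ with $X^{\hol} := \omega_{\hat p}(\tilde X_{\hat p}) \in \mathfrak{p}$.

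In the Cartan exponential chart $\exp^\omega_{\hat p} : U \subset \mathfrak{g}_{-1} \to M$ centered at $p$, the flow $\phi^t_X$ is controlled by two ingredients: the algebraic action of $\exp(t X^{\hol})$ on $\mathfrak{g}/\mathfrak{p}$, and the curvature $\kappa$ along exponential rays from $\hat p$. The structure equation $d\omega + \tfrac{1}{2}[\omega,\omega] = \kappa$ shows that if $\kappa \equiv 0$ on a neighborhood of $\hat p$, then in this chart $\phi^t_X$ realizes the action of $\exp(t X^{\hol})$ on $\Ein^{1,n-1}$ at the base point, and the classification of conformal vector fields on the flat model vanishing at a point shows that each is either analytically linearizable or has a specific higher-order normal form; either way the dichotomy is verified directly in the flat case.

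The content of the theorem is the converse, which I would argue by contrapositive: assuming $X$ is not analytically linearizable near $p$, show $\kappa$ vanishes on a nonempty open subset of $\hat M$, whence analyticity gives $\kappa \equiv 0$ and conformal flatness of $M$. A Jordan-type case analysis on $X^{\hol} \in \mathfrak{p}$ drives this. If $\overline{\exp(\BR X^{\hol})}$ is compact in $P$, Bochner-style averaging in the chart $\exp^\omega_{\hat p}$ produces an analytic linearization, contradicting the assumption; hence $X^{\hol}$ must have a hyperbolic or unipotent component. The equivariance $\kappa(\hat p \cdot g) = \Ad(g^{-1}) \cdot \kappa(\hat p)$, combined with the flow identity $\phi^t_X \circ \exp^\omega_{\hat p}(v) = \exp^\omega_{\hat p}(\Ad(\exp(t X^{\hol})) v)$ up to curvature-dependent corrections, produces a tension between iterated $\Ad$-orbits of $\kappa(\hat p)$ and the normal-form obstruction of $X$: for hyperbolic $X^{\hol}$, exponential growth in a stable or unstable eigendirection forces $\kappa$ to vanish on the corresponding invariant manifold, which is open in $\hat M$; for unipotent $X^{\hol}$, a polynomial blow-up argument extracts a nontrivial limit that must be integrable by a Frobenius-type argument, again giving vanishing of $\kappa$ on an open set.

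\textbf{Main obstacle.} The most delicate case is a nontrivial $\mathfrak{g}_1$-component in $X^{\hol}$, corresponding to $X$ vanishing to order at least two at $p$: there $\Ad(\exp(t X^{\hol}))$ acts polynomially on $\kappa$, so standard hyperbolic-dynamics arguments fail, and one must identify precise formal invariants extractable from the Cartan-geometric data, show they coincide with curvature components forced to vanish when $X$ is not linearizable, and then propagate the vanishing off the fiber using analyticity. The Lorentzian hypothesis is essential here: in higher signatures $(p,q)$ with $\min(p,q) \geq 2$ the richer dynamics in $\mathfrak{g}_0$ admit counterexamples to the dichotomy, while Lorentzian signature leaves just enough rigidity to force it.
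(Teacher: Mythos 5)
This theorem is stated in the paper as a recalled external result (Theorem 1.2 of \cite{fm.champsconfs}) and is not proved here, so there is no internal proof to compare against; I will evaluate the sketch on its own terms. Your general apparatus is right---lift to the normal Cartan bundle, pass to the holonomy $X^{\hol}=\omega_{\hat p}(\tilde X_{\hat p})\in\liep$, and argue by contrapositive that non-linearizability forces $\kappa$ to vanish on an open set, then propagate by analyticity. But your case analysis misidentifies the dichotomy that actually governs the problem. You split according to whether $\exp(\BR X^{\hol})$ has compact closure and then treat hyperbolic and unipotent $X^{\hol}$ as separate non-linearizable cases. This is wrong: an $X^{\hol}$ lying in (or conjugate in $P$ into) $\lieg_0\cong\co(1,n-1)$ yields a linearizable $X$ at $p$, regardless of whether it is elliptic, hyperbolic, or unipotent as an endomorphism of $\lieg_{-1}$---the Cartan exponential chart conjugates the flow to the linear action of $e^{tX^{\hol}}$. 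In particular, your claim that ``for hyperbolic $X^{\hol}$, exponential growth forces $\kappa$ to vanish'' is false as stated: a closed Lorentzian manifold with noncompact isometry group (e.g.\ a quotient of $\SL(2,\BR)$) has Killing fields with hyperbolic linear isotropy at fixed points, with no conformal flatness anywhere.

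The content of the theorem lives entirely in the case you relegate to the ``main obstacle'': $X^{\hol}\in\liep$ \emph{not} conjugate in $P$ into $\lieg_0$, i.e.\ with an irremovable $\liep^+$-component. The correct criterion (used also in this paper's Proposition~\ref{prop:simult_linear}) is that $X^{\hol}$ is linearizable iff, writing $X^{\hol}=X_0+X_1$ in the grading, $X_1$ lies in the image of $\ad X_0:\lieg_1\to\lieg_1$; when this fails, $X$ is essential at $p$ and the curvature analysis kicks in. Your sketch also leaves the actual mechanism that kills $\kappa$ unexplained: the relevant argument is that $\kappa(\hat p\cdot e^{tX^{\hol}})=\Ad(e^{-tX^{\hol}})\kappa(\hat p)$ must stay bounded, forcing $\kappa(\hat p)$ into an approximately stable (or fixed) subspace of the curvature module for the one-parameter group, and in the non-linearizable parabolic case this subspace, paired with propagation of holonomy along exponential curves, gives vanishing on an open cone. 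Gesturing at ``Frobenius-type integrability'' and ``nontrivial limits'' does not supply this. To make the proposal a proof you would need (i) to replace the compact/hyperbolic/unipotent trichotomy with the conjugate-into-$G_0$ dichotomy, and (ii) to carry out the stability/holonomy argument on the Cartan curvature module in the essential case, which in \cite{fm.champsconfs} is precisely where the Lorentzian-specific structure of $\so(2,n)$ enters.
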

This result is used repeatedly in combination with proposition B in our proof.

The Riemannian Lichnerowicz Conjecture even holds for $M$ noncompact \cite{lf.noncompact}.  The following examples illustrate why the Lorentzian conjecture does not apply to noncompact manifolds.

\begin{example}
\label{ex:alekseevsky}
In \cite{alekseevski.selfsim}, Alekseevsky exhibits a family of non-conformally flat Lorentzian metrics on $\BR^n$ for which the diagonal flow $\diag(e^{2t},e^t,\ldots,e^t,1)$ acts conformally and essentially. For all these metrics, the fixed line $\BR e_n$ is isotropic and the invariant hyperplanes $\{ \mathcal{F}_c =  e_1^\perp + c e_n \}$ are degenerate and totally geodesic.  
 \end{example}
 In the course of our proof, we will show that these examples do not conformally embed in a closed, simply connected Lorentzian manifold.
 
\subsection{Dynamical foliations in semi-Riemannian geometry}

In Alekseevsky's examples \ref{ex:alekseevsky}, the family of totally geodesic hypersurfaces $\{ \mathcal{F}_t \}$ are the strong-unstable leaves of the flow.  Neither isometric nor conformal transformations of closed Lorentzian manifolds need be hyperbolic in any sense.  Nonetheless Zeghib in \cite{zeghib.tgl1, zeghib.tgl2}, following a suggestion of D'Ambra and Gromov \cite[Sec 7.4]{dag.rgs}, proved that any unbounded sequence of \emph{isometries} of a compact Lorentzian manifold gives rise to an \emph{approximately stable foliation} by totally geodesic, degenerate hypersurfaces.  When the metric is real analytic, then his foliation is real analytic.  Zeghib thus gave an alternate proof of D'Ambra's Theorem A, thanks to the following theorem:

\begin{theoremC*}[Haefliger \cite{haefliger.codim1}]
A closed, simply connected, real analytic manifold $M$ does not admit a real analytic, codimension-one foliation.
\end{theoremC*}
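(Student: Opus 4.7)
The plan is to argue by contradiction: suppose $M$ admits a real analytic codimension-one foliation $\mathcal{F}$. Since $\pi_1(M)=1$, the transverse orientation cover of $\mathcal{F}$ is trivial, so $\mathcal{F}$ is transversely orientable. In particular, $\mathcal{F}$ is defined locally by non-vanishing analytic $1$-forms with a consistent transverse orientation.

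I would first rule out the case in which every loop in every leaf has trivial holonomy. In that case, the local defining $1$-forms can be globalized, after normalization by positive analytic factors, to a non-vanishing closed analytic $1$-form $\omega$ on $M$. Simple connectedness of $M$ yields $\omega = df$ for some analytic $f : M \to \BR$; since $M$ is compact, $f$ attains extrema where $df$ vanishes, contradicting that $\omega$ is non-vanishing.

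Hence there is a loop $\gamma$ in some leaf $L_0$ whose holonomy on a small transversal $T$ is a non-trivial germ $h : (T,0) \to (T,0)$ of analytic diffeomorphism. Since $\pi_1(M)=1$, $\gamma$ bounds a continuous disk in $M$. I would then apply Haefliger's vanishing cycle construction: starting from such a bounding disk, perturb it to be transverse to $\mathcal{F}$ off $\partial D^2$, and then minimize the singular set of the induced one-dimensional foliation on $D^2$ over homotopies fixing $\partial D^2$. The minimal configuration yields a continuous family of loops $\gamma_s$ lying in nearby leaves $L_s$ that degenerates to a point, with $\gamma_0 = \gamma$ essential in $L_0$. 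Each closed loop $\gamma_s$ for $s$ close to $0$ produces a fixed point of $h$ on $T$, so $h$ admits a sequence of fixed points accumulating at $0$ from one designated side of $T$.

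The contradiction then comes from analytic rigidity: an analytic germ of a diffeomorphism of $(\BR,0)$ fixing a set with an accumulation point at $0$ must be the identity. So $h = \mathrm{id}$, contradicting the non-triviality of the holonomy of $\gamma$. The hard part will be the vanishing cycle construction---the delicate combinatorial-topological step at the heart of Haefliger's original proof, where the minimization on the bounding disk must be performed with care to ensure a genuine \emph{one-sided} accumulation of holonomy fixed points. The remainder is a routine application of the rigidity of analytic one-dimensional germs together with standard topology of codimension-one foliations.
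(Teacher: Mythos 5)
The paper does not prove Theorem C; it is cited from Haefliger and from Godbillon, so there is no internal proof to compare against. Your plan does identify the two genuine ingredients of Haefliger's argument---a vanishing cycle and the rigidity of one-dimensional analytic germs---but the way you assemble them has two gaps.

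First, the dichotomy on holonomy is both unnecessary and, as written, unjustified. The claim that trivial leafwise holonomy lets you normalize the local defining $1$-forms to a single closed analytic $1$-form does not follow: triviality of the holonomy germs is a condition on the monodromy of the normal bundle along leafwise loops, whereas producing a closed global defining form requires killing the multiplicative \v{C}ech cocycle of the local defining forms \emph{and} arranging $d\omega = 0$, neither of which is forced by trivial holonomy. (One can indeed dispose of the no-holonomy case, but via Sacksteder's transverse invariant measure plus a Tischler-type argument, or via Reeb stability to produce a fibration over $S^1$; in any case Haefliger's proof does not run through this case split.) Second, and more seriously, the vanishing cycle construction in Haefliger's theorem starts from a \emph{closed transversal curve}, which exists on any compact transversely orientable codimension-one foliated manifold---this is exactly where compactness enters---and not from a leafwise loop with nontrivial holonomy. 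When the boundary of the bounding disk is tangent to $\mathcal{F}$, the induced singular foliation on the disk has $\partial D^2$ as a leaf, and if the holonomy germ $h$ of $\gamma$ has an isolated fixed point, the nearby induced leaves spiral rather than close up, so no family $\gamma_s$ of closed loops accumulating on $\gamma$ need exist; minimizing over homotopies does not change $h$ and cannot fix this. In the actual proof, the vanishing cycle $c_0$ found inside the disk is generally a different loop, in a different leaf, than the curve one started from; what the construction yields is that $c_0$ is the outermost of a nested family of closed induced leaves, so $c_0$ has \emph{trivial} one-sided holonomy. Analytic continuation then forces the full holonomy of $c_0$ to be trivial, whence all nearby induced leaves are closed, contradicting the outermost property. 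Your final rigidity step is correct, but the object you feed into it must be the vanishing cycle produced from a null-homotopic closed transversal, not the initial leafwise loop.
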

(See also \cite[Cor IV.1.9]{godbillon.book}.)
Zeghib's construction relies essentially on the Levi-Civita connection of the metric; in particular, the fact that a totally geodesic lamination is Lipschitz is needed to extend the foliation over the whole manifold.  In the conformal setting, no such global foliations exist in general.  For sequences of  conformal transformations of semi-Riemannian manifolds which are \emph{stable} near a point, Frances has constructed dynamical foliations on a neighborhood of the point in \cite{frances.degenerescence}, which have proved quite useful.

 \subsection{About our proof}

 D'Ambra's proof of Theorem A and the results of Proposition B and Haefliger's Theorem C are three anchors of our proof.  Between them lay several difficulties.  D'Ambra's original proof breaks down in the absence of an invariant volume.  Existing approaches to global rigidity results for conformal Lorentzian transformation groups require some nontrivial algebraic structure from the groups, which is absent in our problem.  Global dynamical foliations typically require hyperbolicity or an invariant affine connection.

Our proof proceeds as follows.  In section \ref{sec:dambra_consequences} we recall Gromov's stratification results and reduce to a maximal connected, abelian subgroup $H \leq \Conf(M,g)$.  Thanks to the normal forms theorem of \cite{fm.champsconfs} stated above, we can apply D'Ambra's arguments to $H$ whenever it preserves a volume on a nonempty open subset of $M$.  Then we can assume a certain ubiquity of sequences with conformal dilation tending to zero.

A crucial tool for classifying the possible local dynamics of conformal transformations on our Lorentzian manifold and for thoroughly analyzing the geometric implications of those dynamics is the Cartan connection canonically associated to a conformal Lorentzian structure.  In section \ref{sec:cartan_connxn_holonomy}, we recall the definition of this Cartan geometry and of the \emph{holonomy sequences} associated to sequences of conformal transformations.  
We present an analysis of the Weyl curvature module and prove an algebraic result which later allows us to define distributions of degenerate hyperplanes via the Weyl curvature, provided it takes certain special values.
Then we illustrate the application of holonomy and stability by proving theorem \ref{thm:main} given the existence of a \emph{contracting} holonomy sequence.

In section \ref{sec:balanced_proof}, we prove the theorem given the existence of a \emph{balanced, linear} holonomy sequence (see definition \ref{def:holonomy_trichotomy}; these are like the flow in example \ref{ex:alekseevsky}).  Such sequences force the Weyl curvature to have values in a very special subbundle, which in turn implies local integrability of certain distributions on the Cartan bundle, the leaves of which are stable submanifolds for the initial sequence of conformal transformations.  These leaves have a geometric interpretation which allows us to construct a foliation by degenerate hypersurfaces on an open, dense set.  Nonextendability of this foliation corresponds to vanishing of an isotropic conformal vector field, which implies conformal flatness by a theorem of Frances \cite{frances.ccvf}.  Otherwise, the foliation extends, which contradicts Haefliger's Theorem C.  The construction of the foliation in this section is a miniature of our construction of a codimension-one foliation on an open, dense set in the final section of the proof.

In section \ref{sec:fixed_points} we show that there are no $H$-fixed points.  Of course, the local result of \cite{fm.champsconfs} is key, as is Gromov's Stratification.  We show that the full isotropy is linearizable.
As suggested by example \ref{ex:hopf_manifold}, it is not possible to conclude only from studying the action in the neighborhood of one fixed point.  However, we can choose a particular orbit nearby and follow it to another fixed point in the orbit closure.  The interplay of the $H$-action around these two fixed points leads to a contradiction.

We begin
section \ref{sec:end_of_proof} by carefully analyzing the remaining possibilities for holonomy sequences at points with nonclosed orbits.  They 
are not stable, nor are they necessarily linear.  There is nonetheless a strong interplay of the dynamics, the curvature, the algebra governing the Cartan connection, and the conformal geometry, by which we construct a foliation by invariant, degenerate hypersurfaces on an open, dense set.
We show that the resulting foliation extends over closed, isotropic orbits with linear, unipotent isotropy. By the conclusion of section \ref{sec:dambra_consequences}, a dense set of points has such an orbit in its orbit closure.  A contradiction is reached by showing the set of closed, isotropic orbits is covered by finitely-many such extended leaves, which leads to a lightlike conformal vector field in $M$ and then to conformal flatness by \cite{frances.ccvf}.  This last section is the most difficult part of the proof, in which we must work with multiple particularly challenging features of conformal Lorentzian dynamics: instability, nonlinearity, and unipotent dynamics.

\section{Consequences of D'Ambra's proof and the case of an invariant volume}
\label{sec:dambra_consequences}

Throughout this section, let $(M,g)$ be a compact, simply-connected, real-analytic Lorentzian manifold of dimension at least $3$.  Here we recall relevant results for such manifolds from Gromov's theory of rigid geometric structures.  These results drive D'Ambra's proof of Theorem A above.  Existence of an invariant volume for Lorentzian isometry groups also plays a crucial role in her paper.  Here we adapt her arguments to the conformal setting to the extent possible, which leaves us with condition (\ref{eqn:negation_finite_volume}), a statement about nonexistence of certain volumes, for the sequel.

\subsection{Gromov's stratification and reduction to a connected, abelian group}
\label{sec:stratification_abelian}

The conformal class $[g]$ is a rigid geometric structure in the sense of \cite{gromov.rgs}, which yields very strong conclusions on $\Conf(M,g)$ and the structure of its orbits.  These conclusions also hold for a maximal connected, abelian subgroup of $\Conf(M,g)$; D'Ambra observed that showing compactness of any such subgroup suffices to prove compactness of the full automorphism group.  Let $\{ X_1, \ldots, X_r \}$ be a basis of a maximal abelian subalgebra of $\Kill^{conf}(M,g)$, the conformal vector fields of $(M,g)$.  Let 
$$ H' = \{h \in \Conf(M,g) \ : \  \ h_* X_i = X_i \ \forall i \}$$
Now $H'$ is the automorphism group of the augmented rigid geometric structure $[g] \cup \{X_1, \ldots, X_r\}$ (see \cite[Sec 3.6]{gromov.rgs}).  

\begin{theorem}[Gromov \cite{gromov.rgs} 3.5.B, 3.5.C]
\label{thm:finite_components}
The groups $\Conf(M,g)$ and $H'$ each have finitely many connected components.  For any $x \in M$, the stabilizers of $x$ in $\Conf(M,g)$ and $H'$ each have finitely many connected components.
\end{theorem}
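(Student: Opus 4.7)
The plan is to verify that Gromov's theorems apply to both $[g]$ and the augmented structure $[g] \cup \{X_1, \ldots, X_r\}$, and then to unpack their consequences. In dimension at least $3$, the conformal class $[g]$ is a real-analytic rigid geometric structure of algebraic type of order $2$: a conformal diffeomorphism is determined, on its connected domain, by its 2-jet at any one point, and real analyticity together with simple connectedness of $M$ guarantee that any such local automorphism extends uniquely to a global one (Nomizu-type extension). The augmented structure $[g] \cup \{X_1, \ldots, X_r\}$ is likewise a real-analytic rigid geometric structure of algebraic type, with automorphism group exactly $H'$ by construction.

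For the stabilizer statements, I would invoke the algebraicity of isotropy (\cite[3.5.B]{gromov.rgs}): the 2-jet map at any $x \in M$ realizes $\Stab_{\Conf(M,g)}(x)$ as a closed subgroup of the 2-jet group at $x$, with image a real algebraic subgroup of that jet group. Real algebraic groups have finitely many connected components, so $\Stab_{\Conf(M,g)}(x)$ does too. The analogous injection for $\Stab_{H'}(x)$ lands in the real algebraic subgroup cut out by the extra conditions of fixing the 1-jets of $X_1, \ldots, X_r$ at $x$, which is again real algebraic, giving finite components for $\Stab_{H'}(x)$.

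For the full groups, I would combine Gromov's stratification theorem (\cite[3.5.C]{gromov.rgs}) with compactness and real analyticity of $M$. Stratification provides a dense open analytic subset $U \subset M$ on which the orbits of the local automorphism pseudogroup are locally closed submanifolds of locally constant dimension; by the Nomizu-type extension these coincide with $\Conf(M,g)$-orbits. For $x \in U$, the orbit map descends to an injective immersion $\Conf(M,g)/\Stab(x) \hookrightarrow M$ with locally closed image, whose closure is a real-analytic subvariety of the compact manifold $M$ and hence has finitely many irreducible components. This bounds the number of connected components of the orbit, and combined with the finite count for $\Stab(x)$, forces $\Conf(M,g)$ to have finitely many components; the same argument applied to the augmented structure handles $H'$. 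The most delicate step is verifying that the augmented structure still satisfies Gromov's algebraic-type hypothesis, after which the passage from finite isotropy to finite component count relies essentially on the real analyticity of $M$ to control how orbit closures decompose in the compact manifold.
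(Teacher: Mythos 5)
The statement in question is a citation of Gromov's theorems 3.5.B and 3.5.C; the paper does not reprove it, so your proposal is not being compared against a proof in the text but against the external machinery being invoked. With that caveat, your sketch captures the right mechanism, and the stabilizer half (algebraicity of isotropy in the jet group, hence a real algebraic group, hence finitely many components) is exactly the intended argument. For the full groups, however, your route through orbit closures is both imprecise and more circuitous than necessary. The imprecision: you assert the orbit closure is ``a real-analytic subvariety,'' but Theorem \ref{thm:gromov_stratification}(3) as stated in the paper gives only that $\overline{G.x}$ is \emph{semianalytic}; that is a genuinely weaker statement, and bounding $\pi_0$ of a locally closed subset by the number of irreducible components of a merely semianalytic closure needs more care than ``hence has finitely many irreducible components.'' The cleaner path, and the one the paper's own restatement of Gromov's stratification makes available, is Theorem \ref{thm:gromov_stratification}(1): on the open dense set $\Omega$ the $G$-orbits are precisely the connected components of the fibers of a smooth map, so for $x \in \Omega$ the orbit $G.x$ is \emph{connected}. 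Since $G.x$ is locally closed, the orbit map $G/\Stab(x) \to G.x$ is a homeomorphism, so $\pi_0\bigl(G/\Stab(x)\bigr)$ is trivial, and the exact sequence $\pi_0(\Stab(x)) \to \pi_0(G) \to \pi_0(G/\Stab(x)) \to 1$ together with finiteness of $\pi_0(\Stab(x))$ gives finiteness of $\pi_0(G)$ directly, with no appeal to semianalytic component counts. The same argument applies verbatim to the augmented structure, giving the claim for $H'$.
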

(Gromov in fact proved that the stabilizers above are algebraic, in a sense explained in section \ref{sec:isotropy_jordan_decomp} below.)
To show compactness of $\Conf(M,g)$, it thus suffices to prove compactness of the connected component of the identity $\Conf^0(M,g)$.   In order to prove 
the latter property, it suffices to show compactness of the identity component $(H')^0 = H$ (see \cite[Sec 7]{dambra.lorisom}).  Note that $H$ is abelian.

Here are several consequences of Gromov's Stratification Theorem which will be used in the sequel.  A proof of point (1) for $G = \Conf^0(M,g)$ using the Cartan connection associated to the conformal structure is in \cite[Thm 4.1]{me.frobenius}; the proof for $H$ can be obtained by combination with the proof of \cite[Thm 4.19]{pecastaing.frobenius}.

\begin{theorem}[Gromov \cite{gromov.rgs} 3.5.A, 3.2, 3.5.C]
\label{thm:gromov_stratification}
Let $G = \Conf^0(M,g)$ or $H$.  
\begin{enumerate}
\item There is an open, dense subset $\Omega \subseteq M$ and a smooth map of locally constant corank from $\Omega$ to a smooth manifold for which the connected components of the fibers are $G$-orbits.  The complement of $\Omega$ is an analytic subset of $M$.
\item For all $x \in M$, the orbit $G.x$ is locally closed and there is a closed $G$-orbit in the closure $\overline{G.x}$.  
\item For all $x \in M$, the closure $\overline{G.x}$ is semianalytic and locally connected. 
\end{enumerate}
\end{theorem}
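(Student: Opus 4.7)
The plan is to invoke Gromov's theory of rigid geometric structures of algebraic type, whose strongest conclusions apply precisely in the setting of a compact, simply-connected, real-analytic manifold. The conformal class $[g]$ is such a structure (encoded by the canonical normal Cartan connection on a principal $P$-bundle over $M$, with $P \subset \OO(2,n)$ parabolic), and so is its augmentation $[g] \cup \{X_1, \ldots, X_r\}$ by the commuting analytic conformal vector fields.

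For part (1), I would apply Gromov's Open-Dense Theorem to each of the two structures to produce an open, dense subset on which the local Killing algebra has locally constant dimension and the local pseudogroup orbits form a regular foliation of locally constant corank. A transverse family of local scalar invariants built from the jets of the Cartan connection (modulo the action of the Killing algebra) yields the required smooth map to a manifold. One must then identify local pseudogroup orbits with global $G$-orbits: for $\Conf^0(M,g)$, this is the Nomizu-type extension theorem for real-analytic local Killing fields on a simply-connected manifold; for $H$, one observes that $\Lie(H)$ coincides with the centralizer of $\liea = \Span\{X_1,\ldots,X_r\}$ in $\Kill^{conf}(M,g)$, which by maximality of $\liea$ is just $\liea$ itself, so $H^0$ is literally the abelian group generated by the flows of the $X_i$ and its orbits agree with those of this global Lie algebra. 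The complement $M \setminus \Omega$ is cut out by vanishing of real-analytic invariants of the structure, hence is an analytic subset.

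For part (2), I would argue by descending induction on orbit dimension. Gromov's algebraicity of stabilizers in the frame jet bundle, combined with the Cartan-geometric rigidity, makes every orbit locally closed, and an orbit of minimal dimension inside $\bar{G.x}$ is then forced to be closed in $M$; such a minimum exists because orbit dimension is lower semicontinuous and bounded below by $0$. For part (3), semianalyticity follows because $\bar{G.x}$ is locally cut out by vanishing of finitely many real-analytic invariants, and local connectedness of semianalytic sets is Łojasiewicz's classical theorem.

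The main obstacle is the passage from the local pseudogroup of Killing fields, to which Gromov's machinery natively applies, to the global connected group $H$ or $\Conf^0(M,g)$. It is here that both simple connectedness of $M$ and real analyticity play essential roles, and this transition is the technical crux handled in the cited works \cite{me.frobenius} and \cite{pecastaing.frobenius}; for the augmented structure one must additionally verify that algebraicity of isotropy is preserved when adjoining the $X_i$, which reduces to the fact that the augmented structure remains of algebraic type in Gromov's sense.
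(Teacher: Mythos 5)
The paper itself does not give a proof of this theorem: it attributes all three parts to Gromov's monograph and, for part (1), refers the reader to \cite[Thm 4.1]{me.frobenius} and \cite[Thm 4.19]{pecastaing.frobenius} for proofs via the Cartan connection. Your sketch is consistent with that: invoking the Open-Dense Theorem, building the map to a manifold from transverse scalar invariants of the Cartan connection, extending local Killing fields to global ones via Amores/Nomizu on the simply-connected real-analytic $M$, and deducing part (2) from local closedness plus lower semicontinuity of orbit dimension, and part (3) from semianalyticity and Łojasiewicz. Your observation that $\Lie(H) = \liea$ (because the centralizer of a maximal abelian subalgebra is itself, by maximality) is correct and gives a cleaner reason than the paper's brief pointer to \cite{pecastaing.frobenius} for why the augmented structure's pseudogroup orbits agree with global $H$-orbits. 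One small point worth making explicit in your argument for (2): once orbits are known to be locally closed, the boundary $\overline{G.y}\setminus G.y$ consists of orbits of strictly smaller dimension; lower semicontinuity of orbit dimension alone does not give closedness without this, and it is precisely the locally-closed structure that makes the descending-induction step go through.
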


Recall that a subset $S$ of a topological space is \emph{locally closed} if $S$ is open in the closure $\overline{S}$.
A semianalytic subset of an analytic manifold is one locally cut out by finitely many analytic equalities and inequalities; see \cite{bierstone.milman} for properties of these sets.  The closure of a semianalytic set is again semianalytic.  Semianalytic sets are locally closed; by corollary 2.7 in \cite{bierstone.milman}, they are locally connected.

\subsection{Free action on an open, dense subset}

D'Ambra's local result \cite[Cor 3.3]{dambra.lorisom} can be proved in our conformal setting, thanks to the normal forms for conformal vector fields of \cite{fm.champsconfs}.

Let $H$ be the connected, abelian subgroup of $\Conf(M,g)$ defined in the previous section.  Let $L$ be a maximal compact subgroup of $H$, so $H \cong  L \times \BR^k$ for some $k \in \BN$.  

\begin{proposition}
\label{prop:freeness} 
There exists an open, dense subset $\Omega_f$, with analytic complement in $M$, on which $H$ acts freely.
\end{proposition}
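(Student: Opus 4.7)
The plan adapts D'Ambra's analogous result \cite[Cor 3.3]{dambra.lorisom} for isometries, with the Frances--Melnick normal-forms theorem playing the role of the compactness-based linearization of isometries at their fixed points. Consider the analytic evaluation map $\mathrm{ev} : M \to \Hom(\lieh, TM)$, $x \mapsto \mathrm{ev}_x$, with $\mathrm{ev}_x(X) = X(x)$, and the analytic subset
\[
    \Sigma_1 = \{ x \in M : \lieh_x \neq 0 \} = \{ x \in M : \mathrm{rank}\,\mathrm{ev}_x < r \},
\]
cut out by the vanishing of the $r\times r$ minors of a matrix of $\mathrm{ev}$ in local analytic frames. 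Granted that $\Sigma_1 \neq M$, on the complement $M\setminus \Sigma_1$ every stabilizer $H_x$ is discrete, and by Theorem \ref{thm:finite_components} it is finite, hence contained in the compact factor $L$ of $H = L\times\BR^k$. For each $\ell \in L\setminus\{e\}$, $\mathrm{Fix}(\ell)$ is an analytic and proper subset of $M$ (else $\ell$ would fix an open set and hence all of $M$ by analyticity, contradicting $\ell \neq e$ in $H \hookrightarrow \Conf(M,g)$). Standard theory of analytic actions of compact Lie groups then reduces the non-free locus in $M\setminus \Sigma_1$ to a finite union of such proper analytic subsets; removing them yields the desired $\Omega_f$.

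The heart of the matter is thus to prove $\Sigma_1 \neq M$. Suppose for contradiction that $\Sigma_1 = M$. By Theorem \ref{thm:gromov_stratification}(1), $\dim\lieh_x$ is locally constant on the open dense stratum $\Omega$; on some nonempty open $V \subseteq \Omega$ it equals a constant $d \geq 1$. Fix $x_0 \in V$ and a nonzero $X \in \lieh_{x_0}$. Proposition B tells us that $(M,g)$ is not conformally flat, so the Frances--Melnick theorem applies: $X$ is analytically linearizable near $x_0$. In linearizing coordinates, $X = \sum_{i,j} a^i_j y^j \partial_{y^i}$ for some nonzero $A = (a^i_j) \in \co(1,n-1)$, since $A=0$ would force $X \equiv 0$ in a neighborhood of $x_0$ and hence on $M$ by analyticity. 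In particular, $Z(X)$ is locally the linear subspace $\ker A \subset T_{x_0}M$. Abelianness of $\lieh$ gives $h_* X = X$ for all $h \in H$, so the analytic map
\[
    \phi : V \to \Gr_d(\lieh), \qquad y \mapsto \lieh_y,
\]
is $H$-invariant. If $\phi$ were constant on some nonempty open set, the common value would consist of vector fields vanishing on an open set, hence identically by analyticity; this would produce a nonzero element of $\lieh$ that vanishes everywhere, absurd. So $\phi$ varies nontrivially on every nonempty open subset of $V$.

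The main obstacle is to turn the local linearization at $x_0$ into a contradiction with the nontriviality of $\phi$. Repeating the linearization for a basis $X_1, \ldots, X_d$ of $\lieh_{x_0}$ produces simultaneous analytic normal forms, whose linear parts form a $d$-dimensional abelian subalgebra $\liea \subseteq \co(1,n-1)$; this sharply constrains the simultaneous zero locus of $X_1, \ldots, X_d$ near $x_0$. Combining this with the $H$-invariance and analyticity of $\phi$, and with the smooth quotient structure on $V$ from Theorem \ref{thm:gromov_stratification}(1), one should show that $\phi$ is in fact constant on a neighborhood of $x_0$, giving the desired contradiction. Carrying out this last step---exploiting the structure of abelian subalgebras of $\co(1,n-1)$, their joint generalized eigendecomposition on $T_{x_0}M$, and compatibility with the local orbit foliation---is the essential technical step of the proof.
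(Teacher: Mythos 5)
Your proposal is incomplete, and you acknowledge this yourself: the claim that $\phi: y \mapsto \lieh_y$ is constant near $x_0$ is labeled ``the essential technical step'' and never carried out. Worse, that claim is aiming at something demonstrably false on its own terms: once $X_1 \in \lieh_{x_0}$ is linearized with nonzero linear part $A_1$, every $y$ near $x_0$ outside $\ker A_1$ satisfies $X_1(y) \neq 0$, hence $X_1 \notin \lieh_y$ and $\lieh_y \neq \lieh_{x_0}$. So $\phi$ is visibly nonconstant near $x_0$, just as your earlier step establishes; there is no second deduction to set against it, and the contradiction does not materialize. The auxiliary claim of ``simultaneous analytic normal forms'' for $X_1, \ldots, X_d$ is also not free --- the Frances--Melnick theorem linearizes one field at a time, and simultaneous linearization of commuting conformal fields at a common singularity is a substantive statement (the paper proves a version of this later, Proposition \ref{prop:simult_linear}, with real work).

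The ingredient you are missing is how the paper uses Theorem \ref{thm:gromov_stratification}(1) and D'Ambra's Lemma 3.2 together to control the \emph{differential} of a stabilizer element. On the Gromov stratum $\Omega_f$, the fibers of the smooth map $p$ are the $H$-orbits, so $p \circ h = p$ for $h \in \Stab_H(x)$ forces $D_x h$ to act trivially on $T_xM / T_x(H.x)$; abelianness of $H$ makes $D_x h$ trivial on $T_x(H.x)$ as well; and D'Ambra's linear-algebra lemma (that a conformal map of $\BR^{1,n-1}$ acting trivially on a subspace and its quotient is the identity) then gives $D_x h = \Id$ outright. From there the Frances--Melnick normal form, Proposition B, Theorem \ref{thm:finite_components}, and linearizability of finite subgroups of $L$ finish the argument in a few lines, with no case split between the Lie-algebra stabilizer and the torsion and no evaluation map $\mathrm{ev}$. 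The proof you should be writing is an argument about $D_x h$, not about the map $\phi$ into the Grassmannian. Your handling of the finite-torsion part (once the Lie-algebra stabilizer is killed) is roughly sound, but it relies implicitly on $L$ being abelian --- for nonabelian compact groups the union $\bigcup_{\ell \neq e} \Fix(\ell)$ can be all of $M$ even when each $\Fix(\ell)$ is proper --- and this deserves to be said.
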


\begin{proof}
Let $\Omega_f$ be the open, dense subset given by theorem \ref{thm:gromov_stratification} (1), and let $p$ be the smooth map on $\Omega_f$ with fibers equal the $H$-orbits in $\Omega_f$.  
  
Let $\Stab_H(x)$ be the stabilizer of $x \in \Omega_f$.  As in D'Ambra's proof, we observe that 
for any $h \in \Stab_H(x)$, the differential $D_xh$ acts identically on $T_x(H.x)$ since $H$ is abelian.
Using that the $H$-orbits exactly equal the fibers of the smooth map $p$ on an $H$-invariant neighborhood of $x$, we conclude that $D_xh$ is trivial on $T_xM / T_x(H.x)$ for all $h \in \Stab_H(x)$.
The following lemma is also valid in the conformal setting, and implies $D_x h = \mbox{Id}$ for any $h \in \Stab_H(x)$:

\begin{lemma}[\cite{dambra.lorisom} 3.2]
Let $f$ be a linear conformal transformation of $\BR^{1,n-1}$ and $V \subseteq \BR^{1,n-1}$ a vector subspace such that $f$ acts identically on both $V$ and $\BR^{1,n-1} / V$. Then, $f = \mbox{Id}$.
\end{lemma}

Triviality of $D_xh$ for all $h \in \Stab_H^0(x)$ implies $M$ is conformally flat by \cite{fm.champsconfs}, contradicting Proposition B, or $\Stab_H^0(x) = 1$.  
Theorem \ref{thm:finite_components} implies $\Stab_H(x)$ is finite.  Any torsion in $H$ is contained in $L$, and preserves a Lorentzian metric in $[g]$.  Now $\Stab_H(x)$ is linearizable, and therefore trivial, in a neighborhood of $x$.  We conclude $\Stab_H(x) = 1$.
\end{proof}

\subsection{Proof \emph{\`a la D'Ambra} given a finite, invariant volume on an open subset}

\begin{proposition}
\label{prop:proof_with_finite_volume}
If there is a finite, $H$-invariant volume on an $H$-invariant open subset $S \subseteq M$, then $H$ is compact.
\end{proposition}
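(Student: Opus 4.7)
The plan is to use the free action of $H$ on the open dense set $\Omega_f$ of proposition \ref{prop:freeness}, together with Gromov's stratification (theorem \ref{thm:gromov_stratification}), to exhibit an open subset $W\subseteq S$ on which the $H$-orbits foliate as a trivial product $H\times T$. The $H$-invariance of the finite volume $\mu$ on $W$ then forces $H$ to have finite Haar measure, hence to be compact.

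By theorem \ref{thm:gromov_stratification}(1) applied to $H$, the $H$-orbits in $\Omega_f$ are the connected components of the fibers of a smooth map $p:\Omega_f\to N$ of locally constant corank; by the freeness of the action on $\Omega_f$, these fibers have constant dimension $\dim H$. Pick $x_0\in S\cap\Omega_f$, a smooth local section $s:U\to\Omega_f$ of $p$ with $s(p(x_0))=x_0$, and set $T:=s(U)$, shrinking $U$ so that $T\subseteq S\cap\Omega_f$. Because $s$ is a section, distinct points of $T$ lie on distinct fibers of $p$, hence on distinct $H$-orbits.

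Consider the orbit map
$$\phi:H\times T\longrightarrow M,\qquad (h,t)\longmapsto h\cdot t.$$
At $(e,x_0)$ the differential is an isomorphism: $T_eH$ maps isomorphically onto $T_{x_0}(H\cdot x_0)$ by the free action, and $T_{x_0}T$ is a complementary subspace. By $H$-equivariance, smoothness, and further shrinking of $T$, $\phi$ becomes a local diffeomorphism on all of $H\times T$. Moreover $\phi$ is globally injective: if $\phi(h,t)=\phi(h',t')$ then $t$ and $t'$ lie on a common $H$-orbit, forcing $t=t'$ by the choice of $T$, and then freeness forces $h=h'$. Hence $\phi$ is a diffeomorphism of $H\times T$ onto an open subset $W\subseteq\Omega_f\cap S$.

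The $H$-invariance of $\mu$, combined with the $H$-equivariance of $\phi$ and the left-invariance of a Haar measure $dh$ on $H$, forces the pullback to decompose as $\phi^*\mu=f(t)\,dh\,dt$ for a smooth positive function $f$ on $T$. Consequently
$$\mu(W)\;=\;\int_Hdh\cdot\int_T f(t)\,dt\;=\;\mathrm{vol}_H(H)\cdot\int_Tf(t)\,dt.$$
Since $\int_Tf(t)\,dt>0$ and $\mu(W)\le\mu(S)<\infty$, we obtain $\mathrm{vol}_H(H)<\infty$, so the connected Lie group $H$ is compact. The crucial technical step is the global injectivity of $\phi$, for which Gromov's stratification is essential: it provides the map $p$ and the section $s$ that rule out nontrivial returns of orbits through the transversal $T$, a phenomenon that would otherwise be permitted by merely local closedness of the orbits.
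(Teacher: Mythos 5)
Your proof is correct, but it takes a different route than the paper's. The paper's proof is a three-line argument: it cites \cite[Prop 3.7.B]{gromov.rgs}, which (via the finite invariant volume and the structure of the stratification) gives that $H.x = L.x$ for every $x$ in $S \cap \Omega_f$; combined with the freeness of the $H$-action from proposition \ref{prop:freeness}, this forces $L = H$. You instead avoid invoking that cited result and make a self-contained measure-theoretic argument: using theorem \ref{thm:gromov_stratification}(1) and proposition \ref{prop:freeness} to build a flow box $W \cong H \times T$ (the global injectivity of the orbit map on $H \times T$ being precisely where the stratification's map $p$ and its local section do the work), then using $H$-equivariance to split $\phi^*\mu$ as $f(t)\,dh\,dt$ and applying Fubini--Tonelli to get $\mathrm{vol}(H) < \infty$, whence compactness. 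The inputs are the same (stratification plus freeness), but your version replaces the appeal to Gromov's recurrence-type proposition with an explicit local computation; this is somewhat longer to write but arguably more transparent about why the finite volume forces compactness. The only minor points worth noting are that the argument implicitly uses that $\Omega_f$ is $H$-invariant (true, as it comes from the $G$-invariant stratum in theorem \ref{thm:gromov_stratification}), and that the conclusion ``finite Haar measure $\Rightarrow$ compact'' is the standard fact for locally compact (here: connected Lie) groups.
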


\begin{proof}
Let $\Omega = S \cap \Omega_f$, where $\Omega_f$ is as in proposition \ref{prop:freeness}.  Then the conclusion of \cite[Prop 3.7.B]{gromov.rgs} holds for the $H$-action on $\Omega$:  for all  points $x$ in this set, $H.x = L.x$.  On the other hand, the stabilizer of $x$ is trivial.  Then $H = L$.   
\end{proof}

\subsection{Sufficient condition for existence of such a volume}
\label{subsec:volume_condition}

Fix a finite, smooth, $L$-invariant volume $\nu$ on $M$.  Given $h \in H$, we define $\Delta_h(x) = (\det D_x h)^{1/n}$, where the determinant is with respect to $\nu$.  The cocycle relation $\Delta_{gh}(x) = \Delta_g(h.x) \cdot \Delta_h(x)$ is satisfied by $\Delta$.  Note that $\Delta_g \equiv 1$ for all $g \in L$.

Let $\Omega_f$ be as in proposition \ref{prop:freeness}.  Let $p : \Omega_f \rightarrow X$ be the smooth map given by theorem \ref{thm:gromov_stratification}.  For any $x_0 \in \Omega_f$, there is a compact local transversal $\overline{\Sigma}$ to $p$ containing $x_0$, mapping diffeomorphically to its image under $p$. Let $\Sigma$ be the interior of $\overline{\Sigma}$. There is a well-defined projection $\rho: H.\Sigma \rightarrow \Sigma$.

A coboundary equal to $\Delta$ on $H.\Sigma \subset \Omega_f$ is given by 
$$b(y) = \Delta_h(\rho(y)), \  \mbox{where} \ y = h.\rho(y)$$
The function $b$ is well-defined because $H$ acts freely on $H.\Sigma$ by proposition \ref{prop:freeness}.
Now $b^{-n} \cdot \nu$ is an $H$-invariant volume on the open set $H.\Sigma$; it is finite if $b^{-n} \in \mathcal{L}^1(H.\Sigma, \nu)$.
 
Of course, $b^{-n} \in \mathcal{L}^1(H.\Sigma, \nu)$ if it is bounded, because $\nu(M) < \infty$.  
If there is $C \in \BR$ bounding $\Delta_h(x)^{-n}$ from above for all $x \in \Sigma$ and $h \in H$
then $b^{-n} \in \mathcal{L}^1(H.\Sigma, \nu)$, and there is a finite, $H$-invariant volume on $H.\Sigma \subseteq M$.
Thus \emph{if there is no finite, $H$-invariant volume on any open subset of $M$, then there is a dense set of points $S_{mix} \subset \Omega_f$, such that for all $x \in S_{mix}$}, 
\begin{equation}
\label{eqn:negation_finite_volume}
\exists \  x_k \rightarrow x, h_k \in H \ \mbox{such that} \ \Delta_{h_k}(x_k) \rightarrow 0
  \end{equation}
  By proposition \ref{prop:proof_with_finite_volume}, we may henceforth assume that this is the case.
 
\section{Holonomy sequences and proof in the contracting case}
\label{sec:cartan_connxn_holonomy}

In this section we realize our conformal Lorentzian structure as a Cartan geometry infinitesimally modeled on $\Ein^{1,n-1}$.  We recall the notion of holonomy sequences and how it captures the local dynamics of a sequence of conformal transformations.  Section \ref{sec:cartan_geometry} also includes a study of the curvature module for conformal Lorentzian structures and establishes some algebraic results that will be used in the sequel.  In section \ref{sec:contracting_proof} we illustrate the application of these tools by proving conformal flatness in the presence of a uniformly contracting sequence of conformal transformations.

\subsection{The Cartan geometry associated to a conformal Lorentzian structure}
\label{sec:cartan_geometry}

\subsubsection{The equivalence principle for conformal structures}
\label{sec:equivalence_principle}

Let $\BR^{2,n}$ be $\BR^{n+2}$ with standard basis $\{ e_0, \ldots, e_{n+1} \}$, equipped with the quadratic form
$$Q_{2,n} := 2x_0x_{n+1} + 2x_1x_{n}+ x_2^2 + \cdots + x_{n-1}^2.$$
The Lorentzian Einstein Universe, denoted $\Ein^{1,n-1}$, is the projectivization of the nullcone 
$$\mathcal{N}^{2,n} \setminus \{0\} = \{ {\bf x} \in \BR^{n+2} \setminus \{0\} \ | \ Q_{2,n}({\bf x}) = 0\}$$

 It is a smooth quadric hypersurface of ${\bf R P}^{n+1}$, that naturally inherits a conformal class $[g_{1,n-1}]$ of Lorentzian signature from the ambient quadratic form of $\BR^{2,n}$. It is diffeomorphic to ${\bf S}^1 \times_\iota {\bf S}^{n-1}$, where $\iota$ is the inversion on both factors (in particular, it has infinite fundamental group). By construction, there is a transitive conformal action of $G = \PO(2,n)$ on $\Ein^{1,n-1}$, and in fact $\Conf(\Ein^{1,n-1},g_{1,n-1}) \cong G$. Thus, $\Ein^{1,n-1}$ is a compact, conformally homogeneous space, identified with $G/P$, where $P$ is the parabolic subgroup stabilizing an isotropic line in $\BR^{2,n}$ (modulo $\pm \Id$). It is the \textit{model space} of conformal Lorentzian geometry in the following sense.

\begin{theorem}[\'E. Cartan, see  \cite{sharpe} Ch V, \cite{cap.slovak.book.vol1} Sec 1.6]
Let $M^n$ be a connected manifold of dimension $n \geq 3$.  A conformal Lorentzian structure on $M$ canonically determines 
\begin{itemize}
\item a principal $P$-bundle $\pi: \widehat{M} \rightarrow M$; and 
\item a regular, normal Cartan connection $\omega \in \Omega^1(\widehat{M},\lieg)$ satisfying, for all $\hat{x} \in \widehat{M}$,
\begin{enumerate}
 \item $\omega_{\hat{x}} : T_{\hat{x}} \widehat{M} \stackrel{\sim}{\rightarrow} \lieg$
 \item $\omega_{\hat{x}.g} \circ R_{g*} = \mbox{Ad}(g^{-1}) \circ \omega_{\hat{x}} \ \ \forall g \in P$
 \item $\omega \left( \DDt (\hat{x}.e^{tY}) \right) \equiv Y \ \  \forall \ Y \in \liep$
\end{enumerate}
\end{itemize}
\end{theorem}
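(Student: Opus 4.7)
The plan is to follow the Tanaka--Cartan prolongation approach standard in parabolic geometry (see \cite{cap.slovak.book.vol1}, Sec 1.6). I would first build the principal $P$-bundle $\widehat{M} \to M$ from the first-order conformal frame bundle, then construct $\omega$ by a normalization that exploits the $|1|$-grading of $\lieg = \so(2,n) = \lieg_{-1} \oplus \lieg_0 \oplus \lieg_1$, where $\lieg_{-1} \cong \BR^{1,n-1}$, $\lieg_0 \cong \co(1,n-1)$, and $\lieg_1 \cong (\BR^{1,n-1})^*$.

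For the bundle, the parabolic $P$ has Levi decomposition $P = G_0 \ltimes P_+$ with $G_0 \cong \CO(1,n-1)$ and $P_+ = \exp \lieg_1$ abelian. The conformal class $[g]$ canonically determines a reduction $\mathcal{R} \to M$ of the linear frame bundle to structure group $G_0$, since a conformal orthonormal frame is exactly a linear frame modulo conformal rescaling. I then set $\widehat{M} := \mathcal{R} \times_{G_0} P$, a principal $P$-bundle over $M$ whose quotient by $P_+$ recovers $\mathcal{R}$. The tautological soldering form of $\mathcal{R}$ lifts to the $\lieg_{-1}$-valued horizontal component of the sought $\omega$, and imposing condition (3) along $\liep = \lieg_0 \oplus \lieg_1$ prescribes the full vertical behavior of $\omega$, in particular giving conditions (1) and (2) on vertical vectors automatically.

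For the connection, any Weyl structure---a torsion-free linear connection in $[g]$---yields a principal connection form on $\mathcal{R}$, whose extension to $\widehat{M}$ provides a candidate $\lieg_0$-component of $\omega$. There remains a free $\lieg_1$-component, essentially a Schouten-type tensor field, which I fix by imposing the normality condition $\partial^* \kappa = 0$, where $\kappa = d\omega + \tfrac12 [\omega,\omega]$ and $\partial^* : \Lambda^2 \lieg_{-1}^* \otimes \lieg \to \lieg_{-1}^* \otimes \lieg$ is the Kostant codifferential for the $P$-module $\lieg$. This is a linear inhomogeneous equation in the undetermined $\lieg_1$-component, whose solubility and uniqueness rest on Kostant's Hodge theory for $\lien$-cohomology of semisimple modules. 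Its unique solution produces $\omega$, and independence from the auxiliary Weyl structure follows from the uniqueness itself, yielding the canonical assignment asserted in the theorem.

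The main obstacle is the algebraic content of the normalization step: one must verify that the Kostant Laplacian $\partial \partial^* + \partial^* \partial$ acts invertibly on the relevant isotypic component, so that the normality equation uniquely pins down the $\lieg_1$-part of $\omega$ from the lower-order data. For conformal Lorentzian geometry this is a finite-dimensional computation inside $\so(2,n)$, and is carried out once and for all in \cite{cap.slovak.book.vol1}. Once $\omega$ has been constructed, conditions (1)--(3) hold by construction; regularity (meaning $\kappa$ takes values in $\liep \subset \lieg$) is automatic because $[\lieg_{-1}, \lieg_{-1}] = 0$ in a $|1|$-graded algebra; and functoriality of the entire construction under conformal diffeomorphisms delivers the naturality packed into the word ``canonically.''
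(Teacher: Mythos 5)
The paper does not prove this theorem; it cites it to Sharpe (Ch.\ V) and to \v{C}ap--Slov\'ak (Sec.\ 1.6). Your sketch follows the \v{C}ap--Slov\'ak prolongation-and-normalization route, so your approach coincides with one of the references the paper points to: reduce the frame bundle to $G_0 \cong \CO(1,n-1)$, form the $P$-bundle over $M$, build $\omega$ from the soldering form plus a Weyl connection, and fix the remaining $\lieg_1$-valued freedom by normality $\partial^* \kappa = 0$ with uniqueness supplied by Kostant's Hodge decomposition. That is the standard argument and it is sound.

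One terminological and factual slip in your final paragraph is worth flagging. You gloss ``regularity'' as the condition that $\kappa$ take values in $\Lambda^2(\lieg/\liep)^* \otimes \liep$, and you attribute it to $[\lieg_{-1},\lieg_{-1}] = 0$. That conflates two different things. In the \v{C}ap--Slov\'ak vocabulary, \emph{regularity} is the requirement that the curvature function land in filtration degree $\geq 1$ of $\Lambda^2(\lieg/\liep)^* \otimes \lieg$; for a $|1|$-graded algebra this is automatic because the entire target already sits in degree $\geq 1$, and this has nothing to do with $\lieg_{-1}$ being abelian. The condition $\kappa \in \Lambda^2(\lieg/\liep)^* \otimes \liep$ is \emph{torsion-freeness}, which is \emph{not} automatic from the grading. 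In your construction it comes from having chosen a torsion-free Weyl connection; intrinsically it is a consequence of normality for conformal geometry. The abelianness of $\lieg_{-1}$ only removes the $[\omega_{-1},\omega_{-1}]$ contribution to the $\lieg_{-1}$-component of the curvature 2-form, leaving $d\omega_{-1} + [\omega_0,\omega_{-1}]$, which is exactly the torsion of the $\lieg_0$-connection and must be killed separately. Everything else in your sketch is correct.
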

See the references above for details on the regularity and normality conditions on $\omega$; they do not play a prominent role in our work below.

The bundle $\widehat{M}$ admits an interpretation as a subbundle of the 2-frame bundle of $M$, comprising 2-jets of infinitesimal conformal equivalences to Minkowski space.  The projection of $\widehat{M}$ to the 1-frame bundle of $M$ is the bundle of frames compatible with a fixed normalization with respect to the conformal metric (see \cite[p 139] {kobayashi.transf}).

Conformal transformations of $M$ lift naturally and uniquely to bundle automorphisms of $\widehat{M}$ leaving $\omega$ invariant; conformal vector fields similarly lift to $P$-invariant vector fields on $\widehat{M}$ with Lie derivative annihilating $\omega$.  The uniqueness of lifts comes from $\Ein^{1,n-1}$ being an \emph{effective} $\PO(2,n)$-homogeneous space (see \cite[Def IV.3.2]{sharpe}, \cite[Prop 3.6]{me.frobenius}).  We will use the same notation for conformal transformations or vector fields and their respective lifts to $\widehat{M}$.  The lifted action of $\Conf(M,g)$ preserves the parallelization of $\widehat{M}$ determined by $\omega$, so it is free; similarly, lifts of conformal vector fields to $\widehat{M}$ are nonvanishing.  

The following formula follows from the axioms for $\omega$ (see, for example, \cite[Lem 5.4.12]{sharpe}): let $\hat{\gamma}(s)$ be a curve in $\widehat{M}$ and $p(s)$ a curve in $P$.  Then 
\begin{equation}
\label{eqn:omega_along_curves}
\omega \left( \DDs \hat{\gamma}(s) .p(s) \right) = \Ad p(s)^{-1} \left( \omega (\hat{\gamma}'(s)) \right) + \omega_P \left( p'(s) \right)
\end{equation}
where $\omega_P$ is the left-invariant Maurer-Cartan form of $P$.

\subsubsection{Explicit root-space decomposition of $\so(2,n)$}
\label{sec:cartan_decomposition}

Let $G$ and $P$ be as above.  The details of the structure of $\lieg$ are used extensively in our proof below.


Let $\mathbb{I}_{2,n}$ be the inner product determined by $Q_{2,n}$ from the previous section, and $\mathbb{I}_{1,n-1}$, or simply $\mathbb{I}$, its restriction to the Minkowski subspace $e_0^\perp \cap e_{n+1}^\perp$.
The Lie algebra $\lieg= \so(2,n)$ can be parametrized
\begin{equation*}
\lieg= 
\left \{
\begin{pmatrix}
a & \xi & 0 \\
v & X & - \mathbb{I} ^t \xi \\
0 & - ^t v \mathbb{I}  & -a
\end{pmatrix}
, \ a \in \BR, \ v \in \BR^n, \ \xi \in \BR^{n*}, \ X \in \so(\BR^n,\mathbb{I}) \cong \so(1,n-1)
\right \}
\end{equation*}
%

This decomposition yields the grading $\lieg= \lieg_{-1} \oplus \lieg_0 \oplus \lieg_1$, where the summands are parametrized by $v$, $(a,X)$, and $\xi$, respectively, (see \cite[p 118]{cap.slovak.book.vol1}) and  $\liep = \lieg_0 \ltimes \lieg_1$.  The ideal $\lieg_1 \lhd \liep$ is often denoted $\liep^+$.  The corresponding subgroups of $P$ are denoted $P^+$ and $G_0$.  We have $P \cong G_0 \ltimes P^+$, and $G_0 \cong \CO(1,n-1)$.

We can similarly decompose the $\so(1,n-1)$ factor:
\begin{equation*}
\so(1,n-1)= 
\left \{
\begin{pmatrix}
b & U_+ & 0 \\
U_- & R & - ^tU_+   \\
0 & - ^{t}U_- & -b
\end{pmatrix}
, \ b \in \BR, \ U_- \in \BR^{n-2}, \ U_+ \in (\BR^{n-2})^*, \ R \in \so(n-2)
\right \}.
\end{equation*}
An $\BR$-split Cartan subalgebra in $\lieg$, with respect to the Cartan involution $\theta(Y) = - ^{t} Y$ is
\begin{equation}
\label{eqn:cartan_subalgebra}
\liea =
\left \{
\begin{pmatrix}
a & & & & \\
  & b & & & \\
  & & {\bf 0} & & \\
  & & & -b & \\
  & & & & -a
\end{pmatrix}
,\ a,b \in \BR 
\right \}, 
\qquad \mbox{where} \ {\bf 0} = (0, \ldots, 0) \ n-2 \ \mbox{times}
\end{equation}

The real simple roots $\gamma$ and $\beta$ are given by $\gamma(a,b) = a-b$ and $\beta(a,b) = b$, where $(a,b)$ refers to the corresponding matrix of $\liea$. The root $-\alpha = -(\gamma + \beta)$ gives the infinitesimal conformal dilation of an element of $\liea$ in the representation by similarities on $\lieg_{-1} \cong \BR^{1,n-1}$.  The following diagram represents the full restricted root-space decomposition with respect to $\liea$ of $\so(2,n)$:
\begin{equation}
\label{eqn:root_decomposition}
\begin{pmatrix}
\liea & \lieg_{\alpha - \beta} & \lieg_{\alpha} & \lieg_{\alpha+\beta} & 0 \\
\lieg_{\beta - \alpha}   & \liea & \lieg_{\beta} & 0 & \lieg_{\alpha+\beta} \\
  \lieg_{-\alpha} & \lieg_{- \beta} & \liem & \lieg_{\beta} & \lieg_{\alpha }  \\
 \lieg_{- \alpha - \beta}  & 0 & \lieg_{- \beta} & \liea & \lieg_{\alpha - \beta} \\
  0  & \lieg_{- \alpha - \beta} & \lieg_{- \alpha} & \lieg_{\beta - \alpha} & \liea
\end{pmatrix}
\qquad 
\begin{array}{l}
\liem \cong \so(n-2) \\
\dim \lieg_{\beta} = n-2 = \dim \lieg_{\alpha} \\
\dim \lieg_{ \alpha- \beta} = 1 = \dim \lieg_{\alpha + \beta}
\end{array}
\end{equation}
 The factor $\liem$ is the centralizer of $\liea$ in a maximal compact subalgebra of $\so(2,n)$ and is parametrized by $R$ in the decomposition of $\so(1,n-1)$.  We have $\liep^+ = \lieg_{\alpha - \beta} \oplus \lieg_{\alpha} \oplus \lieg_{\alpha + \beta}$, which is equivalent as a $\CO(1,n-1)$-representation to $\BR^{1,n-1*}$.  For later use, we fix a basis $E_1, \ldots, E_n$ of $\lieg_{-1}$ which is aligned with the root-space decomposition above and normalized with respect to $\mathbb{I}_{1,n-1}$ under the $G_0$-equivariant isomorphism with $\BR^{1,n-1}$.

The following lemma establishes nondegeneracy of brackets between opposite root spaces (it holds in any real semisimple Lie algebra, provided $2 \lambda$ is not a restricted root).
\begin{lemma}
\label{lem:bracket_nondegeneracy}
  Let 
$\theta$ be a Cartan involution of $\lieg$ compatible with the Cartan subalgebra $\liea$ above, and denote
$\Sigma \subset \liea^*$ the restricted roots. 
For all $\lambda \in \Sigma$ and for all nonzero $X,Y \in \lieg_{\lambda}$, the bracket $[X,\theta Y] \neq 0$.
\end{lemma}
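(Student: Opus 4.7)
The plan is to exploit the positive definite inner product $\langle U, V\rangle_\theta := -B(U,\theta V)$ on $\lieg$, where $B$ denotes the Killing form. A short computation with the invariance of $B$ and the automorphism identity $\theta[U,V] = [\theta U, \theta V]$ yields the adjoint relation $(\ad X)^{*_\theta} = -\ad(\theta X)$; in particular $\ad M$ is $\langle\cdot,\cdot\rangle_\theta$-skew whenever $M \in \liek$, since then $\theta M = M$. I will use this inner product to establish the stronger claim that for every nonzero $Y \in \lieg_\lambda$ the linear map
\[
\Phi_Y \colon \lieg_\lambda \longrightarrow \lieg_0, \qquad X \longmapsto [X,\theta Y],
\]
is injective, which is exactly the lemma.

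The strategy is to compute the Gram form $\langle \Phi_Y X, \Phi_Y X\rangle_\theta = \langle \Phi_Y^* \Phi_Y X, X\rangle_\theta$ and show it is strictly positive whenever $X \neq 0$. The adjoint identity gives $\Phi_Y^* = \ad Y$ on $\lieg_0$, hence $\Phi_Y^* \Phi_Y(X) = [Y,[X,\theta Y]]$. The key structural input from \eqref{eqn:root_decomposition} enters here: $2\lambda \notin \Sigma$ for the root system of $\so(2,n)$, so $\lieg_{2\lambda}=0$ and $[Y,X]=0$, whence Jacobi collapses $[Y,[X,\theta Y]]$ to $[X,[Y,\theta Y]]$.

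Next I would decompose $[Y,\theta Y] \in \lieg_0 = \liea \oplus \liem$. Killing-form duality pins down the $\liea$-component: invariance of $B$ gives $B([Y,\theta Y], H) = -\lambda(H)\,\|Y\|_\theta^2$ for $H \in \liea$, so it equals $-\|Y\|_\theta^2\, h_\lambda$, where $h_\lambda \in \liea$ is the $B$-dual of $\lambda$. Bracketed against $X \in \lieg_\lambda$ this piece contributes $\|Y\|_\theta^2\, |\lambda|^2\, X$, with $|\lambda|^2 := B(h_\lambda, h_\lambda) > 0$ since $B$ is positive definite on the split Cartan $\liea$. The $\liem$-component lies in $\liek$, so by the opening paragraph it acts on $\lieg_\lambda$ as a $\langle\cdot,\cdot\rangle_\theta$-skew operator, and its pairing with $X$ drops out. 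The net result I expect is
\[
\langle \Phi_Y X, \Phi_Y X\rangle_\theta \;=\; \|X\|_\theta^{\,2}\,\|Y\|_\theta^{\,2}\,|\lambda|^2,
\]
which is strictly positive as soon as $X$ and $Y$ are both nonzero, proving $[X,\theta Y] \neq 0$.

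The only conceptual point to watch is the disappearance of the (potentially nonzero) $\liem$-component of $[Y,\theta Y]$: although it perturbs the operator $\Phi_Y^*\Phi_Y$, it is invisible to the quadratic form $X \mapsto \langle \Phi_Y X, \Phi_Y X\rangle_\theta$ because elements of $\liek$ act by $\langle\cdot,\cdot\rangle_\theta$-skew operators. Apart from this, the only ingredient particular to $\so(2,n)$ is the absence of $2\lambda$ from $\Sigma$, which is immediate from the root diagram \eqref{eqn:root_decomposition}.
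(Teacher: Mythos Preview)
Your proof is correct and rests on the same ingredients as the paper's: the positive-definite form $B_\theta$, the fact that $2\lambda\notin\Sigma$ for $\so(2,n)$, and the decomposition $\lieg_0=\liea\oplus\liem$. The paper instead quotes the identity
\[
[[X,\theta Y],X] = 2|\lambda|^2 B_{\theta}(X,Y)X - |\lambda|^2 B_{\theta}(X,X) Y
\]
and argues by contradiction, splitting into the cases $X,Y$ independent versus collinear. Your route is slightly cleaner: by computing $\|[X,\theta Y]\|_\theta^2 = \langle [Y,[X,\theta Y]],X\rangle_\theta$ rather than $[[X,\theta Y],X]$, the $\liem$-component drops out by skewness and you land directly on $\|X\|_\theta^2\|Y\|_\theta^2|\lambda|^2>0$, with no case analysis needed. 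The two arguments are minor variants of one another.
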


\begin{proof}
We use the following formula from the proof of Lemma 2.4 of \cite{pecastaing.rank1}:
\begin{equation*}
 [[X,\theta Y],X] = 2|\lambda|^2 B_{\theta}(X,Y)X - |\lambda|^2 B_{\theta}(X,X) Y \qquad \forall \ X,Y \in \lieg_{\lambda} 
\end{equation*}
 where $B_\theta(X,Y) = - B(X,\theta Y)$ for $B$ the Killing form.  Assume $[X,\theta Y] = 0$. If $X$ and $Y$ were linearly independent, then the above formula would imply $B_{\theta}(X,X)=0$, contradicting the fact that $B_{\theta}$ is positive definite. If $X$ and $Y$ were collinear and both nonzero, the formula would then give $B_{\theta}(X,X)X = 0$, again a contradiction.
\end{proof}

\subsubsection{Cartan curvature and harmonic curvature}
\label{sec:cartan_curvature}

The \emph{Cartan curvature} is the obstruction to the local equivalence of $(\widehat{M} \stackrel{\pi}{\rightarrow} M,\omega)$ with the model $(G \rightarrow \Ein^{1,n-1},\omega_G)$; local equivalence is simply conformal flatness---local conformal equivalence of $(M,g)$ with the flat Minkowski space $\Min^{1,n-1}$.  The curvature is defined on $\widehat{M}$ by 
$$ \Omega = d \omega + \frac{1}{2} [\omega, \omega]$$
It is a semi-basic 2-form, which we will often view as a $P$-equivariant function
$$ \kappa : \widehat{M} \rightarrow \wedge^2 (\lieg/\liep)^* \otimes \lieg$$
where $P$ acts on the target vector space by the natural representation built from the adjoint $\mbox{Ad}_\lieg P$.  The curvature is invariant by all automorphisms, as well as local automorphisms, of the geometry.

For conformal structures, the regularity condition on $\omega$ implies that $\kappa$ in fact has values in the $P$-submodule $\wedge^2 (\lieg/\liep)^* \otimes \liep$.  The $\emph{Weyl curvature}$ can be viewed in this context as a component of $\kappa$, called the \emph{harmonic curvature}, having values in an irreducible $P$-submodule of $\wedge^2 (\lieg/\liep)^* \otimes \liep$ (see \cite[Secs 3.1.12, 4.1.2]{cap.slovak.book.vol1}); dimension $n=4$ is an exception, in which the harmonic curvature module has two irreducible components, corresponding to self-dual and anti-self-dual forms.  Any irreducible $P$-submodule factors through $P \rightarrow P / P^+ \cong G_0$, which means that the Weyl curvature descends to a tensor on $M$, a section of $\wedge^2 T^*M \otimes \End(TM)$.  The Weyl submodule of $\wedge^2 (\lieg/\liep)^* \otimes \liep$ projects isomorphically to its image under the quotient
$$ \wedge^2 (\lieg/\liep)^* \otimes \liep \rightarrow  \wedge^2 (\lieg/\liep)^* \otimes \lieg_0 \cong \wedge^2 (\lieg/\liep)^* \otimes \co(1,n-1). $$
and in fact has values in $\wedge^2 (\lieg/\liep)^* \otimes \lieg_0'$, where $\lieg_0' = [\lieg_0,\lieg_0] \cong \so(1,n-1)$---in other words, all values $\kappa(u,v)$ project to dilation-free elements of $\lieg_0 \cong \co(1,n-1)$ (see \cite[VII.3.1]{sharpe} or \cite[Sec 1.6.8]{cap.slovak.book.vol1}).   
We will denote $\bar{\kappa}$ the $P$-equivariant function $\widehat{M} \rightarrow \wedge^2 (\lieg/\liep)^* \otimes \lieg_0'$ resulting from composition of $\kappa$ with the quotient. The value $\bar{\kappa}_{\hat{x}}$ is the Weyl curvature tensor at $x = \pi(\hat{x})$ in the 1-frame determined by $\hat{x}$.


\subsubsection{Weyl curvature tensor and corresponding $G_0$-modules}
\label{sec:weyl_representation}

The Weyl curvature tensor $W$ is a classical object in conformal geometry.  With respect to any metric $g$ in the conformal class $[g]$, it is a component of the Riemannian curvature tensor $R_g$, obtained by subtracting all traces (see, eg, \cite[Sec 1.6.8]{cap.slovak.book.vol1}).  In particular, $W$ shares all symmetries of $R_g$, such as
$$ g(W(X,Y)U,V) = - g(W(X,Y)V,U) \qquad \mbox{and} \qquad g(W(X,Y)U,V) = g(W(U,V)X,Y)$$
for all vector fields $X,Y,U,$ and $V$.
The Weyl tensor is moreover conformally invariant: if $f \in \Conf(M,g)$,
then
$$ f_* \left( W(f^{-1}_* X, f^{-1}_* Y) f^{-1}_* Z  \right) = W(X,Y)Z \qquad \forall X,Y,Z \in \mathcal{X}(M) $$
When $\dim M \geq 4$, vanishing of $W$ on an open subset $U \subseteq M$ is equivalent to conformal flatness of $(U, \left. g\right|_U)$.

The function $\bar{\kappa}$ factors through a $G_0$-equivariant map on the bundle of conformal frames $\widehat{M}/P^+$.  A point of this bundle over $x \in M$ determines in particular an inner product $g_x$ in the conformal class at $x$.  The skew-symmetry with respect to $g_x$ of $W_x(u,v) \ \forall u,v \in T_x M$ corresponds to $\bar{\kappa}_{\hat{x}}(u',v')$ belonging to $\lieg_0' \cong \so(1,n-1)$ for all $u', v' \in \lieg_{-1} \cong \lieg/\liep$.  The classical decomposition of $(R_g)_x$ into $W_x$ and a complement involving the Ricci tensor corresponds to $\bar{\kappa}_{\hat{x}}$ lying in the kernel of the homomorphism (\cite[Def 7.1.21, Exer 7.1.22]{sharpe}).  
\begin{eqnarray*}
\rho_{Ric} & : & \wedge^2 \lieg_{-1}^* \otimes \lieg_0' \rightarrow \lieg_{-1}^* \otimes \lieg_{-1}^* \\
 \rho_{Ric} & : &  \lambda \wedge \mu \otimes U \mapsto  \lambda \circ U \otimes \mu - \mu \circ U \otimes \lambda
\end{eqnarray*}
Elements satisfying the Bianchi identity, as $\bar{\kappa}_{\hat{x}}$ does, have image under $\rho_{Ric}$ in the symmetric submodule of $\lieg_{-1}^* \otimes \lieg_{-1}^*$.

Next, consider the map $r : \wedge^2 \lieg_{-1}^* \otimes \lieg_0' \rightarrow \lieg_0' \otimes \lieg_0'$ which raises an index as follows:
$$ r: \lambda \wedge \mu \otimes U \mapsto (\lambda \otimes \mu^\sharp) \otimes U - (\mu \otimes \lambda^\sharp) \otimes U$$
Here $\mu^\sharp$ denotes the dual in $\lieg_{-1} \cong \BR^{1,n-1}$ with respect to the inner product $\mathbb{I}_{1,n-1}$ (see section \ref{sec:cartan_decomposition}).  The symmetry of pairs in $W_x$ with respect to $g_x$ corresponds to $r \circ \bar{\kappa}_{\hat{x}}$ lying in $\Sym^2(\lieg_0')$.

Denote $G_0' < G_0$ the subgroup, isomorphic to $\mbox{O}(1,n-1)$, of elements with determinant $\pm 1$ with respect to the basis $\{ E_1, \ldots, E_n \}$ of $\lieg_{-1}$.
Note that $r$ is $G_0'$-equivariant, but not $G_0$-equivariant.  Thus $\bar{\kappa}^\sharp = r \circ \bar{\kappa}$ is $G_0' \ltimes P^+$-equivariant, but obeys for $h$ a dilation by $e^s$
\begin{equation}
\label{eqn.dilation.kappabar}
\bar{\kappa}^\sharp_{\hat{x} h^{-1}} = e^{-2s} \bar{\kappa}_{\hat{x}}^\sharp 
\end{equation}
although $h$ naturally acts trivially on $\mathbb{V} = \Sym^2(\lieg_0')$.  In order to make $\bar{\kappa}^\sharp$ a $P$-equivariant map, we extend the representation of $G_0'$  on $\mathbb{V}$ to $G_0$ by assigning to the dilation by $d \neq 0$ of $\lieg_{-1}$ a scalar contraction, multiplication by $d^{-2}$, on $\mathbb{V}$.

Raising an index conjugates $\rho_{Ric}$ (restricted to the submodule satisfying the Bianchi identity) to $\rho_{Ric}^\sharp$, from  $\Sym^2(\lieg_0')$ to the $\mathbb{I}_{1,n-1}$-symmetric endomorphisms of $\lieg_{-1}$:
$$\rho_{Ric}^\sharp  :  U \cdot V \mapsto \frac{1}{2} \left( U \circ V+ V \circ U \right)$$

With respect to the basis $\{ \alpha, \beta \}$ of $\liea^*$, the element of the dual basis corresponding to $\beta$ generates the 1-dimensional subspace $\liea \cap \lieg_0'$.  The $\beta$-weights of the $G_0'$-module $\mathbb{V}$ are $\{ -2, -1, 0, 1, 2\}$.
Denote by $\lieu_+$ the subalgebra parametrized by $U_+$ in the decomposition of $\so(1,n-1)$ in section \ref{sec:cartan_decomposition}, equal to $\lieg_\beta$; by $\lies$ the subalgebra parametrized by $R$ and $b$; and by $\lieu_-$ the subalgebra parameterized by $U_-$, equal to $\lieg_{-\beta}$.  Let $\{ U_i^+ \ : \ i = 1, \ldots, n-2 \}$ be a basis of $\lieu_+$ and similarly for $\lieu_-$; $\{ R_{ij} \ : i < j \}$ a basis of $\liem$; and $B$ the element corresponding to $b=1$.  
In terms of these bases, the positive weight modules in $\mathbb{V}$ are 

\begin{eqnarray*}
\mathbb{V}_{+2}   & =  & \left\{ \sum_{i,j} \alpha_{ij} U_i^+ \cdot U_j^+ \ : \ \alpha_{ij} = \alpha_{ji} \right\} \\
\mathbb{V}_{+1}  & =  & \left\{ \sum_i \beta_i U_i^+ \cdot B + \sum_{i,j,k} \gamma_{ijk} U_i^+ \cdot R_{jk} \ : \ \gamma_{ijk} = - \gamma_{ikj} \right\} 
\end{eqnarray*}
where $i,j,k$ range from $1$ to $n-2$.  

The conditions to be in the kernel of $\rho_{Ric}^\sharp$ for the positive weight modules are
\begin{eqnarray*}
\mathbb{V}_{+2} & : & \sum \alpha_{ii} = 0  \\
\mathbb{V}_{+1} & : & \beta_i = 2 \sum_{j \neq i} \gamma_{jji}, i = 1, \ldots, n-2 
\end{eqnarray*}

When $\dim M = 3$, then $W \equiv 0$ and the Cotton-York tensor is the obstruction to local flatness.  On $\widehat{M}$, it corresponds to the component of $\kappa$ in $\wedge^2 (\lieg / \liep)^* \otimes \liep^+ \cong \wedge^2 (\lieg / \liep)^* \otimes (\lieg / \liep)^*$ (see \cite[Cor 1.6.8]{cap.slovak.book.vol1}).  First we analyze this module in arbitrary dimension and then focus on the case $n=3$.  Again raising an index gives the $G_0$-module $\mathbb{U} = \lieg_0' \otimes \lieg_{-1}^*$.  Assuming $W \equiv 0$, the Cotton tensor corresponds to a $P$-equivariant map $\gamma^\sharp: \widehat{M} \rightarrow \mathbb{U}$, provided we set, for $h$ a dilation by $d \neq 0$, 
$$ \gamma^\sharp_{bh^{-1}} = d^{-3}  \gamma_b^\sharp$$

Let $\xi_i =  \ ^tE_i \mathbb{I}$, for $i=1, \ldots, n$.
The $\beta$-weights on $\lieg_1 \cong \lieg_{-1}^*$ are $\{ -1,0,1 \}$.  We have
\begin{eqnarray*}
\mathbb{U}_{+2}   & =  & \left\{ \sum_{i} \alpha_i U_i^+ \cdot \xi_1 \right\} \\
\mathbb{U}_{+1}  & =  & \left\{ \sum_{i,j} \beta_{ij} U_i^+ \cdot \xi_{j+1} + \sum_{i,j} \gamma_{ij}  R_{ij} \cdot \xi_1 + \delta B \otimes \xi_1 \ : \ \gamma_{ij} = - \gamma_{ji} \right\} 
\end{eqnarray*}
where $i,j$ range from $1$ to $n-2$; if $n=3$, then there are no terms with $R_{ij}$.  

For $n=3$, the Cotton-York tensor has values in the bundle corresponding to the 5-dimensional, irreducible representation of $G_0' \cong O(1,2)$.  The isomorphism
$$t : \so(1,2) \mapsto \lieg_{-1} \qquad  t: U^+_1 \mapsto E_1, \ B \mapsto E_2, \ U^-_1 \mapsto E_3$$
is $\mbox{SO}(1,2)$-equivariant.
With $t$, we convert the Cotton-Yorke tensor to an $\mbox{SO}(1,2)$-equivariant map $\gamma^{\sharp \sim} : \widehat{M} \rightarrow \End(\lieg_{-1})$.  The image is in the $\mathbb{I}$-symmetric, trace-free endomorphisms, which we will denote $\mathbb{U}^\sim$ (see \cite[p 373]{cap.slovak.book.vol1}).  Letting scalar matrices $d I_3$ act on $\mathbb{U}^\sim$ by $d^{-3}$ and $P^+$ act trivially makes $\gamma^{\sharp \sim}$ into a $P$-equivariant map.

The weight spaces in $\mathbb{U}^\sim$ are each one-dimensional, with 
\begin{eqnarray*}
\mathbb{U}_{+2}^\sim   & =  & \BR  E_1 \cdot \xi_1  \\
\mathbb{U}_{+1}^\sim  & =  & \BR  (E_1 \cdot \xi_2 +  E_2 \cdot \xi_1) 
\end{eqnarray*}

We will denote $\mathbb{U}^{\sim+} = \mathbb{U}_{+1}^\sim \oplus \mathbb{U}_{+2}^\sim$ below.

\subsubsection{Determination of isotropic line by positive weight spaces}
\label{sec:weyl_isotropic_lines}

Now let $\mathbb{V}$ be the Weyl curvature module in $\Sym^2(\lieg_0')$, decomposed according to $\beta$-weights, as in section \ref{sec:weyl_representation} above.  Let $\varpi \in \mathbb{V}_{+2} \oplus \mathbb{V}_{+1}$, and write
$$\varpi = \sum_{i,j} \alpha_{ij} U_i^+ \cdot U_j^+ + \sum_i \beta_i U_i^+ \cdot B + \sum_{i,j,k} \gamma_{ijk} U_i^+ \cdot R_{jk} $$
with $\alpha_{ij} = \alpha_{ji}$ and $\gamma_{ijk} = - \gamma_{ikj}$.  We define
\begin{equation}
\label{def:curv_rep_subspaces}
\mathbb{V}^+_{Ric} = (\mathbb{V}_{+2} \oplus \mathbb{V}_{+1}) \cap \ker \rho^\sharp_{Ric} \ \mbox{and} \ \mathbb{V}^+_B = \{ \varpi \in \mathbb{V}^+_{Ric} :  \beta_i = 0 \ \forall \ i = 1, \ldots, n-2\} 
\end{equation}

\begin{proposition}
\label{prop:zariski_closed_subspaces}
The subsets $G_0.\mathbb{V}^+_{Ric}$ and $G_0.\mathbb{V}^+_B$ are Zariski closed in $\mathbb{V}$ for $n \geq 4$; when $n=3$, then $G_0.\mathbb{U}^{\sim+}$ and $G_0.\mathbb{U}_{+2}^\sim$ are Zariski closed in $\mathbb{U}^\sim$. 
\end{proposition}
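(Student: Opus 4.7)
The plan is to find, for each subspace $W$ in question, a parabolic subgroup $Q \subset G_0$ preserving $W$, and then to invoke the classical principle that for a parabolic $Q$ of a linear algebraic group $G_0$ acting rationally on a vector space $\mathbb{V}$, the $G_0$-saturation of any $Q$-invariant Zariski closed subvariety $W \subseteq \mathbb{V}$ is itself Zariski closed. The argument for this principle is that under the isomorphism $G_0 \times_Q \mathbb{V} \cong G_0/Q \times \mathbb{V}$, $(g,v) \mapsto (gQ, gv)$, the subset $G_0 \times_Q W$ corresponds to the closed set $\{(gQ, v) : g^{-1} v \in W\}$; the second projection to $\mathbb{V}$ is proper because $G_0/Q$ is complete, so its image $G_0 \cdot W$ is closed.

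I take $Q \subset G_0$ to be the preimage of the $G_0'$-stabilizer of the isotropic line $\BR U_1^+ \subset \BR^{1,n-1}$, so that $G_0/Q$ is the projectivized light cone of $\BR^{1,n-1}$, hence compact. The Lie algebra $\lieq$ is the non-negative $\beta$-weight part of $\lieg_0$, namely the sum of the dilation center, $\BR B$, $\liem$, and $\lieu_+$.

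What remains is the verification of $Q$-invariance of each candidate subspace. The positive weight part $\mathbb{V}^+ = \mathbb{V}_{+2} \oplus \mathbb{V}_{+1}$ is $\lieq$-invariant because $\lieq$ acts with non-negative $\beta$-weights and $+2$ is the top $\beta$-weight of $\mathbb{V}$. The subspace $\ker \rho^\sharp_{Ric}$ is $G_0$-invariant by construction, so $\mathbb{V}^+_{Ric} = \mathbb{V}^+ \cap \ker \rho^\sharp_{Ric}$ is $Q$-invariant. For $\mathbb{V}^+_B$, the constraint $\beta_i = 0$ holds vacuously on $\mathbb{V}_{+2}$, so $Q$-invariance reduces to checking $\liem$-stability of $\Span\{U_i^+ \cdot R_{jk}\} \subset \mathbb{V}_{+1}$; a short bracket computation using $[R_{ij}, U_k^+] \in \lieu_+$ and $[R_{ij}, R_{\ell m}] \in \liem$ shows that applying $R_{ij}$ to $U_k^+ \cdot R_{\ell m}$ produces only terms of the form $U^+ \cdot R$ and no $U^+ \cdot B$ term. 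The action of $\lieu_+$ sends $\Span\{U_i^+ \cdot R_{jk}\}$ into $\mathbb{V}_{+2}$, where the $\beta_i = 0$ condition is empty, and $B$ acts diagonally on $\mathbb{V}^+$. When $n = 3$, both $\mathbb{U}^{\sim +}$ (the positive $\beta$-weight part) and $\mathbb{U}_{+2}^\sim$ (the top $\beta$-weight line) are manifestly $Q$-invariant for analogous reasons.

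The main obstacle is the bracket verification for $\mathbb{V}^+_B$: once $\liem$-stability of the $\{U_i^+ \cdot R_{jk}\}$-span is established, the closure statement becomes a formal consequence of the completeness of the partial flag variety $G_0/Q$.
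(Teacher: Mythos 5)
Your high-level strategy---find a parabolic $Q \subset G_0$ preserving the subspace and exploit completeness of $G_0/Q$---is exactly the strategy the paper uses, and your $Q$-invariance verifications (including the bracket computation showing $\liem$ stabilizes $\Span\{U_i^+ \cdot R_{jk}\}$, and the observation that $\lieu_+$ pushes $\mathbb{V}_{+1}$ into $\mathbb{V}_{+2}$ where the constraint $\beta_i=0$ is vacuous) are correct. However, the ``classical principle'' you invoke is true over an algebraically closed field but \emph{false over $\BR$}, and your proof does not address the real--complex gap that the paper spends most of its effort on.

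Here is a counterexample to the principle as you state it. Take $G_0 = \SL(2,\BR)$ acting on $\mathbb{V} = \Sym^2\BR^2$ by change of variable, $Q$ the Borel of upper-triangular matrices, and $W = \Span\{x^2, xy\}$, which is $Q$-invariant (indeed $W$ consists of forms divisible by $x$, and $Q$ stabilizes $\BR x$). Then $G_0.W$ is the set of quadratic forms that factor over $\BR$, i.e.\ $\{ax^2+bxy+cy^2 : b^2-4ac \geq 0\}$, which is topologically closed and semialgebraic but \emph{not} Zariski closed (its Zariski closure is all of $\Sym^2\BR^2$). Your associated-bundle argument, run over $\BR$, produces the scheme-theoretic image of $G_0 \times_Q W$, which in this example is all of $\Sym^2\BR^2$; the issue is that for a real $v$ in the scheme-theoretic image, the fiber $\{gQ : g^{-1}v \in W\}$ is a nonempty closed $\BR$-subscheme of $G_0/Q$ that need not have a real point (here: two complex-conjugate points of $\BP^1$ when the discriminant is negative). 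So $G_0(\BR).W(\BR)$ can sit strictly inside the real points of the closed image, and need not itself be Zariski closed. Since the paper later needs Zariski (or at least analytic) closedness---it takes $(\bar{\kappa}^\sharp)^{-1}(G_0.\mathbb{V}^+_B)$ to be an \emph{analytic} subset, not merely topologically closed---this is a material gap.

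What the paper does to close this gap is pass to the Grassmannian, prove that the stabilizer of $[\mathbb{V}^+_{Ric}]$ and of $[\mathbb{V}^+_B]$ over $\BC$ is exactly $Q_\BC$ (Lemma~\ref{lem:complex_stabilizers}), observe that $J_\BC/Q_\BC$ is a quadric whose real points are the single real orbit $J/Q \cong S^{n-2}$, and only then project. The stabilizer-is-exactly-$Q_\BC$ computation and the check on real points of the flag variety are precisely the ingredients missing from your proposal, and they are what distinguishes the paper's specific modules from the $\SL_2$ example above (where, at the level of the witness, the fiber over a real point fails to have a real point, whereas here the uniqueness of the determining isotropic line forces the witness to be Galois-fixed, hence real; cf.\ Proposition~\ref{prop:line_determination}). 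To repair your proof you would need to supply this real-rationality argument, not just invoke properness.
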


\begin{proof}
  Assume $n \geq 4$.
Let $N = \dim \mathbb{V}$, $N_{Ric} = \dim \mathbb{V}^+_{Ric}$ and $N_B = \dim \mathbb{V}^+_B$.  We first show that $G_0.[\mathbb{V}^+_{Ric}] \subseteq \Gr(N_{Ric},N)$ and $G_0.[\mathbb{V}^+_B] \subseteq \Gr(N_B,N)$ are Zariski closed.

On the Grassmannian varieties, the action of $G_0$ factors through $G_0 \rightarrow \PO(1,n-1)$; we will denote this simple group by $J$ for the remainder of the proof.  Let $Q < J$ be the stabilizer of the line $\BR E_1$ (modulo $\{ \pm \Id \}$).  It is the maximal parabolic subgroup, determined by $\beta$, which has been defined as a restricted root of $\lieg = \so(2,n)$, but is in fact also the unique nontrivial restricted root of $\mathfrak{j} \cong \so(1,n-1)$.  
Note that 
$$ \mathbb{V}^+ = \mathbb{V}_{+1} \oplus \mathbb{V}_{+2}  = \lieu_+ \odot \lies \oplus \Sym^2 \lieu_+$$
(with notation as in section \ref{sec:weyl_representation})
and that $\lieu_+ = \lieg_\beta = \mathfrak{j}_\beta$, while $\lies = \BR B \oplus \liem$ is the 0-weight space in $\so(1,n-1)$ with respect to $\beta$. 
The Lie algebra of $\lieq$ is precisely $ \lies \ltimes \lieu_+$.
Now $Q$ leaves $\mathbb{V}^+$ invariant.  As $\rho^\sharp_{Ric}$ is $J$-equivariant, it follows that $Q$ leaves $\mathbb{V}^+_{Ric} \subset \mathbb{V}^+$ invariant.  Moreover, $Q$ leaves 
$$\mathbb{V}^+_B = \lieu_+ \odot \liem \oplus \Sym^2 \lieu_+$$ 
invariant, because $Q.\lieu_+ = \lieu_+$ and $Q.\liem = \liem + \lieu_+$.

Now, the stabilizers of $[\mathbb{V}_{Ric}^+]$ and $[\mathbb{V}_B^+]$ are algebraic, proper subgroups of $J$.  But a proper, algebraic subgroup containing the maximal parabolic $Q$ necessarily equals $Q$ (see \cite[pp 411--415]{knapp.lie.groups}; note that $Q$ is the maximal closed subgroup of $J$ with Lie algebra $\lieq$, because the normalizer of $Q^0$ stabilizes $\BR E_1$).  We conclude that $\Stab [\mathbb{V}_{Ric}^+] = \Stab [\mathbb{V}_B^+] = Q$.  Then the orbit maps $J \rightarrow \Gr(N_{Ric},N), \Gr(N_B,N)$ factor through embeddings of the closed projective variety $J/Q$ into the respective Grassmannian varieties.  

\begin{lemma}
\label{lem:complex_stabilizers}
The stabilizers in $J_\BC \cong PO(n,\BC)$ of $[\mathbb{V}^+_{Ric}] \in \Gr_\BC(N_{Ric},N)$ and $[\mathbb{V}^+_B] \in \Gr_\BC(N_B,N)$  equal the maximal parabolic subgroup determined by the root $\beta$, which is $Q_\BC$.  
\end{lemma}

\begin{proof}
The representation of $J_\BC$ on $\mathbb{V}_\BC$ is isomorphic to $\Sym^2 \mathfrak{j}_\BC \cong \Sym^2 \so(n,\BC)$.  Now $(\lieu_+)_\BC$ is the $+1$-weight space for the complex root $\beta$ (strictly containing the root space for $\beta$ unless $n=3$), and $\lies_\BC$ is the $0$-weight space with respect to $\beta$.  Now $Q_\BC$ acts by automorphisms of $\lieq_\BC = \lies_\BC \ltimes (\lieu_+)_\BC$, so it preserves 
$\mathbb{V}^+_\BC $.  The linear map $(\rho^\sharp_{Ric})_\BC$ is evidently $J_\BC$-equivariant, so $Q_\BC$ preserves $(\mathbb{V}^+_{Ric})_\BC$.  The subalgebra $\liem_\BC \ltimes (\lieu_+)_\BC$ is in fact an ideal in $\lieq_\BC$, corresponding to the $(\liem + \lien)$ portion of the Langlands decomposition; therefore, it is normalized by $Q_\BC$.  It follows that $Q_\BC$ preserves $(\mathbb{V}^+_B)_\BC$.  As $Q_\BC$ is the stabilizer in $J_\BC$ of $\BC E_1$, it follows as in the real case that the full stabilizers of these subspaces are both equal to $Q_\BC$.
\end{proof}

The lemma above implies that the orbits $J_\BC.[\mathbb{V}^+_{Ric}]$ and $J_\BC.[\mathbb{V}^+_B]$ are the embedded images of closed projective varieties, isomorphic to $J_\BC/Q_\BC$.   Now the properness of closed projective varieties \cite[Thm I.5.3]{shafarevich} implies that both images in the Grassmannian variety are Zariski closed. 

In general $J$ need not act transitively on the real points of the homogeneous variety $J_\BC/Q_\BC$; we note that, in our case, the real points comprise the quadric in ${\bf RP}^{n-1}$, diffeomorphic to $S^{n-2}$, given in homogeneous coordinates by $2x_1x_n + x_2^2 + \cdots + x_{n-1}^2 = 0$, which is precisely $J/Q$.  Thus the real points of the orbits $J_\BC.[\mathbb{V}^+_{Ric}]$ and $J_\BC.[\mathbb{V}^+_B]$ are precisely the orbits $J.[\mathbb{V}^+_{Ric}]$ and $J.[\mathbb{V}^+_B]$, which are therefore also Zariski closed.

Next, we restrict the canonical bundles of the Grassmannian varieties to these closed orbits to obtain Zariski closed subbundles in $\Gr \times \mathbb{V}$.   The subsets $G_0.\mathbb{V}^+_{Ric}$ and $G_0.\mathbb{V}^+_B$ are the projections of these Zariski closed subbundles to $\mathbb{V}$.  These projections are Zariski closed by properness of Grassmannian varieties.

Via the $\SO(1,2)$-equivariant isomorphisms $\BR^{1,2*} \cong \BR^{1,2} \cong \so(1,2)$, the representation $\mathbb{U}^\sim$ is also isomorphic to $\Sym^2 \so(1,2)$.  The subspaces corresponding to $\mathbb{U}^{\sim+}$ and $\mathbb{U}_{+2}^\sim$ have analogous parametrizations to $\mathbb{V}^+_{Ric}$ and $\mathbb{V}_B^+$, respectively, in terms of the basis $\{ U^+_1, B \}$ of the Lie algebra of the maximal parabolic $\lieq < \so(1,2)$.  Then the appropriate analogue of lemma \ref{lem:complex_stabilizers} and the properness argument also hold in this case.
 \end{proof}

\begin{proposition}
\label{prop:line_determination}
For $\varpi \in \mathbb{V}^+_{Ric}$ (or $\nu \in \mathbb{U}^{\sim+}$ if $n=3$), the evaluation $\varpi(u \cdot v) = 0$ for all $u,v \in \ell = \BR E_1$ ($\nu(u) = 0$ for all $u \in \BR E_1$).  An isotropic line with this property is unique, unless $\varpi \in \mathbb{V}^+_B$ and $n \geq 4$ (or $\nu=0$ if $n=3$).  In this case, $\varpi \lrcorner u = 0$ for all $u \in \ell = \BR E_1$, and $\ell$ is the unique isotropic line with this property unless $\varpi = 0$.   
\end{proposition}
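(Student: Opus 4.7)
The plan is to argue by direct computation in the explicit basis of $\mathbb{V}^+$ from section \ref{sec:weyl_representation}, combined with the transitivity of the parabolic $Q$ (the stabilizer of $\BR E_1$ in $J \cong \PO(1,n-1)$) on the complement of $\BR E_1$ in the projectivized null cone. This transitivity, together with the $Q$-invariance of $\mathbb{V}^+_{Ric}$ and $\mathbb{V}^+_B$, was already established in the proof of proposition \ref{prop:zariski_closed_subspaces}; it lets us normalize any alternative isotropic line to $\BR E_n$ and then transfer conclusions back.

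The first assertion $\varpi(u \cdot v) = 0$ for $u, v \in \BR E_1$ reduces by bilinearity to $\varpi(E_1 \cdot E_1) = 0$. Every basis element of $\mathbb{V}^+ = \mathbb{V}_{+1} \oplus \mathbb{V}_{+2}$ has the form $U^+_i \cdot X$ with $X \in \{U^+_j,\, B,\, R_{jk}\}$, and $U^+_i \in \lieg_\beta$ annihilates $E_1$, since $E_1$ spans the highest $\beta$-weight line in $\lieg_{-1}$. Hence each product evaluates to zero. The analogous $n=3$ check is immediate: $\xi_1(E_1) = \mathbb{I}(E_1, E_1) = 0$ and $\xi_2(E_1) = 0$.

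For the uniqueness of $\BR E_1$ under the weak condition, suppose $\ell' = \BR u$ is isotropic with $\varpi(u \cdot u) = 0$ and $\ell' \neq \BR E_1$. Choosing $q \in Q$ with $q \cdot u \in \BR E_n$ and replacing $\varpi$ by $q^{-1} \cdot \varpi \in \mathbb{V}^+_{Ric}$, we reduce to the case $u = E_n$. Using $U^+_i(E_n) = -E_{i+1}$, $B(E_n) = -E_n$, $R_{jk}(E_n) = 0$, one computes
\[
\varpi(E_n \cdot E_n) \;=\; \sum_{i,j} \alpha_{ij}\, E_{i+1} \cdot E_{j+1} \;+\; \sum_i \beta_i\, E_{i+1} \cdot E_n \;\in\; \Sym^2 \lieg_{-1},
\]
whose vanishing forces $\alpha_{ij} = 0$ for all $i, j$ and $\beta_i = 0$ for all $i$, placing $\varpi$ in $\mathbb{V}^+_B$ by $Q$-invariance of the latter.

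For the last statement, any $\varpi \in \mathbb{V}^+_B$ satisfies $\varpi \lrcorner E_1 = 0$ by the same annihilation argument, since all remaining terms involve $U^+_i$ or $R_{jk}$ acting on $E_1$. For uniqueness, the same $Q$-reduction to $u = E_n$ and the computation
\[
\varpi \lrcorner E_n \;=\; -2 \sum_{i,j} \alpha_{ij}\, E_{i+1} \otimes U^+_j \;-\; \sum_{i,j,k} \gamma_{ijk}\, E_{i+1} \otimes R_{jk}
\]
in $\lieg_{-1} \otimes \lieg_0'$, combined with the linear independence of $\{U^+_j\} \cup \{R_{jk}\}$ in $\lieg_0'$ (these lie in distinct $\beta$-weight subspaces) and of $\{E_{i+1}\}$ in $\lieg_{-1}$, forces all coefficients to vanish, so $\varpi = 0$. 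The $n=3$ case is handled by a direct calculation: for $\nu = \alpha E_1 \cdot \xi_1 + \beta(E_1 \cdot \xi_2 + E_2 \cdot \xi_1)$ and isotropic $u = (c_1, c_2, c_3) \notin \BR E_1$ (so $c_3 \neq 0$), the equation $\nu(u) = (\alpha c_3 + \beta c_2) E_1 + \beta c_3 E_2 = 0$ forces $\alpha = \beta = 0$. No step is conceptually deep; the only care needed is in tracking the conventions for the symmetric product $\cdot$ and the contraction $\lrcorner$, and in the linear independence of root-space elements.
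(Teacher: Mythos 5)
Your proof is correct and takes essentially the same approach as the paper's: a direct evaluation in the explicit basis of $\mathbb{V}^+$, checking which coefficients are forced to vanish. The only variation is that you normalize the second isotropic line to $\BR E_n$ by the $Q$-transitivity (already established in the proof of proposition \ref{prop:zariski_closed_subspaces}), whereas the paper parametrizes a general null vector $v = E_n + \sum_i a_i E_i - \tfrac{1}{2}\sum_i a_i^2 E_1$ and extracts the relevant coefficients; the two reductions are equivalent, and yours slightly streamlines the computation.
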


\begin{proof}
It is straightforward to verify that $\varpi(E_1 \cdot E_1) = 0$.  Let $v$ be a different nonzero vector in the null cone $\mathcal{N}$; up to rescaling, it can be written 
$$ v = E_n + \sum_i a_i E_i - \left( \frac{1 }{2} \sum_i a^2_i \right) E_1 \qquad a \in \BR^{n-2}$$
The coefficient of $E_i \cdot E_n$ in $\varpi(v \cdot v)$ is $\beta_i$.  Thus $\varpi(v \cdot v) = 0$ implies $\beta_i = 0$ for all $i = 1, \ldots, n-2$.  

Assuming $\beta_i = 0$ for all $i = 1, \ldots, n-2$, it is clear that $\varpi \lrcorner E_1 = 0$.  The coefficient of $E_i \cdot R_{jk}$ in $\varpi \lrcorner v$ is $\gamma_{ikj}$.  Thus $\varpi \lrcorner v = 0$ implies $\gamma_{ijk} = 0$ for all $i,j,k$.  In this case, the coefficient of $E_i \cdot U_j^+$ in $\varpi \lrcorner v$ is $- \alpha_{ij}$.  We conclude that $\varpi \lrcorner v$ can only be 0 if $\varpi = 0$.
\end{proof}

\begin{corollary}
\label{cor:weyl_determines_lines}
There are $G_0$-equivariant, algebraic maps from $G_0. \mathbb{V}^+_{Ric} \backslash G_0.\mathbb{V}^+_B$ and from $G_0.\mathbb{V}^+_B \backslash \{ 0 \}$ to ${\bf P} (\mathcal{N} \backslash \{ 0 \} )$.  When $n=3$, there is a $G_0$-equivariant, algebraic map from $G_0.\mathbb{U}^{\sim+} \backslash \{ 0 \}$ to ${\bf P} (\mathcal{N} \backslash \{ 0 \} )$.
\end{corollary}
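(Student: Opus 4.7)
My plan for the corollary is to package Proposition \ref{prop:line_determination} (pointwise uniqueness of the annihilating isotropic line) together with Proposition \ref{prop:zariski_closed_subspaces} (Zariski closedness of the orbits) into well-defined algebraic $G_0$-equivariant maps.

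First I would define the map on $G_0.\mathbb{V}^+_{Ric} \setminus G_0.\mathbb{V}^+_B$ pointwise, sending $\varpi$ to the unique isotropic line $\ell(\varpi) \subset \lieg_{-1}$ satisfying $\varpi(u \cdot v) = 0$ for all $u, v \in \ell(\varpi)$. Writing $\varpi = g.\varpi_0$ with $\varpi_0 \in \mathbb{V}^+_{Ric} \setminus \mathbb{V}^+_B$, Proposition \ref{prop:line_determination} identifies $\BR E_1$ as the unique such line for $\varpi_0$, so $g.\BR E_1$ is the unique such line for $\varpi$, which makes $\ell(\varpi)$ independent of the choices. The second map on $G_0.\mathbb{V}^+_B \setminus \{0\}$ would be defined analogously, replacing the condition $\varpi(u \cdot v) = 0$ by the unique line with $\varpi \lrcorner u = 0$, and the $n=3$ map on $G_0.\mathbb{U}^{\sim+} \setminus \{0\}$ would use the condition $\nu(u) = 0$. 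In each case $G_0$-equivariance is immediate from the intrinsic characterization of $\ell$: any $h \in G_0$ carries the unique line annihilating $\varpi$ to the unique line annihilating $h.\varpi$.

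For algebraicity, I would introduce the Zariski-closed incidence variety
\[
Z = \{(\varpi, [u]) \in \mathbb{V} \times {\bf P}(\mathcal{N} \setminus \{0\}) : \varpi(u \cdot u) = 0\}
\]
cut out by a polynomial condition in $(\varpi, u)$. By Proposition \ref{prop:zariski_closed_subspaces}, the source $G_0.\mathbb{V}^+_{Ric} \setminus G_0.\mathbb{V}^+_B$ is Zariski locally closed in $\mathbb{V}$, and the uniqueness in Proposition \ref{prop:line_determination} ensures that the restriction of the first projection of $Z$ to the preimage of this source is a bijection. Because the second factor is projective, this projection is proper; a proper algebraic bijection onto a normal variety is an isomorphism onto its image (Zariski's main theorem), so composing its inverse with the second projection of $Z$ yields the desired algebraic map. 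The same scheme handles $G_0.\mathbb{V}^+_B \setminus \{0\}$ (where the defining equation is now linear in $u$) and the $n=3$ Cotton-York case.

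The main obstacle is making the algebraicity step rigorous. Pointwise uniqueness and Zariski closedness immediately give a bijective incidence projection, but upgrading its set-theoretic inverse to an algebraic morphism requires a structural result, and in the real-algebraic setting of the paper one may need to complexify and descend, or alternatively write down explicit polynomial formulas for $\ell(\varpi)$ on affine pieces of the source. Modulo this bookkeeping, the corollary is essentially a repackaging of the preceding two propositions.
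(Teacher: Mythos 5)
The paper states this corollary with no proof, presenting it as an immediate consequence of Propositions \ref{prop:zariski_closed_subspaces} and \ref{prop:line_determination}, so there is no argument of the authors to compare against. Your reading --- package the pointwise uniqueness of the vanishing isotropic line into an incidence correspondence and exploit properness of the projection --- is the natural one, and the set-theoretic part is sound: the fiber of $\pi_1\colon Z\to\mathbb{V}$ over $\varpi = g.\varpi_0 \in G_0.\mathbb{V}^+_{Ric}\setminus G_0.\mathbb{V}^+_B$ is the single point $(\varpi, g.[\BR E_1])$, and $G_0$-equivariance is automatic from the intrinsic characterization of the line.

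The gap you flag at the end is genuine, and actually a bit worse than ``bookkeeping.'' Your incidence variety $Z = \{\varpi(u\cdot u)=0\}$ is cut out \emph{tangentially} along the locus of interest: in the affine chart $u = E_1 + \sum_i b_i E_i - \tfrac12\sum b_i^2\, E_n$ about $[\BR E_1]$, expanding the proof of Proposition \ref{prop:line_determination} gives $\varpi_0(u\cdot u) = \bigl(\sum_i \beta_i b_i\bigr)\,E_1\cdot E_1 + O(|b|^2)$, so the linearization of the defining equations in the ${\bf P}(\mathcal{N})$-direction has rank one, while the set-theoretic fiber is a single point; the remaining $n-3$ directions are killed only at higher order (and over $\BC$ need not be killed at all). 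Thus $Z$ is singular there, and a Zariski-Main-Theorem argument cannot be run on it directly. The better incidence object is the one already implicit in the proof of Proposition \ref{prop:zariski_closed_subspaces}: the restriction of the Grassmannian tautological bundle to the closed orbit $J.[\mathbb{V}^+_{Ric}]$, i.e.\ $\{(gQ,\varpi)\in J/Q\times\mathbb{V} : \varpi\in g.\mathbb{V}^+_{Ric}\}$, which is smooth by construction and whose projection to $\mathbb{V}$ one can check has injective differential whenever some $\beta_i\neq 0$. Even so, your worry about Zariski's Main Theorem over $\BR$ stands ($t\mapsto t+t^3$ is a proper regular bijection of $\BR$ that is not biregular, and the complex uniqueness has not been verified since the chart in Proposition \ref{prop:line_determination} does not exhaust $\mathcal{N}_\BC\setminus\BC E_1$). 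To finish rigorously you should either write the inverse out in coordinates, as you propose, or observe that properness plus the invertible differential of the tautological-bundle projection already gives a $G_0$-equivariant, real-analytic map --- and analyticity and equivariance are all that the paper actually uses of this corollary (Step 3 of \S\ref{sec:balanced_proof}, Step 2 of \S\ref{sec:foliation_construction}).
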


\subsection{Holonomy sequences}

Let $\{f_k\}$ be a sequence of conformal transformations of $M$, and suppose $f_k .x_k \rightarrow y$ for a convergent sequence $x_k \rightarrow x$.  
The following definition captures the divergence of $f_k$ in the fiber direction along $\{ x_k \}$.

\begin{definition}
A \emph{holonomy sequence} for $\{ f_k \}$ at $x$ is $p_k \in P$ for which there exist a convergent sequence $\hat{x}_k \rightarrow \hat{x} \in \pi^{-1}(x)$ and $\hat{y} \in \widehat{M}$ such that
$$ f_k. \hat{x}_k .p_k^{-1} \rightarrow \hat{y}$$
A \emph{pointwise holonomy sequence} is $\{ p_k \}$ as above for which also $\{ \hat{x}_k \} \subset \pi^{-1}(x)$.
\end{definition}

In the situation $f_k .x_k \rightarrow y$ with $x_k \rightarrow x$ given above, any choice of convergent sequence $\hat{x}_k \in \pi^{-1}(x_k)$ and $\hat{y} \in \pi^{-1}(y)$ gives a holonomy sequence.  When $x_k = x$, we can obtain a pointwise holonomy sequence.

Any $X \in \lieg$ defines a vector field $\hat{X}$ on $\widehat{M}$ by $\omega(\hat{X}) \equiv X$.  The exponential map of a Cartan geometry is given 
by the time-one flow along these $\omega$-constant vector fields.
\begin{definition}
\label{def.exp.map}
The \emph{exponential map} at $\hat{x} \in \widehat{M}$ is 
$$ \exp_{\hat{x}}(X) = \varphi^1_{\hat{X}}(\hat{x}) \in \widehat{M}$$
for $X$ in a sufficiently small neighborhood of $0$ in $\lieg$.
\end{definition}
The restriction of $\exp_{\hat{x}}$ to a sufficiently small neighborhood of $0$ in $\lieg$ is a 
diffeomorphism onto a neighborhood of $\hat{x}$ in $\widehat{M}$. The map $\pi \circ \exp_{\hat{x}}$ induces a diffeomorphism from a neighborhood of $0$ in $\lieg_{-1}$ to a neighborhood of $x=\pi(\hat{x})$ in $M$. Projections to $M$ of exponential curves 
$s\mapsto\exp(\hat{x},sX)$ for $X\in\lieg_{-1}$ are \emph{conformal geodesics}. 

Suppose that for $h \in \Conf(M,g)$, there is $\hat{x} \in \widehat{M}$ with $h.\hat{x} = \hat{x}.g$ for some $g \in P$ (in particular, $h$ fixes $x = \pi(\hat{x}))$.  Suppose that for $X \in \lieg$, the following equation holds in $G$:
\begin{equation}
ge^{sX}=e^{c(s)X}p(s) \qquad \forall s \in I
\end{equation} 
Here $I$ is an interval containing $0$; $c: I {\rightarrow} I'$ is a diffeomorphism fixing $0$; and $p: I\rightarrow P$ is a smooth path with $p(0)=g$.  Hence,  $g$ acts on the curve $[e^{sX}]$ in $\Ein^{1,n-1} \cong G/P$ by a reparametrization. 
Then it follows from \cite[ Prop 4.3]{fm.nilpconf} or \cite[Prop 2.1]{cap.me.parabolictrans} that the analogous equation holds in $\widehat{M}$:
\begin{equation}
\label{eqn:isotropy_exp_curves}
h.\exp(\hat{x}, sX)=\exp(\hat{x}, c(s)X).p(s)\quad \forall s\in I.
\end{equation}

A holonomy sequence for $\{ f_k \}$ at $x$ is also valid along certain exponential curves from $x$:

\begin{proposition}(Propagation of holonomy)
\label{prop:propagation_holonomy}
 Let $\{p_k \}$ be a holonomy sequence for $f_k$ at $x$, with respect to $\hat{x}_k \in \pi^{-1}(x_k)$.  Suppose given $Y_k  \rightarrow Y \in \lieg \backslash \liep$ for which $\Ad p_k(Y_k)$ converges.  Then, provided $Y$ is in the domain of $\exp_{\hat{x}}$, $\{ p_k \}$ is also a holonomy sequence for $\{ f_k \}$ at $x' = \pi \circ \exp(\hat{x},Y)$ with respect to $\hat{x}_k' = \exp(\hat{x}_k,Y_k)$.  
\end{proposition}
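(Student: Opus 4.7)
The plan is to reduce the assertion to the algebraic identity
\[
f_k.\hat{x}_k'.p_k^{-1} \;=\; \exp\bigl(f_k.\hat{x}_k.p_k^{-1},\, \Ad(p_k)Y_k\bigr),
\]
and then to pass to the limit on the right-hand side. The identity combines two equivariance properties of the Cartan exponential: first, each $f_k$ lifts to a bundle automorphism of $\widehat{M}$ preserving $\omega$ and hence preserves the $\omega$-constant vector fields, giving $f_k.\exp(\hat{z},Y) = \exp(f_k.\hat{z},Y)$; second, the right $P$-equivariance axiom $R_p^{*}\omega = \Ad(p^{-1})\circ\omega$ forces $(R_p)_*\hat{Y} = \widehat{\Ad(p^{-1})Y}$ for the $\omega$-constant vector field $\hat{Y}$, whence $\exp(\hat{z},Y).p^{-1} = \exp(\hat{z}.p^{-1},\Ad(p)Y)$. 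Chaining these two identities starting from $\hat{x}_k' = \exp(\hat{x}_k,Y_k)$ yields the displayed formula.

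The hypothesis that $Y$ lies in the domain of $\exp_{\hat{x}}$ and openness of that domain place $(\hat{x}_k,Y_k)$ in the domain for large $k$, so the left-hand side is well defined, and the identity propagates definedness to $(f_k.\hat{x}_k.p_k^{-1},\, \Ad(p_k)Y_k)$. By hypothesis these inputs converge to $(\hat{y}, Z)$ for some $Z\in\lieg$, and continuity of $\exp$ on its open domain then gives
\[
f_k.\hat{x}_k'.p_k^{-1} \;\longrightarrow\; \exp(\hat{y},Z) \;=:\; \hat{y}'.
\]
Together with the convergence $\hat{x}_k' \to \exp(\hat{x},Y) \in \pi^{-1}(x')$ (again by continuity of $\exp$), this is exactly the statement that $\{p_k\}$ is a holonomy sequence for $\{f_k\}$ at $x'$ with respect to $\{\hat{x}_k'\}$.

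The point requiring care is that the domain of $\exp$ is open but not all of $\widehat{M}\times\lieg$, so before invoking continuity one must verify $(\hat{y},Z)$ lies in it. I would handle this by examining the family of full integral curves $\gamma_k(s) = f_k.\exp(\hat{x}_k,sY_k).p_k^{-1} = \exp(f_k.\hat{x}_k.p_k^{-1},\, s\Ad(p_k)Y_k)$, each defined on $[0,1]$ for $k$ large, whose initial data $(\gamma_k(0), \Ad(p_k)Y_k)$ converge to $(\hat{y},Z)$ in $\widehat{M}\times\lieg$; a standard ODE continuation argument then rules out blow-up of the limit integral curve of $\hat{Z}$ before time $1$, placing $(\hat{y},Z)$ in the domain of $\exp$ and completing the convergence.
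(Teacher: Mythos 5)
Your argument is essentially the paper's proof: derive the identity $f_k.\hat{x}_k'.p_k^{-1}=\exp(f_k.\hat{x}_k.p_k^{-1},\,\Ad p_k(Y_k))$ from the two equivariance properties of the Cartan exponential, then pass to the limit. Your expansion of why the identity holds (automorphism-equivariance plus right-$P$-equivariance of the $\omega$-constant flows) is exactly what is behind the paper's one-line computation.

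The extra discussion about the domain of $\exp$ flags a genuine subtlety that the paper leaves entirely implicit, and it is to your credit that you noticed it. However, the proposed fix does not actually close the gap: definedness of each $\gamma_k$ on $[0,1]$ together with convergence of the initial data $(\gamma_k(0),\Ad p_k(Y_k))\to(\hat y,Y_*)$ does \emph{not} rule out escape of the limit integral curve of $\widehat{Y_*}$ before time $1$. A constant vector field on $\BR^2\setminus\{\text{pt}\}$, with the $k$-th trajectory missing the deleted point by distance $1/k$, already shows nearby solutions can all exist on $[0,1]$ while the limit trajectory is incomplete; the standard ODE continuation lemma only upgrades a solution that is known to stay in a compact set, and that bound is precisely what is missing here. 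What one really needs is an a priori compactness for the family $\gamma_k([0,1])$, or simply the additional hypothesis that $(\hat y,Y_*)$ lies in the domain of $\exp$. In the paper's uses of this proposition the sequences $\Ad p_k(Y_k)$ and $f_k.\hat x_k.p_k^{-1}$ are controlled tightly enough that the issue does not arise, so the paper's silence is (in practice) harmless, but the ``ODE continuation'' step as you have written it is not a correct justification. Apart from that addendum, your proposal matches the paper's argument.
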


This is a variation on several existing propositions, such as \cite[Prop 6.3]{frances.locdyn} or \cite[Prop 2.8]{cap.me.parabolictrans}.  It is quickly deduced from equivariance properties of the exponential map, as follows:

\begin{proof}
Let $\hat{y} = \lim f_k.\hat{x}_k .p_k^{-1}$ and $Y_* = \lim \Ad p_k(Y_k)$.  
\begin{eqnarray*}
f_k.\hat{x}_k'.p_k^{-1} & = & f_k.\exp(\hat{x}_k,Y_k).p_k^{-1} \\
& = & \exp(f_k.\hat{x}_k.p_k^{-1}, \Ad p_k (Y_k)) \\
& \rightarrow & \exp(\hat{y},Y_*)
\end{eqnarray*}
\end{proof}

On the other hand, at a given $x$ with a holonomy sequence $\{ p_k \}$, there are many other valid holonomy sequences.  Choosing $\hat{x}_k' = \hat{x}_k l_k'$ and $\hat{y}' = \hat{y} l^{-1}$, where $l_k \rightarrow l$ and $l_k' \rightarrow l'$ in $P$, gives a new holonomy sequence $p_k' = l_k p_k l_k'$, which we will call \emph{vertically equivalent} to $\{ p_k \}$.

Recall that $P$ is isomorphic to the semidirect product $G_0 \ltimes P^+$, where $P^+ \cong \BR^{1,n-1*}$.  As $G_0$ is reductive, it has a KAK decompositon with $A = \exp(\liea)$ (see (\ref{eqn:cartan_subalgebra}) in section \ref{sec:cartan_decomposition}).
Any holonomy sequence $\{ p_k \}$ is vertically equivalent to one of the form $\{ d_k \tau_k \}$ with $d_k \in A$ and $\tau_k \in P^+$.  In view of the $\Ad K$-action on $A$, such a sequence is in fact equivalent to one with $d_k \in A'$, the semigroup of $A$ comprising elements $d$ satisfying $\beta(\ln d) \leq 0$.  Then we will say the sequence is in \emph{$A'P^+$-form}.

The following terminology for sequences in $A'P^+$ is based on \cite[Sec 3.2]{frances.lorentz.klein}.

\begin{definition}
\label{def:holonomy_trichotomy}
  Let $\{ d_k \tau_k \}$ be a sequence in $A'P^+$-form, and let $D_k = \ln d_k$.  The sequence is said to be
\begin{itemize}
\item of \emph{bounded distortion} if $\alpha(D_k)$ is bounded.
\item \emph{balanced} if $\alpha(D_k) + \beta(D_k)$ is bounded, but each term is unbounded.
\item \emph{mixed} if $\alpha(D_k), \beta(D_k)$, and $(\alpha + \beta)(D_k)$ are unbounded, with $\alpha(D_k) \rightarrow \infty$.
\end{itemize}
It is called \emph{linear} if $\tau_k \equiv 1$.
\end{definition}

\subsection{Approximately stable spaces}
\label{sec:approximately_stable}

The following definition is inspired by \cite{zeghib.tgl1}, \cite[Sec 7.4]{dag.rgs} (see also \cite[Def 2.10]{cap.me.parabolictrans} for a non-approximate version, and \cite[Sec 4.4]{frances.degenerescence} for a related notion of stability and stable foliations).

\begin{definition}
Let $\mathbb{V}$ be a $P$-module, and let $\{ p_k \}$ be a sequence in $P$.  The \emph{approximately stable set} for $\{ p_k \}$ in $\mathbb{V}$ is
$$ \mathbb{V}^{AS}(p_k) = \{ v = \lim v_k \in \mathbb{V} \ : \ p_k.v_k \ \mbox{is bounded} \} $$
\end{definition}

Let $\mathbb{V} = \lieg / \liep$, so $TM \cong \widehat{M} \times_P \mathbb{V}$ (see, eg,  \cite[Thm 5.3.15]{sharpe}).  Denote $q$ the quotient $\widehat{M} \times \mathbb{V} \rightarrow TM$.  If $\{ p_k \}$ is a holonomy sequence for $\{ f_k \}$ at $x$ with respect to $\hat{x}_k$, and $Y \in \mathbb{V}^{AS}(p_k)$, then $q(\hat{x},Y) \in T_xM$ is approximately stable for $\{ f_k \}$ in the sense of Zeghib.  For this case, we will call $q(\hat{x},\mathbb{V}^{AS}(p_k))$ the \emph{approximately stable subset of $T_xM$} for $\{ p_k \}$.

Approximate stability relates to propagation of holonomy:

\begin{remark}
Let $\{p_k \}$ be a holonomy sequence with respect to $\hat{x}_k \rightarrow \hat{x}$.
Let $\mathbb{V} = \lieg$ and let $Y \in \mathbb{V}^{AS}(p_k) \backslash (\mathbb{V}^{AS}(p_k) \cap \liep)$.  
 Then by proposition \ref{prop:propagation_holonomy}, $\{p_k \}$ is also a holonomy sequence at $\exp(\hat{x},Y)$, provided $Y$ is in the domain of $\exp_{\hat{x}}$. 
\end{remark}

The above remark will be applied below to holonomy sequences in $A'P^+$-form and $Y_k \rightarrow Y$ in $\lieg\backslash \liep$.

The following proposition is a version for sequences of \cite[Prop 2.9]{cap.me.parabolictrans}:

\begin{proposition}(Approximate stability for invariant sections)
\label{prop:as_for_inv_sections}
Given a $P$-module $\mathbb{V}$, represent a continuous, $\{ f_k \}$-invariant section of the associated bundle $\widehat{M} \times_P \mathbb{V}$ by a continuous, $P$-equivariant, $\{ f_k \}$-invariant map $\sigma : \widehat{M} \rightarrow \mathbb{V}$.  Given any holonomy sequence $\{ p_k \}$ for $\{ f_k \}$ with respect to $\hat{x}_k \rightarrow \hat{x}$, the value $\sigma(\hat{x}) \in \mathbb{V}^{AS}(p_k)$.
\end{proposition}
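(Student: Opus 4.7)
The plan is to produce an explicit approximating sequence $v_k \to \sigma(\hat{x})$ in $\mathbb{V}$ with $\{ p_k . v_k \}$ convergent (hence bounded), by evaluating $\sigma$ along the natural sequences attached to the holonomy data.

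Concretely, I would set $v_k := \sigma(\hat{x}_k)$. Continuity of $\sigma$ and the hypothesis $\hat{x}_k \to \hat{x}$ immediately give $v_k \to \sigma(\hat{x})$. The next step is to rewrite $v_k$ using the $\{f_k\}$-invariance $\sigma \circ f_k = \sigma$, which gives $v_k = \sigma(f_k . \hat{x}_k)$. Writing $f_k . \hat{x}_k = (f_k . \hat{x}_k . p_k^{-1}) . p_k$ and applying the $P$-equivariance of $\sigma$ (with the convention $\sigma(\hat{z} . p) = p^{-1} . \sigma(\hat{z})$, compatible with viewing $\sigma$ as a section of $\widehat{M} \times_P \mathbb{V}$), one obtains
\begin{equation*}
v_k = \sigma(f_k . \hat{x}_k) = \sigma\bigl( (f_k . \hat{x}_k . p_k^{-1}) . p_k \bigr) = p_k^{-1} . \sigma(f_k . \hat{x}_k . p_k^{-1}),
\end{equation*}
so that $p_k . v_k = \sigma(f_k . \hat{x}_k . p_k^{-1})$.

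By definition of a holonomy sequence, $f_k . \hat{x}_k . p_k^{-1} \to \hat{y}$ in $\widehat{M}$, and continuity of $\sigma$ then yields $p_k . v_k \to \sigma(\hat{y})$. In particular, the sequence $\{ p_k . v_k \}$ is bounded in $\mathbb{V}$. Combined with $v_k \to \sigma(\hat{x})$, this is exactly the statement that $\sigma(\hat{x}) \in \mathbb{V}^{AS}(p_k)$.

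There is essentially no obstacle beyond bookkeeping: the whole argument is a direct unfolding of the definitions of $\{f_k\}$-invariance, $P$-equivariance, holonomy sequence, and approximate stability. The only point that needs care is the sign/side convention in the $P$-equivariance formula, so that the rewriting in the displayed line above produces $p_k . v_k$ rather than $p_k^{-1} . v_k$; this is why it is important to regard $\sigma$ as coming from a section of the associated bundle $\widehat{M} \times_P \mathbb{V}$ with the standard right $P$-action on $\widehat{M}$.
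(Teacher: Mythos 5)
Your proposal is correct and follows essentially the same one-line computation as the paper: set $v_k = \sigma(\hat{x}_k)$, then use $\{f_k\}$-invariance and $P$-equivariance to get $p_k . v_k = \sigma(f_k . \hat{x}_k . p_k^{-1}) \to \sigma(\hat{y})$, while $v_k \to \sigma(\hat{x})$ by continuity. You simply spell out the sign-convention bookkeeping that the paper leaves implicit.
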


\begin{proof}
$$ p_k . \sigma(\hat{x}_k) = \sigma(f_k. \hat{x}_k .p_k^{-1}) \rightarrow \sigma(\hat{y})$$
for a certain $\hat{y} \in \widehat{M}$; moreover, $\sigma(\hat{x}_k) \rightarrow \sigma(\hat{x})$.
\end{proof}


\subsection{Proof of main theorem given a contracting holonomy sequence}
\label{sec:contracting_proof}

The following proposition illustrates the application of stability and propagation of holonomy.  Conformal flatness in the presence of predominantly contracting dynamics is relatively well known --- see, for example \cite[Props 4,5]{frances.ccvf}, \cite[Prop 3.6]{fm.champsconfs}.

\begin{proposition}
\label{prop:contracting_proof}
Let $(M^n,g)$ be a Lorentzian manifold with $n \geq 3$.  Let $\{h_k \}$ be a sequence in $\Conf(M,[g])$ with
holonomy sequence  $\{ p_k = d_k \tau_k\}$ in $A'P^+$-form at $x \in M$.
Let $D_k = \ln d_k$ and suppose that $\alpha(D_k) + \beta(D_k) \rightarrow  \infty$ and $\{ \tau_k \}$ is bounded.  Then an open neighborhood  $U \subset M$ of $x$ is conformally flat.
\end{proposition}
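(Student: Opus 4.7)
The plan is to prove vanishing of the relevant harmonic part of the Cartan curvature---the Weyl tensor when $n\ge 4$, the Cotton--York tensor when $n=3$---on an open neighborhood of $x$, which is equivalent to conformal flatness there. The argument combines approximate stability of invariant sections with holonomy propagation.

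\emph{Reduction.} Since $\{\tau_k\}$ is bounded, pass to a subsequence with $\tau_k\to\tau\in P^+$. Vertical equivalence applied with $l_k=\Id$ and $l_k'=\tau_k^{-1}\to\tau^{-1}$ replaces the holonomy sequence by the linear sequence $\{d_k\}$. Observe also that $\beta(D_k)\le 0$ together with $(\alpha+\beta)(D_k)\to\infty$ forces $\alpha(D_k)\to\infty$.

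\emph{Weight analysis and vanishing at $\hat x$.} Using the root-space description in \S\ref{sec:cartan_decomposition}, the $\liea$-weights of $\wedge^2\lieg_{-1}^*$ are $\{2\alpha-\beta,\,2\alpha,\,2\alpha+\beta\}$, those of $\lieg_0'$ are $\{-\beta,\,0,\,\beta\}$, and those of $\liep^+$ are $\{\alpha-\beta,\,\alpha,\,\alpha+\beta\}$. Consequently every weight of $\wedge^2\lieg_{-1}^*\otimes\lieg_0'$ (where $\bar\kappa$ takes values when $n\ge 4$) or of $\wedge^2\lieg_{-1}^*\otimes\liep^+$ (where the Cotton--York component lives when $n=3$) has the form $s\alpha+c\beta$ with $s\in\{2,3\}$ and $c\in\{-2,-1,0,1,2\}$. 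For $c\ge 0$, writing $s\alpha+c\beta=(s-c)\alpha+c(\alpha+\beta)$ exhibits it as a non-negative combination of $\alpha(D_k)$ and $(\alpha+\beta)(D_k)$, each tending to $+\infty$; for $c<0$, $c\beta(D_k)\ge 0$ and therefore $(s\alpha+c\beta)(D_k)\ge s\alpha(D_k)\to+\infty$. Thus $\Ad(d_k)$ expands every weight space uniformly, and if $v_k\to v$ with $d_kv_k$ bounded, then $v_k=d_k^{-1}(d_kv_k)\to 0$, so the approximately stable set in the curvature module is $\{0\}$. Proposition \ref{prop:as_for_inv_sections} applied to the $P$-equivariant, $\{h_k\}$-invariant function $\bar\kappa$ (or to the Cotton--York map when $n=3$) yields $\bar\kappa(\hat x)=0$.

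\emph{Propagation and conclusion.} The weights of $\Ad(d_k)$ on $\lieg_{-1}$ are $\{\beta-\alpha,\,-\alpha,\,-\alpha-\beta\}$, all tending to $-\infty$, so $\Ad(d_k)(Y)\to 0\in\lieg$ for any $Y\in\lieg_{-1}$. Taking $V\subset\lieg_{-1}$ a $0$-neighborhood on which $\exp_{\hat x}$ is defined, Proposition \ref{prop:propagation_holonomy} shows that $\{d_k\}$ is again a holonomy sequence at every point $\pi\circ\exp(\hat x,Y)$, $Y\in V$. The previous step then forces the harmonic curvature to vanish at each such point, and the open neighborhood $U:=\pi\circ\exp_{\hat x}(V)$ of $x$ is conformally flat. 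The only serious piece of bookkeeping is the weight-expansion claim above; the potentially problematic $c<0$ directions are handled correctly only because the hypothesis $\beta(D_k)\le 0$ turns $c\beta(D_k)$ into a nonnegative contribution.
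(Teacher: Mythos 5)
Your proof is correct and follows essentially the same route as the paper's: compute the $\liea$-weights of the harmonic-curvature module, observe that $\alpha(D_k)\to\infty$ together with $\beta(D_k)\le 0$ forces every weight to diverge to $+\infty$, conclude the approximately stable set is $\{0\}$ so the harmonic curvature vanishes at $\hat x$ by Proposition \ref{prop:as_for_inv_sections}, and then propagate via Proposition \ref{prop:propagation_holonomy} using $\Ad(d_k)|_{\lieg_{-1}}\to 0$. Your preliminary reduction to the linear sequence $\{d_k\}$ by vertical equivalence is a small but genuine sharpening: the paper applies the propagation proposition directly to $\{p_k\}=\{d_k\tau_k\}$, and for that hypothesis one must check that $\Ad p_k(Y_k)$ converges in $\lieg$ (not merely modulo $\liep$), which is not automatic when $\tau_k\ne 1$ and $\beta(D_k)\to-\infty$; passing to the vertically equivalent linear sequence, as you do, makes this point immediate.
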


\begin{proof}
On $\lieg/\liep$, the action of $p_k$ factors through $d_k$, which acts by scalars $e^{- \alpha(D_k) + w \beta(D_k)}$ for $w=-1,0,1$.  As $\beta(D_k) \leq 0$, the exponents $- \alpha(D_k) + w \beta(D_k) \rightarrow - \infty$ for $w=-1,0$ or $1$.  By proposition \ref{prop:propagation_holonomy}, $\{ p_k \}$ is a holonomy sequence at all points of $U=\pi \circ \exp_{\hat{x}}(\mathcal{U})$, for $\mathcal{U}$ a neighborhood of $0$ in $\lieg_{-1}$ intersect the domain of $\exp_{\hat{x}}$.

The action of $p_k$ on the Weyl curvature module $\mathbb{V}_{Ric}$ also factors through $d_k$, which acts by the scalars $e^{2\alpha(D_k) + w \beta(D_k)}$ for $w = -2, -1, 0, 1,2$.  On the Cotton module $\mathbb{U}$, it acts by the scalars $e^{3\alpha(D_k) + w \beta(D_k)}$ for $w = -2,-1, 0, 1,2$ (see section \ref{sec:weyl_representation}).  These sequences of exponents diverge to $\infty$ for all values of $w$, and the approximately stable set in either module is $\{ 0 \}$.  Then by proposition \ref{prop:as_for_inv_sections}, the Weyl curvature vanishes on $U$, and the Cotton tensor vanishes on $U$ if $n=3$.  
\end{proof}

If we also assume that $(M,g)$ is real-analytic, as in theorem \ref{thm:main}, then we can further conclude that $(M,g)$ is everywhere conformally flat.
By proposition B, we may henceforth assume there is no contracting holonomy sequence, as in the hypotheses of proposition \ref{prop:contracting_proof} above, associated to any sequence $\{h_k\} \subset H$.

\section{Proof given a linear, balanced holonomy sequence}
\label{sec:balanced_proof}

We now prove a global result which relates to Alekseevky's examples \ref{ex:alekseevsky}, in which an essential conformal flow
fixes a null geodesic pointwise and has linear, balanced holonomy at these fixed points (see definition \ref{def:holonomy_trichotomy}).  The proposition below implies, in combination with Proposition B and Haefliger's Theorem, that such an essential flow is not possible on a closed, simply connected, real-analytic Lorentzian manifold.  

\begin{proposition}
\label{prop:balanced_proof}
Let $(M,g)$ be a compact, real-analytic Lorentzian manifold of dimension $n \geq 3$.  Let $\{ h_k \} \subset \Conf(M,g)$, and 
let $\{ p_k  \}$ be a holonomy sequence for $\{ h_k \}$ at $x$ that is balanced and linear.  Then $M$ is conformally flat or the universal cover of $M$ admits a codimension-one analytic foliation; if $n=3$, then $M$ is conformally flat.
\end{proposition}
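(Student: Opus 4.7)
The plan is to chain together: (1) approximate stability of the harmonic curvature under the balanced, linear sequence $\{d_k\}$, which forces the Cotton tensor to vanish when $n=3$ and forces $\bar{\kappa}^\sharp(\hat{x}) \in \mathbb{V}^+_B$; (2) propagation of holonomy and real analyticity, which globalize the curvature constraint; (3) extraction from the $\mathbb{V}^+_B$-constraint of a distinguished isotropic line field $\ell$ and an integrable codimension-one degenerate distribution $\ell^\perp$, producing an analytic foliation $\mathcal{F}$ on $M\setminus W^{-1}(0)$; (4) lifting to the universal cover $\widetilde{M}$ and analyzing the extension of $\mathcal{F}$, using Frances' theorem \cite{frances.ccvf} in the non-extension alternative.

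Write $a_k = \alpha(D_k) \to +\infty$, $b_k = \beta(D_k) \to -\infty$ with $a_k + b_k$ bounded. The element $d_k$ acts on $\mathbb{V}_w$ by $e^{2a_k + w b_k}$ and on $\mathbb{U}^\sim_w$ by $e^{3a_k + w b_k}$, since the conformal dilation $e^{-a_k}$ of $\lieg_{-1}$ scales $\mathbb{V}$ by $e^{2a_k}$ and $\mathbb{U}^\sim$ by $e^{3a_k}$. Every Cotton exponent tends to $+\infty$, so $(\mathbb{U}^\sim)^{AS}(d_k) = \{0\}$; on $\mathbb{V}$ only $w=+2$ yields a bounded exponent, so $\mathbb{V}^{AS}(d_k) = \mathbb{V}_{+2}$, which together with the Bianchi constraint lies in $\mathbb{V}^+_B$ (since the $\beta_i$-coefficients distinguishing $\mathbb{V}^+_{Ric} \setminus \mathbb{V}^+_B$ live in $\mathbb{V}_{+1}$). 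Proposition \ref{prop:as_for_inv_sections} forces $\bar{\kappa}^\sharp(\hat{x}) \in \mathbb{V}^+_B$ and the Cotton tensor to vanish at $\hat{x}$. Because $\Ad d_k$ contracts on $\lieg_{-\alpha}\oplus\lieg_{\beta-\alpha}$ and is bounded on $\lieg_{-\alpha-\beta}$, Proposition \ref{prop:propagation_holonomy} (after extracting a subsequence) makes $\{d_k\}$ a holonomy sequence on a neighborhood of $x$, where the same constraints hold. Zariski-closedness of $G_0.\mathbb{V}^+_B$ in $\mathbb{V}$ (Proposition \ref{prop:zariski_closed_subspaces}) combined with real analyticity and connectedness of $M$ globalizes the constraint. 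For $n=3$ this gives that the Cotton-York tensor vanishes identically, whence $(M,g)$ is conformally flat, finishing that case.

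For $n \geq 4$, either $\bar{\kappa}^\sharp \equiv 0$ (so $(M,g)$ is conformally flat) or $W^{-1}(0)$ is nowhere dense and Corollary \ref{cor:weyl_determines_lines} yields an analytic, $\Conf(M,g)$-invariant isotropic line field $\ell$ on $M \setminus W^{-1}(0)$, pointwise spanned by $E_1$ in adapted frames. Its orthogonal $\ell^\perp$ is a codimension-one degenerate distribution containing $\ell$, coinciding with the strictly stable subspace of $\Ad d_k$ on $\lieg_{-1}$. To show integrability of $\ell^\perp$, I would pass to the reduction $\widehat{M}_Q$ of $\widehat{M}$ to frames whose first vector lies in $\ell$ and apply the Cartan structure equation $d\omega = -\tfrac{1}{2}[\omega,\omega] + \Omega$: for $V_1, V_2 \in T\widehat{M}_Q$ with $\omega(V_i) \in \BR E_1 \oplus \lieg_{-\alpha}$, the algebraic brackets $[\omega(V_1),\omega(V_2)]$ remain in $\BR E_1 \oplus \lieg_{-\alpha} + \liep$ (using that $Q$ preserves $\BR E_1 \oplus \lieg_{-\alpha}$ and that $\lieg_{-1}$ is abelian), and the $\mathbb{V}^+_B$-constraint together with Proposition \ref{prop:line_determination} in the form $\bar{\kappa}^\sharp \lrcorner E_1 = 0$ is exactly what kills the $\lieg_{\beta - \alpha}$-component of $\Omega(V_1,V_2)$ that would otherwise obstruct Frobenius; the resulting integrability produces an analytic codimension-one degenerate foliation $\mathcal{F}$ of $M \setminus W^{-1}(0)$ tangent to $\ell^\perp$.

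Finally, lift $\mathcal{F}$ and a local isotropic generator of $\ell$ to $\widetilde{M}$, where $\ell$ admits a global nonvanishing section. Extending this generator analytically across $\pi^{-1}(W^{-1}(0))$, either the extension is everywhere nonvanishing—so $\ell$ extends to all of $\widetilde{M}$ and the integrability argument extends $\mathcal{F}$ analytically to a codimension-one foliation of all of $\widetilde{M}$, which is the second alternative of the statement—or the extension must acquire a zero. In the latter case, the canonicity of the construction ($\ell$ is defined algebraically from $\bar{\kappa}^\sharp$, and its generator can be built from the Cartan connection so as to be a conformal vector field) produces an isotropic conformal vector field on $\widetilde M$ with a zero, to which Frances' theorem \cite{frances.ccvf} applies, forcing local conformal flatness and hence, by analyticity, conformal flatness of $M$. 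The chief obstacles are verifying integrability rigorously via the $\mathbb{V}^+_B$-constraint and showing that the canonical isotropic generator of $\ell$ is genuinely conformal, so that non-extension of $\mathcal{F}$ across $W^{-1}(0)$ really triggers \cite{frances.ccvf}.
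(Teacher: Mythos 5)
Your overall architecture (approximate stability $\Rightarrow$ curvature confined to a positive-weight module $\Rightarrow$ isotropic line field and degenerate distribution $\Rightarrow$ integrability $\Rightarrow$ dichotomy via Frances \cite{frances.ccvf}) is the same as the paper's, and your treatment of the $n=3$ case, the weight exponents on $\mathbb{V}$ and $\mathbb{U}^\sim$, and the propagation of holonomy (boundedness of $\Ad d_k$ on $\lieg_{-1}$) are all correct. But there is a genuine gap at precisely the point you flag as ``the chief obstacle,'' and it is not a routine check.

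The missing step is showing that the generator of $\ell$ is an honest conformal vector field. Everything downstream depends on this: Amores' extension theorem requires a \emph{local conformal vector field} to produce the global vector field $Z$ on $\widetilde{M}$, and \cite{frances.ccvf} is a statement about \emph{conformal} vector fields with a zero. An analytic isotropic line field built algebraically from $\bar{\kappa}^\sharp$ on $M\setminus W^{-1}(0)$ gives you a line bundle, not a vector field, and there is no reason a section should be conformal. The paper closes this gap via its Steps 4--5: from the $\mathbb{V}_{+2}$-constraint it builds a reduction $\mathcal{R}$ of $\widehat{M}$ to $Q_0 \ltimes S_1$, then uses the fact that $\kappa$ has values in $\lieu_+$ (together with the structure equation $\kappa(u,v) = -\omega([\omega^{-1}u,\omega^{-1}v])$) to integrate $\omega^{-1}(\lieg_{-1}+\lieu_+)$, producing a further reduction to the unipotent group $e^{\lieu_+}$. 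This reduction singles out a metric in the conformal class whose Levi-Civita connection has holonomy in $e^{\lieu_+}$, and the vector field $Z = \pi_*\omega^{-1}(E_1)$ is then \emph{parallel} for that metric, hence Killing, hence conformal. Without an argument of this kind, the extension across $W^{-1}(0)$ and the appeal to \cite{frances.ccvf} are unsupported.

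Two smaller points. First, approximate stability actually forces $\bar{\kappa}^\sharp(\hat{x})$ into $\mathbb{V}_{+2}$ (the $w=+2$ weight space), which is strictly smaller than $\mathbb{V}^+_B$; the paper's integrability argument in Step 6 uses the stronger $\mathbb{V}_{+2}$-constraint to get $\kappa_{\hat{y}}(u,v)=0$ for all $u,v\in E_1^\perp$, and it is not clear that the $\mathbb{V}^+_B$-constraint alone yields this. Second, in your Frobenius argument you speak of killing ``the $\lieg_{\beta-\alpha}$-component of $\Omega(V_1,V_2)$''; but $\Omega$ takes values in $\liep=\lieg_0\oplus\lieg_1$, and $\lieg_{\beta-\alpha}\subset\lieg_{-1}$, so this cannot be what is meant. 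The component that must be controlled is in $\lieg_{-\beta}\subset\lieg_0$ (which moves $\lieq_0$), and the paper handles this by working inside the reduction $\mathcal{R}$ and computing $\omega([X,Y])$ directly from the structure equation.
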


The construction of the foliation, assuming $M$ is not conformally flat, is a miniature of the construction of a codimension-one, analytic foliation on an open dense set in section \ref{sec:foliation_construction}, where we are in the more difficult situation of mixed and bounded holonomy sequences.  We will freely use elementary properties of the grading of $\lieg = \so(2,n)$ and and of its root space decomposition, as
given in sections
\ref{sec:equivalence_principle} and \ref{sec:cartan_decomposition}.

\begin{proof}
Let $\{ p_k \}$ be a holonomy sequence as in the proposition with respect to $\hat{x}_k \rightarrow \hat{x}$.  
  We can assume $p_k = d_k \in A'$ for all $k$.  Write $D_k = \ln d_k$ as usual.

\emph{Step 1: curvature values.}
The action of $p_k$ on the Weyl curvature module $\mathbb{V}$ is by scalars $e^{2 \alpha(D_k) + w \beta(D_k)}$ for $w=-2, -1,0,1,2$.  Our assumptions imply these sequences diverge unless $w=2$, which corresponds to values of $\bar{\kappa}^\sharp(\hat{x})$ in $\mathbb{V}_{+2}$.  On the Cotton module $\mathbb{U}$, the scalars are $e^{3 \alpha(D_k) + w \beta(D_k)}$, which diverge for every $w$ when the holonomy sequence is balanced and linear.  The full Cartan curvature $\kappa(\hat{x})$ is also approximately stable for $\{ d_k \}$, so it lies in $\wedge^2 (\lieg/\liep)^* \otimes \lieg_0$, and can be identified with the Weyl curvature;   
moreover, if $n = 3$, then the Cartan curvature vanishes at $x$.

  \emph{Step 2: propagation of holonomy}.  
  The adjoint action of $p_k$ on $\lieg_{-1} \cong \BR^{1,n-1}$ is by scalars $e^{- \alpha(D_k) + w \beta(D_k)}$ for $w=-1,0,1$.  Our assumptions imply these scalar sequences are all bounded.
By proposition \ref{prop:propagation_holonomy}, there is a neighborhood $\mathcal{U}$ of $0$ in $\lieg_{-1}$ such that $\{ p_k \}$ is a holonomy sequence with respect to all $\hat{y} \in \widehat{U} = \exp_{\hat{x}}(\mathcal{U})$.  Denote $U = \pi(\widehat{U})$, a neighborhood of $x$.  By Step 1, all values of $\bar{\kappa}^\sharp$ on $\pi^{-1}(U)$ belong to $G_0.\mathbb{V}_{+2}$; moreover, for the Cartan curvature $\kappa$, raising an index gives $\kappa^\sharp(\widehat{U}) \subset \mathbb{V}_{+2}$.   Also from Step 1, if $n=3$, then $U$ is conformally flat; by analyticity, so is $M$.  Henceforth we assume $n \geq 4$.

\smallskip

\emph{Step 3: reduction of first-order frame bundle}.
By proposition \ref{prop:line_determination}, there is an algebraic map from $G_0.\mathbb{V}_B^+ \backslash \{ 0 \} \rightarrow {\bf P}(\mathcal{N} \backslash \{ 0 \} )$.  Choosing a different base point $x$ in $U$ or shrinking $U$ if necessary, we may assume (unless $M$ is conformally flat), that $\bar{\kappa}^\sharp$ does not vanish on $\pi^{-1}(U)$.   As $\mathbb{V}_{+2} \subset \mathbb{V}_B^+$, we obtain via $\bar{\kappa}^\sharp$ a $P$-equivariant real-analytic map $\hat{\eta}: \pi^{-1}(U) \rightarrow {\bf P}(\mathcal{N} \backslash \{ 0 \} )$.
Recall the basis $\{ E_1, \ldots, E_n \}$ of $\lieg_{-1}$ introduced in section \ref{sec:cartan_decomposition}.
Let $Q_0 < G_0$ be the stabilizer of $\BR E_1$, with Lie algebra $\lieq_0$.  Denote $\hat{\pi}$ the projection $\widehat{M} \rightarrow \widehat{M}/P^+$.  We define an analytic $Q_0$-reduction
$$ \mathcal{R}' = \hat{\pi}(\hat{\eta}^{-1}( \BR E_1 ))$$
The interpretation of $\mathcal{R}'$ is as the conformal normalized 1-frames at points $x \in U$ in which the strongly stable set of $\{ h_k \}$---the limits in $T_xM$ of sequences of tangent vectors tending to $0$ under $h_k$---is identified with $E_1^\perp$.  Thus $\mathcal{R}'$ determines a distribution of degenerate hyperplanes on $U$, as well as an isotropic line field, which we will denote $\mathcal{L}$.  Observe that, as $\{ p_k \}$ is a holonomy sequence with respect to all $\hat{y}$ in $\widehat{U}$, the projection $U' = \hat{\pi}(\widehat{U})$ is contained in $\mathcal{R}'$.  Thus $\hat{\pi}_* \omega^{-1}_{\hat{x}} (\lieg_{-1})$ is tangent to $\mathcal{R}'_{\hat{\pi}(\hat{x})}$ whenever $\{ p_k \}$ is a holonomy sequence at $\hat{x}$.  

\smallskip

\emph{Step 4: reduction of $\left. \widehat{M} \right|_U$}.  
Define 
$$ \mathcal{R} = \{ \hat{y}  \ : \ \hat{\pi}(\hat{y}) = y' \in \mathcal{R}' \ \mbox{and} \ \hat{\pi}_*(\omega^{-1}_{\hat{y}}(\lieg_{-1})) \subset T_{y'} \mathcal{R}' \}$$ 
Let $S_1 < P^+$ be the subgroup with Lie algebra $\lies_1 \subset \liep^+ \cong \BR^{1,n-1*}$ equal the annihilator of $E_1^\perp$.  The Lie algebra $\lies_1$ comprises all elements $\xi \in \liep^+$ with $[\xi,\lieg_{-1}] \subset \lieq_0$, as can be seen via the root space decomposition of section \ref{sec:cartan_decomposition}, in which $\lies_1$ equals the one-dimensional space $\lieg_{\alpha + \beta}$.  The subgroup $S_1$ equals the normalizer in $P^+$ of 
$\lieg_{-1} + \lieq_0 \subset \lieg/\liep^+$.   It follows that $\mathcal{R}$ is an analytic reduction of $\left. \widehat{M} \right|_U$ to $Q_0 \ltimes S_1$; it equals $\widehat{U}.(Q_0 \ltimes S_1)$.  We can interpret $\mathcal{R}$ as the conformal normalized 2-frames at points $x \in U$ in which $\mathcal{L} = \BR E_1$ and $\mathcal{L}$ is parallel (infinitesimally at $x$).

\emph{Step 5: unipotent reduction}.
Note that $Q_0 < G_0$ preserves $\mathbb{V}_{+2} \subset \mathbb{V}$.  If the Cartan curvature $\kappa(\hat{x})$ is identified with the Weyl curvature $\bar{\kappa}(\hat{x})$ in the sense of step 1 for $\hat{x} \in \mathcal{R}$, and $\bar{\kappa}^\sharp(\hat{x}) \in \mathbb{V}_{+2}$, then $S_1$ fixes $\kappa^\sharp(\hat{x})$ also in the $P$-representation corresponding to the full Cartan curvature.
It follows that $\kappa^\sharp(\mathcal{R}) \subseteq \mathbb{V}_{+2}$.

Recall that $\omega_{\hat{y}}^{-1}(\lieg_{-1}) \subset T_{\hat{y}} \widehat{U}$ for all $\hat{y} \in \widehat{U}$.  It follows that $\omega_{\hat{y}}^{-1}(\lieg_{-1}) \subset  T_{\hat{y}} \mathcal{R}$ for all $\hat{y} \in \mathcal{R}$ because $\Ad(Q_0 \ltimes S_1).\lieg_{-1} \subset \lieg_{-1} + \lieq_0$.  Recall that $\lieu_+$ is the unipotent subalgebra of $\lieg_0$ parametrized by $U_+$ in section \ref{sec:cartan_decomposition}.  As $\lieu_+ \subset \lieq_0$, the subalgebra $\lieg_{-1} + \lieu_+ \subset \omega_{\hat{y}}(T \mathcal{R})$ for all $\hat{y} \in \mathcal{R}$.  Denote the corresponding analytic distribution on $\mathcal{R}$ by $\widehat{\mathcal{D}}$.  

Because $\kappa^\sharp(\mathcal{R}) \in \mathbb{V}_{+2}$, the values 
$$ \kappa_{\hat{x}}(u,v) \in \lieu_+ \qquad \forall \ u,v \in \lieg_{-1}, \hat{x} \in \mathcal{R}$$
Recall the Cartan curvature 2-form from section \ref{sec:cartan_curvature}.  Now take $X = \omega^{-1}(u)$ and $Y = \omega^{-1}(v)$ and apply the usual formula for $d \omega$ to obtain 
\begin{equation}
\label{eqn:curvature_formula}
\kappa_{\hat{x}}(u,v) = \Omega_{\hat{x}}(X,Y) = X.\omega(Y) - Y. \omega(X) - \omega([ X,Y]) + [\omega(X), \omega(Y)] = - \omega([X,Y])
\end{equation}
 and to conclude that $\widehat{\mathcal{D}}$ is integrable in $\mathcal{R}$. 

The result is an analytic reduction $\mathcal{S}$ to the unipotent subgroup $e^{\lieu_+} < G_0$, on which $\omega$ has values in $\lieg_{-1} + \lieu_+$.  This reduction determines a metric in the conformal class on $U$.  The restriction of $\omega$ to $T \mathcal{S}$ is a principal connection with holonomy in $e^{\lieu_+}$.  It is the Levi-Civita connection of the metric---note that $\omega$ is already torsion-free by the regularity conditions in the solution of the equivalence problem (see \cite{sharpe} Ch V).  
We note that the same reduction and metric were previously obtained in \cite[Lemme 6.5]{frances.degenerescence}.
Frances' techniques are similar to ours, though the context of his paper is different and he arrives at this reduction by a different path.

\emph{Step 6: conclusion}.
Because $e^{\lieu_+}$ fixes $E_1$, the reduction $\mathcal{S}$ 
determines
an analytic, isotropic Killing field in $\mathcal{L}$ by $Z(y) = \pi_* \omega_{\hat{y}}^{-1}(E_1)$, for any $\hat{y} \in \mathcal{S}_{y}$.
The vector field $Z$ is parallel, which implies it is Killing.  It is thus an analytic, isotropic local conformal vector field on $U$.  By \cite{amores.killing}, Z extends to a conformal vector field after lifting to the universal cover $\widetilde{M}$ of $M$, which is everywhere isotropic by analyticity; we will denote this vector field also by $Z$.  As $Z$ is \emph{causal}, if it vanishes at some point in $\widetilde{M}$, then $\widetilde{M}$, and hence $M$, is conformally flat by \cite{frances.ccvf}.  

Now suppose $Z$ is nonvanishing.  It determines an analytic hyperplane distribution by $\mathcal{D} = Z^\perp$.  We will show that $Z^\perp$ is an integrable distribution on $U$, which suffices by analyticity to show integrability of $\mathcal{D}$ on $\widetilde{M}$.  For any $\hat{y} \in \mathcal{R}_y$, the subspace $Z^\perp(y) = \pi_* \omega^{-1}_{\hat{y}} (E_1^\perp)$.  As $\omega^{-1}(E_1^\perp) \subset \omega^{-1}(\lieg_{-1})$ is tangent to $\mathcal{R}$, this distribution in fact lifts to a distribution $\widetilde{\mathcal{D}}$ on $\mathcal{R}$.    

Now we again use that $\kappa^\sharp(\mathcal{R}) \subset \mathbb{V}_{+2}$.  First,
$$ \kappa_{\hat{y}}(u,v) = 0 \qquad \forall \ u,v \in E_1^\perp, \hat{y} \in \mathcal{R} $$
Now, again setting $X = \omega^{-1}(u)$ and $Y = \omega^{-1}(v)$ and applying the formula (\ref{eqn:curvature_formula}) gives
\begin{equation*}
\kappa_{\hat{y}}(u,v) = \Omega_{\hat{y}}(X,Y) = X.\omega(Y) - Y. \omega(X) - \omega([ X,Y]) + [\omega(X), \omega(Y)] = - \omega_{\hat{y}}([X,Y]).
\end{equation*}
It follows that $\widetilde{\mathcal{D}}$ is integrable in $\mathcal{R}$.  The leaves project to integral leaves of $Z^\perp$ in $U$.  Thus $\mathcal{D}$ is integrable, yielding a codimension-one, analytic foliation on $\widetilde{M}$.  
  \end{proof}

If $M$ as in the proposition above is also simply connected, then proposition B and Haefliger's Theorem C apply, yielding a contradiction to either of the two conclusions.  We may henceforth assume that there are no balanced, linear holonomy sequences associated to any sequences of conformal transformations of $M$.

\section{Proof given a fixed point}
\label{sec:fixed_points}

Throughout this section, $H$ will be, as in section \ref{sec:stratification_abelian}, a maximal connected, abelian subgroup of $\Conf(M,g)$, obtained
from a maximal commuting collection of conformal vector fields. The maximal compact subgroup of $H$ is denoted $L$, so $H \cong \BR^k \times L$ for some $k \in \BN$.  In this section, $(M,g)$ will be a closed, simply connected, analytic Lorentzian manifold of dimension at least 3.

Here we prove that $H$ must be compact, assuming that $H$ fixes a point of $M$.  In this case, there is a more refined version of the holonomy, which we call \emph{isotropy}.  It is a straightforward extension of the classical notion of isotropy.  Namely, choosing any $\hat{x} \in \pi^{-1}(x)$---which corresponds to choosing a two-jet of a conformal normalized frame at $x$---determines a representation $\Stab_H(x) \rightarrow P$ sending $h$ to the unique $\hat{h}$ satisfying $h.\hat{x}.\hat{h}^{-1} = \hat{x}$.  If $\{h_k \}$ is a sequence in $\Stab_H(x)$, then $\{ \hat{h}_k \}$ defined in this way in terms of a choice of $\hat{x}$ is a holonomy sequence for $\{ h_k \}$ at $x$.  

Given a one-parameter subgroup $\{ h^t \} \leq \Stab_H(x)$, we may choose $\hat{x}$ so that $\{ \hat{h}^t \} \leq G_0$, by \cite[Thm 1.2]{fm.champsconfs} and proposition B. (Since $H$ is abelian, we may actually arrange that the full isotropy subgroup is in $G_0$, as is proved in proposition \ref{prop:simult_linear} below.)  

Fixed points will fall into two cases, according to whether they contain hyperbolic or unipotent isotropy.  We may assume that no one-parameter subgroup of the isotropy is hyperbolic of contracting type by proposition \ref{prop:contracting_proof}, or of balanced type by proposition \ref{prop:balanced_proof}.   In the remaining mixed and bounded hyperbolic cases, we ``slide'' along an invariant, null geodesic to obtain another fixed point, with contracting hyperbolic isotropy, and apply proposition \ref{prop:contracting_proof}.  The contradiction in the unipotent case stems from the incompatibility of the linear dynamics with the Gromov stratification.

\subsection{Jordan decomposition of isotropy of a conformal flow}
\label{sec:isotropy_jordan_decomp}

It is another consequence of Gromov's stratification that isotropy groups, for a real-analytic rigid geometric structure on a closed, simply connected manifold, are algebraic \cite[3.5.B]{gromov.rgs}; in our case, this means they are algebraic in $P$.  It follows (see, eg, \cite[Thm 4.3.3]{morris.ratners.thms.book}) that isotropy groups are closed under Jordan decomposition---that is, if $\{ \phi^t \} \leq \Stab_H(x)$, and if $p^t = p_h^t p_u^t p_e^t$ is the Jordan decomposition of a corresponding isotropy flow into semisimple, unipotent, and elliptic one-parameter subgroups of $P$, then these are the isotropies of flows $\{ \phi^t_h\} , \{ \phi^t_u \}, \{ \phi^t_e \}$, respectively, each belonging to $\Stab_H(x)$, and multiplying together to give $\phi^t$, for all $t$.
%
%
A singularity of  $\{ \phi^t \} \leq \Conf(M,g)$ will be called hyperbolic, unipotent or elliptic if the isotropy at this singularity is of the corresponding type, and the flows $\{ \phi_h^t \}$, $\{ \phi_u^t \}$, and $\{ \phi_e^t \}$ will be called the hyperbolic, unipotent, and elliptic components, respectively, of $\{ \phi^t \}$ at $x$.

\begin{lemma}
\label{lem:hyperbolic_implies_no_unipotent_component}
Let $x \in M$.
Let $S \leq \Conf(M)$ be an abelian group stabilizing $x$, with isotropy $\widehat{S}_{x}$.  If $\widehat{S}_{x}$ contains a nontrivial one-parameter subgroup of hyperbolic elements, then it contains no nontrivial unipotents.  
\end{lemma}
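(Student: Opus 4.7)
The strategy is contradiction. Suppose $\widehat{S}_x$ contains both a nontrivial hyperbolic one-parameter subgroup $\{p_h^t=\exp(tD)\}$ and a nontrivial unipotent $u=\exp N$. Abelianness of $S$ forces $[D,N]=0$. Because $\{\phi_h^t\}$ is an analytic conformal flow vanishing at $x$, the normal-forms theorem of \cite{fm.champsconfs} combined with Proposition~B shows it is linearizable near $x$; hence, after a suitable choice of $\hat{x}\in\pi^{-1}(x)$ and a further $G_0$-conjugation, I may assume $D\in\liea$ with $D\neq 0$.

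Next I locate $N$. Since $N\in\liep$ commutes with $D\in\liea$, it lies in the $0$-weight space of $\ad D$ inside $\liep$, namely
\[
C_\liep(D)=\liea+\liem+\sum_{\lambda\in\Phi(\liep),\,\lambda(D)=0}\lieg_\lambda,
\]
where $\Phi(\liep)=\{\pm\beta,\alpha-\beta,\alpha,\alpha+\beta\}$. The subspace $\liea+\liem$ consists entirely of semisimple elements ($\liea$ is split abelian, $\liem$ is compact, and $[\liea,\liem]=0$), so it contains no nonzero nilpotent element. Thus if no root in $\Phi(\liep)$ vanishes on $D$, the sum above collapses to $\liea+\liem$ and forces $N=0$, a contradiction. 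Hence $D$ must lie on one of the four hyperplanes $\{\beta=0\}$, $\{\alpha=0\}$, $\{\alpha-\beta=0\}$, or $\{\alpha+\beta=0\}$ in $\liea$.

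In each of these four cases I exhibit a vertically equivalent $A'P^+$-form of the holonomy sequence $\{\exp(t_k D)\}$ that is ruled out by the reductions of the previous sections. When $D=(a,0)$ with $a\neq 0$, the sequence $\{\exp(t_k D)\}$ (for the appropriate sign of $t_k\to\infty$) is already in $A'P^+$-form with $\tau_k\equiv 1$, $\beta(t_k D)=0$, and $(\alpha+\beta)(t_k D)=t_k a\to+\infty$: this is a contracting linear holonomy, which via Proposition~\ref{prop:contracting_proof} and analyticity would force conformal flatness of $M$, contradicting Proposition~B. When $D=(0,b)$, a $K$-representative of the Weyl reflection $s_{\alpha-\beta}$ moves $D$ to $(b,0)$, returning to the previous case. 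When $D$ lies on $\{\alpha-\beta=0\}\cup\{\alpha+\beta=0\}$, a Weyl conjugate places $D$ inside $A'$ on the line $\{\alpha+\beta=0\}$ with both $\alpha(D)$ and $\beta(D)$ nonzero, producing a balanced linear holonomy sequence, which via Proposition~\ref{prop:balanced_proof}, Proposition~B, and Haefliger's Theorem~C again yields a contradiction.

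The point requiring care is that the vertical equivalence genuinely delivers an $A'P^+$-form to which Propositions~\ref{prop:contracting_proof} or \ref{prop:balanced_proof} applies. Since the isotropy sequence $\{\exp(t_k D)\}$ has no $P^+$-component, conjugation by a fixed $k\in K$ representing the desired element of the restricted Weyl group produces the vertically equivalent sequence $\{\exp(t_k\Ad(k)D)\}$, which still lies entirely in $A$; in particular $\tau_k\equiv 1$ throughout, so the linearity hypothesis of Proposition~\ref{prop:balanced_proof} is automatic, and the condition $\beta\leq 0$ defining $A'$ is obtained by choosing $k$ in the correct Weyl coset. With this bookkeeping in place, the case analysis above runs without further obstruction.
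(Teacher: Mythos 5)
Your proof has a genuine gap in what you call Case~2, the case $D=(0,b)$ with $b\neq 0$ (equivalently, $\alpha(D)=0$, so $\lieg_\alpha\subset\liep^+$ centralizes $D$).

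The issue is that the conjugation by a $K$-representative of $s_{\alpha-\beta}$ is \emph{not} a vertical equivalence.  Vertical equivalence (as used throughout the paper, and in particular when passing to $A'P^+$-form) replaces $p_k$ by $l_k p_k l_k'$ with $l_k,l_k'\in P$.  For a linear pointwise holonomy sequence at a fixed point, this amounts to $\Ad(P)$-conjugacy of $D$, since the choice of $\hat{x}\in\pi^{-1}(x)$ is exactly a $P$-coset.  But $\Ad(P)$ on $\liea$ factors through $\Ad(G_0)$, and the only Weyl-type symmetry realized inside $G_0\cong\CO(1,n-1)$ is $s_\beta$ (sign change on the $b$-coordinate); the component $a$, being the central dilation direction, is fixed by $\Ad(G_0)$, and $\Ad(P^+)$ only adds $\liep^+$-terms.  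A representative of $s_{\alpha-\beta}$, which swaps $(a,b)\mapsto(b,a)$, swaps $e_0\leftrightarrow e_1$ and therefore does not stabilize $\BR e_0$, so it lies in $K$ but not in $P$.  Hence $(0,b)$ is $P$-conjugate only to $(0,\pm b)$, never to $(b,0)$.  With $D=(0,b)$ in $A'P^+$-form ($\beta(D_k)=t_kb\le 0$, $\alpha(D_k)\equiv 0$), the sequence is of \emph{bounded distortion} type in the sense of Definition~\ref{def:holonomy_trichotomy}, and neither Proposition~\ref{prop:contracting_proof} (contracting) nor Proposition~\ref{prop:balanced_proof} (balanced, linear) applies.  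Your case analysis therefore does not close.  (Your Cases 1, 3, 4 are fine: $s_\beta$ is realized in $G_0\subset P$, so $(a,a)\sim_P(a,-a)$, and those cases do yield contracting or balanced linear holonomy.)

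This is also the point at which the paper's proof diverges from yours. For any $b\neq 0$ (including $a=0$) the paper does not try to classify the holonomy type of $\{\exp(tD)\}$.  Instead, it observes that the $\lieg_0$-component $Y_0$ of any nilpotent $Y\in\liep$ with $[D,Y]=0$ must lie in $C_{\lieg_0}(D)=\liea+\liem$, and being nilpotent it must then vanish, forcing $Y\in\liep^+$.  A nonzero $Y\in\liep^+$ stays in $\liep^+$ under $\Ad(P)$ (since $\liep^+$ is abelian and $G_0$-invariant), so the corresponding conformal flow is nonlinearizable in any frame; Theorem~1.2 of \cite{fm.champsconfs} then forces conformal flatness, contradicting Proposition~B.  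To repair your argument you would need to add this nonlinearizability step in the $\alpha(D)=0$ case (where, by exactly your own centralizer computation, $N$ must lie in $\lieg_\alpha\subset\liep^+$), rather than appealing to a Weyl conjugation that is unavailable in $P$.
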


\begin{proof}
Let $\widehat{S}_x$ be defined with respect to $\hat{x} \in \pi^{-1}(x)$, and let $\widehat{X} \in \hat{\lies}_x$ have Jordan decomposition with nontrivial hyperbolic component $\widehat{X}_h$.  Let $\{ \phi^t_h \} \leq \Stab_H(x)$ be as in the discussion above.
Up to conjugacy in $P$, we may assume $\widehat{X}_h$ belongs to the Cartan subalgebra $\liea$.  Let it have parameters $(a,b)$ according to (\ref{eqn:cartan_subalgebra}).
If $b = 0$, then proposition \ref{prop:contracting_proof} applies to $\{ \phi^t_h \}$, leading to conformal flatness and a contradiction of proposition B.  Then $b \neq 0$, and  the centralizer of $X_h$ in $\liep$ is contained in $\liea \oplus \liem \oplus \liep^+$ (as can be seen, for example, from the root space decomposition in section \ref{sec:cartan_decomposition}); in particular, $X_u \in \liep^+$.

Now let $u \in \widehat{S}_x$ be a nontrivial unipotent.  Since $\widehat{S}_x$ is algebraic, $u$ belongs to a one-parameter unipotent subgroup $\{ e^{tY} \}$ with $[X,Y]=0$.    Decompose $Y$ according to the grading $\liep = \lieg_0 + \lieg_1$ as $Y_0 + Y_1$.  The component $[X,Y]_0$ equals $[X_h + X_e, Y_0]$.  Vanishing of the latter implies $Y_0 \in \liea + \liem$,
which means since $Y$ is nilpotent that $Y_0 = 0$.  Thus, if $Y \in \lieg_1 = \liep^+$ were nonzero, the corresponding conformal flow would be nonlinearizable, and Theorem 1.2 of \cite{fm.champsconfs} would imply conformal flatness of $(M,g)$, a contradiction.
\end{proof}

The above lemma implies in particular for $\{ \phi^t \} \leq \Stab_H(x)$ that if $\{ \phi^t_h \}$ is nontrivial, then $\{ \phi^t_u \}$ is trivial.

\begin{proposition}
\label{prop:simult_linear}
Let $x \in M$.
Let $S \leq \Conf(M,g)$ be an abelian group stabilizing $x$, with isotropy $\widehat{S}_{x}$.  Then $S$ is linearizable at $x$---that is, $\widehat{S}_x$ is conjugate in $P$ into $G_0$.  
\end{proposition}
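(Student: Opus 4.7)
The plan is to analyze $\widehat{S}_x$ via its Jordan decomposition. By Gromov's algebraicity result (see Theorem \ref{thm:finite_components}), $\widehat{S}_x \leq P$ is algebraic; being abelian and algebraic, it factors as a commuting product $\widehat{S}_x = \widehat{S}_x^{ss} \cdot \widehat{S}_x^u$ of its semisimple and unipotent parts, both algebraic subgroups. By Lemma \ref{lem:hyperbolic_implies_no_unipotent_component}, exactly one of two alternatives holds: either $\widehat{S}_x^u = 1$, or $\widehat{S}_x^{ss}$ contains no nontrivial hyperbolic element and is therefore a compact abelian group (purely elliptic).

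In the first alternative, $\widehat{S}_x$ is abelian reductive in the parabolic $P$, and by the classical theorem of Mostow every reductive subgroup is conjugate into the Levi factor $G_0$. In the second alternative I would first conjugate the compact part $\widehat{S}_x^{ss}$ into a maximal compact subgroup of $G_0$, and then work inside the centralizer $Z_P(\widehat{S}_x^{ss}) = Z_{G_0}(\widehat{S}_x^{ss}) \ltimes Z_{P^+}(\widehat{S}_x^{ss})$ in order to conjugate $\widehat{S}_x^u$ into $Z_{G_0}(\widehat{S}_x^{ss}) \subseteq G_0$. A key preliminary observation is that $\lieu := \Lie(\widehat{S}_x^u)$ satisfies $\lieu \cap \liep^+ = 0$: since $\liep^+$ is a $P$-invariant ideal, $\Ad P \cdot \liep^+ \subseteq \liep^+$, so a nonzero $Y \in \liep^+$ is not $P$-conjugate to any element of $\lieg_0$; by Frances--Melnick \cite{fm.champsconfs} applied to the flow of $Y$, this would force $(M,g)$ to be conformally flat, contradicting Proposition B. Hence $\lieu$ projects injectively onto an abelian nilpotent subalgebra $\lieu^0 \subseteq Z_{\lieg_0}(\widehat{S}_x^{ss}) \cap \so(1,n-1)$, and one may write $\lieu = \{ Y^0 + \phi(Y^0) : Y^0 \in \lieu^0 \}$ for a linear $\phi : \lieu^0 \to Z_{\liep^+}(\widehat{S}_x^{ss})$.

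The abelianness of $\lieu$ is precisely the $1$-cocycle condition $[Y^0, \phi(Z^0)] = [Z^0, \phi(Y^0)]$ for the adjoint action, while the individual linearizability of each $Y^0 + \phi(Y^0)$, again by Frances--Melnick and Proposition B, is precisely the \emph{pointwise} coboundary condition $\phi(Y^0) \in \ad Y^0(Z_{\liep^+}(\widehat{S}_x^{ss}))$ for every $Y^0 \in \lieu^0$. The main step---and the principal obstacle---is to promote this pointwise property into a global coboundary, namely to produce a single $X \in Z_{\liep^+}(\widehat{S}_x^{ss})$ with $\phi(Y^0) = \ad Y^0(X)$ for all $Y^0$; conjugation by $\exp(X)$ then simultaneously linearizes $\widehat{S}_x^u$ while fixing $\widehat{S}_x^{ss}$. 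After further normalizing $\lieu^0$ into the horocyclic subalgebra $\lieg_\beta \subseteq \so(1,n-1)$, which contains every abelian nilpotent subalgebra up to conjugacy (since commuting nonzero elements of $\lieg_\beta$ and $\lieg_{-\beta}$ would violate Lemma \ref{lem:bracket_nondegeneracy}), this becomes an explicit calculation in the root-space decomposition of Section \ref{sec:cartan_decomposition}: the $\ad$-action of $\lieg_\beta$ on $\liep^+ = \lieg_{\alpha - \beta} \oplus \lieg_\alpha \oplus \lieg_{\alpha + \beta}$ is two-step nilpotent, with nondegenerate bilinear pairings $[\lieg_\beta, \lieg_{\alpha - \beta}] = \lieg_\alpha$ and $[\lieg_\beta, \lieg_\alpha] = \lieg_{\alpha+\beta}$; combining the cocycle identity with the pointwise coboundary condition forces the $\lieg_\alpha$-component of $\phi$ to be a scalar multiple of the canonical identification $\lieu^0 \hookrightarrow \lieg_\alpha$ given by $Y^0 \mapsto [Y^0, e_-]$, and this scalar, together with the $\lieg_{\alpha+\beta}$-coefficients, assembles into the desired common potential $X$.
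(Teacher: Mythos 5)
Your proof is correct and runs essentially parallel to the paper's: both use Gromov's algebraicity together with Jordan decomposition, Lemma \ref{lem:hyperbolic_implies_no_unipotent_component} to reduce to a reductive isotropy or a commuting product of unipotent and compact parts, Frances--Melnick together with Proposition B as the engine producing pointwise linearizability, an explicit root-space calculation in $\liep^+$ to globalize it for the unipotent part, and averaging for the compact factor. The genuine differences are two. For the reductive case you invoke Mostow's conjugacy theorem, while the paper shows directly (via Proposition \ref{prop:contracting_proof} and the rank-one structure of $\so(1,n-1)$) that the hyperbolic part is a single one-parameter group and linearizes it by Frances--Melnick before averaging; Mostow is arguably cleaner, though the paper stays within its chosen toolkit. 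For the unipotent case you set up the cocycle/coboundary problem inside $Z_P(\widehat{S}_x^{ss})$, whereas the paper conjugates $\widehat{U}_x$ into $G_0$ first and only then averages the $\Ad\widehat{U}_x$-invariant complement $\lieg_{-1}$ over the compact factor, an order of operations that sidesteps the need to exhibit the common potential inside a centralizer. A few loose ends you should tighten: the constancy of $c(Y^0)$ is forced by linearity of $\phi_\alpha$ together with the pointwise coboundary condition, not by the cocycle identity (which becomes automatic once $c$ is constant); the $\lieg_{\alpha+\beta}$-component of $\phi$ is always a coboundary, because the pairing $\lieg_\beta \times \lieg_\alpha \to \lieg_{\alpha+\beta}$ is nondegenerate, so it contributes no obstruction and should not be treated as if it were one; the common potential $X$ must centralize $\widehat{S}_x^{ss}$, which is achieved by averaging $X$ over that compact group using equivariance of $\phi$, but you do not address this; and the conjugacy of abelian nilpotent subalgebras of $\so(1,n-1)$ into $\lieg_\beta$ uses the fact that two commuting nonzero nilpotents share their unique fixed isotropic line and hence lie in a common parabolic whose only nilpotents are in $\lieg_\beta$, a bit more than Lemma \ref{lem:bracket_nondegeneracy} alone supplies.
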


\begin{proof}
Linearizability of $\widehat{S}_x \leq P$ is equivalent to existence of an $\Ad \hat{S}_x$-invariant complement to $\liep$ in $\lieg$.  The complementary subspaces to $\liep$ in $\lieg$ form an affine space modeled on $\Hom(\lieg/\liep, \liep)$, and $P$ acts affinely on this space.  If $\widehat{S}_x$ is compact, such an invariant complement can be obtained by averaging. 

An element $X \in \liep$ is conjugate in $P$ into $G_0$ if and only if, when decomposed into $X_0 + X_1$ according to the grading $\lieg_0 \oplus \lieg_1$, the element $X_1 \in \lieg_1$ is in the image of $X_0$, for the representation by $\mbox{ad}$ of $\lieg_0$ on $\lieg_1 = \liep^+ \cong \BR^{1,n-1*}$; this condition is independent of the choice of decomposition $\liep \cong \lieg_0 \ltimes \liep^+$.  If $X_1$ as above equals $[X_0,Z]$ for $Z \in \liep^+$, conjugating $X$ by $e^{Z}$ yields $X_0$.

Suppose $\widehat{S}_x$ contains a nontrivial, connected, unipotent subgroup $\widehat{U}_x$.  The projection of $\widehat{\lieu}_x$ to $\lieg_0$ is abelian and comprises nilpotent elements.  Up to conjugation, it annihilates $E_1$, and is thus contained in $\lieu_+$ (where as before $\lieu_+$ is the unipotent subalgebra of $\lieg_0$ parametrized by $U_+$ in section \ref{sec:cartan_decomposition}).  The images in $\liep^+$ of elements of $\lieu_+$ are two-dimensional and degenerate, contained in the annilator of $E_1$.  Then, given two linearly independent elements $X, Y \in \widehat{\mathfrak{u}}_x$, with projections to $\lieg_0$ in $\lieu_+$, we can suppose, thanks again to \cite[Thm 1.2]{fm.champsconfs} and proposition B, that one of them, say, $X$, is in $\lieg_0$ and that $Y$ satisfies the linearizability criterion: if $Y = Y_0 + Y_1$ with $Y_1 \neq 0$, then $Y_1 \in \mbox{im}(Y_0)$.  As $X_0$ and $Y_0$ are linearly independent, the intersection of $\mbox{im}(X_0)$ with $\mbox{im}(Y_0)$ is a line $\ell$, the annihilator of $E^\perp_1$, which equals $\lieg_{\alpha + \beta}$.  If $Y_1 \notin \ell$, then it is not in the image of $X_0 + Y_0$, so $X + Y$ is not linearizable, which would again imply $(M,g)$ is conformally flat and lead to a contradiction.  We conclude that the $\liep^+$-components of all elements in $\widehat{\mathfrak{u}}_x$ belong to $\ell$.  Then a commuting product of conjugations $\Ad(e^{Z_i})$ with $Z_i \in \lieg_{\alpha}$ sends $\widehat{U}_x$ into $G_0$.  By lemma \ref{lem:hyperbolic_implies_no_unipotent_component}, $\widehat{S}_x$ contains no nontrivial, connected, hyperbolic subgroup.  Assuming $\widehat{U}_x$ to be a maximal connected, unipotent subgroup, $\widehat{S}_x$ is the commutative product of $\widehat{U}_x$ with a torus subgroup.  Averaging an $\Ad \widehat{U}_x$-invariant complement to $\liep$ in $\lieg$ over this compact subgroup yields an $\widehat{S}_x$-invariant complementary subspace.

Next suppose $\widehat{S}_x$ contains a nontrivial, connected, hyperbolic subgroup.  
By proposition \ref{prop:contracting_proof} and proposition B, it does not intersect the center of $G_0$.  Then it projects isomorphically to a connected, abelian group of hyperbolic elements of $G_0' \cong \mbox{O}(1,n-1)$, so it comprises a single one-parameter group $ \{ \phi^t_h \}$.  By \cite[Thm 1.2]{fm.champsconfs} and proposition B, this subgroup is linearizable.  Again, by lemma \ref{lem:hyperbolic_implies_no_unipotent_component}, $\widehat{S}_x$ is the commutative product of $\{ \phi^t_h \}$ with a torus subgroup.
As above, we average 
to obtain an $\widehat{S}_x$-invariant complementary subspace.
\end{proof}

\subsection{Hyperbolic fixed points}

The goal of this section is to prove the following: 

\begin{proposition}
\label{prop:no_hyperbolic_singularity}
Let $H$ fix a point $x_0 \in M$.  Then the isotropy at $x_0$ contains no nontrivial hyperbolic one-parameter subgroup.
\end{proposition}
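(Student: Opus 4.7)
Suppose for contradiction that $\widehat{H}_{x_0}$ contains a nontrivial hyperbolic one-parameter subgroup $\{\hat{\phi}^t_h\}$. By Proposition \ref{prop:simult_linear}, after choosing $\hat{x}_0 \in \pi^{-1}(x_0)$ appropriately, $\widehat{H}_{x_0} \subset G_0$; further $G_0$-conjugation places the hyperbolic generator $X$ of $\{\hat{\phi}^t_h\}$ into the $\BR$-split Cartan subalgebra $\liea$. Denote $(a,b) = (\alpha(X), \beta(X)) \neq (0,0)$.

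The plan proceeds in two stages. First I would analyze the isotropy sequences at $x_0$ itself, using the $P$-realizable Weyl element $r_\beta \in K \cap G_0 \subset P$ combined with forward/backward flow directions $t_k \to \pm \infty$. These yield linear ($\tau_k \equiv 1$) $A'P^+$-form generators in the Weyl sub-orbit $\{(\pm a, \pm b)\}$, and a short case analysis by sign of $b$ shows the resulting values of $\alpha + \beta$ have maximum $|a| - |b|$, and include $0$ precisely when $|a| = |b|$. Hence $|a| > |b|$ produces a contracting linear holonomy sequence in $H$, contradicting (via Proposition \ref{prop:contracting_proof} and Proposition B) the running assumption of no contracting sequence; and $|a| = |b|$ with $(a,b) \neq (0,0)$ produces a balanced linear sequence, contradicting (via Proposition \ref{prop:balanced_proof}, Proposition B, and Haefliger's Theorem C) the running assumption of no balanced linear sequence. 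We are reduced to the subcase $|a| < |b|$.

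In this remaining subcase, $a+b \neq 0$, so the weight $-\alpha-\beta$ is nonzero on $X$. Choose a nonzero $E \in \lieg_{-\alpha-\beta}$; then the conformal geodesic $c(s) = \pi \exp(\hat{x}_0, sE)$ is a null curve which is setwise $\{\hat{\phi}^t_h\}$-invariant and is reparametrized by $c(s) \mapsto c(e^{t(-a-b)} s)$. The sign of $-a-b$ determines a direction (forward or backward time) along which $\{\hat{\phi}^t_h\}$ pushes points of $c \setminus \{x_0\}$ away from $x_0$; by compactness of $M$, the $\{\hat{\phi}^t_h\}$-orbit of such a point has limit points in $M$, each of which must itself be $\{\hat{\phi}^t_h\}$-fixed. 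This produces a new fixed point $x_1 \neq x_0$.

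The most delicate step, and the main obstacle I anticipate, is to identify the isotropy of $\{\hat{\phi}^t_h\}$ at $x_1$ and show it yields a contracting $A'P^+$-form. The model $\Ein^{1,n-1}$ guides intuition: a hyperbolic element of $O(2,n)$ there has exactly two fixed points, with isotropies related by the Weyl reflection $r_{\alpha+\beta} \notin P$, which geometrically corresponds to traversing the invariant null geodesic between them. I would use Proposition \ref{prop:propagation_holonomy}, extended by a limiting argument along $c$ toward $x_1$, to deduce that the isotropy generator at $x_1$ is, up to $\{1, r_\beta\}$-conjugation in $P$ and flow direction, the Weyl image $X' = (-b,-a)$. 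Since $|\alpha(X')| = |b| > |a| = |\beta(X')|$, applying the first-stage analysis at $x_1$ now produces a contracting linear holonomy sequence in $H$, contradicting (via Proposition \ref{prop:contracting_proof} and Proposition B) the running assumption and completing the proof.
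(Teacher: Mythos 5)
Your strategy matches the paper's exactly: reduce to $|a|<|b|$ via Propositions \ref{prop:contracting_proof} and \ref{prop:balanced_proof}, travel along an invariant null conformal geodesic to a second fixed point, identify the isotropy there as a Weyl image of $(a,b)$ with the roles of $a,b$ exchanged, and conclude via the contracting case. (The paper takes the direction $E_1\in\lieg_{\beta-\alpha}$ rather than your $E\in\lieg_{-\alpha-\beta}$; this changes the Weyl image from your $(-b,-a)$ to $(b,a)$, but both have $|\alpha|=|b|>|a|=|\beta|$, so the endgame is identical.) However there are two genuine gaps.

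\textbf{Gap 1: fixed point at the far end.} You assert that ``by compactness of $M$, the $\{\hat\phi^t_h\}$-orbit of such a point has limit points in $M$, each of which must itself be $\{\hat\phi^t_h\}$-fixed.'' This is not true in general: the $\omega$-limit set of a flow orbit is $\phi^t$-invariant but need not consist of fixed points, even when the orbit is a linear reparametrization of a geodesic --- the closure of the geodesic in $M$ can be complicated. The paper proves that the far end is a single $H$-fixed point $x_1$ by appealing to Theorem \ref{thm:gromov_stratification}~(3): the closure $\overline{\alpha^+}$ is semianalytic and locally connected, which forces $\overline{\alpha^+}=\{x_0\}\cup\alpha^+\cup\{x_1\}$, and then $\{x_1\}$ is $H$-invariant by elimination. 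Your sketch skips the entire Gromov-stratification input that makes the ``second fixed point'' step legitimate.

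\textbf{Gap 2: isotropy at $x_1$.} You flag this as ``the most delicate step, and the main obstacle I anticipate'' and propose Proposition \ref{prop:propagation_holonomy} ``extended by a limiting argument,'' but this is precisely where the real work lies, and no version of Proposition \ref{prop:propagation_holonomy} applies directly because the parameter tends to $\infty$ along the geodesic. The paper's resolution has three nontrivial ingredients: (i) a reparametrization of the closed null geodesic $\overline{\alpha^+}$ by $[0,\pi/2]$ using a compact one-parameter subgroup $e^{\theta R}$ in $G$, together with a smoothness argument (Corollary \ref{cor:smooth_parametrization}) showing the parametrization is smooth up to the endpoint; (ii) a linear first-order ODE argument, comparing the $\omega$-constant lift $\hat\beta$ against an arbitrary smooth lift $\tilde\beta$, proving that $\hat\beta(\theta)$ converges to some $\hat{x}_1\in\pi^{-1}(x_1)$ as $\theta\to\pi/2$ --- completeness of $P$ plus boundedness of the $\omega$-derivative on the compact interval is what drives this; (iii) a development into the model $\SO(2,n)$ to compute $\lim_{\theta\to\pi/2}(\ell_{\theta_t}^{-1}d^t\ell_\theta)$, which gives the Weyl-image isotropy $g^t$. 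These steps are the heart of the proposition; your sketch records the expected answer but does not supply an argument that would produce it.
\end{document}
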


We denote throughout this section by $X$ an infinitesimal generator of $\{ \phi^t \}$, which we assume admits a hyperbolic singularity $x_0 \in M$.
For a well chosen point $\hx_0 \in \pi^{-1}(x_0)$, the image of $X$ in the isotropy algebra $\hat{\lieh}_{x_0}$ belongs to the Cartan subalgebra $\liea$.  Let it correspond to $(a,b)$ in the parametrization (\ref{eqn:cartan_subalgebra}), so the isotropy of $\{ \phi^t \}$ is $\{ d^t \}$ with $\alpha(\ln d^t) = ta$ and $\beta(\ln d^t) = tb$ .  By proposition \ref{prop:contracting_proof}, $|a| \leq |b|$ (If $a < 0$, then we simply replace $X$ with $-X$).  By proposition \ref{prop:balanced_proof}, we can assume $|a| < |b|$.


Let, as above, $\mathcal{U} \subseteq \mathfrak{g}_{-1}$ be an open neighborhood of $0$ on which $\Phi = \pi \circ \exp_{\hat{x}_0} : \mathcal{U} \rightarrow M$ is a diffeomorphism onto its image.  In the basis  $\{ E_1, \ldots, E_n\}$,  $\left. \Ad(d^t) \right|_{\mathfrak{g}_{-1}}$ has the form
\begin{equation*}
h^t = 
\begin{pmatrix}
e^{t(b-a)} & & & & \\
 & e^{-ta} & & & \\
 & & \ddots & & \\
 & & & e^{-ta} & \\
 & & & & e^{-t(b+a)}
\end{pmatrix}
\end{equation*}
Note that $E_1$ and $E_n$ are lightlike and orthogonal to $E_2, \ldots, E_{n-1}$, which are spacelike.
We see from
$$ \phi^t \circ \Phi = \Phi \circ \Ad(d^t)$$
that the fixed set of $\{ \phi^t \}$ in $\Phi(\mathcal{U})$ is spacelike if $a=0$ and otherwise just the point $x_0$.  

The isotropy of the maximal compact factor $L < H$ at $\hat{x}_0$ has compact image in $\mbox{CO}(T_{x_0} M,g_{x_0})$ and commutes with $\{h^t \}$, so it fixes the lightlike vectors $D_0\Phi(E_1) = \pi_* \omega_{\hat{x}_0}^{-1}(E_1)$ and $D_0\Phi(E_n) = \pi_* \omega_{\hat{x}_0}^{-1}(E_n)$.
Let $\hat{\alpha}(s) = \exp_{\hat{x}_0} (s E_1)$ and $\alpha = \pi \circ \hat{\alpha}$.  The latter curve is a lightlike geodesic.  The calculation in $\SO(2,n)$ that
$$ d^t e^{s E_1} = e^{A_ts E_1} d^t \qquad A_t = e^{t(b-a)} $$
gives in $\widehat{M}$ by (\ref{eqn:isotropy_exp_curves})
\begin{eqnarray}
  \label{eqn.axn.on.alphahat}
  \phi^t. \hat{\alpha}(s) & = & \hat{\alpha}(A_ts). d^t \qquad A_t = e^{t(b-a)} 
\end{eqnarray}

  In particular,
$$ \phi^t .\alpha(s) = \alpha(e^{t(b-a)} s),$$
and $\alpha$ and $\hat{\alpha}$ are defined for all $s \in \BR$.  The curve $\alpha$ is moreover pointwise fixed by $L$.  Then $\alpha^+ = \{ \alpha(s) \ : \ s > 0 \}$ is an $H$-orbit.  Let $x_1$ be an accumulation point of $\{ \alpha(s_k) \}$ for a sequence $s_k \rightarrow \infty$.  It is in the closure $\overline{\alpha^+}$.

\begin{lemma} $\overline{\alpha^+} = \{ x_0 \} \cup \alpha^+ \cup \{ x_1 \}$ and $x_1$ is a fixed point for $H$.
\end{lemma}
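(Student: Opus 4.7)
The plan is to analyze the boundary $\partial\alpha^+ := \overline{\alpha^+}\setminus\alpha^+$ and show it consists of exactly two (possibly equal) $H$-fixed points.

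\emph{Finiteness of the boundary.} By Theorem~\ref{thm:gromov_stratification}(2), the orbit $\alpha^+$ is locally closed in $M$, hence open in $\overline{\alpha^+}$; by part~(3), $\overline{\alpha^+}$ is semianalytic. Because $b-a\neq 0$, the relation $\phi^t.\alpha(s)=\alpha(A_t s)=\alpha(s)$ forces $t=0$, so $\{\phi^t\}$ acts freely on $\alpha^+$, and $\alpha^+$ is a $1$-dimensional orbit. Consequently $\partial\alpha^+$ is a closed semianalytic subset of dimension zero in the compact set $\overline{\alpha^+}$; such a set is locally finite, hence finite.

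\emph{Embedded parametrization and classification of limits.} An orbit of a Lie group action that is not embedded in $M$ accumulates on itself in the ambient topology and so fails to be locally closed; combined with the freeness of the $\phi^t$-action, local closedness therefore forces $\alpha\colon(0,\infty)\to M$ to be an analytic embedding. Every point of $\partial\alpha^+$ is thus a limit $\lim\alpha(s_k)$ in which $s_k$ leaves every compact subinterval of $(0,\infty)$, i.e.\ $s_k\to 0^+$ or $s_k\to+\infty$. Limits of the first type equal $x_0=\alpha(0)$ by continuity, and $x_0$ does lie in $\partial\alpha^+$, since $x_0$ is fixed by $\{\phi^t\}$ while no point of $\alpha^+$ is. Via the substitution $s=A_t$, limits of the second type coincide with the $\omega$-limit (or $\alpha$-limit, depending on the sign of $b-a$) of $\alpha(1)$ under $\{\phi^t\}$.

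\emph{Uniqueness of $x_1$ and $H$-invariance.} The classical fact that $\omega$-limit sets of continuous flows on compact metric spaces are nonempty, compact, and connected, combined with the finiteness of $\partial\alpha^+$, forces this $\omega$-limit set to reduce to a single point, which is $x_1$. Hence $\partial\alpha^+=\{x_0,x_1\}$ and $\overline{\alpha^+}=\{x_0\}\cup\alpha^+\cup\{x_1\}$. Finally, $H$ preserves the two-point set $\partial\alpha^+$ and is connected, so it fixes both points; in particular, $x_1$ is $H$-fixed. The two substantive ingredients beyond the set-up are Gromov's stratification, used to control $\overline{\alpha^+}$ as a finite-boundary $1$-dimensional semianalytic set, and the connectedness of $\omega$-limit sets from classical topological dynamics; once these are in place, the remaining steps are routine.
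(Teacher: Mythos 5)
The proof takes a genuinely different route from the paper's. The paper invokes local connectedness of the semianalytic closure $\overline{\alpha^+}$ to produce, for a small neighborhood $U$ of $x_1$, an injective continuous path in $U\cap\overline{\alpha^+}$ from $\alpha(T)$ to $x_1$, and then shows by analyzing the preimages under $\alpha$ that this path must coincide with a terminal segment $\alpha[T,\infty)\cup\{x_1\}$. You instead work with the two ends of the embedded arc $\alpha\colon(0,\infty)\to M$ and use the classical fact that $\omega$-limit sets of a flow on a compact space are nonempty, compact and connected, reducing everything to showing that $\partial\alpha^+$ contains no nondegenerate continuum. Both approaches have the same foundations (Gromov's stratification for local closedness/semianalyticity, and the embedding property of the orbit map, which the paper gets from Zimmer's lemma 2.1.15), but the dynamical-systems packaging is cleaner and makes the two boundary points emerge symmetrically.

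There is, however, a soft spot in your finiteness step. You assert that $\partial\alpha^+=\overline{\alpha^+}\setminus\alpha^+$ is semianalytic of dimension zero; but Theorem \ref{thm:gromov_stratification}(3) only says the \emph{closure} $\overline{\alpha^+}$ is semianalytic, and semianalyticity of the orbit $\alpha^+$ itself is not among the quoted conclusions, so semianalyticity of the difference does not automatically follow. (Images of noncompact Lie groups under orbit maps are subanalytic rather than semianalytic in general, and only after proving more about the map would the difference be in a tame class.) Fortunately, your argument does not really need finiteness — total disconnectedness of $\partial\alpha^+$ suffices, since the $\omega$-limit set is connected. And total disconnectedness can be obtained without the unjustified claim: $\overline{\alpha^+}$ is a compact one-dimensional semianalytic set, hence triangulable (\L ojasiewicz) and so a finite one-complex; since $\alpha^+$ is dense and open, $\partial\alpha^+$ is closed with empty interior, and any connected subset of a finite one-complex with more than one point contains an arc and hence has interior. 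Replacing the "closed semianalytic of dimension zero, hence finite" sentence with this triangulability argument closes the gap, and the rest of the proof — classification of boundary points into the $s\to 0^+$ and $s\to\infty$ limits, singleton $\omega$-limit set, and $H$-fixedness by connectedness of $H$ — is sound.
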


\begin{proof} First, note that for the locally closed orbit $\alpha^+ = \{ \phi^t. \alpha(\epsilon) \ : t \in \BR \}$, the orbit map $t \mapsto \alpha(e^{t(b-a)})$ is a homeomorphism onto its image (see \cite[Lem. 2.1.15]{zimmer.etsg}).

Theorem \ref{thm:gromov_stratification} (3) gives a neighborhood $U \subseteq M$ of $x_1$ such that $U \cap \overline{\alpha^+}$ is connected and semianalytic, thus path connected.  In particular, for any sufficiently large $T \in \BR_{>0}$, there is an injective, continuous curve $\gamma : [0,1] \rightarrow U \cap \overline{\alpha^+}$ with $\gamma(0) = \alpha(T)$ and $\gamma(1) = x_1$.  Because $\alpha^+$ is open in $\overline{\alpha^+}$, the set $\gamma^{-1}(\alpha^+)$ is a collection of relatively open intervals of $[0,1]$, including one of the form $[0,t_0)$ with $t_0 > 0$.  Now $\gamma(0,t_0)$ is open, and $\alpha^{-1}(\gamma(0,t_0))$ is a nonempty open interval $I$, with one endpoint equal $T$.  The other endpoint must be $0$ or $\infty$, because $\gamma(t_0) \in \overline{\alpha(I)} \backslash \alpha^+$.  
Assuming $x_0 \neq x_1$, we may choose $U$ so that $x_0 \notin U$ and $I = [T,\infty)$.  If $x_0 = x_1$, then choosing $T$ large ensures that again $I = [T,\infty)$.  Thus $\mbox{im } \gamma = \alpha[T,\infty) \cup \{ x_1 \}$, and $x_1 = \lim_{s \rightarrow \infty} \alpha(s)$.  
It follows that $\overline{\alpha^+} = \{ x_0 \} \cup \alpha^+ \cup \{ x_1 \}$, as desired.

The sets $\{ x_0 \}$, $\alpha^+$ and $\overline{\alpha^+}$ are each $H$-invariant, so $\{ x_1 \}$ is also $H$-invariant.
\end{proof}

Now, in a neighorhood of $x_1$, the segment $\overline{\alpha^+}$ is a null geodesic segment.  In suitable coordinates, it is a line segment in the light cone of $x_1$.  We have in particular the following corollary:

\begin{corollary}
\label{cor:smooth_parametrization}
The path $\overline{\alpha^+}$ admits a smooth parametrization by a finite interval.
\end{corollary}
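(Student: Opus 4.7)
My plan is to combine the smooth parametrization $\alpha:[0,\infty)\to \overline{\alpha^+}\setminus\{x_1\}$ with a smooth conformal-exponential parametrization of $\overline{\alpha^+}$ through $x_1$, then glue the two on their overlap.

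First, I invoke the remark preceding the corollary: in a neighborhood $V$ of $x_1$, the arc $\overline{\alpha^+}\cap V$ is a smooth null geodesic segment, so its tangent direction at $x_1$ is a well-defined isotropic line $\ell\subset T_{x_1}M$. Choose a lift $\hat{x}_1\in\pi^{-1}(x_1)$ with $\pi_*\omega_{\hat{x}_1}^{-1}(\BR E_1)=\ell$, and define
\[
\beta(u):=\pi\circ\exp_{\hat{x}_1}(uE_1),\qquad u\in(-\delta,\delta),
\]
so that $\beta(0)=x_1$ and (after possibly reversing the sign of $E_1$) $\beta((-\delta,0])\subset \overline{\alpha^+}\cap V$.

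Second, I compare parametrizations on the overlap. Since $\beta((-\delta,0))\subset\alpha^+$, and both $\alpha$ and $\beta$ are smooth immersions onto the same $1$-dimensional submanifold there, the implicit function theorem yields a smooth strictly monotone diffeomorphism $\sigma:(-\delta,0)\to(T_*,\infty)$ with $\alpha\circ\sigma=\beta$ and $\sigma(u)\to\infty$ as $u\to 0^-$.

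Third, I concatenate. Fix $u_0\in(-\delta,0)$ and set $s_0=\sigma(u_0)$, so $\alpha(s_0)=\beta(u_0)$. Define $\gamma:[0,1]\to\overline{\alpha^+}$ by $\gamma|_{[0,1/2]}=\alpha\circ f$ and $\gamma|_{[1/2,1]}=\beta\circ g$ for smooth diffeomorphisms $f:[0,1/2]\to[0,s_0]$ and $g:[1/2,1]\to[u_0,0]$, chosen so that all derivatives of $\gamma$ agree at $t=1/2$. One convenient choice is to take $f$ and $g$ flat to all orders at $t=1/2$; this yields a $C^\infty$ parametrization of $\overline{\alpha^+}$ by the finite interval $[0,1]$.

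The main obstacle is smoothness at the gluing point $t=1/2$, where the $\alpha$- and $\beta$-parametrizations differ by the nontrivial reparametrization $\sigma$. This is handled by the flat-reparametrization trick above, which relies only on smoothness of $\alpha$ and $\beta$ on their common overlap, itself a consequence of smoothness of $\sigma$.
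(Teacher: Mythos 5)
Your proof is correct and follows essentially the same line of reasoning the paper intends: the preceding remark gives that $\overline{\alpha^+}$ is a smooth null-geodesic arc through $x_1$, you parametrize that arc by the conformal exponential from a lift $\hat{x}_1$, and you glue with $\alpha$ on the overlap. One small imprecision: after declaring $f,g$ flat to all orders at $t=1/2$ they are no longer diffeomorphisms (their derivatives vanish there), so the resulting $\gamma$ is a smooth but non-immersive parametrization; if a regular one is wanted, it suffices to observe that the two overlapping immersions $\alpha$ and $\beta$ exhibit $\overline{\alpha^+}$ as a compact $C^\infty$ one-manifold with boundary, which can then be parametrized by arclength with respect to any auxiliary Riemannian metric.
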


Consider the one-parameter subgroup of $\SO(2,n)$ generated by

\begin{equation*}
R =
\begin{pmatrix}
0 & - 1 & & & \\
1 & 0 & & & \\
 & & 0 & & \\
 & & & 0 & 1 \\
 & & & -1 & 0
\end{pmatrix}
\ \  
\mbox{with}
\ \
e^{\theta R} =
\begin{pmatrix}
  \cos \theta & - \sin \theta &  &  &  \\
  \sin \theta & \cos \theta &  & &  \\
  &  &  I_{n-2}  &  &  &  \\
  &  &   &  \cos \theta & \sin \theta \\
  &  &  &   - \sin \theta & \cos \theta
  \end{pmatrix}
\end{equation*}

Let $\hat{\beta}(\theta) = \exp(\hat{x}_0,\theta R)$ and $\beta = \pi \circ \hat{\beta}$. In $\Ein^{1,n-1}$, there is equality $e^{\theta R}.o = e^{s E_1}.o$ with $s = \tan \theta$,  $0 \leq \theta < \pi/2$.  Thus there is a smooth path $\ell_\theta$ in $P$ such that
$$ e^{\theta R} = e^{s E_1} . \ell_\theta \qquad s = \tan \theta, 0 \leq \theta < \pi/2$$

Differentiating with respect to $\theta$ and evaluating with $\omega_G$ gives, by formula (\ref{eqn:omega_along_curves})
$$ R = \Ad \ell_\theta^{-1} \left( \frac{ds}{d\theta}  E_1 \right) + \omega_P \left( \frac {d \ell_\theta}{d \theta} \right) $$
which in turn implies
\begin{equation}
  \label{eqn:reln.betahat.alphahat}
  \hat{\beta}(\theta) = \hat{\alpha}(s). \ell_\theta \qquad s = \tan \theta, \qquad 0 \leq \theta < \pi/2
  \end{equation}

\begin{lemma} There is $\hat{x}_1 \in \pi^{-1}(x_1)$ equal to $\lim_{\theta \rightarrow \pi/2} \hat{\beta}(\theta)$.
  \end{lemma}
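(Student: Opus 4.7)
My plan is to express $\hat\beta(\theta)$ near $\theta=\pi/2$ via an exponential from a base point close to the presumed limit, using an ODE-theoretic lift of the group identity behind equation (\ref{eqn:reln.betahat.alphahat}) to absorb the divergence of $\ell_\theta$ as $\theta\to\pi/2^-$.

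The central ingredient is the Cartan-geometric version of the identity $e^{\psi R}=e^{\tan\psi\, E_1}\ell_\psi$ in $G$: for any $\hat y\in\widehat M$ and any $\psi\in(-\pi/2,\pi/2)$ such that both sides are defined,
$$\exp(\hat y,\psi R)=\exp(\hat y,\tan\psi\cdot E_1)\cdot\ell_\psi.$$
I would derive this by uniqueness of integral curves of $\omega$-constant vector fields: the two sides agree at $\psi=0$, the left-hand side has $\omega$-derivative constantly $R$, and by (\ref{eqn:omega_along_curves}) the right-hand side has $\omega$-derivative $\sec^2\!\psi\,\Ad(\ell_\psi^{-1})E_1+\omega_P(\ell_\psi')$, which equals $R$ precisely because this is the logarithmic-derivative form of the defining identity in $G$.

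Applying this with $\hat y=\hat\beta(\pi/2-\epsilon)$ and $\psi=\epsilon$, the right-hand side involves only the exponential of the small vector $\tan\epsilon\cdot E_1\in\lieg_{-1}$ (vanishing as $\epsilon\to 0$) together with a bounded right multiplication $\ell_\epsilon$, so whenever it is defined we may use it to \emph{define}
$$\hat x_1:=\exp\bigl(\hat\beta(\pi/2-\epsilon),\,\tan\epsilon\cdot E_1\bigr)\cdot\ell_\epsilon\in\widehat M.$$
Independence from $\epsilon$ follows from reapplying the identity (comparing $\epsilon_1<\epsilon_2$ by flowing along $\hat R$ from $\hat\beta(\pi/2-\epsilon_2)$ by time $\epsilon_2-\epsilon_1$). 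For $\theta\in(\pi/2-\epsilon,\pi/2)$ the identity with $\psi=\theta-\pi/2+\epsilon$ gives
$$\hat\beta(\theta)=\exp\bigl(\hat\beta(\pi/2-\epsilon),\,\tan(\theta-\pi/2+\epsilon)\,E_1\bigr)\cdot\ell_{\theta-\pi/2+\epsilon},$$
whose right-hand side tends to $\hat x_1$ as $\theta\to\pi/2^-$; continuity of $\pi$ together with $\beta(\theta)\to x_1$ then forces $\pi(\hat x_1)=x_1$.

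The main obstacle is verifying that the exp-domain at $\hat\beta(\pi/2-\epsilon)$ contains $\tan\epsilon\cdot E_1$ for some small $\epsilon>0$, since the base point itself may escape along the fiber of $\widehat M$ as $\epsilon\to 0$, a priori shrinking the exp-domain in an uncontrolled way. I would handle this via the $\{\phi^t\}$- and $P$-equivariance of $\exp$: using (\ref{eqn.axn.on.alphahat}) to write $\hat\beta(\pi/2-\epsilon)=\phi^{t_\epsilon}\hat\alpha(1)\,p_\epsilon$ with $t_\epsilon=\ln\cot\epsilon/(b-a)$ and $p_\epsilon\in P$, the domain condition at $\hat\beta(\pi/2-\epsilon)$ transfers to an analogous condition at the fixed point $\hat\alpha(1)$ with test vector $\Ad(p_\epsilon^{-1})(\tan\epsilon\cdot E_1)$. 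A direct weight computation with the root-space decomposition of section \ref{sec:cartan_decomposition} (using in particular that the $P^+$-component of $\ell_\theta$ has a nontrivial bracket with $E_1\in\lieg_{\beta-\alpha}$ landing in $\liea$) shows that this transferred test vector stays bounded as $\epsilon\to 0$; since $\hat\alpha$ is defined on all of $\BR$ the exp-domain at $\hat\alpha(1)$ has uniform positive radius on a neighborhood of this bounded limit, and openness closes the argument.
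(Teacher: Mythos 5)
Your route is genuinely different from the paper's, which makes no use of exp-equivariance: there, corollary \ref{cor:smooth_parametrization} gives that the extended base path $\beta$ is smooth on the closed interval $[0,\pi/2]$; an arbitrary smooth lift $\tilde\beta$ over that closed interval is chosen; and with $\tilde\beta(\theta)=\hat\beta(\theta)\,p_\theta$ one derives, via (\ref{eqn:omega_along_curves}), a linear first-order ODE for $\omega_P(p_\theta')$ whose coefficients (coming from $\omega(\tilde\beta')$) extend smoothly to $\theta=\pi/2$, so $p_\theta$ converges by completeness of $P$. Your lift of the development identity to $\widehat M$ is fine by uniqueness of $\omega$-flows, and the strategy of defining $\hat x_1$ by $\exp$ from $\hat\beta(\pi/2-\epsilon)$ is a reasonable alternative.

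However, the domain argument, which you rightly identify as the crux, does not close. The $P$-equivariance $\exp(\hat{y}\,p,X)=\exp(\hat y,\Ad p(X))\,p$ makes the transferred test vector $\Ad p_\epsilon(\tan\epsilon\,E_1)$, not $\Ad p_\epsilon^{-1}(\tan\epsilon\,E_1)$, where $p_\epsilon=(d^{t_\epsilon})^{-1}\ell_{\pi/2-\epsilon}$. A direct computation gives $\Ad\ell_\theta(E_1)=\sec^2\theta\,E_1+\tan\theta\,[E_1,\xi_n]-\sin^2\theta\,\xi_n$ with $[E_1,\xi_n]\in\liea\setminus\{0\}$, and hence $\Ad p_\epsilon(\tan\epsilon\,E_1)=\sec^2\epsilon\,E_1+[E_1,\xi_n]-\cos^2\epsilon\,\xi_n$, which is indeed bounded and tends to the fixed nonzero vector $v_0=E_1+[E_1,\xi_n]-\xi_n$. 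Your final assertion---that the exp-domain at $\hat\alpha(1)$ contains a neighborhood of this limit because $\hat\alpha$ is defined on all of $\BR$---is unjustified: completeness of $\hat\alpha$ shows only that the line $\BR E_1$ lies in the domain of $\exp_{\hat\alpha(1)}$, and says nothing about the off-axis vector $v_0$, which has nontrivial $\lieg_0$- and $\lieg_1$-components and definite nonzero size, while the exp-domain at a fixed point is a priori only a small neighborhood of $0\in\lieg$. In fact, the claim that $\exp_{\hat\alpha(1)}$ is defined near $v_0$ is equivalent to the lemma you are proving: by your own formula, $\exp(\hat\alpha(1),v_0)$ would be, up to the $\phi^{t_\epsilon}$- and right-$P$-translations, exactly the sought-for limit $\hat\beta(\pi/2)$. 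The paper's reference-lift ODE argument is designed precisely to avoid this circularity.
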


\begin{proof}
  From (\ref{eqn:reln.betahat.alphahat}), we have $\beta(\theta) = \alpha(\tan \theta)$.
  Then both $\hat{\beta}$ and $\beta$ are defined for $0 \leq \theta < \pi/2$. 
  Defining $\beta(\pi/2) = x_1$ extends the domain of $\beta$ to $[0,\pi/2]$.  The resulting path is a monotone parametrization of $\overline{\alpha^+}$ by $[0,\pi/2]$, smooth on $[0,\pi/2)$.  By corollary \ref{cor:smooth_parametrization}, it is smooth on $[0,\pi/2]$.  We wish to extend $\hat{\beta}$, as well.


  Let $\tilde{\beta}$ be a path in $\widehat{M}$, smooth on $[0,\pi/2]$, originating at $\hat{x}_0$, such that $\pi \circ \tilde{\beta}$ equals the extended $\beta$ on $[0,\pi/2]$.  There is a smooth path $p_{\theta}$ in $P$ for which
  $$ \tilde{\beta}(\theta) = \hat{\beta}(\theta) . p_{\theta} \qquad 0 \leq \theta < \pi/2$$
  By equation (\ref{eqn:omega_along_curves}),
  $$ \omega(\tilde{\beta}'(\theta))  = \Ad p_\theta^{-1} (R)+ \omega_P( p_\theta') \qquad 0 \leq \theta < \pi/2$$
  which is equivalent to
  $$ R = \Ad p_\theta \left( \omega(\tilde{\beta}'(\theta)) - \omega_P( p_\theta') \right)$$
  We will compute the derivative with respect to $\theta$ of the above equation, with the aid of the following identity.
  For $g = g(\theta)$ a path in a Lie group $G$ with Maurer-Cartan form $\omega_G$,
 and for $X = X(\theta)$ a path in $\lieg$,
  $$ \left( g X g^{-1} \right)^\prime = g \left( [\omega_G (g'), X] + X^\prime \right) g^{-1}$$

  Using this identity gives
  $$ 0 = p_\theta \left( [\omega_P (p_\theta'), \omega(\tilde{\beta}')] + \DDth \omega(\tilde{\beta}') - \DDth \omega_P( p_\theta') \right) p_\theta^{-1} $$
which is equivalent to
  $$ \DDth \omega_P( p_\theta') =  [\omega_P (p_\theta'), \omega(\tilde{\beta}')] + \DDth \omega(\tilde{\beta}') $$

  This is a linear, first-order, inhomogeneous ODE on $\omega_P( p_\theta')$ in $\lieg$, for which the coefficient functions are smooth on $[0,\pi/2]$.  For any initial value---in this case, $\omega(\tilde{\beta}'(0)) - R$---the unique solution is defined and continuous on $[0,\pi/2]$ (see, eg, \cite[Sec 3.4]{teschl.ode.dyn.syst}).  The path $p_\theta$ is known to be smooth on $[0,\pi/2)$, and now it is also known to have bounded $\omega$-derivative.  It follows by completeness of $P$ that $\lim_{\theta \rightarrow \pi/2} p_\theta$ exists, and therefore, so does $\lim_{\theta \rightarrow \pi/2} \hat{\beta}(\theta)$.
    \end{proof}

\begin{lemma}
  The holonomy of $\phi^t$ at $x_1$ with respect to $\hat{x}_1$ is
\begin{equation*}
g^t = 
\begin{pmatrix}
e^{tb} & & & & \\
 & e^{ta} & & & \\
 & & I_n & & \\
 & & & e^{-ta} & \\
 & & & & e^{-tb}
\end{pmatrix}
\end{equation*}
    \end{lemma}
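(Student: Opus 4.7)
The plan is to apply the isotropy--exponential formula~(\ref{eqn:isotropy_exp_curves}) at $\hat{x}_0$ in the direction $R\in\lieg$, and then pass to the limit $\theta\to \pi/2^-$ using the continuous extension of $\hat{\beta}$ established by the preceding lemma.

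First I describe how $d^t$ acts on $e^{\theta R}$ inside $G$. Since $d^t$ is diagonal with eigenvalues $e^{ta}$ on $e_0$ and $e^{tb}$ on $e_1$, it preserves the conformal circle $\theta\mapsto[\cos\theta\,e_0+\sin\theta\,e_1]$ in $\Ein^{1,n-1}=G/P$ and acts on it by the reparametrization
\[
c_t(\theta)=\arctan\bigl(e^{t(b-a)}\tan\theta\bigr);
\]
because $b\neq a$ (by proposition~\ref{prop:balanced_proof}), $c_t$ extends to a smooth diffeomorphism of $[0,\pi/2]$ fixing both endpoints. Hence $p_t(\theta):=e^{-c_t(\theta)R}\,d^t\,e^{\theta R}$ is a smooth path in $P$ on $[0,\pi/2]$ with $p_t(0)=d^t$, satisfying $d^te^{\theta R}=e^{c_t(\theta)R}p_t(\theta)$. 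Applying~(\ref{eqn:isotropy_exp_curves}) with $h=\phi^t$, $g=d^t$, $X=R$ then gives
\[
\phi^t.\hat{\beta}(\theta)=\hat{\beta}(c_t(\theta))\,.\,p_t(\theta),\qquad 0\le\theta<\pi/2.
\]

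Now I let $\theta\to\pi/2^-$. The preceding lemma yields $\hat{\beta}(\theta)\to\hat{x}_1$, and since $c_t(\theta)\to\pi/2$ the same limit applies to $\hat{\beta}(c_t(\theta))$. Continuity of multiplication in $G$ forces $p_t(\theta)\to g^t:=e^{-(\pi/2)R}d^t\,e^{(\pi/2)R}\in P$. Passing to the limit in the displayed identity yields $\phi^t.\hat{x}_1=\hat{x}_1.g^t$, exhibiting $g^t$ as the isotropy at $\hat{x}_1$. It remains only to identify $g^t$: a direct block-matrix calculation shows that $e^{(\pi/2)R}$ acts as the $\pi/2$-rotation in the $(e_0,e_1)$-plane and the $-\pi/2$-rotation in the $(e_n,e_{n+1})$-plane while fixing $e_2,\dots,e_{n-1}$, so its conjugation swaps the first two and the last two diagonal entries of $d^t=\diag(e^{ta},e^{tb},I_{n-2},e^{-tb},e^{-ta})$, yielding the announced $g^t=\diag(e^{tb},e^{ta},I_{n-2},e^{-ta},e^{-tb})$. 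No substantive obstacle arises; the only delicate point is continuity of $\hat{\beta}$ and $p_t$ at $\theta=\pi/2$, which is handled by the preceding lemma and by the explicit formula for $c_t$ respectively.
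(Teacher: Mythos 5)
Your proof is correct and follows essentially the same route as the paper: both reduce the problem to computing the limit of the path $e^{-c_t(\theta)R}\,d^t\,e^{\theta R}$ in $P$ as $\theta\to\pi/2$, identified as $e^{-(\pi/2)R}d^t e^{(\pi/2)R}$, and both rely on the preceding lemma that $\hat{\beta}$ extends continuously to $\hat{x}_1$. The only cosmetic difference is that you invoke the formula (\ref{eqn:isotropy_exp_curves}) directly in the $R$-direction, whereas the paper derives the same intermediate identity $\phi^t.\hat{\beta}(\theta)=\hat{\beta}(\theta_t).(\ell_{\theta_t}^{-1}d^t\ell_\theta)$ by combining (\ref{eqn.axn.on.alphahat}) and (\ref{eqn:reln.betahat.alphahat}).
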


\begin{proof}
Combining (\ref{eqn.axn.on.alphahat}) and (\ref{eqn:reln.betahat.alphahat}) yields
  $$ \phi^t .\hat{\beta}(\theta) = \hat{\beta}(\theta_t).(\ell_{\theta_t}^{-1} d^t \ell_\theta) \qquad \theta_t = \tan^{-1}(A_t \tan \theta), \ A_t = e^{t(b-a)}$$
The desired holonomy with respect to $\hat{x}_1$ is the limit of $(\ell_{\theta_t}^{-1} d^t \ell_\theta)$ as $\theta \rightarrow \pi/2$.  It is possible to compute this limit directly, but easier to compute in $\SO(2,n)$, where the above curves develop to
$$d^t e^{\theta R} = e^{\theta R} . (e^{- \theta R} d^t e^{\theta R}) = e^{\theta_t R}.(\ell_{\theta_t}^{-1}d^t \ell_\theta) $$
The conjugate $e^{- \theta R} d^t e^{\theta R}$ is not in $P$ for arbitrary $\theta$, but it is for $\theta = \pi/2$, and equals $g^t$.  Then
$$ g^t = \lim_{\theta \rightarrow \pi/2} (\ell_{\theta_t}^{-1} d^t \ell_\theta) \qquad \mbox{so} \qquad \phi^t. \hat{x}_1 = \hat{x}_1 . g^t$$
as desired.
\end{proof}

The holonomy $\{ g^t \}$ is in contradiction with proposition \ref{prop:contracting_proof} and proposition B.  The proof of Proposition \ref{prop:no_hyperbolic_singularity} is complete.

\subsection{Unipotent fixed points}

The goal of this section is to prove the following:

\begin{proposition}
\label{prop:no_unipotent_singularity}
Suppose $H$ fixes a point $x_0 \in M$.  Then the isotropy of $H$ at $x_0$ contains no nontrivial unipotent elements.
\end{proposition}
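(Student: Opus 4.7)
The plan is to assume, for contradiction, that $\widehat{H}_{x_0}$ contains a nontrivial unipotent one-parameter subgroup $\{\phi^t\}$, and to exhibit a second $H^0$-fixed point $x_1$ at which $\phi^t$ has a nonzero isotropy component in $\liep^+$, contradicting the Frances--Melnick linearization and proposition B. By propositions \ref{prop:simult_linear} and \ref{prop:no_hyperbolic_singularity}, after conjugation in $P$, we may arrange that $\hat{\lieh}_{x_0}$ sits in $\so(1,n-1)\subset\lieg_0$ as an abelian subalgebra with no hyperbolic elements, containing a nonzero $N\in\lieu_+$. The first key step is to show that $E_1$ is a common fixed vector for the action of $\hat{\lieh}_{x_0}$ on $\lieg_{-1}$: any $X\in\hat{\lieh}_{x_0}$ commutes with $N$, hence with $N^2$, and a direct computation gives $N^2 E_n = -|U_+|^2 E_1\neq 0$, so $X$ preserves the isotropic line $\BR E_1 = \Ima(N^2)$ by some real scalar $\lambda$; the algebraicity of isotropy (cf.\ section \ref{sec:isotropy_jordan_decomp}) places the hyperbolic Jordan component of $X$ in $\hat{\lieh}_{x_0}$, which vanishes by proposition \ref{prop:no_hyperbolic_singularity}, forcing $\lambda=0$.

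With this common fixed vector in hand, I would then produce an $H^0$-fixed null geodesic. Via the exponential chart $\Phi = \pi\circ\exp_{\hat{x}_0}$, every conformal Killing field in $\lieh$ vanishes along $\alpha(s):=\Phi(sE_1)$ near $x_0$; by the identity principle applied to the restriction of each (real-analytic) Killing field to the analytic curve $\alpha$, this vanishing propagates to the maximal extension of $\alpha$ in $M$, placing $\alpha$ entirely in the closed analytic set $\mathrm{Fix}(H^0)$. I would then apply the rotation construction of proposition \ref{prop:no_hyperbolic_singularity}: defining $\hat{\beta}(\theta)=\exp_{\hat{x}_0}(\theta R)$ and the $P$-valued path $\ell_\theta$ by $e^{\theta R}=e^{(\tan\theta)E_1}\,\ell_\theta$ in $G$, we get $\hat{\beta}(\theta)=\hat{\alpha}(\tan\theta)\cdot\ell_\theta$ on $[0,\pi/2)$. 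The main obstacle is to extend $\hat{\beta}$ smoothly to $\theta=\pi/2$: unlike in the hyperbolic case, $\alpha$ is not an $H$-orbit but a pointwise $H^0$-fixed curve, so one cannot directly invoke theorem \ref{thm:gromov_stratification}(3) on $\overline{\alpha^+}$. Instead, one must use that $\overline{\alpha^+}$ is contained in the closed analytic set $\mathrm{Fix}(H^0)$, apply the theory of semianalytic sets to produce a smooth parametrization $\beta:[0,\pi/2]\to M$ with $\beta(\pi/2)=x_1\in\overline{\alpha}$, and then run the linear-ODE argument for $p_\theta$ from the end of proposition \ref{prop:no_hyperbolic_singularity} to obtain $\hat{x}_1=\hat{\beta}(\pi/2)$ over $x_1$.

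Since $[N,E_1]=0$, the flow $\phi^t$ fixes $\alpha$ pointwise, and the calculation from the hyperbolic case yields $\phi^t\hat{\beta}(\theta)=\hat{\beta}(\theta)\cdot(\ell_\theta^{-1}e^{tN}\ell_\theta)$ on $[0,\pi/2)$. Using $[N,E_1]=0$ again to simplify $\Ad(\ell_\theta^{-1})N=\Ad(e^{-\theta R})N$, together with the identity $\ad(R)^2 N=-N$ (a direct calculation in the $(n+2)\times(n+2)$ realization of $\so(2,n)$), one finds $\Ad(e^{-\theta R})N=\cos\theta\cdot N-\sin\theta\cdot[R,N]$, whose $\theta\to\pi/2$ limit is $N':=-[R,N]$. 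Inspection of matrix entries places $N'$ entirely in row $0$ and column $n+1$, so $N'\in\liep^+\setminus\{0\}$. Because $\liep^+$ is $\Ad(P)$-invariant with $\liep^+\cap\lieg_0=0$, no $P$-conjugate of $e^{tN'}$ lies in $G_0$, so $\phi^t$ cannot be analytically linearizable at $x_1$. By the Frances--Melnick theorem of section \ref{sec:intro_llc}, $(M,g)$ is then conformally flat, contradicting proposition B.
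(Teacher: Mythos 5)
The proposal takes a genuinely different route from the paper and, as written, has a gap at the global extension step that it does not resolve.

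Your local computations are correct and well done: for $N\in\lieu_+$ one indeed has $N^2 E_n = -|U_+|^2 E_1$, hence $\Ima(N^2)=\BR E_1$; any $X\in\hat{\lieh}_{x_0}$ commuting with $N$ must therefore scale $E_1$, and the algebraicity/Jordan argument together with proposition \ref{prop:no_hyperbolic_singularity} force that scalar to vanish. Likewise $[E_1,N]=0$ since $2\beta-\alpha$ is not a root, $\ad(R)^2N=-N$, so $\Ad(e^{-\theta R})N=\cos\theta\,N-\sin\theta\,[R,N]$, and $-[R,N]$ is a nonzero element of $\liep^+$, whose exponential cannot be $P$-conjugated into $G_0$ and hence would yield nonlinearizable isotropy at $x_1$. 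This is an attractive idea modeled on proposition \ref{prop:no_hyperbolic_singularity}.

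The gap is in the passage from $\theta<\pi/2$ to $\theta=\pi/2$. In the hyperbolic case, the paper's proof depends essentially on $\alpha^+$ being a single nonclosed $H$-orbit: the flow reparametrizes $\alpha$ (so $\alpha$ is complete), and theorem \ref{thm:gromov_stratification}(2)--(3) (closed orbit in the closure, local connectedness of semianalytic orbit closures) is what shows $\overline{\alpha^+}=\{x_0\}\cup\alpha^+\cup\{x_1\}$, gives the smooth parametrization of corollary \ref{cor:smooth_parametrization}, and feeds the ODE argument for $\hat{x}_1=\lim\hat{\beta}(\theta)$. In your unipotent case the isotropy $e^{tN}$ fixes $E_1$, so $\phi^t$ fixes $\alpha$ pointwise rather than reparametrizing it. Then $\alpha^+$ is not an $H$-orbit but a single null geodesic inside $\mathrm{Fix}(H)$, and none of the orbit-closure machinery applies: you do not get completeness of $\alpha$ (no reparametrizing flow to extend the domain), you do not get that $\alpha$ has a single limit point $x_1$ rather than a higher-dimensional accumulation set (the fixed locus $\mathrm{Fix}(H)$ is in general $\ge 2$-dimensional, being the common $1$-eigenspace of the isotropy, which contains all of $\mathcal{E}'$), and consequently you do not get a smooth parametrization of $\overline{\alpha^+}$ by $[0,\pi/2]$ to run the linear ODE for $p_\theta$. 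The appeal to ``the theory of semianalytic sets'' is not enough: local connectedness of an analytic set does not imply that a geodesic in it converges. So the step producing $\hat{x}_1$ is not justified.

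The paper circumvents this entirely by a different strategy. It first observes that a unipotent linear action on $\lieg_{-1}$ has orbits bounded away from the origin (closed affine orbits), which combined with local connectedness of orbit closures (theorem \ref{thm:gromov_stratification}(3)) shows $x_0\notin\overline{H.y_0}$ for nearby $y_0\ne x_0$. It then chooses $y_0$ to be $L$-fixed but not $H$-fixed, finds a compact orbit $H.x_1$ in $\overline{H.y_0}$ which must be an $H$-fixed point, notes the isotropy at $x_1$ is again linear unipotent times compact, and applies the same local-boundedness observation at $x_1$ to reach a contradiction with $x_1\in\overline{H.y_1}$. That argument never slides along a geodesic and never needs a limit of $\hat{\beta}$. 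Your route is appealing because it parallels the hyperbolic case, but the hyperbolic case relies crucially on $\alpha^+$ being an orbit; without a substitute for theorem \ref{thm:gromov_stratification}(2)--(3) applied to $\alpha^+$, the construction of $x_1$ and $\hat{x}_1$ is unproven.
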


\begin{proof}
Denote $\widehat{H}_{x_0}$ the isotropy of $H$ at $x_0$ with respect to $\hat{x}_0$, and suppose it contains nontrivial unipotent elements.  By lemma \ref{lem:hyperbolic_implies_no_unipotent_component}, there are no nontrivial one-parameter hyperbolic subgroups of $\widehat{H}_{x_0}$, and by proposition \ref{prop:simult_linear}, we may assume $\widehat{H}_{x_0} < G_0$. 
  
Denote $\widehat{U}_{x_0}$ a maximal connected, unipotent subgroup of $\widehat{H}_{x_0} < G_0$.  We may assume $\widehat{U}_{x_0}$ belongs to the maximal unipotent subgroup of $\SO(1,n-1) < G_0$ with 
 isotropic characteristic line $\BR E_1$.  
The full $1$-eigenspace, call it $\mathcal{E}'$, of $\widehat{U}_{x_0}$ is degenerate and codimension at least two.
  
For $L$ a maximal compact factor of $H$, the linear isotropy $\widehat{L}_{x_0}$ of $L$ at $x_0$ with respect to $\hat{x}_0$ is compact and commutes with $\widehat{U}_{x_0}$, so it fixes $E_1$ and another isotropic vector, which we may assume to be $E_n$.  It preserves $\mathcal{E}'$ and pointwise fixes the spacelike subspace $\mathcal{E} = (\mathcal{E}' \oplus \BR E_n)^\perp$, which is contained in the image of $\widehat{\lieu}_{x_0}$. The full isotropy $\widehat{H}_{x_0}$ has the form $\widehat{U}_{x_0} \times \widehat{L}_{x_0} < G_0$. 

The orbit of any $v \in \BR^{1,n-1} \backslash \{ 0 \}$ under the unipotent isotropy is bounded away from the origin, and thus so is the orbit of $v$ under the full isotropy $\widehat{H}_{x_0}$.  Let $y_0  = \pi \circ \exp_{\hat{x}_0}(v)$ for $v \neq 0$ in a sufficiently small neighborhood $\mathcal{V}$ of $0$ in $\lieg_{-1} \cong \BR^{1,n-1}$, mapping diffeomorphically to $V = \pi \circ \exp_{\hat{x}_0}(\mathcal{V}) \subset M$.  
The connected component of $y_0$ in $H.y_0 \cap V$ equals $\pi \circ \exp_{\hat{x}_0}(\widehat{H}_{x_0} .v \cap \mathcal{V})$ and is bounded away from $x_0$.  By theorem \ref{thm:gromov_stratification}, $\overline{H.y_0}$ is locally connected and contains $H.y_0$ as a relatively open subset; it follows that $x_0 \notin \overline{H.y_0}$ for any $y_0 \neq x_0$ in $V$.

Now consider $v \in (\mathcal{E} \oplus \BR E_n) \cap \mathcal{V}$, so that $y_0$ is fixed by $L$ but not by $H$.
By theorem \ref{thm:gromov_stratification}, there is $x_1 \in \overline{H.y_0}$ with compact $H$-orbit.  As $x_1$ is $L$-fixed, it is a fixed point for $H$.  By proposition \ref{prop:simult_linear}, $H$ is again linearizable at $x_1$.  By 
proposition \ref{prop:no_hyperbolic_singularity}, the isotropy at $x_1$ is again the product of a nontrivial unipotent subgroup of dimension $k = \dim H/L$ and a compact subgroup isomorphic to $L$. We can choose $\hat{x}_1 \in \pi^{-1}(x_1)$ so that the unipotent factor of $\widehat{H}_{x_1}$ has the same $1$-eigenspace $\mathcal{E}'$ as for $\widehat{H}_{x_0}$ above.

Any point $y_1 \in H.y_0$ is $L$-fixed but not $H$-fixed.  Assuming $y_1$ is sufficiently close to $x_1$, it can be written $\pi \circ \exp_{\hat{x}_1}(v)$ for $v \in \BR^{1,n-1} \backslash \{ 0 \}$.
But now, the same argument as above  with the linearization shows that the $H$-orbit of $y_1$ is bounded away from $x_1$, contradicting $x_1 \in \overline{H.y_1}$.
\end{proof}

\subsection{Conclusion}

Henceforth we will assume $L \neq 1$.  Here is one corollary of the absence of fixed points:

\begin{proposition}
\label{prop:1d_implies_closed}
Any $1$-dimensional $H$-orbit is closed.  
\end{proposition}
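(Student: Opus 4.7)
I would argue by contradiction, combining Gromov's stratification with the standing absence of $H$-fixed points.  In the ongoing noncompact case propositions \ref{prop:no_hyperbolic_singularity}, \ref{prop:no_unipotent_singularity}, and \ref{prop:simult_linear} together rule out any $H$-fixed point: at such a point the isotropy would coincide with $H$ itself (since $H$ is abelian and its lift to $\widehat{M}$ is free), and would be linearizable with no nontrivial hyperbolic or unipotent part, hence contained in a compact subgroup of $G_0$; this forces $H=L$, contradicting noncompactness.

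\textbf{Main step.} Suppose $H.x$ is $1$-dimensional and not closed.  Theorem \ref{thm:gromov_stratification}(2) gives that $H.x$ is locally closed---open in $\overline{H.x}$---and that $\overline{H.x}$ contains a closed $H$-orbit $\mathcal{O}$.  Because $H.x$ itself is not closed, $\mathcal{O}\neq H.x$, and as distinct $H$-orbits are disjoint, $\mathcal{O}\subseteq \overline{H.x}\setminus H.x$.  The crux is to show $\dim \mathcal{O}=0$: then $\mathcal{O}$, being the continuous image of the connected group $H$ and thus connected, reduces to a single $H$-fixed point, contradicting the previous paragraph.

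\textbf{Dimension drop and main obstacle.} To bound $\dim \mathcal{O}$ I would invoke that $\overline{H.x}$ is semianalytic by Theorem \ref{thm:gromov_stratification}(3).  Local closedness gives $H.x = U\cap \overline{H.x}$ for an open $U\subseteq M$, so the frontier $\overline{H.x}\setminus H.x$ is a closed semianalytic subset of $\overline{H.x}$ with empty interior there; since $H.x$ is a $1$-dimensional embedded submanifold open and dense in $\overline{H.x}$, the closure itself has dimension $1$.  Standard dimension theory for semianalytic sets \cite{bierstone.milman} then yields $\dim(\overline{H.x}\setminus H.x)<1$, hence $0$, which completes the contradiction.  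The only substantive content of the argument is this dimension drop for the frontier of a locally closed semianalytic set; it is precisely the local closedness output by Gromov's theorem that makes the drop available, ruling out configurations such as a $1$-dimensional orbit spiralling onto a $1$-dimensional closed set, which would otherwise permit $\mathcal{O}$ to be $1$-dimensional.
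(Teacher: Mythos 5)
Your proposal is correct in outline but follows a genuinely different route from the paper. The paper's proof is group-theoretic and shorter: writing $H\cong L\times\BR^k$, a $1$-dimensional nonclosed orbit is a copy of $\BR$, so the compact factor $L$ lies in $\Stab_H(x)$; since $H$ is abelian, stabilizers only grow in the closure, so $L\le\Stab_H(y_0)$ at the closed orbit $H.y_0\subset\overline{H.x}$ furnished by Theorem~\ref{thm:gromov_stratification}(2); compactness of $H.y_0$ makes $\Stab_H(y_0)$ cocompact, and combined with $L\le\Stab_H(y_0)$ and the finite-component property of stabilizers from Theorem~\ref{thm:finite_components}, this forces $\Stab_H(y_0)=H$, i.e.\ a fixed point, contradicting Propositions~\ref{prop:no_hyperbolic_singularity} and~\ref{prop:no_unipotent_singularity}. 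You reach the same contradiction, but via a dimension-drop for the frontier $\overline{H.x}\setminus H.x$. Both routes hinge on Gromov's local closedness to rule out spiralling, and both reduce to the no-fixed-point result.

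One step of your argument is asserted rather than justified: you call $\overline{H.x}\setminus H.x$ ``a closed semianalytic subset of $\overline{H.x}$.'' Theorem~\ref{thm:gromov_stratification}(3) gives semianalyticity of $\overline{H.x}$, and (2) gives that $H.x$ is open in $\overline{H.x}$, i.e.\ $H.x=\overline{H.x}\cap U$ for some open $U$ --- but $U$ need not be semianalytic, so neither $H.x$ nor the frontier is automatically semianalytic from the cited statements, and the inequality $\dim(\overline{E}\setminus E)<\dim E$ in Bierstone--Milman is for semianalytic $E$. To make your argument watertight you can avoid this entirely: stratify the compact $1$-dimensional semianalytic set $\overline{H.x}$ into finitely many strata; the $1$-dimensional strata form an open dense subset of $\overline{H.x}$ and their complement is finite. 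Since $H.x$ is open and dense in $\overline{H.x}$, the frontier meets each $1$-dimensional stratum in a closed, nowhere-dense subset of a $1$-manifold, which has covering dimension $\le 0$. If the closed orbit $\mathcal{O}$ were $1$-dimensional it would be open in some $1$-dimensional stratum (hence relatively open in $\overline{H.x}$ near some point), contradicting nowhere-density. Thus $\mathcal{O}$ is $0$-dimensional and, being connected, a point --- the fixed-point contradiction as you intended. The trade-off between the two proofs: the paper's argument exploits the algebra of $H=L\times\BR^k$ and the cocompactness of the limit stabilizer, requiring no dimension theory; yours uses only topology plus the semianalytic structure of the closure, at the cost of the extra stratification argument.
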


\begin{proof}
If $H.x$ is $1$-dimensional and not closed, then 
$L \leq \mbox{Stab}_H(y)$ for all $y \in H.x$.  By theorem \ref{thm:gromov_stratification} (2), there is $y \in \overline{H.x}$ with closed orbit.  One the one hand, $\mbox{Stab}_H(y)$ is cocompact in $H$; on the other hand, $L \leq \mbox{Stab}_H(y)$.  Therefore, $y$ is an $H$-fixed point, a contradiction.  
\end{proof}

\section{End of proof using approximately-stable hypersurfaces}
\label{sec:end_of_proof}

In this final section, $(M,g)$ is again a closed, simply connected, analytic Lorentzian manifold, and $H$ is a maximal, connected, abelian subgroup of $\Conf(M,g)$, obtained from a maximal commuting collection of conformal vector fields, which we assume noncompact.  The results of all previous sections apply.  
From section \ref{sec:fixed_points} just above, $H$ has no global fixed points; in particular, $\dim H \geq 2$.   

Under these assumptions, we deduce a detailed description of holonomy sequences at points of $\Omega_f$ (see proposition \ref{prop:freeness}) in section \ref{sec:analysis_holonomy}.  They are balanced, bounded, or mixed, as in definition \ref{def:holonomy_trichotomy}.  If they are balanced, then we show they are also linear, so proposition \ref{prop:balanced_proof} applies.  The bounded and mixed cases occupy the remainder of the paper.  These holonomy sequences need not be linear; they are moreover not ``stable'' in the sense of \cite{frances.degenerescence}, which makes their local behavior much more difficult to analyze.  We are nonetheless able to construct a codimension-one foliation by degenerate hypersurfaces on an open dense subset $\Omega_{\mathcal{F}} \subseteq \Omega_f$, in section \ref{sec:foliation_construction}.  The construction roughly follows the outline of section \ref{sec:balanced_proof}, but it is more delicate.  In section \ref{sec:extension_closed_isotropic} we show that the foliation extends over closed isotropic orbits on the boundary of orbits in $\Omega_{\mathcal{F}}$; on the latter, $H$ necessarily has linear, unipotent isotropy.  
In section \ref{sec:final_contradiction}, many such distinct closed, isotropic orbits are found, and the prevalence of isotropic orbits finally contradicts the local geometry of the linear, unipotent isotropy.

Some of the proofs in this section make extensive use of elementary properties of the grading of $\lieg = \so(2,n)$ and of its root space decomposition.  The reader not intimately familiar with the structure of this Lie algebra can verify these properties by referring to the descriptions in sections
\ref{sec:equivalence_principle} and \ref{sec:cartan_decomposition}.

\subsection{Analysis of holonomy sequences in the absence of a fixed point}
\label{sec:analysis_holonomy}

Let $x \in M$ with $H.x$ not closed.  By theorem \ref{thm:gromov_stratification}, there is a closed orbit $H.y$ in the closure $\overline{H.x}$.  By the results of the previous section, $y$ is not an $H$-fixed point.  Let $h_k \in H$ be such that $h_k.x \rightarrow H.y$.
As $L$ acts transitively on $H.y$, we may translate $\{ h_k \}$ by a sequence in $L$ to obtain $h_k.x \rightarrow y$ (without changing the notation for $\{ h_k \}$).  Let $\{ p_k \}$ be a holonomy sequence in $A'P^+$-form for $\{ h_k \}$. 

 Proposition \ref{prop:simult_linear} gives that the isotropy of $\mbox{Stab}_H(y)$ is linearizable.  
Replacing $\{ p_k \}$ with $\{  q p_k  \}$ for suitable $q \in P^+$ gives another holonomy sequence (again denoted $\{ p_k \}$) with $h_k .\hat{x}.p_k^{-1} \rightarrow \hat{y}$ such that the isotropy of $\Stab_H(y)$ with respect to $\hat{y}$ is in $G_0$.  Denote this isotropy by $\widehat{S}_{y}$. 
Note that the new $\{ p_k \}$ can also be written in $A'P^+$-form, which will be denoted $\{ d_k \tau_k \}$.  An \emph{ACL holonomy} sequence  will be one associated to $h_k.x_k \rightarrow y$ with the following properties:
\begin{itemize}
\item[{\bf A})] $\{ p_k \}$ in $A'P^+$-form
\item[{\bf C})] $H.y$ a closed orbit
\item[{\bf L})] linear isotropy in $G_0$ of $\Stab_H(y)$ with respect to $\hat{y}$
\end{itemize}
Note that any $x$ with nonclosed orbit admits an ACL pointwise holonomy sequence as a consequence of theorem \ref{thm:gromov_stratification} and proposition \ref{prop:simult_linear}.  
 
Now, $\mbox{Stab}_H(y)$ is cocompact in $H$, with positive codimension.  There is thus $Y \in \mathfrak{l}$ with $Y(y) \neq 0$.  If $x \in \Omega_f$ (see proposition \ref{prop:freeness}) or if $\{ p_k \}$ is a pointwise holonomy sequence, then necessarily $Y(x) \neq 0$.  Asuming $H.x$ is not closed, there is $X \in \lieh \backslash \mathfrak{l}$ vanishing at $y$ but not at $x$.  If $x \in \Omega_f$ or if $\{ p_k \}$ is a pointwise holonomy sequence, then $X(x)$ and $Y(x)$ must be linearly independent.  By lemma \ref{lem:hyperbolic_implies_no_unipotent_component}, we may assume $X$ has unipotent or hyperbolic isotropy with respect to $\hat{y}$; ensuring this may require adding to $X$ an element of $\mathfrak{stab}_\lieh(y) \cap \mathfrak{l}$, which does not affect independence of $X$ and $Y$ at $x$ under the assumptions on $x$.  We will call such a pair $(X,Y)$ of elements of $\lieh$ with
\begin{itemize}
\item $X \in \mathfrak{stab}_\lieh(y)$ with unipotent or hyperbolic isotropy with respect to $\hat{y}$;
\item $Y(y) \neq 0$;
\item $X,Y$ linearly independent at $x$
\end{itemize}
a \emph{leverage pair} for the holonomy sequence $\{ p_k \}$.  A leverage pair for the ACL holonomy sequence $\{ p_k \}$ exists whenever $x \in \Omega_f$ or $\{ p_k \}$ is a pointwise holonomy sequence.
As usual, we denote also by $X,Y$ the lifts to $\widehat{M}$.  

\begin{remark}
The following propositions \ref{prop:all_about_holonomy},  \ref{prop:e1perp_stable}, \ref{prop:distribution_in_open_dense}, and \ref{prop:propagation_mixed_bounded}, as well as lemma \ref{lemma:holonomy_second_order}, are stated for points $x \in \Omega_f$, since that is what is used in the sequel; however, they apply as well to pointwise holonomy sequences at any point with nonclosed orbit (with $\Omega_{\mathcal{F}}$ suitably redefined). 
\end{remark}

\begin{proposition}
\label{prop:all_about_holonomy}
Let $\mathbb{V} = \lieg / \liep$. Let $x \in \Omega_f$
and let $\{ p_k = d_k \tau_k \}$ be an ACL holonomy sequence at $x$.   
Then $\{p_k \}$ falls into one of the following three cases:
\begin{enumerate}
\item \emph{balanced}:  $\{ p_k \}$ is vertically equivalent to a linear holonomy sequence
\end{enumerate}
Otherwise, $\mathbb{V}^{AS}(p_k) = E_1^\perp$ and $\lieh(x)$ is in the approximately stable subset of $T_xM$, which is a degenerate hyperplane.  Furthermore, $\{ p_k \}$ is 
\begin{enumerate}[start=2]
\item \emph{mixed}: $H.y$ is one-dimensional and isotropic, and $\hat{X} \in \lieg_0$ is unipotent; or
\item \emph{bounded}: $X(x)$ is in the orthogonal of the approximately stable subset of $T_xM$.
\end{enumerate}
\end{proposition}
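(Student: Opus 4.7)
The plan is to exploit $h_k$-invariance of the lifted conformal vector fields $\omega(X), \omega(Y):\widehat{M}\to\lieg$ together with weight analysis under $\Ad(d_k)$. Since $X, Y$ commute with each $h_k$, the $P$-equivariant maps $\hat X(\hat z) := \omega_{\hat z}(X)$ and $\hat Y(\hat z) := \omega_{\hat z}(Y)$ satisfy $\hat X(h_k.\hat x) = \hat X(\hat x)$ and similarly for $\hat Y$, yielding
\[ \hat X(\hat y) = \lim_k \Ad(p_k)\hat X(\hat x), \qquad \hat Y(\hat y) = \lim_k \Ad(p_k)\hat Y(\hat x). \]
The ACL linear-isotropy condition puts $\hat X(\hat y) \in \lieg_0$. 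Since $Y(y) \neq 0$, the second limit has nontrivial $\lieg_{-1}$-projection, and since $x \in \Omega_f$, so does $\hat Y(\hat x)$. Passing to a subsequence, each of $\alpha(D_k), \beta(D_k), (\alpha+\beta)(D_k)$ tends to a point in $\BR \cup \{\pm\infty\}$, with $\beta(D_k) \leq 0$. Contracting behavior is excluded by Proposition \ref{prop:contracting_proof} and Proposition B.

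For the balanced sub-case ($\alpha\to\infty$, $\beta\to-\infty$, $\alpha+\beta$ bounded), I would show that $\tau_k$ is bounded. Writing $\tau_k = \exp(Z_k)$ and decomposing $Z_k$ along the $\beta$-weights in $\lieg_1 = \lieg_{\alpha-\beta} \oplus \lieg_\alpha \oplus \lieg_{\alpha+\beta}$, the $\lieg_{\alpha-\beta}$ and $\lieg_\alpha$ components are amplified by $\Ad(d_k)$ with divergent scalars $e^{\alpha-\beta}, e^\alpha$. Expanding $\Ad(\tau_k)\hat Y(\hat x)$ in the three-step grading and tracking both the linear $[Z_k,\cdot]$ and quadratic $\tfrac{1}{2}[Z_k,[Z_k,\cdot]]$ contributions, a weight-by-weight bookkeeping shows that for $\Ad(d_k)\Ad(\tau_k)\hat Y(\hat x)$ to converge these components of $Z_k$ must stay bounded. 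Then on a convergent subsequence $\tau_k\to\tau_\infty \in P^+$, right-multiplication by $\tau_k^{-1}$ is a valid vertical equivalence to a linear holonomy sequence, establishing case (1).

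In the non-balanced cases, the scalars of $\Ad(d_k)$ on $\lieg_{-1}$ along $E_1, \{E_i\}_{i=2}^{n-1}, E_n$ are $e^{\beta-\alpha}, e^{-\alpha}, e^{-\alpha-\beta}$. In the bounded sub-case, $\alpha$ bounded and $\beta\to-\infty$ makes the $E_n$-scalar diverge and the others bounded; in the mixed sub-case with $\alpha+\beta\to-\infty$ the same holds, while the sub-case $\alpha+\beta\to+\infty$ would make all three scalars vanish, preventing any nontrivial $\lieg_{-1}$-limit and thus forcing $Y(y)=0$, contradicting the leverage pair. Since $\Ad(\tau_k)$ acts trivially on $\lieg/\liep$, this gives $\mathbb{V}^{AS}(p_k) = E_1^\perp$; Proposition \ref{prop:as_for_inv_sections} applied to each conformal vector field in $\lieh$ puts $\lieh(x)$ inside the corresponding degenerate hyperplane in $T_x M$.

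To distinguish bounded from mixed, I examine the vanishing of the $\lieg_{-1}$-part of $\lim \Ad(p_k)\hat X(\hat x)$ (required by $\hat X(\hat y)\in\lieg_0$). When $\alpha$ is bounded, the $E_2,\ldots,E_{n-1}$ scalars stay bounded away from $0$, forcing the corresponding components of $[\hat X(\hat x)]$ to vanish, whence $X(x)\in\BR E_1 = (E_1^\perp)^\perp$ --- case (3). When $\alpha\to\infty$, this constraint relaxes; unipotency of $\hat X$ is then derived from the $\lieg_0$-component analysis combined with the prohibitions of Propositions \ref{prop:contracting_proof}, \ref{prop:balanced_proof}, and \ref{prop:no_hyperbolic_singularity} against nontrivial hyperbolic isotropy of $X$. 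Finally, $H.y$ being one-dimensional and isotropic in the mixed case follows from a parallel analysis applied to every $Z\in\lieh$: the only mechanism producing a nonzero $\lieg_{-1}$-limit of $\Ad(d_k) v_k$ (with $v_k\to v \in E_1^\perp$) is through the divergent $e^{-\alpha-\beta}$-scalar on $E_n$, so $Z(y)$ must lie in the isotropic $E_n$-direction. The main obstacle I expect is the balanced-case quadratic compensation argument, requiring simultaneous bookkeeping of the $[Z_k,[Z_k,\cdot]]$-action across $\beta$-gradings; a secondary difficulty is the mixed-case unipotency argument, where unbounded $\tau_k$ injects contributions into the $\lieg_0$-component of the limit that must be carefully separated from the intrinsic isotropy $\hat X$.
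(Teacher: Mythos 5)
Your setup — tracking the grading and $\beta$-weight components of $\Ad p_k \bigl(\omega_{\hat x_k}(X)\bigr)$ and $\Ad p_k \bigl(\omega_{\hat x_k}(Y)\bigr)$, using the $H$-invariance and $P$-equivariance of $\omega$, excluding the contracting regime via Proposition \ref{prop:contracting_proof}, and identifying $\mathbb{V}^{AS}(p_k) = E_1^\perp$ in the non-balanced cases — is the same as the paper's. The elimination of $\alpha(D_k) \to -\infty$ via linear independence of $X(x),Y(x)$, the identification of $Y(y)$ as isotropic in the mixed case, and the bounded case conclusion about $X(x)$ also match.

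The significant gap is in the balanced case. You propose to bound $\xi_k = \ln \tau_k$ using only the bookkeeping for $\Ad p_k \bigl(\omega_{\hat x_k}(Y)\bigr)$. This is not enough. Write $\hat Y_k^{(-1)} = c_k E_n + F_k$ with $F_k \in E_1^\perp$. If $F_k \to 0$ (which is a priori possible), then the $\lieg_0$-component $[\xi_k, \hat Y_k^{(-1)}]$ involves only $[(\xi_k)_\alpha, E_n] \in \lieg_{-\beta}$ and $[(\xi_k)_{\alpha+\beta}, E_n] \in \liea+\liem$; there is no bracket of $E_n \in \lieg_{-\alpha-\beta}$ against $\lieg_{\alpha-\beta}$ since $-2\beta$ is not a restricted root. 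So the $\lieg_0$-level constraint yields boundedness only of $(\xi_k)_\alpha$ and $(\xi_k)_{\alpha+\beta}$, leaving $(\xi_k)_{\alpha-\beta}$ unconstrained, and the quadratic term at the $\lieg_1$ level does not close this gap without further information. The paper's proof avoids exactly this degeneracy by using \emph{both} elements of the leverage pair: it first replaces $\hat X_k$ by $\omega_{\hat x_k}(X - \epsilon_k Y/c_k)$ so that $\hat X_k^{(-1)} \in E_1^\perp$ exactly, and then the two bracket equations $[\xi_k, \hat X_k^{(-1)}]_0$ and $[\xi_k, \hat Y_k^{(-1)}]_0$, together with the nondegeneracy Lemma \ref{lem:bracket_nondegeneracy} applied to $\ad$ between opposite root spaces, pin down all three components of $\xi_k$. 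Without the vector $X$ (whose $\lieg_{-1}$-part lies in $E_1^\perp$ and therefore \emph{does} pair against $\lieg_{\alpha-\beta}$), the argument cannot conclude.

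A secondary imprecision: for the unipotency of $\hat X$ in the mixed case, you invoke Proposition \ref{prop:no_hyperbolic_singularity}, which concerns $H$-fixed points; $y$ here has a nontrivial one-dimensional orbit, so that proposition does not apply directly. The paper instead observes that $Y(y)$ is an isotropic vector fixed by the derivative of $\Stab_H(y)$, that a hyperbolic element of $\CO(1,n-1)$ fixing an isotropic vector must be balanced, that Proposition \ref{prop:balanced_proof} excludes balanced isotropy, and that the leverage-pair construction (via Lemma \ref{lem:hyperbolic_implies_no_unipotent_component}) already guarantees $\hat X$ is unipotent or hyperbolic — so unipotent is the only remaining option. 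You should adopt this chain rather than the fixed-point result.
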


\begin{proof}
Let $\hat{x}_k \rightarrow \hat{x} \in \pi^{-1}(x)$ be such that $h_k.
\hat{x}_k.p_k^{-1} \rightarrow \hat{y}$.
Let $D_k = \ln d_k$ and $\xi_k = \ln \tau_k$.  Recall that $d_k \in A'$ implies $\beta(D_k) \leq 0$. 

Let $\widehat{X}_k$ and $\widehat{Y}_k$ equal $\omega_{\hat{x}_k}(X)$ and $\omega_{\hat{x}_k}(Y)$, respectively, and denote with superscripts their components on the grading $\lieg = \lieg_{-1} \oplus \lieg_0 \oplus \lieg_1$.  The components of $\Ad p_k(\widehat{X}_k)$ are
\begin{multline}
\label{eqn:holo_on_X_k}
\underbrace{\Ad(d_k). \widehat{X}_k^{(-1)}}_{\lieg_{-1}}
+ \underbrace{\Ad(d_k). \left( \widehat{X}_k^{(0)} + [\xi_k,\widehat{X}_k^{(-1)}] \right) }_{\lieg_0} \\
+ \underbrace{\Ad(d_k). \left (\widehat{X}_k^{(1)} + [\xi_k,\widehat{X}_k^{(0)}] + \frac{1}{2}[\xi_k,[\xi_k,\widehat{X}_k^{(-1)}]] \right) }_{\lieg_1}
\end{multline}
and similarly for $\Ad p_k(\widehat{Y}_k)$; moreover, by the $H$-invariance and $P$-equivariance of $\omega$, each component above converges to the corresponding component of $\omega_{\hat{y}}(X)$ or $\omega_{\hat{y}}(Y)$, respectively.  

The eigenvalues of $\Ad(d_k)$ on $\lieg_{-1}$ are $e^{-\alpha(D_k) \pm \beta(D_k)}$, each with multiplicity $1$, and $e^{-\alpha(D_k)}$ with multiplicity $n-2$.
Now $X(y) = 0$ means $\omega_{\hat{y}}(X)^{(-1)} = 0$.  As $X(x) \neq 0$, the vectors $X_k^{(-1)} \neq 0$.  Thus $\Ad(d_k).\widehat{X}_k^{(-1)} \rightarrow 0$ implies $(- \alpha + \beta)(D_k) \rightarrow - \infty$.
On the other hand, both $Y(x)$ and $Y(y)$ are nonzero, so at least one eigenvalue of $\Ad(p_k)$ on ${\lieg_{-1}}$ has a limit in $\BR_{>0} \cup \{+\infty\}$. Thus the largest, $(-\alpha-\beta)(D_k)$, is bounded below. Together these limits imply $\beta(D_k) \rightarrow -\infty$.
 
 Finally, $\alpha(D_k)$ cannot diverge to $- \infty$.  If it did, then, since $(\beta - \alpha)(D_k) \rightarrow - \infty$, both $\omega_{\hat{x}}(X)^{(-1)}$ and $\omega_{\hat{x}}(Y)^{(-1)}$ would belong to the 1-dimensional negative weight space of $\beta$ in $\lieg_{-1}$ (identified with $\BR E_1$ in the basis introduced in section \ref{sec:cartan_decomposition}), contradicting the fact that $X(x)$ and $Y(x)$ are linearly independent. 
 
We are left with the three possibilites of balanced, mixed, or bounded types for $\{ d_k \}$. The action of $p_k$ on $\mathbb{V}$ factors through $d_k$.  We identify $\mathbb{V}$ as a $G_0$-module with $\BR^{1,n-1}$.

In the mixed and bounded cases $(\alpha + \beta)(D_k) \rightarrow - \infty$, and it is clear that $\mathbb{V}^{AS}(p_k) = E_1^\perp$.  Every $Z \in \lieh$ is $H$-invariant, so $\omega_{\hat{x}}(Z) \in \lieg^{AS}(p_k)$.  Then the projections of $\omega_{\hat{x}}(Z)$ modulo $\liep$ are in $\mathbb{V}^{AS}(p_k)$, which means $Z(x)$ is in the approximately stable subset of $T_xM$.  

Now suppose $\{ p_k \}$ is of mixed type, and consider its action on $\mathbb{V}$.  For any bounded sequence $v_k \in \mathbb{V}$, the components of $(\Ad d_k).v_k$ on $E_1^\perp$ tend to $0$.  If $\lim \Ad(d_k). v_k$ exists, it belongs to $\BR E_n$.  It follows that $\lieh(y)$ is one-dimensional, and $Y(y) = q(\hat{y}, \lim \Ad(d_k).\hat{Y}_k^{(-1)})$ is isotropic (where $q : \widehat{M} \times \mathbb{V} \rightarrow TM$ as in section \ref{sec:approximately_stable}).  Now $Y(y)$ is fixed by the derivative action of $\Stab_H(y)$.  Hyperbolic elements of $\CO(1,n-1)$ have isotropic fixed vectors only if they are of balanced type.  We conclude by proposition \ref{prop:balanced_proof} that there are no hyperbolic elements of $\hat{S}_{y}$, so $\hat{X}$ is unipotent.

  Now suppose $\{ p_k \}$ is of bounded type, and consider again the action on $\mathbb{V}$.  For a convergent sequence $v_k \in \mathbb{V}$, if the limit $\Ad(d_k) .v_k$ exists, it is nonzero unless $\lim v_k \in \BR E_1$, the orthogonal of $\mathbb{V}^{AS}(p_k)$.  We conclude $X(x)$ is in the orthogonal of the approximately stable subset of $T_xM$.
   
 Now suppose $\{ p_k \}$ is of balanced type.   By similar considerations as above of the action of $\{ d_k \}$ on $\mathbb{V}$, this behavior forces $\omega_{\hat{x}}(X)^{(-1)} \in E_1^\perp$ and $\omega_{\hat{x}}(Y)^{(-1)}$ to be transverse to $E_1^\perp$.  After rescaling, we may write $\omega_{\hat{x}}(Y)^{(-1)} = E_n + F$ with $F \in E_1^\perp$, and $\widehat{Y}_k^{(-1)} = c_k E_n + F_k$ with $c_k \rightarrow 1$ and $F_k \in E_1^\perp$.  We will prove that in this case, $\{ \xi_k \}$ is bounded, with a rather detailed analysis of equation (\ref{eqn:holo_on_X_k}).
 
 To simplify this analysis, we first make a slight modification to $\{ \widehat{X}_k^{(-1)} \}$.  As shown above, it has the form $\epsilon_k E_n + G_k$ with $G_k \in E_1^\perp$ and $\epsilon_k \rightarrow 0$.  Applying $\Ad p_k$ to the sequence $\widehat{X}'_k = \omega_{\hat{x}_k}(X - \epsilon_k Y/c_k)$ gives a bounded sequence in $\lieg$, tending to $\omega_{\hat{x}}(X)$.  We make this replacement without change of notation---that is, we simply assume $\widehat{X}_k^{(-1)} \in E_1^\perp$.
 
 We consider the second line of equation (\ref{eqn:holo_on_X_k}), corresponding to the $\lieg_0$-components of $\Ad p_k(\widehat{X}_k)$ and $\Ad p_k(\widehat{Y}_k)$, and decompose it further into components on the root spaces in $\lieg_0$, which correspond to $- \beta, 0,$ and $\beta$.  As $\widehat{X}_k^{(0)}$ 
 and $\Ad(d_k).\left(\widehat{X}_k^{(0)} + \left[\xi_k,\widehat{X}_k^{(-1)} \right] \right)$ are bounded, the components $ [\xi_k,\widehat{X}_k^{(-1)}]_{- \beta}$ and $\left[\xi_k, \widehat{X}_k^{(-1)} \right]_0$ must be bounded.  The same goes for $\widehat{Y}_k$.  Using that $\widehat{X}_k \in E_1^\perp$, we begin with 
 $$ \left[\xi_k,\widehat{X}_k^{(-1)} \right]_0 = \left[(\xi_k)_{\alpha - \beta},(\widehat{X}_k^{(-1)})_{\beta - \alpha} \right] + \left[ (\xi_k)_{\alpha},(\widehat{X}_k^{(-1)})_{- \alpha} \right]$$
  The first and second terms on the right-hand side are linearly independent, so each must be bounded.  At least one of $\omega_{\hat{x}}(X)^{(-1)}_{\beta - \alpha}$ and $\omega_{\hat{x}}(X)^{(-1)}_{ - \alpha}$ is nonzero, so at least one of $(\xi_k)_{\alpha - \beta}$ and $(\xi_k)_{\alpha}$ is bounded, by the nondegeneracy of the brackets between opposite root spaces given by lemma \ref{lem:bracket_nondegeneracy}.
  
  Next consider
  $$ \left[ \xi_k,\widehat{Y}_k^{(-1)} \right]_0 = \left[ (\xi_k)_{\alpha - \beta},(\widehat{Y}_k^{(-1)})_{\beta - \alpha} \right]  + \left[ (\xi_k)_{\alpha},(\widehat{Y}_k^{(-1)})_{- \alpha} \right] + c_k \left[ (\xi_k)_{\alpha + \beta},E_n \right] $$
At least one of the first two terms is bounded, so each of the remaining two must be bounded; we conclude $(\xi_k)_{\alpha + \beta}$ is bounded.

Supposing $\omega_{\hat{x}}(X)^{(-1)}_{\beta - \alpha} = 0$, we know $\lim (\widehat{X}_k^{(-1)})_{- \alpha} \neq 0$ and $\{ (\xi_k)_{\alpha} \}$ is bounded.  Now
$$ \left[ \xi_k,\widehat{X}_k^{(-1)} \right]_{ - \beta} = \left[ (\xi_k)_{\alpha - \beta},(\widehat{X}_k^{(-1)})_{- \alpha} \right]$$
implies that $\{ (\xi_k)_{\alpha - \beta} \}$ is bounded because, for any nonzero $\xi \in \lieg_{\alpha - \beta},$ the map $\ad \xi : \lieg_{- \alpha} \rightarrow \lieg_{- \beta}$ is an isomorphism.  On the other hand, if $\omega_{\hat{x}}(X)^{(-1)}_{ - \alpha} = 0$, then we know $\{ (\xi_k)_{\alpha - \beta} \}$ is bounded, and
$$ \left[ \xi_k,\widehat{Y}_k^{(-1)} \right]_{- \beta} = \left[ (\xi_k)_{\alpha - \beta},(\widehat{Y}_k^{(-1)})_{ - \alpha} \right]  + c_k \left[ (\xi_k)_{\alpha},E_n\right],$$
so $\{ (\xi_k)_\alpha \}$ is bounded, because $\ad E_n : \lieg_\alpha \rightarrow \lieg_{- \beta}$ is an isomorphism.

Finally, in the balanced case, $\{ \xi_k \}$ is bounded, which means $\{ p_k = d_k \tau_k \}$ is vertically equivalent to the linear sequence $\{ d_k \}$.
\end{proof}

By proposition \ref{prop:proof_with_finite_volume}, there is a dense set of points of $\Omega_f$ satisfying condition (\ref{eqn:negation_finite_volume}) of section \ref{subsec:volume_condition}.  This condition gives holonomy sequences, which we may assume in $A'P^+$-form $\{ p_k = d_k \tau_k \}$, with $\alpha(D_k) \rightarrow  \infty$, as $\alpha$ measures the negative of the logarithm of the dilation of $\Ad(d_k)$ on $\lieg / \liep$. (Note that they are in general not pointwise holonomy sequences).  Unboundedness of $\{ \alpha(D_k) \}$ implies $\{ p_k \}$ is of balanced or mixed type; thanks to proposition \ref{prop:balanced_proof}, it must be mixed.  
Given such a holonomy sequence associated to $h_k .x_k \rightarrow y$, the limit orbit $H.y$ is $1$-dimensional, as in the proof of the mixed case (2) above.   The orbit $H.y$ is then closed by proposition \ref{prop:1d_implies_closed}.  The isotropy of $\Stab_H(y)$ is linearizable by proposition \ref{prop:simult_linear}, so changing $\hat{y}$ in the fiber of $y$ if necessary gives an ACL holonomy sequence vertically equivalent to $\{ p_k \}$.

\begin{corollary}
\label{cor:dense_mixed}
There is a dense set of points $S_{mix} \subset \Omega_f$ such that for all $x \in S_{mix}$, there is $\{ h_k \} \subset H$ with ACL holonomy sequence at $x$ of mixed type.
\end{corollary}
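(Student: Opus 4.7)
The plan is to invoke condition \eqref{eqn:negation_finite_volume} from section \ref{subsec:volume_condition}, which, under our standing assumption that no $H$-invariant finite volume exists on any open subset of $M$, produces a dense subset $S_{mix} \subset \Omega_f$ such that for each $x \in S_{mix}$ there exist $x_k \to x$ and $h_k \in H$ with $\Delta_{h_k}(x_k) \to 0$. For such $x$, I pick lifts $\hat{x}_k \in \pi^{-1}(x_k)$ converging to some $\hat{x} \in \pi^{-1}(x)$ and, after passing to a subsequence, extract a holonomy sequence $\{p_k\}$ for which $h_k.\hat{x}_k.p_k^{-1}$ converges to some $\hat{y} \in \widehat{M}$. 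After vertical equivalence I write $p_k = d_k \tau_k$ in $A'P^+$-form. Because $\alpha$ records the negative logarithm of the dilation of $\Ad(d_k)$ on $\lieg/\liep$, the hypothesis $\Delta_{h_k}(x_k) \to 0$ becomes $\alpha(D_k) \to +\infty$, where $D_k = \ln d_k$.

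By definition \ref{def:holonomy_trichotomy}, the divergence of $\alpha(D_k)$ rules out the bounded-distortion case and leaves only the balanced and mixed possibilities. To eliminate balancedness, I would show that in this case $\{p_k\}$ is vertically equivalent to a linear sequence and then apply proposition \ref{prop:balanced_proof} together with Proposition B and Haefliger's Theorem C to contradict the standing hypothesis that $M$ is closed and simply connected. The linearization is precisely the root-space dissection carried out in case (1) of proposition \ref{prop:all_about_holonomy}: a leverage pair $(X,Y) \subset \lieh$ at $x$ is available because $x \in \Omega_f$ makes $X(x)$ and $Y(x)$ linearly independent while $X(y) = 0$ and $Y(y) \neq 0$, and the convergence of each graded and root-space component of $\Ad(p_k)(\widehat{X}_k)$ and $\Ad(p_k)(\widehat{Y}_k)$ then forces each component of $\xi_k = \ln \tau_k$ to remain bounded.

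Thus $\{p_k\}$ is of mixed type. After translating $h_k$ by elements of $L$ if necessary, I arrange $h_k.x_k \to y = \pi(\hat{y})$. The mixed-case conclusion in step (2) of proposition \ref{prop:all_about_holonomy} forces $H.y$ to be one-dimensional and isotropic. Proposition \ref{prop:1d_implies_closed} then upgrades $H.y$ to a closed orbit, and proposition \ref{prop:simult_linear} linearizes the isotropy of $\Stab_H(y)$: replacing $\hat{y}$ by $\hat{y}.q^{-1}$ for a suitable $q \in P^+$ moves the isotropy with respect to the new lift into $G_0$. The modified holonomy sequence $\{q p_k\}$ remains in $A'P^+$-form and is still of mixed type, yielding the required ACL holonomy sequence at $x$.

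The main obstacle I foresee is the balanced case, because proposition \ref{prop:balanced_proof} requires linearity and proposition \ref{prop:all_about_holonomy}(1) is stated for ACL sequences, which is precisely the property we are in the process of establishing. The delicate point is thus to verify that the leverage-pair computation applies to a general $A'P^+$-form holonomy sequence at a point $x \in \Omega_f$, before we know that $H.y$ is closed or that the isotropy at $y$ is linearized. This works because the existence of a leverage pair depends only on $\mathrm{stab}_\lieh(y)$ having positive codimension in $\lieh$ and on $H$ acting freely at $x$, both of which follow from our starting data and the absence of global $H$-fixed points established in section \ref{sec:fixed_points}.
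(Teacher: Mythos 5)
Your route is the same as the paper's: start from condition (\ref{eqn:negation_finite_volume}), read off $\alpha(D_k)\to\infty$ in $A'P^+$-form, rule out bounded distortion, rule out balancedness via proposition \ref{prop:balanced_proof}, and upgrade to ACL using propositions \ref{prop:1d_implies_closed} and \ref{prop:simult_linear}. You also correctly identify the one genuinely delicate point: the leverage-pair analysis of proposition \ref{prop:all_about_holonomy}(1) is needed to show a balanced $\{p_k\}$ is vertically equivalent to a linear one, but that proposition is stated for ACL sequences, which is exactly what the corollary is in the business of producing.

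Your resolution of that circularity, however, does not quite close the loop. A leverage pair needs a nonzero $X\in\mathfrak{stab}_\lieh(y)$ with unipotent or hyperbolic isotropy with respect to $\hat{y}$, in addition to $Y$ with $Y(y)\neq 0$ and linear independence of $X(x),Y(x)$. Absence of global $H$-fixed points gives $\mathfrak{stab}_\lieh(y)$ \emph{positive codimension} in $\lieh$, i.e.\ supplies $Y$; freeness on $\Omega_f$ gives linear independence. But neither gives $\mathfrak{stab}_\lieh(y)\neq 0$, which is required for $X$ to exist, nor arranges the isotropy of $X$ to be linearized at $\hat{y}$. The missing ingredient is an eigenvalue observation that should come \emph{before} the balanced/mixed dichotomy: since $\{p_k\}$ is in $A'P^+$-form, $\beta(D_k)\leq 0$, so $\Ad(d_k)$ acts on $\lieg/\liep\cong\lieg_{-1}$ with eigenvalues $e^{(-\alpha\pm\beta)(D_k)}$ and $e^{-\alpha(D_k)}$, of which the $w=0$ and $w=+1$ ones tend to $0$ once $\alpha(D_k)\to\infty$. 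Because $P^+$ acts trivially on $\lieg/\liep$, the $(-1)$-components $\omega_{\hat y}(Z)^{(-1)}$ for $Z\in\lieh$ are all confined to the remaining eigenline $\BR E_n$. Thus $\dim\lieh(y)\leq 1$, and by the absence of fixed points it is exactly $1$; since $\dim H\geq 2$, this forces $\mathfrak{stab}_\lieh(y)\neq 0$. It also yields that $H.y$ is one-dimensional, hence closed by proposition \ref{prop:1d_implies_closed}; then proposition \ref{prop:simult_linear} lets you adjust $\hat y$ by an element of $P^+$ to put the isotropy into $G_0$. Only now is the sequence ACL, the leverage pair is available, and the trichotomy of proposition \ref{prop:all_about_holonomy} (in particular balanced $\Rightarrow$ linear, hence ruled out by proposition \ref{prop:balanced_proof}) applies cleanly. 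With this insertion your argument is complete and matches the paper's intent.
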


If $\{ p_k \}$ in proposition \ref{prop:all_about_holonomy} above is mixed or bounded, it may not be linear as in the balanced case.  We can arrange, however, that the nonlinear factors lie in a 1-dimensional subgroup of $P^+$.

\begin{lemma}
\label{lemma:holonomy_second_order}
Let $x \in \Omega_f$.  
Any ACL holonomy sequence at $x$ of mixed or bounded type is vertically equivalent to an ACL holonomy sequence $\{ p_k = d_k \tau_k \}$ for which $\{ \xi_k = \ln \tau_k \}$ is contained in the line $\BR^tE_1 \mathbb{I} \subset \liep^+ \cong \BR^{1,n-1*}$.  Moreover, for $D_k = \ln d_k$, the sequence $\{ e^{\beta(D_k)} \xi_k \}$ is bounded.
\end{lemma}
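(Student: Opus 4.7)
The plan is to adapt the bracket analysis from the balanced case in the proof of proposition \ref{prop:all_about_holonomy} to the mixed and bounded settings; the weaker control on $\{ d_k \}$ will allow $\xi_k$ to grow, but only in $\lieg_{\alpha+\beta}$ and at a rate controlled by $\beta(D_k)$. I start by fixing a leverage pair $(X, Y) \in \lieh$ at $x$, which exists because $x \in \Omega_f$. Setting $\hat{X}_k := \omega_{\hat{x}_k}(X)$ and $\hat{Y}_k := \omega_{\hat{x}_k}(Y)$, the $H$-invariance of $\omega$ together with the $P$-invariance of the lifts give that both $\hat{X}_k, \hat{Y}_k$ and $\Ad(p_k) \hat{X}_k, \Ad(p_k) \hat{Y}_k$ are convergent. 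Since $P^+$ is abelian, $\Ad(p_k) = \Ad(d_k) \circ e^{\ad \xi_k}$; applying formula (\ref{eqn:holo_on_X_k}), I decompose these convergence conditions by the grading $\lieg = \lieg_{-1} \oplus \lieg_0 \oplus \lieg_1$ and further by $\beta$-weights.

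The crucial observation is that in both mixed and bounded cases $\beta(D_k) \to -\infty$, so $\Ad(d_k)$ dilates $\lieg_{-\beta}$ by $e^{-\beta(D_k)} \to +\infty$. For $Z \in \{ X, Y \}$, the $\lieg_{-\beta}$-component of $\hat{Z}_k^{(0)} + [\xi_k, \hat{Z}_k^{(-1)}]$ must therefore be $O(e^{\beta(D_k)})$, and in particular bounded. Expanding the brackets in terms of the $\beta$-weight decomposition $\xi_k = (\xi_k)_{\alpha-\beta} + (\xi_k)_\alpha + (\xi_k)_{\alpha+\beta}$ yields a linear system in $(\xi_k)_{\alpha-\beta}$ and $(\xi_k)_\alpha$. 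Combining the equations for $X$ and $Y$, and using the linear independence of $X(x), Y(x)$ in the approximately stable hyperplane $E_1^\perp$ together with the nondegeneracy of the brackets between opposite root spaces (lemma \ref{lem:bracket_nondegeneracy}) and of the pairing $\lieg_\alpha \times \lieg_{-\alpha-\beta} \to \lieg_{-\beta}$, I conclude that $(\xi_k)_{\alpha-\beta}$ and $(\xi_k)_\alpha$ are bounded. Since $P^+$ is abelian, these bounded components can be canceled by right multiplication of $p_k$ by a convergent element of $P^+$ (passing to a subsequence if needed), producing a vertically equivalent holonomy sequence with $\xi_k \in \lieg_{\alpha+\beta} = \BR\, {}^tE_1 \mathbb{I}$.

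For the growth bound, I turn to the $\lieg_\beta$-component of the $\lieg_0$-equation for $Y$: this is scaled by $e^{\beta(D_k)} \to 0$, so the pre-scaled expression can grow at rate $O(e^{-\beta(D_k)})$. With $\xi_k$ now in $\lieg_{\alpha+\beta}$, the only bracket contribution is $[\xi_k, y_c]$ where $y_c := (\hat{Y}_k^{(-1)})_{-\alpha}$, and a direct computation shows $\ad({}^tE_1 \mathbb{I}) : \lieg_{-\alpha} \to \lieg_\beta$ is an isomorphism, so if $y_c$ has nonzero limit then $|\xi_k| = O(e^{-\beta(D_k)})$. The hypothesis that $y_c$ has a nonzero limit can be arranged in the choice of leverage pair: in the bounded case it is automatic from approximate stability ($X(x) \in \BR E_1$) together with the linear independence with $Y(x)$, and in the mixed case it can be achieved by replacing $Y$ by $X + Y$ if necessary. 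The main obstacle will be the linear-algebra step of the previous paragraph: approximate stability forces several components of $\hat{X}_k^{(-1)}, \hat{Y}_k^{(-1)}$ to vanish in the limit, requiring a careful case analysis to verify that the combined system retains enough rank to bound both $(\xi_k)_{\alpha-\beta}$ and $(\xi_k)_\alpha$.
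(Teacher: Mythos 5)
Your overall framework is right — you work from the same expansion \eqref{eqn:holo_on_X_k}, decompose by $\beta$-weights, and lean on the nondegeneracy of opposite-root brackets and the leverage pair — but the step you yourself flag as ``the main obstacle'' is precisely the crux of the proof, and leaving it as an acknowledged loose end constitutes a genuine gap. The difficulty is not cosmetic. Because $\widehat{X}_k^{(-1)}$ and $\widehat{Y}_k^{(-1)}$ can each carry small but nonzero $\lieg_{-\alpha-\beta}$ ($= E_n$) components tending to $0$, the $\lieg_{-\beta}$ part of $[\xi_k,\widehat{Z}_k^{(-1)}]$ picks up a parasitic cross term $[(\xi_k)_\alpha,(\widehat{Z}_k^{(-1)})_{-\alpha-\beta}]$ with a vanishing coefficient, so boundedness of that component alone cannot distinguish a bounded $(\xi_k)_\alpha$ from one that grows like $1/\epsilon_k$. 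The rank of your ``linear system'' genuinely degenerates in the limit, and there is no reason the equations for $X$ and $Y$ restore it: the approximate stability hypotheses allow, for instance, that one of $\omega_{\hat{x}}(X)^{(-1)}$, $\omega_{\hat{x}}(Y)^{(-1)}$ lies in $\BR E_1$, killing its $\lieg_{-\alpha}$ contribution as well.

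The paper resolves this by first forming the auxiliary sequence $\widehat{Z}_k = \omega_{\hat{x}_k}(Y-\delta_k X/\epsilon_k)$ or $\omega_{\hat{x}_k}(X-\epsilon_k Y/\delta_k)$ so that $\widehat{Z}_k^{(-1)} \in E_1^\perp$ \emph{exactly}, which eliminates the $\lieg_{-\alpha-\beta}$ cross term; and only then by a dichotomy on whether $\lim(\widehat{Z}_k)^{(-1)}_{-\alpha}$ is nonzero or $\lim\widehat{Z}_k^{(-1)}\in\BR E_1$. In the first branch the weight-$0$ component of $[\xi_k,\widehat{Z}_k^{(-1)}]$ controls $(\xi_k)_\alpha$ via lemma~\ref{lem:bracket_nondegeneracy}, then the $\lieg_{-\beta}$ component controls $(\xi_k)_{\alpha-\beta}$. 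In the second branch, $(\xi_k)_{\alpha-\beta}$ is bounded directly, and one must return to the \emph{unmodified} $\widehat{X}_k^{(-1)}$ and use that the brackets $[\lieg_\alpha,\lieg_{-\alpha}]$ and $[\lieg_{\alpha+\beta},\lieg_{-\alpha-\beta}]$ land in linearly independent subspaces of $\liea\oplus\liem$ to split the weight-$0$ equation and bound $(\xi_k)_\alpha$. You also only analyze the $\lieg_{-\beta}$ component, whereas this second branch needs the weight-$0$ component; and after the $\widehat{Z}_k$ correction the $\lieg_{-\beta}$ component involves only $(\xi_k)_{\alpha-\beta}$, so it cannot by itself pin down $(\xi_k)_\alpha$. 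Your treatment of the growth bound $e^{\beta(D_k)}\xi_k$ via the $\lieg_\beta$ component is essentially the paper's argument and is fine, except that you do not need to replace $Y$ by $X+Y$: you may simply take whichever of $X,Y$ has nonvanishing $(-\alpha)$-component, as the paper does.
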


\begin{proof}
Let $(X,Y)$  be a leverage pair for $\{ p_k \}$.
The proof is based on boundedness of each grading component of (\ref{eqn:holo_on_X_k}), as in proposition \ref{prop:all_about_holonomy}.
As before, let $\widehat{X}_k$ and $\widehat{Y}_k$ equal $\omega_{\hat{x}_k}(X)$ and $\omega_{\hat{x}_k}(Y)$, respectively, and denote with superscripts their components on the grading $\lieg = \lieg_{-1} \oplus \lieg_0 \oplus \lieg_1$.  By proposition \ref{prop:all_about_holonomy}, both $\omega_{\hat{x}}(X)^{(-1)}, \omega_{\hat{x}}(Y)^{(-1)} \in E_1^\perp$.  Then we can write
$$ \widehat{Y}_k^{(-1)} = \delta_k E_n + F_k \qquad \widehat{X}_k^{(-1)} = \epsilon_k E_n + G_k \qquad \mbox{where } F_k, G_k \in E_1^\perp, \ \delta_k, \epsilon_k \rightarrow 0$$
Possibly after passing to a subsequence, one of the sequences $\delta_k / \epsilon_k$ or $\epsilon_k / \delta_k$ has a limit in $\BR$ as $k \rightarrow \infty$.  In the first case, set $\widehat{Z}_k = \omega_{\hat{x}_k}(Y - \delta_k X / \epsilon_k)$; otherwise, set $\widehat{Z}_k = \omega_{\hat{x}_k}(X - \epsilon_k Y / \delta_k ).$  Then, after passing to a subsequence, the limit $\Ad p_k (\widehat{Z}_k)$ exists, and $\widehat{Z}_k^{(-1)} \in E_1^\perp$; moreover, by linear independence of $X(x)$ and $Y(x)$, the limit of $\widehat{Z}_k^{(-1)}$ is a nonzero element of $E_1^\perp$.

Now we consider the second term of equation (\ref{eqn:holo_on_X_k}) for $\widehat{Z}_k$.  As before, the following sequence must be bounded:
$$\left[ \xi_k,\widehat{Z}_k^{(-1)} \right]_0 = \left[ (\xi_k)_{\alpha - \beta},(\widehat{Z}_k^{(-1)})_{\beta - \alpha} \right] + \left[ (\xi_k)_{\alpha},(\widehat{Z}_k^{(-1)})_{- \alpha} \right] $$
and one of the limits of $(\widehat{Z}_k)^{(-1)}_{- \alpha}$ or $(\widehat{Z}_k)^{(-1)}_{\beta - \alpha}$ is nonzero.  In the first case, $(\xi_k)_{\alpha}$ must be bounded by the nondegeneracy lemma \ref{lem:bracket_nondegeneracy}.  Then boundedness of
$$ \left[ \xi_k , \widehat{Z}_k^{(-1)} \right]_{- \beta} = \left[ (\xi_k)_{\alpha - \beta},(\widehat{Z}_k)^{(-1)}_{- \alpha} \right]$$
implies $(\xi_k)_{\alpha - \beta}$ is bounded because, for nonzero $\xi \in \lieg_{\alpha - \beta}$, the map $\ad \xi : \lieg_{- \alpha} \rightarrow \lieg_{- \beta}$ is an isomorphism.

Otherwise, $\lim \widehat{Z}_k^{(-1)} \in \BR E_1$ and $(\xi_k)_{\alpha - \beta}$ is bounded (again thanks to lemma \ref{lem:bracket_nondegeneracy}).  By linear independence of $X(x)$ and $Y(x)$, one of $\omega_{\hat{x}}(X)_{- \alpha}^{(-1)}$ or $\omega_{\hat{x}}(Y)_{- \alpha}^{(-1)}$ is nonzero.  Assume without loss of generality this holds for $X$.  Now
$$\left[ \xi_k,\widehat{X}_k^{(-1)} \right]_0 = \left[ (\xi_k)_{\alpha - \beta},(\widehat{X}_k^{(-1)})_{\beta - \alpha} \right] + \left[ (\xi_k)_{\alpha},(\widehat{X}_k^{(-1)})_{- \alpha}\right] + \left[ (\xi_k)_{\alpha + \beta},(\widehat{X}_k^{(-1)})_{- \alpha - \beta} \right]$$
The first term on the right-hand side is bounded.  The second and third terms are linearly independent, so each must be bounded.  We conclude with lemma \ref{lem:bracket_nondegeneracy} $(\xi_k)_\alpha$ is bounded.  Under the isomorphism $\lieg_1 = \liep^+ \cong \BR^{1,n-1*}$, the root space $\lieg_{\alpha + \beta} $ corresponds to $\BR^tE_1 \mathbb{I}$. 

Let $\tau_k'$ be the components of $\tau_k$ in the annihilator of $E_n$, a complementary hyperplane to $\BR^tE_1 \mathbb{I}$ in $\liep^+$.  Right-multiply $\hat{x}_k$ with the bounded sequence $\tau_k'^{-1}$ to obtain a new, vertically equivalent ACL holonomy sequence at $x$ with $\{ \xi_k \} \subset \BR^tE_1\mathbb{I}$.  

Next, we again use the nonvanishing of one of $(\widehat{X}_k)^{(-1)}_{- \alpha}$ or $(\widehat{Y}_k)^{(-1)}_{- \alpha}$, which we take without loss of generality to be the former.  The following sequence is bounded:
$$ \left( \Ad (p_k). \widehat{X}_k \right)_\beta = \Ad(d_k).\left( (\widehat{X}_k)^{(0)}_\beta + \left[ (\xi_k)_{\alpha + \beta},(\widehat{X}_k)^{(-1)}_{- \alpha}  \right] \right) $$
The first term on the right-hand side tends to 0, while the second equals $e^{\beta(D_k) } \left[ \xi_k, (\widehat{X}_k)^{(-1)}_{- \alpha} \right] $.  For nonzero $\xi \in \lieg_{\alpha + \beta}$, the map $\ad \xi : \lieg_{- \alpha} \rightarrow \lieg_\beta$ is an isomorphism, so $\{ e^{\beta(D_k) } \xi_k \}$ is bounded.
\end{proof}

The following important proposition is a corollary of the above lemma:

\begin{proposition}
\label{prop:e1perp_stable}
Let $x \in \Omega_f$, and let $\{ p_k \}$ be an ACL holonomy sequence at $x$ of mixed or bounded type.  Then $\{ p_k \}$ is vertically equivalent to an ACL holonomy sequence for which the $P$-module $\lieg$ with the adjoint representation satisfies
 $$ E_1^\perp \subset \lieg^{AS}(p_k) $$
 (under the $G_0$-equivariant identification of $\BR^{1,n-1} \cong \lieg_{-1} \subset \lieg$)
\end{proposition}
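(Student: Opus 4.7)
The plan is to show that the constant sequence $v_k\equiv v$ already witnesses approximate stability for any $v\in E_1^\perp\subset\lieg_{-1}$, once the refined form of the holonomy sequence given by lemma \ref{lemma:holonomy_second_order} is in hand. The real work has in fact already been done in that lemma; the present statement will reduce to a short calculation using the grading of $\lieg$ and the restricted root space decomposition of $\so(2,n)$ from section \ref{sec:cartan_decomposition}.

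Concretely, I would first apply lemma \ref{lemma:holonomy_second_order} to pass to a vertically equivalent ACL holonomy sequence $\{p_k = d_k \tau_k\}$ with $\xi_k := \ln\tau_k \in \lieg_{\alpha+\beta}$ and $\eta_k := e^{\beta(D_k)}\xi_k$ bounded. Fix $v\in E_1^\perp$; since $E_1\in\lieg_{\beta-\alpha}$ and $E_n\in\lieg_{-\alpha-\beta}$ are $\mathbb{I}$-dual while $E_2,\ldots,E_{n-1}\in\lieg_{-\alpha}$, the condition $v\perp E_1$ is precisely the vanishing of the $\lieg_{-\alpha-\beta}$-component, so $v\in\lieg_{\beta-\alpha}\oplus\lieg_{-\alpha}$. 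Because $\xi_k\in\lieg_1$ and $v\in\lieg_{-1}$, the $\ad(\xi_k)$-series terminates at the quadratic term, giving
$$\Ad(p_k)(v)=\Ad(d_k)\Bigl(v+[\xi_k,v]+\tfrac{1}{2}[\xi_k,[\xi_k,v]]\Bigr).$$

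I would then analyze the three grading components separately. The $\lieg_{-1}$-component $\Ad(d_k)v$ is bounded because in the mixed and bounded cases the eigenvalues $e^{(\beta-\alpha)(D_k)}$ and $e^{-\alpha(D_k)}$ on $\lieg_{\beta-\alpha}$ and $\lieg_{-\alpha}$ remain bounded (using $\beta(D_k)\to-\infty$ and $\alpha(D_k)$ either bounded or going to $+\infty$). For the $\lieg_0$-component, $[\lieg_{\alpha+\beta},\lieg_{\beta-\alpha}]\subset\lieg_{2\beta}=0$ eliminates the $E_1$-piece of $v$, while $[\lieg_{\alpha+\beta},\lieg_{-\alpha}]\subset\lieg_\beta$ places the rest in $\lieg_\beta$; hence $\Ad(d_k)[\xi_k,v]=e^{\beta(D_k)}[\xi_k,v]=[\eta_k,v]$, which is bounded by the lemma. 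For the $\lieg_1$-component, $[\xi_k,[\xi_k,v]]\in[\lieg_{\alpha+\beta},\lieg_\beta]\subset\lieg_{\alpha+2\beta}=0$ vanishes identically. Together these three facts show $\Ad(p_k)(v)$ is bounded, so $v\in\lieg^{AS}(p_k)$.

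The main obstacle has already been cleared by lemma \ref{lemma:holonomy_second_order}; here the only subtle point is checking that $2\beta$ and $\alpha+2\beta$ are not restricted roots of $\so(2,n)$, which is immediate from the explicit root diagram \eqref{eqn:root_decomposition}. The crucial conceptual observation is that both the concentration of $\xi_k$ in $\lieg_{\alpha+\beta}$ and the rate $e^{-\beta(D_k)}$ for $|\xi_k|$ are precisely calibrated to be absorbed by $\Ad(d_k)$ acting on $\lieg_\beta$, while the absence of the $E_n$-direction in $E_1^\perp$ prevents the quadratic term from producing any unbounded $\lieg_1$-contribution.
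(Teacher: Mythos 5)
Your proof is correct and follows essentially the same route as the paper: pass to the refined holonomy sequence from Lemma \ref{lemma:holonomy_second_order} with $\xi_k \in \lieg_{\alpha+\beta}$ and $e^{\beta(D_k)}\xi_k$ bounded, then check boundedness grading-component by grading-component using $\ad\lieg_{\alpha+\beta}(E_1^\perp)\subset\lieg_\beta$ and $\ad\lieg_{\alpha+\beta}(\lieg_\beta)=0$. You merely spell out the root-space bookkeeping (including the vanishing of $\lieg_{2\beta}$ and $\lieg_{\alpha+2\beta}$, and the boundedness of the $\lieg_{-1}$-eigenvalues in both the mixed and bounded cases) that the paper compresses into one line.
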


\begin{proof}
Replace $\{ p_k \}$ with a vertically equivalent sequence satisfying the conclusions of lemma \ref{lemma:holonomy_second_order}.
Let $v \in E_1^\perp$.  Because $\ad \lieg_{\alpha + \beta}(E_1^\perp) \subset \lieg_\beta$ and $\ad \lieg_{\alpha + \beta}(\lieg_\beta) = 0$, we simply verify that
$$ \Ad(p_k).v = \Ad(d_k). ( v + [\xi_k,v]) = \Ad(d_k).v + e^{\beta(D_k)} [\xi_k,v]$$
is bounded by our assumptions on $\{ p_k \}$.
\end{proof}

\subsection{Foliation in open, dense subset}
\label{sec:foliation_construction}

Assuming $(M,g)$ is simply connected in proposition \ref{prop:balanced_proof}, the balanced case cannot occur in
proposition \ref{prop:all_about_holonomy}, so for $x \in \Omega_f$ and any ACL holonomy sequence $\{ p_k \}$ at $x$, the approximately stable subset of $T_x M$ is a degenerate hyperplane containing $\lieh(x)$.  
Denote by $\mathcal{D}_x(p_k )$ the approximately stable subsets of $T_xM$---each a lightlike hyperplane---arising in this way.  In this section we show that these lightlike hyperplanes form an integrable distribution on an open, dense subset of $M$, following the broad outline of section \ref{sec:balanced_proof}.

\emph{Step 1: curvature values.}  First assume $n \geq 4$, and let $\mathbb{V} = \Sym^2(\lieg_0')$, where $\lieg_0' \cong \so(1,n-1)$, as in sections \ref{sec:weyl_representation} and \ref{sec:weyl_isotropic_lines}.  

  For any holonomy sequence of mixed or bounded type, $\{ p_k = d_k \tau_k \}$ with $D_k = \ln d_k$, the behavior $\beta(D_k) \rightarrow - \infty$ makes $\mathbb{V}^{AS}(p_k) \subset \mathbb{V}_0 \oplus \mathbb{V}_{+1} \oplus \mathbb{V}_{+2}$.  If $\{p_k \}$ is moreover of mixed type, then $\alpha(D_k) \rightarrow \infty$ makes $\mathbb{V}^{AS}(p_k) \subset \mathbb{V}_{+1} \oplus \mathbb{V}_{+2}$.  Then $\bar{\kappa}^\sharp(\hat{x}) \in \mathbb{V}^+_{Ric}$ by proposition \ref{prop:as_for_inv_sections}.  Recall that $\bar{\kappa}^\sharp$ is $P^+$-invariant, so the condition $\bar{\kappa}^\sharp(\hat{x}) \in G_0.\mathbb{V}^+_{Ric}$ is independent of the choice of $\hat{x} \in \pi^{-1}(x)$.   Because $S_{mix}$ is dense and $G_0.\mathbb{V}^+_{Ric}$ is Zariski closed by proposition \ref{prop:zariski_closed_subspaces}, we conclude that the image of $\bar{\kappa}^\sharp$ is contained in $G_0.\mathbb{V}^+_{Ric}$ on all of $M$.
  
Let the subspace ${\mathbb V}^+_B \subset \mathbb{V}^+_{Ric}$ be as in section \ref{sec:weyl_isotropic_lines}.
The set $(\bar{\kappa}^\sharp)^{-1}(G_0. {\mathbb V}^+_B)$ is analytic by proposition \ref{prop:zariski_closed_subspaces} and $P$-invariant, so it projects to an analytic subset of $M$.  If this set equals $M$, then let ${\mathbb V}^+_\Omega =  {\mathbb V}^+_B \backslash \{ 0 \}$; otherwise, let ${\mathbb V}^+_\Omega = {\mathbb V}^+_{Ric} \backslash {\mathbb V}^+_B$.  Then $\Omega_{W} = \pi \left( (\bar{\kappa}^\sharp)^{-1}(G_0.{\mathbb V}^+_\Omega) \right)$ is an open, dense subset of $M$ with analytic complement.

When $n=3$, we take $\mathbb{U}^\sim$ in place of $\mathbb{V}$;  $\mathbb{U}^{+ \sim}$ in place of $\mathbb{V}^+_{Ric}$; and $\mathbb{U}_{+2}^\sim$ in place of $\mathbb{V}_B^+$ in the argument above.  Proposition \ref{prop:zariski_closed_subspaces} and corollary \ref{cor:weyl_determines_lines} hold for these modules, as well.  Define $\Omega_{W}$ in the analogous way.
%
 
We can extend our knowledge of the Weyl curvature to the full Cartan curvature $\kappa$.  Recall that $\kappa$ has values in $\wedge^2 (\lieg/\liep)^* \otimes \liep$.  Via the isomorphism $\lieg/\liep \cong \lieg_{-1}$, the inner product $\mathbb{I}$ corresponds to a $P$-invariant inner product on $\lieg/\liep$.  As in section \ref{sec:weyl_representation}, using $\mathbb{I}$ to raise an index gives a $P$-equivariant isomorphism $\wedge^2 (\lieg/\liep)^* \otimes \liep \rightarrow (\liep/\liep^+)'\otimes \liep$, where the latter $P$-representation is via the adjoint with the modification that the dilation by $d$ gives an additional factor of $d^{-2}$; here $\liep/\liep^+ \subset \mbox{End}(\lieg/\liep)$ is isomorphic to $\lieg_0$, and $(\liep/\liep^+)'$ is the commutator subalgebra.  Denote by $\kappa^\sharp$ the composition of $\kappa$ with this isomorphism.  As a $G_0$-module, the target is isomorphic to $ \mbox{Sym}^2 \lieg_0' \oplus \lieg_0' \otimes \lieg_{-1}^*$, using that $\liep^+ \cong \lieg_{-1}^*$.  

\begin{lemma}
\label{lemma:cartan_curvature_positive}
Let $\{p_k\}$ be an ACL holonomy sequence of mixed type with respect to $\hat{x}_k \rightarrow \hat{x}$, for $x \in \Omega_f$, satisfying the conclusions of lemma \ref{lemma:holonomy_second_order}.  Then $\kappa^\sharp(\hat{x}) \in \mathbb{V}^+_{Ric} + \mathbb{U}^+$, where $\mathbb{V}^+_{Ric}$ and $\mathbb{U}^+ = \mathbb{U}_{+1} \oplus \mathbb{U}_{+2}$ are as in section \ref{sec:weyl_representation}.
\end{lemma}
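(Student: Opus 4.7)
The plan is to apply Proposition \ref{prop:as_for_inv_sections} to the $P$-equivariant function $\kappa^\sharp:\widehat M\to\mathbb V\oplus\mathbb U$, which is invariant under $\{h_k\}$ because $\kappa$ is, obtaining $\kappa^\sharp(\hat x)\in(\mathbb V\oplus\mathbb U)^{AS}(p_k)$. The approximately stable set is then computed weight-by-weight under the $\beta$-grading, exploiting the fact that $d_k$ is diagonal in this grading while $\tau_k=\exp(\xi_k)$ with $\xi_k=c_k\xi_1\in\lieg_{\alpha+\beta}$ (by Lemma \ref{lemma:holonomy_second_order}) is unipotent and shifts the $\lieg_0$-valued (Weyl) component of curvature into the $\lieg_1$-valued (Cotton-like) component.

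The Weyl part is straightforward: since $P^+$ acts trivially on $\mathbb V$, the action of $p_k$ on $\mathbb V_w$ is by the scalar $e^{2\alpha(D_k)+w\beta(D_k)}$, which diverges to $+\infty$ in mixed type for all $w\leq 0$. Combined with the Ricci-traceless property built into the Weyl tensor, i.e., $\bar\kappa^\sharp(\hat x)\in\ker\rho^\sharp_{Ric}$, this forces $\bar\kappa^\sharp(\hat x)\in\mathbb V^+_{Ric}$, as in Step~1 of Section \ref{sec:foliation_construction}.

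The core argument concerns the Cotton-like component $u=\kappa^\sharp(\hat x)|_{\mathbb U}$. Since $\ad\xi_1:\lieg_0\to\lieg_1$ raises $\beta$-weights by one, the induced shift $\Phi(\xi_1):\mathbb V\to\mathbb U$ sends $\mathbb V_{w-1}$ into $\mathbb U_w$. Hence, writing $v_k,u_k$ for the $\mathbb V$- and $\mathbb U$-components of $\kappa^\sharp(\hat x_k)$, the $\mathbb U_w$-component of $p_k\cdot\kappa^\sharp(\hat x_k)$ reads
\begin{equation*}
e^{3\alpha(D_k)+w\beta(D_k)}\bigl(u_k|_{\mathbb U_w}+c_k\,\Phi(\xi_1)v_k|_{\mathbb V_{w-1}}\bigr).
\end{equation*}
In mixed type the prefactor diverges to $+\infty$ whenever $w\leq 0$, so the bracketed sum must tend to zero.

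To extract $u_k|_{\mathbb U_w}\to 0$ it suffices to show the cross term $c_k\,\Phi(\xi_1)v_k|_{\mathbb V_{w-1}}$ decays. Lemma \ref{lemma:holonomy_second_order} yields $c_k=O(e^{-\beta(D_k)})$, while boundedness of $d_kv_k$ in $\mathbb V$ gives $v_k|_{\mathbb V_{w'}}=O(e^{-2\alpha(D_k)-w'\beta(D_k)})$ for each $w'$. For $w=-2$ the cross term is absent since $\mathbb V_{-3}=0$; for $w\in\{-1,0\}$ the product is $O(e^{-2\alpha(D_k)-w\beta(D_k)})$, whose exponent tends to $-\infty$ because $-2\alpha(D_k)\to-\infty$ and $-w\beta(D_k)\leq 0$ (as $w\leq 0$ and $\beta(D_k)\leq 0$). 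Hence $u|_{\mathbb U_w}=0$ for every $w\leq 0$, so $u\in\mathbb U^+$, and together with the Weyl conclusion we obtain $\kappa^\sharp(\hat x)\in\mathbb V^+_{Ric}+\mathbb U^+$. The main delicate point is the interaction between the unipotent factor $\tau_k$ and the grading: one must verify the precise weight shift of $\Phi(\xi_1)$ and write the $\mathbb U_w$-component of $p_k\cdot\kappa^\sharp(\hat x_k)$ in the form above, so that the diverging prefactor can be played off against the decay of the cross term.
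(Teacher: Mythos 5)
Your proof is correct and takes essentially the same route as the paper: apply Proposition \ref{prop:as_for_inv_sections} to $\kappa^\sharp$, split its target into $\mathbb{V}\oplus\mathbb{U}$, exploit that the residual $P^+$-factor $\tau_k=\exp(\xi_k)$ with $\xi_k\in\lieg_{\alpha+\beta}$ only shifts $\mathbb{V}_{w-1}$ into $\mathbb{U}_w$, and use the boundedness of $e^{\beta(D_k)}\xi_k$ from Lemma \ref{lemma:holonomy_second_order} together with the decay of $v_k|_{\mathbb{V}_{w'}}$ forced by the diverging $\mathbb{V}$-weights to kill the cross term. Your weight-by-weight bookkeeping (one formula covering $w\in\{-2,-1,0\}$ with the uniform exponent $-2\alpha(D_k)-w\beta(D_k)\to-\infty$) is a slightly cleaner packaging of the paper's case-by-case treatment of $\mathbb{U}_0$, $\mathbb{U}_{-1}$, $\mathbb{U}_{-2}$, but the argument is substantively identical.
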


\begin{proof}
Write $\{ p_k \} = \{ d_k \tau_k \}$ with $\xi_k = \ln \tau_k \in \BR ^tE_1 \mathbb{I}$ and $\{ e^{\beta(D_k)} \xi_k \}$ bounded.
  Write $\kappa^\sharp(\hat{x}_k) = \varpi_k + \nu_k$ with $\varpi_k \in \mathbb{V}$ and $\nu_k \in \mathbb{U}$.   The action of $\{ p_k \}$ on $\kappa^\sharp(\hat{x}_k)$ is
  \begin{equation}
\label{eqn:pk_on_kappa}
 p_k.\kappa^\sharp(\hat{x}_k) = e^{2 \alpha(D_k)} d_k. \left( \varpi_k + (\nu_k + \xi_k.\varpi_k) \right)
 \end{equation}
By proposition \ref{prop:as_for_inv_sections}, $\kappa^\sharp(\hat{x})$ is in the approximately stable set for this action.  From just above, $\bar{\kappa}^\sharp(\widehat{M}) \subset \mathbb{V}^+_{Ric}$.  In particular $\lim \varpi_k = \bar{\kappa}^\sharp(\hat{x}) \in \mathbb{V}^+_{Ric}$.  
Boundedness of each component of $e^{2 \alpha(D_k)} d_k. \varpi_k$ implies that each of the sequences $\{ e^{ - 2 \beta(D_k)} \varpi_k^{(-2)} \}$ and $\{ e^{ - \beta(D_k)} \varpi_k^{(-1)} \}$ tends to $0$, because $\alpha(D_k) \rightarrow \infty$.  Then 
$$ d_k \xi_k. \varpi^{(-1)}_k =  e^{\alpha(D_k)} \xi_k .\varpi_k^{(-1)} = \left( e^{(\alpha+\beta)(D_k)} \xi_k \right) . \left( e^{- \beta(D_k)} \varpi_k^{(-1)} \right) \rightarrow 0$$
In order that the $\mathbb{U}_0$-component of (\ref{eqn:pk_on_kappa}) remain bounded as $k \rightarrow \infty$, 
$$ d_k.(\nu_k + \xi_k.\varpi_k)^{(0)} = e^{\alpha(D_k)} \left( \nu_k^{(0)} + \xi_k. \varpi_k^{(-1)} \right) \rightarrow 0$$
which implies $\nu_k^{(0)} \rightarrow 0$.  Similar arguments with $d_k \xi_k.\varpi_k^{(-2)}$ and the $\mathbb{U}_{-1}$-component of (\ref{eqn:pk_on_kappa}) give $\nu_k^{(-1)} \rightarrow 0$.  Finally, consideration of the $\mathbb{U}_{-2}$-component of (\ref{eqn:pk_on_kappa}) implies $\nu_k^{(-2)} \rightarrow 0$.  Then $\kappa^\sharp(\hat{x})$ belongs to the claimed subspace $\mathbb{V}^+_{Ric} + \mathbb{U}^+$.   
\end{proof}

\medskip

\emph{Step 2: definition of distribution and propagation of holonomy.}

\begin{proposition}
\label{prop:distribution_in_open_dense}
For all ACL holonomy sequences $\{ p_k \}$
at $x \in \Omega_{\mathcal{F}} = \Omega_W \cap \Omega_f$, the approximately stable subspaces $\mathcal{D}_x( p_k ) \subset T_x M$ are equal.  These subspaces form an $H$-invariant analytic distribution of lightlike hyperplanes in $\Omega_{\mathcal{F}}$.  
\end{proposition}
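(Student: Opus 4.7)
The plan is to construct an $H$-invariant analytic distribution $\mathcal{D}$ of lightlike hyperplanes on $\Omega_W$ (hence on $\Omega_{\mathcal{F}}$) directly from the Weyl curvature, and then identify $\mathcal{D}_x$ with the approximately stable subspace $\mathcal{D}_x(p_k)$ of every ACL holonomy at $x \in \Omega_{\mathcal{F}}$. First I would define an isotropic line field $\mathcal{L}$ on $\Omega_W$: for $\hat{x} \in \pi^{-1}(x)$ with $x \in \Omega_W$, the value $\bar{\kappa}^\sharp(\hat{x}) \in G_0 \cdot \mathbb{V}^+_\Omega$ is sent by the $G_0$-equivariant algebraic map of Corollary \ref{cor:weyl_determines_lines} to an isotropic direction in $\mathbf{P}(\mathcal{N}\setminus\{0\}) \subset \mathbf{P}(\lieg/\liep)$, which via the frame identification $q(\hat{x},\cdot)$ produces a line $\mathcal{L}_x \subset T_xM$. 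Independence from $\hat{x}$ follows from $G_0$-equivariance and the fact that $P^+$ acts trivially on both the Weyl module and $\lieg/\liep$. Real-analyticity of $\mathcal{L}$, and hence of $\mathcal{D} := \mathcal{L}^\perp$, comes from that of $\bar{\kappa}^\sharp$ and the algebraic map; $H$-invariance comes from $H$-invariance of $\bar{\kappa}^\sharp$ and the commutation of conformal lifts with the $P$-action on $\widehat{M}$.

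To prove $\mathcal{D}_x(p_k) = \mathcal{D}_x$ for an ACL holonomy $\{p_k = d_k\tau_k\}$ at $x \in \Omega_{\mathcal{F}}$, I would use Proposition \ref{prop:all_about_holonomy} (with the balanced case excluded by Proposition \ref{prop:balanced_proof} and simple-connectedness of $M$) to see that $\mathcal{D}_x(p_k) = q(\hat{x}, E_1^\perp)$ is a lightlike hyperplane with unique isotropic direction $q(\hat{x}, \BR E_1)$. Since every lightlike hyperplane is the orthogonal of its isotropic line, equality with $\mathcal{L}_x^\perp$ reduces to $\mathcal{L}_x = q(\hat{x}, \BR E_1)$, i.e.\ to the frame $\hat{x}$ being \emph{adapted} in the sense $\bar{\kappa}^\sharp(\hat{x}) \in \mathbb{V}^+_\Omega$. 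In the mixed case this follows from the Step 1 argument preceding Lemma \ref{lemma:cartan_curvature_positive} combined with Proposition \ref{prop:as_for_inv_sections}, which gives $\bar{\kappa}^\sharp(\hat{x}) \in \mathbb{V}^+_{Ric}$; a case analysis on whether $\mathbb{V}^+_\Omega$ equals $\mathbb{V}^+_{Ric}\setminus \mathbb{V}^+_B$ or $\mathbb{V}^+_B\setminus \{0\}$, together with the uniqueness clauses of Proposition \ref{prop:line_determination}, promotes this to $\bar{\kappa}^\sharp(\hat{x}) \in \mathbb{V}^+_\Omega$.

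The hard part will be the bounded case, where approximate stability on the Weyl module only yields $\bar{\kappa}^\sharp(\hat{x})$ in the non-negative-weight subspace $\mathbb{V}^{\geq 0}$, allowing a priori a nonzero weight-$0$ component. To force adaptedness I would apply Proposition \ref{prop:as_for_inv_sections} to the full Cartan curvature $\kappa^\sharp \in \mathbb{V} + \mathbb{U}$ and use Lemma \ref{lemma:holonomy_second_order}: the constraint $\xi_k \in \lieg_{\alpha+\beta}$ with $\{e^{\beta(D_k)} \xi_k\}$ bounded makes the nontrivial $\tau_k$-action couple the weight-$0$ component of $\bar{\kappa}^\sharp$ to the Cotton module, and the approximate-stability bounds there, combined with $\bar{\kappa}^\sharp(\hat{x}) \in G_0 \cdot \mathbb{V}^+_\Omega$, rule out this extra $\mathbb{V}_0$-component. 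Once adaptedness is secured in both cases, the identification $\mathcal{D}_x(p_k) = \mathcal{D}_x$ is immediate, and the $H$-invariant analytic distribution claim of the proposition follows from the intrinsic construction of $\mathcal{D}$.
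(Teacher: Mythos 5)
Your construction of $\mathcal{D}$ from the Weyl curvature and your reduction of the proposition to the adaptedness statement $\bar{\kappa}^\sharp(\hat{x})\in\mathbb{V}^+_\Omega$ match the paper exactly, and your mixed-case argument (Step 1, Proposition~\ref{prop:as_for_inv_sections}, plus the uniqueness clauses of Proposition~\ref{prop:line_determination}) is correct.

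The bounded case is where your proposal breaks. You correctly flag that approximate stability in $\mathbb{V}$ only yields $\bar{\kappa}^\sharp(\hat{x})\in\mathbb{V}_0\oplus\mathbb{V}_{+1}\oplus\mathbb{V}_{+2}$, but the Cotton-module coupling you invoke does not eliminate the weight-zero piece. The $\tau_k$-action feeds $\varpi_k^{(0)}$ into the $\mathbb{U}_{+1}$-slot of $p_k.\kappa^\sharp(\hat{x}_k)$, and that slot carries the scalar $e^{(3\alpha+\beta)(D_k)}$; in the bounded case $\alpha(D_k)$ is bounded while $\beta(D_k)\to -\infty$, so this scalar tends to $0$ and the resulting approximate-stability bound is vacuous. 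A bookkeeping along the lines of the proof of Lemma~\ref{lemma:cartan_curvature_positive}, using $\{e^{\beta(D_k)}\xi_k\}$ bounded, gives at most vanishing of the negative-weight Cotton components of $\kappa^\sharp(\hat{x})$, which is not what is needed.

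What actually closes the gap is a purely algebraic identity, in the same spirit as the ``promotion'' step you already carry out for the mixed case:
$$\bigl(\mathbb{V}_0\oplus\mathbb{V}_{+1}\oplus\mathbb{V}_{+2}\bigr)\cap G_0.\mathbb{V}^+_\Omega \;=\; \mathbb{V}^+_\Omega.$$
If $\varpi$ lies on the left and the isotropic line $\ell(\varpi)$ given by Corollary~\ref{cor:weyl_determines_lines} is not $[E_1]$, choose $q\in Q_0$ with $\ell(q.\varpi)=[E_n]$ (possible since $Q_0$ surjects onto the maximal parabolic of $\PO(1,n-1)$, which acts transitively on ${\bf P}(\mathcal{N}\backslash\{0\})\backslash\{[E_1]\}$). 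Any $g\in G_0$ with $g.[E_1]=[E_n]$ conjugates $\lieq_0$ to the opposite parabolic, hence carries $\lieu_+$ to $\lieu_-$ and the Levi $\lies$ into $\lies+\lieu_-$, so $g.(\mathbb{V}_{+1}\oplus\mathbb{V}_{+2})\subset\mathbb{V}_{-1}\oplus\mathbb{V}_{-2}$; thus $q.\varpi\in\mathbb{V}_{-1}\oplus\mathbb{V}_{-2}$. But $Q_0$ preserves $\mathbb{V}_0\oplus\mathbb{V}_{+1}\oplus\mathbb{V}_{+2}$, so $q.\varpi$ lies there as well, forcing $\varpi=0$, impossible since $0\notin\mathbb{V}^+_\Omega$. (For $n=3$: in the model $\mathbb{U}^\sim\cong\Sym^4\BR^2$ a binary quartic divisible both by $x^2$ and by the cube of a linear form is divisible by $x^3$ or vanishes.) With this identity, the bounded case follows from $\bar{\kappa}^\sharp(\hat{x})\in\mathbb{V}^{AS}(p_k)\cap G_0.\mathbb{V}^+_\Omega$ exactly as the mixed case did; Lemma~\ref{lemma:holonomy_second_order} and the full Cartan curvature are not needed here and enter only later, for the integrability step.
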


\begin{proof}
We will show that the spaces $\mathcal{D}_x(p_k )$ are determined by the Weyl curvature.    
  For an ACL holonomy sequence $\{ p_k \}$, mixed or bounded with respect to $\hat{x}_k \rightarrow \hat{x} \in \pi^{-1}(\Omega_{\mathcal{F}})$ as in the hypotheses, proposition \ref{prop:all_about_holonomy} says $(\lieg / \liep)^{AS}(p_k) = E_1^\perp$.  The algebraic map given by corollary \ref{cor:weyl_determines_lines} has value $\BR E_1$ on $\mathbb{V}^+_{\Omega}$.  Composed with $\bar{\kappa}$, it gives a $P$-equivariant analytic map $\hat{\eta} : \pi^{-1}(\Omega_W) \rightarrow {\bf P}(\mathcal{N} \backslash \{ 0 \} )$ with the property that
the approximately stable space $(\lieg / \liep)^{AS}(p_k) = \hat{\eta}(\hat{x})^\perp$.   Now $\hat{\eta}$ descends to an analytic section $\eta : \Omega_{W} \rightarrow {\bf P}(\mathcal{N}(TM) \backslash \{ 0 \})$, with the property that $\mathcal{D}_x(p_k) = \eta(x)^\perp$ for all $x \in \Omega_{\mathcal{F}}$, and $\{ p_k \}$ any ACL holonomy sequence at $x$ as in the hypotheses.  We conclude that the approximately stable subspaces $\mathcal{D}_x(p_k)$, henceforth denoted simply $\mathcal{D}_x$, form an analytic distribution of lightlike hyperplanes in $\Omega_{\mathcal{F}}$.  Because $\{ \mathcal{D}_x \}$ are determined by the invariant tensor $W$, they form an $H$-invariant distribution in $\Omega_{\mathcal{F}}$.
\end{proof}

Propagation of holonomy for mixed and bounded types, in contrast to the balanced case, works only in approximately stable directions, as follows:
\begin{proposition}
\label{prop:propagation_mixed_bounded}
Let $\{ p_k \}$ be a holonomy sequence satisfying the hypotheses of proposition \ref{prop:e1perp_stable}.
Then given $v \in E_1^\perp$, and $t$ such that $\exp_{\hat{x}}(tv)$ is defined and projects to a point of $\Omega_{\mathcal{F}}$, the sequence $\{ p_k \}$ is also a holonomy sequence with respect to some $\hat{x}_k' \rightarrow \exp_{\hat{x}}(tv)$.
\end{proposition}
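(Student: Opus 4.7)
The plan is to reduce the claim to a direct application of proposition~\ref{prop:propagation_holonomy}. First I would invoke proposition~\ref{prop:e1perp_stable} to replace $\{p_k\}$ by a vertically equivalent ACL holonomy sequence (with corresponding base points $\hat{x}_k \to \hat{x}$) for which $E_1^\perp \subset \lieg^{AS}(p_k)$. The proof of that proposition in fact establishes pointwise boundedness: for every fixed $w \in E_1^\perp$, the identity
$$\Ad(p_k).w = \Ad(d_k).w + e^{\beta(D_k)}[\xi_k,w]$$
exhibits $\{\Ad(p_k).w\}_k$ as bounded under the assumptions of lemma~\ref{lemma:holonomy_second_order}.

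Next I would set $Y := tv$; we may assume $v \neq 0$, in which case $Y \in \lieg_{-1}\setminus\{0\}$ is transverse to $\liep$. Taking $Y_k \equiv Y$ constant, boundedness of $\{\Ad(p_k).Y\}_k$ allows me to pass to a subsequence along which it converges. To invoke proposition~\ref{prop:propagation_holonomy}, I still need $\exp(\hat{x}_k, Y)$ to be defined for all large $k$ and to converge to $\exp_{\hat{x}}(Y)$. Both follow from openness of the domain of the time-one flow $\varphi^1_{\widehat{Y}}$ and continuous dependence on initial conditions: since $\exp_{\hat{x}}(Y) = \varphi^1_{\widehat{Y}}(\hat{x})$ is defined by hypothesis, this flow exists on an open neighborhood of $\hat{x}$ in $\widehat{M}$, and $\hat{x}_k' := \exp(\hat{x}_k, Y) \to \exp_{\hat{x}}(Y)$. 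Proposition~\ref{prop:propagation_holonomy} then directly yields that $\{p_k\}$ is a holonomy sequence at $\pi \circ \exp_{\hat{x}}(Y)$ with respect to $\hat{x}_k'$.

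The only real subtlety is bookkeeping around the vertical-equivalence step. Replacing $\{p_k\}$ by $\{l_k p_k l_k'\}$ modifies the reference frame $\hat{x}$ within the fiber $\pi^{-1}(x)$, and with it the meaning of ``$E_1^\perp$'' as a subspace of $\lieg/\liep \cong T_xM$. I would therefore take care to re-express the hypotheses ``$v \in E_1^\perp$'' and ``$\exp_{\hat{x}}(tv)$ is defined and projects to $\Omega_{\mathcal{F}}$'' in the adjusted frame, or else simply read the statement as asserting the existence of \emph{some} vertically equivalent sequence for which the propagation conclusion holds, which is precisely what the argument above produces. The geometric hypothesis $\pi \circ \exp_{\hat{x}}(tv) \in \Omega_{\mathcal{F}}$ plays no role in the proof itself; it serves only to make the propagated holonomy useful for the foliation construction surrounding proposition~\ref{prop:distribution_in_open_dense}.
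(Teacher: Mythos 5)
Your proof is correct and follows the same route as the paper, which dispatches this proposition in a single sentence as an immediate consequence of propositions \ref{prop:e1perp_stable} and \ref{prop:propagation_holonomy}. Your attention to the small points the paper leaves implicit --- passing to a subsequence to upgrade the boundedness of $\Ad(p_k).(tv)$ to convergence (prop.\ \ref{prop:e1perp_stable} controls the approximately stable set, whereas prop.\ \ref{prop:propagation_holonomy} needs actual convergence), continuity of the domain of $\exp_{\hat{x}_k}$, and the fact that $\hat{x}$ and $E_1^\perp$ should be read in the frame produced after the vertical replacement of proposition \ref{prop:e1perp_stable} (which is exactly how the proposition is invoked in Steps 3 and 5 of section \ref{sec:foliation_construction}) --- is well placed and does not expose any gap; the $\Omega_{\mathcal{F}}$ hypothesis is indeed irrelevant to the propagation step and serves only the downstream applications.
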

This is an immediate consequence of propositions \ref{prop:e1perp_stable} and \ref{prop:propagation_holonomy}.

\medskip

\emph{Step 3: reduction of first-order frame bundle.}

Let $Q_0 < G_0$ be the stabilizer of the line $\BR E_1$ (as in Step 3 of section \ref{sec:balanced_proof}), and again define
an $H$-invariant, analytic reduction $\mathcal{R}'$ of $\left. \widehat{M}/P^+ \right|_{\Omega_{\mathcal{F}}}$ to $Q_0$ by
$$ \mathcal{R}' = \hat{\pi}(\hat{\eta}^{-1} (\BR E_1))$$
By the definition of $\mathcal{D}$, this reduction also equals
$$\{ \hat{\pi}(\hat{x}) \ : \ \pi_* \omega^{-1}_{\hat{x}}(E_1^\perp) = \mathcal{D}_x \}$$
that is, $\mathcal{R}'$ comprises the (conformal normalized) 1-frames identifying $\mathcal{D}$ with $E_1^\perp$.

Let $\{p_k\}$ be a holonomy sequence with respect to $\hat{x}_k \rightarrow \hat{x} \in \pi^{-1}( \Omega_{\mathcal{F}})$ satisfying the hypotheses and conclusions of proposition \ref{prop:e1perp_stable}.  Then $\hat{\eta}(\hat{x}) = \BR E_1$ and $x' = \hat{\pi}(\hat{x})$ belongs to $\mathcal{R}'$.
Given $v \in E_1^\perp$, proposition \ref{prop:propagation_mixed_bounded} says $\{ p_k \}$ is a holonomy sequence at $\hat{z}_v(t) = \exp_{\hat{x}}(tv)$ for all $t$ in some interval $I$ containing a neighborhood of $0$.  That means $z'_v(t) = \hat{\pi}(\hat{z}_v(t)) \in \mathcal{R}'$,  so $\hat{\pi}_* \omega_{\hat{x}}^{-1}(v) \in T_{x'}\mathcal{R}'$.  Thus $\hat{\pi}_* \omega_{\hat{x}}^{-1}(E_1^\perp) \subset T_{x'}\mathcal{R}'$.

\medskip

\emph{Step 4: reduction of $\left. \widehat{M} \right|_{\Omega_{\mathcal{F}}}$}

Let $\mathfrak{q}_1$ be the annihilator of $E_1$ in $\liep^+ \cong \BR^{1,n-1*}$, with corresponding connected subgroup $Q_1 < P^+$, and let $Q = Q_0 \ltimes Q_1$.  For $\tau \in P^+$, the image $\Ad \tau(E_1^\perp + \lieq_0) \equiv E_1^\perp + \lieq_0 \ \mbox{mod } \liep^+$ if and only if $\tau \in Q_1$.  Define
$$ \mathcal{R} = \{ \hat{x} \ : \ \hat{\pi}(\hat{x}) \in \mathcal{R}', \ \mbox{and } \hat{\pi}_* \omega_{\hat{x}}^{-1} (E_1^\perp) \subset T_{\hat{\pi}(\hat{x})} \mathcal{R}' \}$$
As $\widehat{M}$ is a subbundle of the second-order frames of $M$, it can also be viewed as a bundle of first-order frames on $\widehat{M}/P^+$, the bundle of conformal normalized one-frames. 
With this point of view, we can define a $Q_0 \ltimes P^+$-equivariant map $\hat{\pi}^{-1}(\mathcal{R}') \rightarrow \Hom(E_1^\perp, \lieg_0/\lieq_0)$ such that $\mathcal{R}$ is precisely the inverse image of $0$.  The affine $Q_0 \ltimes P^+$-action on $\Hom(E_1^\perp, \lieg_0/\lieq_0)$ factors through $\mbox{Aff}(\BR)$.  The orbit of $0$ is one-dimensional, with stabilizer $Q$.

We will show that every $\pi$-fiber of $\hat{\pi}^{-1}(\mathcal{R}')$ contains a point mapping to $0 \in \Hom(E_1^\perp, \lieg_0/\lieq_0)$ and thus the image of $\hat{\pi}^{-1}(\mathcal{R}')$ is contained in the above one-dimensional orbit.
Let, as before, $\{p_k\}$ be a holonomy sequence with respect to $\hat{x}_k \rightarrow \hat{x} \in \pi^{-1}( \Omega_{\mathcal{F}})$ satisfying the hypotheses and conclusions of proposition \ref{prop:e1perp_stable}; recall that every $x \in \Omega_{\mathcal{F}}$ admits such a holonomy sequence.  From Step 3, $x' = \hat{\pi}(\hat{x}) \in \mathcal{R}'$ and $\hat{\pi}_* \omega_{\hat{x}}^{-1}(E_1^\perp) \subset T_{x'}\mathcal{R}'$.  Thus $\hat{x} \in \mathcal{R}$.  
It follows that 
$\mathcal{R}$ is a smooth $Q$-reduction of $ \widehat{M}$ over ${\Omega_{\mathcal{F}}}$.  
Note that $\mathcal{R}$ is $H$-invariant and analytic because $\mathcal{R}'$ and $\omega$ are.

The geometric interpretation of $\mathcal{R}$ is as the conformal normalized 2-frames at points $x \in \Omega_{\mathcal{F}}$ in which $\mathcal{D}$ is totally geodesic (infinitesimally at $x$).

\medskip

\emph{Step 5: integration.}

\begin{proposition}
\label{prop:more_about_holonomy}  Let $\hat{x} \in \mathcal{R}$.  
Given $v \in E_1^\perp$, and $t$ such that $\exp_{\hat{x}}(tv)$ is defined and projects to $\Omega_{\mathcal{F}}$, the point $\exp_{\hat{x}}(tv) \in \mathcal{R};$
moreover, $\omega_{\hat{x}}^{-1}(E_1^\perp) \subset T_{\hat{x}}\mathcal{R}$ and $E_1^\perp + \lieq \subset \omega(T_{\hat{x}} \mathcal{R})$.
\end{proposition}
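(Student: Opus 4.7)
The last two assertions follow from the first: differentiating the curve $\exp_{\hat{x}}(tv)\in\mathcal{R}$ at $t=0$ gives $\omega^{-1}_{\hat{x}}(E_1^\perp)\subset T_{\hat{x}}\mathcal{R}$, to which we add the vertical directions $\omega^{-1}_{\hat{x}}(\lieq)\subset T_{\hat{x}}\mathcal{R}$ (since $\mathcal{R}$ is a $Q$-principal subbundle), giving $E_1^\perp+\lieq\subset\omega(T_{\hat{x}}\mathcal{R})$. Conversely, once $\omega^{-1}(E_1^\perp)\subset T\mathcal{R}$ is established throughout $\mathcal{R}$, the $\omega$-constant vector fields with values in $E_1^\perp$ are tangent to the submanifold $\mathcal{R}$, so their integral curves through points of $\mathcal{R}$ stay in $\mathcal{R}$ as long as they are defined and project to $\Omega_{\mathcal{F}}$. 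The task reduces to proving $\omega^{-1}_{\hat{x}}(E_1^\perp)\subset T_{\hat{x}}\mathcal{R}$ for every $\hat{x}\in\mathcal{R}$.

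I first treat a \emph{base} point $\hat{x}^\star\in\mathcal{R}$ in the fiber over $x\in\Omega_{\mathcal{F}}$ arising from an ACL holonomy sequence $\{p_k\}$ of mixed or bounded type satisfying $E_1^\perp\subset\lieg^{AS}(p_k)$. Such a sequence exists for every $x\in\Omega_{\mathcal{F}}$: the balanced case of Proposition \ref{prop:all_about_holonomy} is ruled out by Proposition \ref{prop:balanced_proof} and simple connectivity, and any sequence of the remaining types can be put in the required form via Proposition \ref{prop:e1perp_stable}. For this $\hat{x}^\star$ and any $v\in E_1^\perp$, Proposition \ref{prop:propagation_mixed_bounded} shows that $\{p_k\}$ remains a holonomy sequence at every $\exp_{\hat{x}^\star}(tv)$ in the valid range. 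Re-running the arguments of Steps 3 and 4 at each such point (with a further application of propagation in directions of $E_1^\perp$ to verify the ``moreover'' defining condition of $\mathcal{R}$) shows $\exp_{\hat{x}^\star}(tv)\in\mathcal{R}$, and differentiating at $t=0$ yields $\omega^{-1}(E_1^\perp)\subset T_{\hat{x}^\star}\mathcal{R}$.

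For arbitrary $\hat{x}\in\mathcal{R}$, write $\hat{x}=\hat{x}^\star\cdot q$ for some $q\in Q$. The formulas $(R_q)_*\omega^{-1}(v)=\omega^{-1}(\Ad(q^{-1})v)$ (from $R_q^*\omega=\Ad(q^{-1})\omega$) and $(R_q)_*T_{\hat{x}^\star}\mathcal{R}=T_{\hat{x}}\mathcal{R}$ (from $Q$-invariance of $\mathcal{R}$) together give $\omega^{-1}(\Ad(q^{-1})v)|_{\hat{x}}\in T_{\hat{x}}\mathcal{R}$ for every $v\in E_1^\perp$. The remaining algebraic ingredient is the inclusion
\[
\Ad(Q)\cdot E_1^\perp \subset E_1^\perp+\lieq.
\]
For $Q_0$, which preserves $\BR E_1\subset\lieg_{-1}$, this is immediate; for $q=\exp\xi\in Q_1$ with $\xi\in\lieq_1$, the $3$-grading of $\lieg=\so(2,n)$ yields the terminating expansion $\Ad(\exp\xi)w=w+[\xi,w]+\tfrac{1}{2}[\xi,[\xi,w]]$, and for $w\in E_1^\perp$ a short Jacobi-based computation---whose heart is that $[\xi,E_1]$ preserves $\BR E_1$ (a consequence of $\xi(E_1)=\mathbb{I}(E_1,E_1)=0$)---shows $[\xi,w]\in\lieq_0$ and $[\xi,[\xi,w]]\in\lieq_1$. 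Granted the inclusion, decompose $\Ad(q)w=w'+\eta$ with $w'\in E_1^\perp$ and $\eta\in\lieq$; then
\[
\omega^{-1}(w)|_{\hat{x}}=\omega^{-1}(\Ad(q^{-1})w')|_{\hat{x}}+\omega^{-1}(\Ad(q^{-1})\eta)|_{\hat{x}}
\]
is the sum of the $(R_q)_*$-image of a tangent vector at $\hat{x}^\star$ and a vertical vector, both in $T_{\hat{x}}\mathcal{R}$. The main obstacle is this algebraic inclusion: without it, $Q$-equivariance alone cannot transport the base-point tangency to every point of $\mathcal{R}$.
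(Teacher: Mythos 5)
Your proof follows the same strategy as the paper: establish the first claim at the reference point of a holonomy sequence satisfying Proposition \ref{prop:e1perp_stable} via propagation of holonomy (Proposition \ref{prop:propagation_mixed_bounded}), deduce the tangency by differentiating, and then transport all conclusions over the $Q$-fiber; you then usefully make explicit the algebraic inclusion $\Ad(Q)\cdot E_1^\perp\subset E_1^\perp+\lieq$, which the paper only asserts implicitly when it says the tangency conclusion is ``invariant by the right-$Q$-action.'' One small wording slip: the reason $\xi(E_1)=0$ for $\xi\in\lieq_1$ is the definition of $\lieq_1$ as the annihilator of $E_1$, not ``$\mathbb{I}(E_1,E_1)=0$''; the null condition $\mathbb{I}(E_1,E_1)=0$ (more generally $\mathbb{I}(w,E_1)=0$ for $w\in E_1^\perp$) is a separate ingredient that also enters the computation of $[\xi,w]\cdot E_1$, and your phrasing conflates the two, but the computation itself is correct.
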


\begin{proof}
Every $x \in \Omega_{\mathcal{F}}$ admits 
a holonomy sequence $\{ p_k \}$ satisfying the hypotheses and conclusions of proposition \ref{prop:e1perp_stable}.
If $\{ p_k \}$ is defined with respect to $\hat{x}_k \rightarrow \hat{x}$, then, as we saw in the previous step, $\hat{x} \in \mathcal{R}$.  The conclusion  $\omega_{\hat{x}}^{-1}(E_1^\perp) \subset T_{\hat{x}} \mathcal{R}$ is invariant by the right-$Q$-action.  All three conclusions for all $\hat{x} \in \mathcal{R}$ follow if we can prove them for all $\hat{x}$ which are the reference point for such a holonomy sequence.  

Proposition \ref{prop:propagation_mixed_bounded} gives, for $v \in E_1^\perp$, that $\{ p_k \}$ is a holonomy sequence with respect to $\hat{z}_v(t) = \exp_{\hat{x}}(tv)$ for all $ - \epsilon < t < \epsilon$; it follows, as above, that $\exp_{\hat{x}}(tv) \in \mathcal{R}$ for all such $t$.  Then $\omega^{-1}_{\hat{x}}(v) \in T_{\hat{x}} \mathcal{R}$, and since $v \in E_1^\perp$ was arbitrary, $\omega_{\hat{x}}^{-1}(E_1^\perp) \subset T_{\hat{x}} \mathcal{R}$.  Since $\mathcal{R}$ is a $Q$-reduction, $E_1^\perp + \lieq \subset \omega_{\hat{x}}(T_{\hat{x}} \mathcal{R})$. 
\end{proof}

By the above proposition, the distribution defined by $\widehat{\mathcal{D}} = \omega^{-1}(E_1^\perp + \mathfrak{q})$ restricted to $\mathcal{R}$ is tangent to $\mathcal{R}$. 
Finally, we can prove:

\begin{proposition}
\label{prop:distribution_integrability}
The distribution $\widehat{\mathcal{D}}$ restricted to $\mathcal{R}$ is integrable; in fact, the distribution $\widehat{\mathcal{D}}^\ell = \omega^{-1}(E_1^\perp + \lieu_+ + \BR^tE_1{\mathbb I})$ restricted to $\mathcal{R}$ is also integrable.  The integral leaves of either distribution project to integral leaves of $\mathcal{D}$.
\end{proposition}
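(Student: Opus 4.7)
The plan is to apply Cartan's structural equation. For $\omega$-constant vector fields $X = \omega^{-1}(u)$ and $Y = \omega^{-1}(v)$ on $\widehat{M}$, a direct computation from $\Omega = d\omega + \tfrac{1}{2}[\omega,\omega]$ gives
\[
\omega_{\hat{x}}([X,Y]) \;=\; [u,v] \;-\; \kappa_{\hat{x}}(u,v).
\]
Involutivity of $\widehat{\mathcal{D}}$ (resp.\ $\widehat{\mathcal{D}}^\ell$) restricted to $\mathcal{R}$ thus reduces to two claims: (i) $\lief := E_1^\perp + \lieq$ and $\lief^\ell := E_1^\perp + \lieu_+ + \lieg_{\alpha+\beta}$ are each Lie subalgebras of $\lieg$; and (ii) for every $\hat{x} \in \mathcal{R}$ and all $u,v \in E_1^\perp$, the curvature value $\kappa_{\hat{x}}(u,v)$ lies in $\lieu_+ + \lieg_{\alpha+\beta}$, which sits inside both $\lief^\ell$ and $\lief$ and thereby handles both distributions at once. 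Since $\kappa$ is semi-basic, only the $\lieg_{-1}$-components of $u,v$ ever contribute. Proposition \ref{prop:more_about_holonomy} guarantees that $\omega^{-1}(E_1^\perp)$ is tangent to $\mathcal{R}$, and together with the fundamental vector fields for the $Q$-action it spans $T\mathcal{R}$, so the vector fields in play genuinely restrict to $\mathcal{R}$.

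For (i), a root-space inspection suffices: $\lieg_{-1}$ is abelian; $\lieq_0$ preserves $\BR E_1$ and hence $E_1^\perp$; and $[\lieq_1, E_1^\perp] \subset \liea + \lieu_+ \subset \lieq_0$, closing $\lief$ under the bracket. For $\lief^\ell$, the sums $2\beta$, $\alpha+2\beta$ and $2(\alpha+\beta)$ are not roots of $\lieg$, so $[\lieu_+,\lieu_+]$, $[\lieu_+, \lieg_{\alpha+\beta}]$ and $[\lieg_{\alpha+\beta},\lieg_{\alpha+\beta}]$ all vanish; moreover $[\lieg_{-\alpha}, \lieu_+] \subset \lieg_{\beta-\alpha} = \BR E_1 \subset E_1^\perp$, $[\lieg_{-\alpha}, \lieg_{\alpha+\beta}] \subset \lieu_+$, and the remaining brackets $[\lieg_{\beta-\alpha}, \lieu_+]$, $[\lieg_{\beta-\alpha}, \lieg_{\alpha+\beta}]$ vanish.

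For (ii), I begin with lemma \ref{lemma:cartan_curvature_positive}, which gives $\kappa^\sharp(\hat{x}) \in \mathbb{V}^+_{Ric} + \mathbb{U}^+$ at those $\hat{x}$ supporting an ACL holonomy sequence of mixed type; by corollary \ref{cor:dense_mixed} such points are dense in $\pi^{-1}(\Omega_{\mathcal{F}})$, and the condition then propagates to all of $\mathcal{R}$ by real-analyticity of $\kappa$ and $P$-equivariance. A $\beta$-weight count then closes the argument. Viewed as a tensor in $\wedge^2 \lieg_{-1}^* \otimes \liep$, the piece of $\kappa$ corresponding to $\mathbb{V}^+_{Ric} + \mathbb{U}^+$ has $\beta$-weight $w \in \{+1, +2\}$ (raising an index by $\mathbb{I}$ preserves $\beta$-weights). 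For inputs of weights $\mu_u, \mu_v$, the $B$-action forces $\kappa_{\hat{x}}(u,v)$ to lie in the $\liep$-weight space of weight $w + \mu_u + \mu_v$. Since $E_1^\perp$ carries only $\beta$-weights $0$ and $+1$, while $\liep = \lieg_0 \oplus \lieg_1$ has weights bounded above by $+1$, the only way $\kappa_{\hat{x}}(u,v)$ can be nonzero is with $w=+1$ and $\mu_u=\mu_v=0$, landing in the $+1$ weight subspace of $\liep$---which is precisely $\lieu_+ + \lieg_{\alpha+\beta}$.

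Combining (i) and (ii), Cartan's equation places $\omega([X,Y])$ in $\lief$ (resp.\ $\lief^\ell$), and Frobenius then yields integral leaves in $\mathcal{R}$. The projection by $\pi$ sends the vertical directions in $\lieq$ (resp.\ $\lieu_+ + \lieg_{\alpha+\beta}$) to zero in $TM$, while $\omega^{-1}(E_1^\perp)$ projects onto $\mathcal{D}$; each leaf therefore descends to an integral manifold of $\mathcal{D}$ in $\Omega_{\mathcal{F}}$. The main technical obstacle is step (ii): propagating the curvature constraint of lemma \ref{lemma:cartan_curvature_positive} from the dense set of mixed-type points to all of $\mathcal{R}$ via analyticity, and then executing the weight computation that squeezes $\kappa_{\hat{x}}(u,v)$ into the highest-weight piece $\lieu_+ + \lieg_{\alpha+\beta}$ of $\liep$.
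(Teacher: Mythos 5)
Your proposal is correct and follows essentially the same route as the paper: both reduce integrability to the Cartan structure equation, the subalgebra checks for $E_1^\perp+\lieq$ and $E_1^\perp+\lieu_++\lieg_{\alpha+\beta}$, and the curvature constraint $\kappa^\sharp(\mathcal{R})\subset\mathbb{V}^+_{Ric}+\mathbb{U}^+$ propagated from the dense set of mixed-type points (lemma \ref{lemma:cartan_curvature_positive}, corollary \ref{cor:dense_mixed}), and both conclude that the curvature evaluated on $E_1^\perp\times E_1^\perp$ lands in $\lieu_++\BR^tE_1\mathbb{I}=\lieg_\beta+\lieg_{\alpha+\beta}$. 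Your $\beta$-weight count ($w\geq 1$, $\mu_u,\mu_v\geq 0$, while $\liep$ has no weight above $+1$) is a clean way to make explicit the evaluation step that the paper asserts without detail. One small imprecision: the constraint does not propagate by $P$-equivariance — the subspace $\mathbb{V}^+_{Ric}+\mathbb{U}^+$ is only $Q$-invariant, not $P$-invariant — and the points at issue are dense in $\mathcal{R}$, not in all of $\pi^{-1}(\Omega_{\mathcal{F}})$; the correct statement is that the dense set of base points carrying mixed ACL holonomy sequences have their reference lifts lying in $\mathcal{R}$, the condition is $Q$-invariant so it holds on whole $Q$-fibers, and then analyticity (or just continuity plus closedness) of $\kappa^\sharp$ restricted to $\mathcal{R}$ extends it.
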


Here $\lieu_+$ is the nilpotent subalgebra of $\lieg_0$ annihilating $E_1$ (as in section \ref{sec:balanced_proof}).

\begin{proof}
  Every $\hat{x} \in \mathcal{R}$ admits a holonomy sequence $\{ p_k \}$ 
  satisfying the hypotheses and conclusions of lemma \ref{lemma:holonomy_second_order}, as was seen in step 4.  Assuming that $\{ p_k \}$ is moreover of mixed type, lemma \ref{lemma:cartan_curvature_positive} implies $\kappa^\sharp(\hat{x}) \in \mathbb{V}^+_{Ric} + \mathbb{U}^+$.  (The latter subspace is $Q$-invariant.)
Corollary \ref{cor:dense_mixed} says points in $\Omega_{\mathcal{F}}$ with mixed holonomy sequences are dense.  Lemmas \ref{lemma:holonomy_second_order} and \ref{lemma:cartan_curvature_positive} apply to these holonomy sequences.  It follows that $\kappa^\sharp(\mathcal{R}) \subset \mathbb{V}^+_{Ric} + \mathbb{U}^+$.

Knowledge of the full Cartan curvature on $\mathcal{R}$ gives information on the brackets of vector fields on $\mathcal{R}$.  Given $u,v \in E_1^\perp + \lieq \subset \lieg$, denote $X = \omega^{-1}(u)$ and $Y = \omega^{-1}(v)$.  If one of $u,v \in \lieq$, then $[X, Y] = \omega^{-1}([u,v])$ so the result is in $\widehat{\mathcal{D}}$ because $E_1^\perp + \lieq$ is a subalgebra of $\lieg$.  Assume now that $u,v \in E_1^\perp$.  Then the formula (\ref{eqn:curvature_formula}) gives in this case
$$ \kappa_{\hat{x}}(u,v) =  - \omega_{\hat{x}}[X,Y] + [u,v] = - \omega_{\hat{x}}[X,Y]$$
because $\lieg_{-1}$ is abelian.  Elements of $\mathbb{V}^+_{Ric} + \mathbb{U}^+$ evaluated on $(E_1^\perp)^* \otimes E_1^\perp$ have values in $\lieu_+ + \BR^tE_1 \mathbb{I}$, which is contained in $\lieq$.  Thus $\widehat{D}$ is integrable.  Integrability of $\widehat{\mathcal{D}}^\ell$ follows from the observation that $E_1^\perp + \lieu_+ + \BR^tE_1{\mathbb I}$ is also a subalgebra of $\lieg$.

For any $\hat{x} \in \mathcal{R}$, the subspace $\omega_{\hat{x}}^{-1}(E_1^\perp)$ projects to $\mathcal{D}_x$, so an integral leaf of $\widehat{\mathcal{D}}$ or of $\widehat{\mathcal{D}}^\ell$ projects to an integral leaf of $\mathcal{D}$.
\end{proof}

Denote by $\mathcal{F}$ the resulting foliation by lightlike hypersurfaces in $\Omega_{\mathcal{F}}$.  
In the next section, we will extend leaves $\mathcal{L}$ of $\mathcal{F}$ over closed, isotropic orbits in the closure $\overline{\mathcal{L}}$ by extending $\mathcal{R}$ over such orbits.  The following proposition implies that whenever $\mathcal{R}$ can be extended, then leaves can be constructed through the boundary points.

\begin{proposition}
\label{prop:R_extension_leaf}
Let $\hat{y} \in \overline{\mathcal{R}}$.  There is a neighborhood $\mathcal{U}$ of $0$ in $E_1^\perp$ such that $\exp_{\hat{y}} (\mathcal{U})$ is contained in an integral leaf of $ \omega^{-1}(E_1^\perp + \lieu_+ + \BR^tE_1 \mathbb{I})$; the saturation by $Q$ is contained in an integral leaf of $ \omega^{-1}(E_1^\perp + \lieq)$.
\end{proposition}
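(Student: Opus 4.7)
The plan is to extend the curvature condition of Proposition \ref{prop:distribution_integrability} from $\mathcal{R}$ to the boundary point $\hat{y}$, build the candidate leaf through $\hat{y}$ via the Cartan exponential, and verify tangency to $\widehat{\mathcal{D}}^\ell$ by a limiting argument from nearby points of $\mathcal{R}$.

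First I would propagate the curvature condition by continuity: the map $\kappa^\sharp$ is analytic on $\widehat{M}$, and the proof of Proposition \ref{prop:distribution_integrability} established $\kappa^\sharp(\mathcal{R})\subset\mathbb{V}^+_{Ric}+\mathbb{U}^+$, a closed linear subspace; hence $\kappa^\sharp(\overline{\mathcal{R}})\subset\mathbb{V}^+_{Ric}+\mathbb{U}^+$ as well. Evaluating on $E_1^\perp\otimes E_1^\perp$ exactly as in that proof yields $\kappa_{\hat{y}}(u,v)\in\lieu_++\BR^tE_1\mathbb{I}$ for all $u,v\in E_1^\perp$, so the pointwise Frobenius condition holds at $\hat{y}$.

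Next I would set $L:=\exp_{\hat{y}}(\widetilde{\mathcal{U}})$, where $\widetilde{\mathcal{U}}$ is a small neighborhood of $0$ in $E_1^\perp+\lieu_++\BR^tE_1\mathbb{I}$ on which $\exp_{\hat{y}}$ is an embedding, and let $\mathcal{U}:=\widetilde{\mathcal{U}}\cap E_1^\perp$, so that $\exp_{\hat{y}}(\mathcal{U})\subset L$ as required. By construction, $T_{\hat{y}}L=\widehat{\mathcal{D}}^\ell_{\hat{y}}$. To propagate the tangency to every $\hat{z}\in L$, I would approximate $\hat{y}$ by $\hat{y}_n\in\mathcal{R}$: Proposition \ref{prop:more_about_holonomy}, combined with the right-$Q$-invariance of $\mathcal{R}$, forces $\exp_{\hat{y}_n}(\widetilde{\mathcal{U}})\subset\mathcal{R}$, and Proposition \ref{prop:distribution_integrability} places this image inside an integral leaf of $\widehat{\mathcal{D}}^\ell|_\mathcal{R}$; its tangent planes therefore lie in $\widehat{\mathcal{D}}^\ell$ everywhere. $C^1$-continuity of the Cartan exponential in its base point yields $C^1$-convergence of these parametrized submanifolds to $L$, and closedness of $\widehat{\mathcal{D}}^\ell$ as a subbundle of $T\widehat{M}$ then forces $T_{\hat{z}}L\subset\widehat{\mathcal{D}}^\ell_{\hat{z}}$ throughout $L$. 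This limiting step is the main obstacle: the classical Frobenius theorem does not apply directly, because involutivity of $\widehat{\mathcal{D}}^\ell$ is known only pointwise on $\overline{\mathcal{R}}$ and not on an open neighborhood, so the argument must lean on the analyticity of $\omega$ and continuous dependence of ODE flows on initial data rather than on a direct integrability theorem.

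For the $Q$-saturation assertion, $E_1^\perp+\lieq\subset\lieg$ is $\Ad(Q)$-invariant---$Q$ preserves $\BR E_1\subset\lieg/\liep$ and normalizes $\lieq$, while the adjoint action of $Q_1\subset P^+$ on lifts of $E_1^\perp$ to $\lieg_{-1}$ produces only corrections lying in $\lieq$, as one reads off the root-space decomposition of section \ref{sec:cartan_decomposition}---so $\widehat{\mathcal{D}}$ is right-$Q$-invariant. The saturation $L\cdot Q$ is then a submanifold of dimension $\dim E_1^\perp+\dim\lieq=\dim\widehat{\mathcal{D}}$ tangent to $\widehat{\mathcal{D}}$ at every point, hence an integral leaf containing $\exp_{\hat{y}}(\mathcal{U})\cdot Q$.
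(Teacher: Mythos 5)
Your overall outline is close to the paper's -- propagate the curvature condition to the boundary, build the candidate leaf by exponentiating $E_1^\perp + \lieu_+ + \BR\, {}^tE_1\mathbb{I}$, and verify the tangency condition by a limiting argument from nearby points of $\mathcal{R}$ -- but the pivotal step is not justified as written.

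The gap is the assertion that Proposition~\ref{prop:more_about_holonomy} together with right-$Q$-invariance forces $\exp_{\hat{y}_n}(\widetilde{\mathcal{U}}) \subset \mathcal{R}$. Proposition~\ref{prop:more_about_holonomy} gives $\exp_{\hat{y}_n}(tv) \in \mathcal{R}$ \emph{only for those $t$ such that $\pi \circ \exp_{\hat{y}_n}(tv)$ lies in $\Omega_{\mathcal{F}}$}. Since $\Omega_{\mathcal{F}}$ is open and dense but not all of $M$, the exponential curve may leave $\Omega_{\mathcal{F}}$, and $\mathcal{R}$ is by definition a bundle only over $\Omega_{\mathcal{F}}$; so at best you get $\exp_{\hat{y}_n}(v) \in \overline{\mathcal{R}}$, not $\mathcal{R}$. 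Consequently you cannot invoke Proposition~\ref{prop:distribution_integrability} on the whole image $\exp_{\hat{y}_n}(\widetilde{\mathcal{U}})$ to conclude its tangent planes lie in $\widehat{\mathcal{D}}^\ell$, and the ensuing $C^1$-limit argument loses its footing. The paper circumvents exactly this issue: it observes that, for fixed $k$, the containment
\[
\omega_{\exp_{\hat{y}_k}(v)}\bigl(D_v \exp_{\hat{y}_k}(E_1^\perp)\bigr) \subset E_1^\perp + \lieu_+ + \BR\, {}^tE_1\mathbb{I}
\]
is an \emph{analytic} condition on $v \in \mathcal{U}$, which holds in a neighborhood of $0$ (because $\pi(\hat{y}_k) \in \Omega_{\mathcal{F}}$ is open and there Propositions~\ref{prop:more_about_holonomy} and~\ref{prop:distribution_integrability} do apply), and therefore holds for all $v$ in the connected set $\mathcal{U}$. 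Only then does it pass to $k \to \infty$, obtaining the tangency along $\exp_{\hat{y}}(\mathcal{U})$ and the conclusion that the $Q$-saturation is an integral submanifold contained in $\overline{\mathcal{R}}$. Your proof needs this analyticity step (or a substitute that explicitly handles the points projecting to the analytic complement of $\Omega_{\mathcal{F}}$); you half-acknowledge this difficulty at the end but do not actually supply the argument. The curvature-propagation and the $Q$-saturation observations in your write-up are fine.
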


\begin{proof}
Let $\hat{y}_k \rightarrow \hat{y}$ with $\hat{y}_k \in \mathcal{R}$.  Let $\mathcal{U} \subset E_1^\perp$ be a neighborhood of $0$ in the common domain of $\exp_{\hat{y}}$ and of $\exp_{\hat{y}_k}$ for all $k$.  Fix $k$ and consider the following analytic condition on $v \in \mathcal{U}$:
$$ \omega_{\exp_{\hat{y}_k} (v)} \left( D_v \exp_{\hat{y}_k} (E_1^\perp) \right) \subset E_1^\perp + \lieu^+ + \BR^tE_1 \mathbb{I}$$
This condition is satisfied by $v$ in a neighborhood of $0$ in $\mathcal{U}$, by proposition \ref{prop:more_about_holonomy} because $\pi(\hat{y}_k)$ is in the open set $\Omega_{\mathcal{F}}$, together with the integrability of $\widehat{D}^\ell$ in $\mathcal{R}$ given by proposition \ref{prop:distribution_integrability}.  Thus the above containment holds for all $v \in \mathcal{U}$.  

With $k$ and $v \in \mathcal{U}$ fixed, consider the path $\exp_{\hat{y}_k}(tv)$ for $0 \leq t \leq 1$.  The set $\Omega_{\mathcal{F}} = \Omega_W \cap \Omega_f$ is open and dense with analytic complement, because each of $\Omega_W$ and $\Omega_f$ are (see Step 1 and proposition \ref{prop:freeness}).  The projection $\pi \circ \exp_{\hat{y}_k} (tv)$ is in $\Omega_{\mathcal{F}}$ for $t < \epsilon$ for some $\epsilon > 0$, and therefore, it is in $\Omega_{\mathcal{F}}$ for all but finitely-many $t$ in $[0,1]$.  Then by proposition \ref{prop:more_about_holonomy}, $\exp_{\hat{y}_k}(tv) \in \mathcal{R}$ for all but finitely-many $t$.  We conclude that $\exp_{\hat{y}_k}(v)$ is in the closure $\overline{\mathcal{R}}$ for all $v \in \mathcal{U}$ and all $k$. 

Now fix $v \in \mathcal{U}$ and take the limit of the subspaces $\omega_{\exp_{\hat{y}_k} (v)} \left( D_v \exp_{\hat{y}_k} (E_1^\perp) \right)$ as $k \rightarrow \infty$.  By continuity of the expression in $\hat{y}$, we also have
$$ \omega_{\exp_{\hat{y}} (v)} \left( D_v \exp_{\hat{y}} (E_1^\perp) \right) \subset E_1^\perp + \lieu_+ + \BR^tE_1 \mathbb{I} \qquad \forall \ v \in \mathcal{U}$$
Thus the saturation of $\exp_{\hat{y}}(\mathcal{U})$ by the group $e^{\lieu_+} \cdot e^{\BR^tE_1 \mathbb{I}}$ is an integral submanifold for $\hat{\mathcal{D}}^\ell$, and similarly for $Q$ and $\hat{\mathcal{D}}$.
From the previous paragraph, both integral submanifolds are contained in $\overline{\mathcal{R}}$.
\end{proof}

\subsection{Extension of leaves to isotropic limit orbits}
\label{sec:extension_closed_isotropic}

In this section we verify that $\mathcal{R}$---and therefore $\mathcal{F}$---extends over closed, isotropic orbits; in particular, it extends over orbits of accumulation points for ACL holonomy sequences of mixed type.  Recall from the proof of proposition \ref{prop:all_about_holonomy} that if the isotropy at a point with closed, isotropic $H$-orbit contains a subgroup of hyperbolic elements, then the isotropy is of balanced type.  By proposition \ref{prop:balanced_proof}, we can assume that this isotropy contains no hyperblic elements.  Assuming $H$ noncompact, the isotropy along every closed, isotropic orbit contains unipotent elements.

\begin{proposition}
\label{prop:extension_of_R}
Let $y \in M$ have closed, isotropic $H$-orbit, and suppose that, with respect to $\hat{y} \in \pi^{-1}(y)$, the isotropy $\hat{S}_y$ contains a unipotent subgroup in $e^{\lieu_+}$.
Then $\mathcal{R}^e = \mathcal{R} \cup H.\hat{y}. Q \subseteq \overline{\mathcal{R}}$. 
\end{proposition}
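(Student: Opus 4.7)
The goal is to show $\hat{y} \in \overline{\mathcal{R}}$; since $\mathcal{R}$ is both $H$- and $Q$-invariant, the desired inclusion $H.\hat{y}.Q \subseteq \overline{\mathcal{R}}$ then follows automatically by continuity of these actions.

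I would begin by pinning down the Weyl-curvature value at $\hat{y}$. A nontrivial $u = \exp(U) \in \hat{S}_y \cap e^{\lieu_+}$ arises from some $h \in \Stab_H(y)$ with $h.\hat{y} = \hat{y}.u$, so $\{u^k\}$ is a pointwise holonomy sequence at $y$. Since $u \in G_0'$, the sequence $\{u^k\}$ is linear, and after a bounded $Q_0$-adjustment of $\hat{y}$ (which preserves the hypothesis, as $e^{\lieu_+}$ is normal in $Q_0$ being the unipotent radical of the parabolic stabilizer of $\BR E_1$), it is vertically equivalent to an $A'$-sequence with $\alpha(D_k) = 0$ and $\beta(D_k) \to -\infty$---i.e., of bounded distortion in the sense of definition \ref{def:holonomy_trichotomy}. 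Proposition \ref{prop:as_for_inv_sections} combined with the $\beta$-weight analysis of Step~1 in section \ref{sec:foliation_construction} forces $\bar{\kappa}^\sharp(\hat{y}) \in \mathbb{V}_{\ge 0}$, while the global containment $\bar{\kappa}^\sharp(\widehat{M}) \subset G_0.\mathbb{V}^+_{Ric}$ established there (using density of $S_{mix}$) writes $\bar{\kappa}^\sharp(\hat{y}) = g.v$ with $g \in G_0$ and $v \in \mathbb{V}^+_{Ric}$. Because the stabilizer in $G_0$ of the full filtration $\mathbb{V}_{\ge 0} \supset \mathbb{V}_{\ge 1} \supset \mathbb{V}_{\ge 2}$ is $Q_0$, one deduces $g \in Q_0$, hence $\bar{\kappa}^\sharp(\hat{y}) \in \mathbb{V}^+_{Ric}$. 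Corollary \ref{cor:weyl_determines_lines} then yields $\hat{\eta}(\hat{y}) = \BR E_1$ and $y \in \Omega_W$; the case $n = 3$ is handled identically using $\mathbb{U}^{\sim+}$.

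Next I would approximate $\hat{y}$ by points of $\mathcal{R}$. Choose $y_k \in \Omega_{\mathcal{F}}$ with $y_k \to y$ (possible by density of $\Omega_{\mathcal{F}}$) and arbitrary lifts $\tilde{y}_k \to \hat{y}$. Continuity and $P$-equivariance of $\hat{\eta}$ on $\pi^{-1}(\Omega_W)$, combined with $\hat{\eta}(\hat{y}) = \BR E_1$ (whose $G_0$-stabilizer is $Q_0$), allow a bounded correction in the fiber yielding $\hat{y}_k' \to \hat{y}$ with $\hat{\pi}(\hat{y}_k') \in \mathcal{R}'$. Since $\mathcal{R}$ is a $Q$-reduction of $\widehat{M}|_{\Omega_{\mathcal{F}}}$ cut out inside $\hat{\pi}^{-1}(\mathcal{R}')$ by the continuous $\mathrm{Aff}(\BR)$-type tangency condition described in Step~4 of section \ref{sec:foliation_construction}, a further bounded correction in $Q_1 \subset P^+$ produces $\hat{y}_k'' \in \mathcal{R}$ with $\hat{y}_k'' \to \hat{y}.q$ for some $q \in Q$. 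The $Q$-invariance of $\mathcal{R}$ then gives $\hat{y} \in \overline{\mathcal{R}}$, as required.

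I expect the main technical obstacle to be the verification in the first step that the global weight-filtration stabilizer argument genuinely places $\bar{\kappa}^\sharp(\hat{y})$ in $\mathbb{V}^+_{Ric}$ itself (not merely in $G_0.\mathbb{V}^+_{Ric}$), together with ruling out that this value falls in the degenerate subset that would make $\hat{\eta}$ ill-defined at $\hat{y}$. Specifically, when $\mathbb{V}^+_\Omega = \mathbb{V}^+_{Ric} \setminus \mathbb{V}^+_B$, I would exclude $\bar{\kappa}^\sharp(\hat{y}) \in \mathbb{V}^+_B$ by propagating along mixed-type holonomy from nearby points of $\Omega_{\mathcal{F}}$ via proposition \ref{prop:propagation_mixed_bounded}; and when $\mathbb{V}^+_\Omega = \mathbb{V}^+_B \setminus \{0\}$, vanishing of $\bar{\kappa}^\sharp(\hat{y})$ combined with \cite{frances.ccvf} would force conformal flatness on a neighborhood of $y$, contradicting Proposition B.
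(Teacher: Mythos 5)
Your proposal correctly identifies the high-level structure — reduce to $\hat{y} \in \overline{\mathcal{R}}$, then extend by $H$- and $Q$-invariance — but the proof of that reduction contains a gap precisely where the paper does the hard work.

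The central issue is in your final step: the $P^+$-correction needed to push $\hat{y}_k' \in \hat{\pi}^{-1}(\mathcal{R}')$ into $\mathcal{R}$ is \emph{not} in $Q_1$. By Step 4 of section \ref{sec:foliation_construction}, $\mathcal{R}$ is the zero locus of a $Q_0 \ltimes P^+$-equivariant map $\sigma : \hat{\pi}^{-1}(\mathcal{R}') \to \Hom(E_1^\perp, \lieg_0/\lieq_0)$ whose image over $\Omega_{\mathcal F}$ lies in a one-dimensional orbit with stabilizer $Q = Q_0 \ltimes Q_1$; the direction transverse to $Q_1$ inside $P^+$ is $\lieg_{\alpha-\beta} = \BR\,^tE_n\mathbb{I}$. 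So the correction $\tau_k$ has a component along $\lieg_{\alpha-\beta}$, and even granting continuity of $\sigma$ at $\hat{y}$ (which already requires your Step 1), the best you get is $\tau_k \to \tau^0 q_1$ with $\tau^0$ in the $\lieg_{\alpha-\beta}$-direction and $q_1 \in Q_1$. This only yields $\hat{y}.\tau^0 q_1 \in \overline{\mathcal{R}}$, and since $\tau^0 \in P^+ \setminus Q_1$ unless $\tau^0 = 1$, $Q$-invariance of $\overline{\mathcal{R}}$ does not recover $\hat{y} \in \overline{\mathcal{R}}$. You must prove $\tau^0 = 1$, i.e.\ that the particular $\hat{y}$ with $\hat{S}_y \cap e^{\lieu_+}$ nontrivial satisfies the tangency condition $\sigma(\hat{y}) = 0$. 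This is exactly what the paper's proof does: it takes $Y \in \mathfrak{l}$ with $Y(y) \neq 0$ and $X \in \lieh$ with unipotent isotropy, writes $\hat{z}_k\tau_k^{-1} \to \hat{y}$ with $\xi_k = \ln\tau_k \in \BR\,^tE_n\mathbb{I}$, and uses the commutativity $[X,Y]=0$ plus Lemma \ref{lemma:no_dilation_at_limit} (that a conformal Killing field with a nonzero $\liea \cap \ker\beta$ component at a frame would be timelike arbitrarily nearby, contradicting degeneracy of all orbits) to force $\xi_k \to 0$. Without this argument your proof does not close.

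There are also problems in Step 1, which you partially flagged but underestimated. First, the KAK decomposition of the unipotent holonomy $\{u^k\}$ gives convergent flanking sequences in the maximal compact $K < G_0$, \emph{not} in $Q_0$; $K$ is not contained in $Q_0$, so the vertical adjustment needed to put the sequence in $A'P^+$-form changes $\hat{y}$ by an element outside $Q_0$ and in particular does not preserve the hypothesis $\hat{S}_y \cap e^{\lieu_+} \neq 1$. Second, the filtration-stabilizer argument doesn't go through as stated: from $\bar{\kappa}^\sharp(\hat{y}) = g.v \in \mathbb{V}_{\ge 0}$ with $v \in \mathbb{V}^+_{Ric} \subset \mathbb{V}_{\ge 1}$ one cannot deduce $g \in Q_0$, since $g$ is constrained only by this single evaluation, not by preserving the filtration. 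Note that the paper sidesteps the curvature at $\hat{y}$ entirely in this proposition: it identifies the $G_0$-component of the frame via the isotropic vector field $Y(y)$ (which must correspond to $E_1$ because it commutes with the unipotent isotropy), and identifies the $P^+$-component via the Jacobi-type contradiction of Lemma \ref{lemma:no_dilation_at_limit}. This is both cleaner and avoids the difficulties you encountered.
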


\begin{proof}
Let $Y \in \mathfrak{l}$ with $Y(y) \neq 0$.  Let $X \in \lieh$ with $X(y) = 0$ and unipotent isotropy with respect to $\hat{y}$.  We will denote $\omega_{\hat{y}}(Y) = \hat{Y}_*$ and $\omega_{\hat{y} } (X) = \hat{X}_*$.

Let $z_k \in \Omega_{\mathcal{F}}$ converge to $y$.
By propositions \ref{prop:all_about_holonomy} and \ref{prop:distribution_in_open_dense}, $\mathcal{D}_{z_k}$ contains $Y(z_k)$ for all $k$.  Then for any sequence $v_k \in \mathcal{D}_{z_k}^\perp$, we have $v_k \in Y(z_k)^\perp$.   Since $Y(z_k) \rightarrow Y(y)$, with the limit isotropic and nonzero, $\mathcal{D}_{z_k}^\perp \rightarrow \BR Y(y)$ and thus $\mathcal{D}_{z_k} \rightarrow Y(y)^\perp$.  
The unipotent isotropy $\omega_{\hat{y}}(X) \in \lieu_+$ has unique isotropic fixed vector $E_1$ (up to a nonzero scalar).  Since $[X,Y]=0$, the vector $Y(y)$ must correspond in the frame $y' = \hat{\pi}(\hat{y})$ to a nonzero multiple of $E_1$.
It follows that $\mathcal{R}^{'e} = \mathcal{R}' \cup H.y'. Q_0$ is a continuous extension over  $\Omega_{\mathcal{F}} \cup H.y$.   

Now let $\hat{z}_k \in \mathcal{R}$ with $\hat{\pi}(\hat{z}_k) = z_k' \rightarrow y'$ in $\mathcal{R}^{'e}$.   Given $W_k, Z_k \in \lieh$, denote $\hat{W}_k = \omega_{\hat{z}_k} (W_k)$ and $\hat{Z}_k = \omega_{\hat{z}_k} (Z_k)$.  There exists $\tau_k  \in P^+$ such that $\hat{z}_k \tau^{-1}_k \rightarrow \hat{y}$, and by adjusting $\{ \hat{z}_k \}$ if necessary, we may assume $\ln \tau_k = \xi_k  \in \BR^tE_n \mathbb{I}$, a subspace of $\liep^+$ complementary to $\lieq_1$ (it is the root space $\lieg_{\alpha - \beta}$).  The goal is to show that $\lim \xi_k = 0$.

Recall that $\hat{Y}_k^{(-1)}$ tends to a nonzero element of $\BR E_1$ and $\hat{X}_k^{(-1)}$ is linearly independent from it and tends to $0$.  
The following limit exists:
$$( \hat{Y}_*)^{(0)}_\liea = \lim \left( \hat{Y}^{(0)}_k + \left[ \xi_k,\hat{Y}^{(-1)}_k \right] \right)_\liea $$
Let $A_k = (\hat{Y}_k)^{(0)}_\liea$ and $B_k = \left[ \xi_k,(\hat{Y}_k)^{(-1)}_{\beta - \alpha} \right] \in \liea$.
 Since $\hat{Y}_*$ commutes with $\hat{X}_*$, a nonzero element of $\lieu_+$, its $\liea$-component is in the kernel of $\beta$ --- that is, $\lim ( \beta(A_k) + \beta(B_k) ) = 0$.
Recall that $\lieh$ is everywhere tangent to $\mathcal{R}$ and has projection to $TM$ in $\mathcal{D}$, it follows that $\lieh(\hat{x}) \subset \hat{\mathcal{D}}$ for all $\hat{x} \in \mathcal{R}$.  Since $\hat{z}_k \in \mathcal{R}$, the components $\hat{Y}_k^{(1)} \in \lieq_1$ for all $k$, so $(\hat{Y}_k)^{(1)}_{\alpha - \beta} = 0$ for all $k$.  Then 
\begin{eqnarray*}
(\hat{Y}_*)^{(1)}_{\alpha - \beta} & = & \lim \left(\Ad \tau_k(\hat{Y}_k) \right)^{(1)}_{\alpha-\beta} \\
& = &  \lim \left( \left[ \xi_k,A_k \right]  + \left[ \frac{\xi_k}{2}, B_k \right] \right) \\
& = & \lim \left(  (\beta - \alpha)(A_k + \frac{1}{2} B_k) \right) \cdot  \xi_k \\
& = & \lim \left( - \alpha(A_k) + \beta(A_k + B_k) \right) \cdot \xi_k 
\end{eqnarray*}
since $\left[ \xi_k,\liem \right] = 0$ and $(\alpha + \beta) (B_k) \equiv 0$.
On the other hand, since $\hat{Y}_*$ commutes with $\hat{X}_*$, and, for $\xi \neq 0$ in $\lieg_{\alpha - \beta}$, the map $\ad \xi : \lieg_\beta \rightarrow \lieg_\alpha$ is an isomorphism, $\hat{Y}_*^{(1)}$ is also in $\lieq_1$.  Thus if $\xi_k$ does not converge to $0$, then, up to extraction, $\lim \alpha(A_k) = 0$ and $\alpha \left( (\hat{Y}_*)^{(0)}_\liea \right) = \lim \alpha(B_k)$, which exists and is moreover nonzero (recall $\lim (\hat{Y}_k)^{(-1)}_{\beta - \alpha} \neq 0$).  

In order to reach the desired conclusion that $\lim \xi_k = 0$, it remains to derive a contradiction from the condition $\alpha \left( (\hat{Y}_*)^{(0)}_\liea \right) \neq 0$. Since $\omega_{\hat{y}}(Y)$ has the form required by lemma \ref{lemma:no_dilation_at_limit}, $Y$ is timelike for a metric in the conformal class at a point arbitrarily close to $y$.  Then it is timelike on an open set with $y$ in its closure.  This contradicts propositions \ref{prop:all_about_holonomy} and \ref{prop:distribution_in_open_dense}, which say that inside the open, dense set $\Omega_f$, the values of $Y$ are always tangent to the approximately stable distribution, which is degenerate.
\end{proof}

\begin{lemma}
\label{lemma:no_dilation_at_limit}
Let $Y \in \lieh$ and $\hat{z} \in \widehat{M}$.  Suppose that $\omega_{\hat{z}}(Y) = E_1 + A + B + \eta$, where $A \in (\liea \cap \ker \beta) \backslash \{ 0 \}$; $B \in \liem + \lieu_+$; and $\eta \in \lieq_1$.  Let $\gamma(t) = \pi \circ \exp_{\hat{z}}(tE_n)$.  Then there is $\epsilon \neq 0$ such that $Y(\gamma(\epsilon))$ is timelike.  
\end{lemma}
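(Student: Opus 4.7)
The plan is to determine the causal character of $Y(\gamma(t))$ for small $t$ by computing the $\lieg_{-1}$-component of $\omega_{\hat\gamma(t)}(Y)$ to first order in $t$, where $\hat\gamma(t) = \exp_{\hat z}(tE_n)$. Each $\hat x \in \widehat M$ determines a metric $g_{\hat x}$ in the conformal class at $\pi(\hat x)$ with $g_{\hat x}(v,v) = \mathbb{I}(\omega_{\hat x}(v)^{(-1)}, \omega_{\hat x}(v)^{(-1)})$; since causal character is a conformal invariant, the sign of the $\mathbb{I}$-norm of $\omega_{\hat\gamma(t)}(Y)^{(-1)}$ controls whether $Y(\gamma(t))$ is timelike, null, or spacelike. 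At $t = 0$ this component is $E_1$, which is null, so the task reduces to the first variation in $t$.

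The decisive observation is that $[Y, E_n^*] = 0$, where $E_n^*$ is the $\omega$-constant vector field on $\widehat M$ with $\omega(E_n^*) \equiv E_n$. Indeed, since $L_Y\omega = 0$, evaluation on $E_n^*$ combined with the constancy of $\omega(E_n^*)$ gives $\omega([Y, E_n^*]) = 0$, whence $[Y, E_n^*] = 0$ by non-degeneracy of $\omega$ on tangent spaces. Applying the Cartan formula $V(\omega(W)) = d\omega(V,W) + W(\omega(V)) + \omega([V,W])$, combined with $d\omega = \Omega - \tfrac12[\omega,\omega]$, to $V = E_n^*$ and $W = Y$ then yields
\[
\left.\tfrac{d}{dt}\right|_{t=0} \omega_{\hat\gamma(t)}(Y) \;=\; \Omega_{\hat z}(E_n^*, Y) \;-\; [E_n, \omega_{\hat z}(Y)].
\]
By regularity of $\omega$ (section \ref{sec:cartan_curvature}), $\Omega$ has values in $\liep$, so its $\lieg_{-1}$-component vanishes.

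It remains to evaluate $-[E_n, E_1 + A + B + \eta]^{(-1)}$ using the root-space decomposition. Since $\lieg_{-1}$ is abelian, $[E_n, E_1] = 0$; since $\liem$ annihilates $E_n$, $[E_n, B_\liem] = 0$; and $[E_n, \eta] \in \lieg_0$ by the grading. Only $[E_n, A] = \alpha(A)\,E_n \in \lieg_{-\alpha-\beta}$ and $[E_n, B_{\lieu_+}] \in \lieg_{-\alpha}$ contribute, giving
\[
\left.\tfrac{d}{dt}\right|_{t=0} \omega_{\hat\gamma(t)}(Y)^{(-1)} \;=\; -\alpha(A)\,E_n \,+\, c, \qquad c \in \lieg_{-\alpha}.
\]
Using $\mathbb{I}(E_1, E_1) = 0$, $\mathbb{I}(E_1, E_n) = 1$, and $\mathbb{I}(E_1, \lieg_{-\alpha}) = 0$, the Taylor expansion gives
\[
\mathbb{I}\bigl(\omega_{\hat\gamma(t)}(Y)^{(-1)},\, \omega_{\hat\gamma(t)}(Y)^{(-1)}\bigr) \;=\; -2t\,\alpha(A) + O(t^2).
\]
Since $A \in \liea \cap \ker\beta \setminus \{0\}$ forces $\alpha(A) \neq 0$, for any sufficiently small $\epsilon$ with $\epsilon\,\alpha(A) > 0$ this quantity is negative, and $Y(\gamma(\epsilon))$ is timelike. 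The only subtle point is the commutation identity $[Y, E_n^*] = 0$, which collapses the otherwise delicate comparison between the flow of $Y$ and the $\omega$-parallel data along $\hat\gamma$ to a purely algebraic first-order computation.
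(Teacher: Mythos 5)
Your proof is correct and follows essentially the same route as the paper's: use $L_Y\omega = 0$ (equivalently $[Y, E_n^*]=0$) to express $\left.\frac{d}{dt}\right|_{0}\omega_{\hat\gamma(t)}(Y)$ via the curvature formula, observe that $\Omega$ is $\liep$-valued so only the algebraic bracket $-[E_n,\omega_{\hat z}(Y)]$ contributes to the $\lieg_{-1}$-component, and expand the $\mathbb{I}$-norm to first order in $t$. The computation, including the identification of $-\alpha(A)\,E_n$ as the decisive term and the sign choice of $\epsilon$, matches the paper's argument.
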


\begin{proof}
We will compute the first-order Taylor approximation in $t$ for $\langle Y, Y \rangle_{\gamma(t)}$ with respect to a metric along $\gamma$ in the conformal class.
Let $\widehat{E}_n = \omega^{-1}(E_n)$, and 
write $\hat{\gamma}(t) = \exp_{\hat{z}}(t E_n) = \varphi^t_{\widehat{E}_n} .\hat{z}$.
Because $L_Y \omega(\widehat{E}_n) = 0$, formula (\ref{eqn:curvature_formula}) gives
$$ \Omega_{\hat{\gamma}(t)}(Y,\widehat{E}_n) = - \left. \widehat{E}_n.\omega(Y) \right|_{\hat{\gamma}(t)} + [\omega_{\hat{\gamma}(t)}(Y),E_n] $$
Then
$$ \left. \DDt \right|_0 \omega_{\hat{\gamma}(t)}(Y) = [ E_1 + A + B + \eta,E_n] - \kappa_{\hat{z}}(E_1,E_n) \equiv a E_n + V_B \ \mbox{mod } \liep$$
where $a \neq 0$ and $V_B \in E_1^\perp \cap E_n^\perp$.  We are interested in the components on $\lieg_{-1}$:
$$\omega^{(-1)}_{\hat{\gamma}(t)}(Y) = E_1 + t(a E_n + V_B + R(t)) \qquad \lim_{t \rightarrow 0} R(t) = 0$$
Denote $\mathcal{Q} = Q_{1,n-1}$ and $\langle \ , \ \rangle$ the corresponding inner product on $\lieg_{-1}$.  This inner product together with the path $\hat{\gamma}$ determine a metric along $\gamma$ in the conformal class with 
\begin{eqnarray*}
\langle Y, Y \rangle_{\gamma(t)}  & = & \mathcal{Q}(\omega^{(-1)}_{\hat{\gamma}(t)}(Y)) \\
& = & 2at + 2t \langle E_1, R(t) \rangle + t^2 \mathcal{Q}(a E_n + V_B + R(t))
\end{eqnarray*}
This number is negative for sufficiently small $t$ of the opposite sign from $a$.
\end{proof}

\begin{remark}
  \label{rmk:lifts_to_R}
The proof of proposition \ref{prop:extension_of_R} shows that for any sequence $z_k \in \Omega_{\mathcal{F}}$ converging to $y$, there are lifts $\hat{z}_k \in \mathcal{R}$ converging to $\hat{y}$.  Moreover, given a continuous path $\gamma$ in $\Omega_{\mathcal{F}}$ converging to $y$, there is a continuous lift $\hat{\gamma}$ to $\mathcal{R}$ converging to $\hat{y}$, when $\hat{y}$ satisfies the hypotheses of proposition \ref{prop:extension_of_R}.
  \end{remark}

We now show that all orbits in $\Omega_{\mathcal{F}}$ are degenerate.  

\begin{proposition}  
\label{prop:isotropic_in_leaf_local}
 Let $\hat{y} \in \overline{\mathcal{R}}$, and suppose $X \in \mathfrak{stab}_\lieh(y)$ has isotropy in $\lieu_+$ with respect to $\hat{y}$.  
 The vector field $X$ is isotropic in a neighborhood of $y$ in $\mathcal{L} = \pi \circ \exp_{\hat{y}}(\mathcal{U})$, for $\mathcal{U}$ a suitable neighborhood of $0$ in $E_1^\perp$. 
\end{proposition}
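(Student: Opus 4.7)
The plan is to reduce the claim to a root-space computation with a Jacobi-type ODE. First I would observe that $X$ is tangent to $\hat{\mathcal{L}}$: since $A = \omega_{\hat y}(X) \in \lieu_+$ lies in $\mathfrak{n} := E_1^\perp + \lieu_+ + \lieg_{\alpha+\beta}$, and $X$ preserves $\omega$, the flow of $X$ preserves the $\omega$-distribution $\omega^{-1}(\mathfrak{n})$, whose integral leaf through $\hat y$ is $\hat{\mathcal{L}}$ by proposition \ref{prop:R_extension_leaf}. Consequently $\omega(X)^{(-1)}$ takes values in $E_1^\perp$ along $\hat{\mathcal{L}}$; since the null cone of $E_1^\perp \subset \lieg_{-1}$ with respect to $\mathbb{I}$ is exactly $\BR E_1$, the proposition reduces to showing $\omega_{\hat z}(X)^{(-1)} \in \BR E_1$ for all $\hat z \in \hat{\mathcal{L}}$ near $\hat y$.

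To establish this, I would exploit the identity $[X, \widehat{v}] = 0$, valid for constant $v$ because $\omega(\widehat{v}) \equiv v$ and $\mathcal{L}_X \omega = 0$. Root-space analysis gives $[A, v] \in \BR E_1$ and $(\ad A)^2 v = 0$ for every $v \in E_1^\perp$. Combining commutation of flows with the standard identity $\varphi^t_{\widehat{v}}(\hat y \cdot p) = \exp_{\hat y}(t \Ad(p) v) \cdot p$, I obtain, for $\hat z(t) = \exp_{\hat y}(tv)$,
\begin{equation*}
\phi^s(\hat z(t)) = \exp_{\hat y}\bigl( tv + st[A, v] \bigr) \cdot e^{sA}.
\end{equation*}
Differentiating at $s = 0$ via formula (\ref{eqn:omega_along_curves}), $\omega_{\hat z(t)}(X) = A + \omega\bigl(D_{tv}\exp_{\hat y}(t[A, v])\bigr)$, so setting $c = t[A, v] \in \BR E_1$ the claim reduces to $\omega\bigl(D_{tv}\exp_{\hat y}(c)\bigr)^{(-1)} \in \BR E_1$.

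This last statement is a $\lieg$-valued Jacobi equation along the radial curve $r \mapsto \exp_{\hat y}(r \cdot tv)$ with transverse perturbation in direction $c$. The function $K(r) := \omega\bigl(\partial_s \exp_{\hat y}(r(tv + sc))|_{s=0}\bigr)$ satisfies $K(0) = 0$ and
\begin{equation*}
\partial_r K = c - [tv, K] + \kappa_{\exp_{\hat y}(r tv)}\bigl(tv, \bar K\bigr),
\end{equation*}
with $K(1)$ the target value. I would then decompose $K$ by root spaces and verify, using (i) the curvature constraint $\kappa^\sharp \in \mathbb{V}^+_{Ric} + \mathbb{U}^+$ on $\overline{\mathcal{R}}$ (from lemma \ref{lemma:cartan_curvature_positive} together with proposition \ref{prop:distribution_integrability}, extended to $\overline{\mathcal{R}}$ by continuity) and (ii) the Lie identities $[E_1^\perp, \lieu_+] \subseteq \BR E_1$, $[E_1^\perp, \BR E_1] = 0$, and $[E_1^\perp, \lieq_1] \subseteq \BR E_1 + \lieu_+$, that the subspace $\BR E_1 \oplus \lieu_+ \oplus \lieq_1 \subset \lieg_{-1} \oplus \lieg_0 \oplus \lieg_1$ is invariant under the ODE. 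This forces $K(r)^{(-1)} \in \BR E_1$ throughout, completing the argument. The main obstacle is this root-space bookkeeping: one must check that every positive-$\beta$-weight component of the Weyl and Cotton curvatures, applied to a pair with one slot in $E_1^\perp$, lands in the invariant triple above, which is precisely the content of the constraint $\kappa^\sharp \in \mathbb{V}^+_{Ric} + \mathbb{U}^+$.
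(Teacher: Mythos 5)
Your overall strategy — computing $\phi^s(\exp_{\hat y}(tv))$ via commuting flows and the linearizable isotropy, then tracking $\omega(X)$ along the radial curve through a Cartan-geometric development ODE — is sound and closely parallels the paper's proof, which carries out the same computation in the language of a Weyl connection and the classical Jacobi equation along $\gamma_v$. Steps 1--3 of your argument (tangency of $X$ to $\widehat{\mathcal{L}}$, the formula for $\phi^s(\hat z(t))$ using $\Ad(e^{sA})v = v + s[A,v]$, and the first-order ODE for $K(r)$) are all correct.

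However, your Lie identity (iii), $[E_1^\perp, \lieq_1] \subseteq \BR E_1 + \lieu_+$, is false, and so the claimed invariant subspace $\BR E_1 \oplus \lieu_+ \oplus \lieq_1$ is not preserved by the linear part of the ODE. Concretely, $E_1^\perp = \lieg_{\beta-\alpha} \oplus \lieg_{-\alpha}$ and $\lieq_1 = \lieg_\alpha \oplus \lieg_{\alpha+\beta}$, and the bracket $[\lieg_{-\alpha}, \lieg_\alpha] \subset \liea \oplus \liem$ (the zero $\beta$-weight part of $\lieg_0$) sits entirely outside $\BR E_1 + \lieu_+ + \lieq_1$. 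If you attempted to verify the root-space bookkeeping you propose, this term would obstruct it.

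The argument is salvageable, and is in fact simpler than you anticipated. The correct invariant subspace is just $\BR E_1$ (or at most $\BR E_1 + \lieu_+ + \lieg_{\alpha+\beta}$): the inhomogeneous term $c = t[A,v]$ lies in $\BR E_1$; $[tv, \BR E_1] = 0$ since $\lieg_{-1}$ is abelian; and the curvature term $\kappa(tv, \bar K)$ with $\bar K \in \BR E_1$ does not merely land in a positive-weight subspace but \emph{vanishes outright}. Indeed, both $\varpi \in \mathbb{V}^+_{Ric}$ and $\nu \in \mathbb{U}^+$ have $\beta$-weight $\geq 1$; evaluating on the pair $(tv, E_1)$ whose weights satisfy $\mathrm{wt}(tv) \geq 0$ and $\mathrm{wt}(E_1) = +1$ produces an output of $\beta$-weight $\geq 2$, which exceeds the maximum $\beta$-weight $+1$ of $\liep$. (This is precisely the fact the paper exploits in the form $\bar{\kappa}(E_1, v) = 0$ when computing the Jacobi curvature term $R(E, \gamma_v')$.) Hence the ODE along $\BR E_1$ collapses to $K'(r) = c$, giving $K(r) = rc \in \BR E_1$ by uniqueness, and the conclusion follows. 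If you keep the larger subspace, you should replace $\lieq_1$ by the 1-dimensional root space $\lieg_{\alpha+\beta} = \BR{}^tE_1\mathbb{I}$, for which $[E_1^\perp, \lieg_{\alpha+\beta}] \subseteq \lieu_+$ does hold.
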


\begin{proof}

Let $\mathcal{V} \subset \lieg_{-1} \cong \BR^{1,n-1}$ be a neighborhood of $0$ in the domain of $\exp_{\hat{y}}$, and denote $\widehat{N} = \exp_{\hat{y}}(\mathcal{V})$.  The saturation $\widehat{N}.G_0 \subset \widehat{M}$ determines a Weyl connection $\nabla$ on $N = \pi (\widehat{N})$ (see definition 5.1.2 of \cite{cap.slovak.book.vol1}).  It is given by $\omega^{(0)}$ restricted to $\widehat{N}.G_0$, which is a $\lieg_0$-valued principal connection (see proposition 5.1.2 of \cite{cap.slovak.book.vol1}).  Let $\mathcal{U} \subset \mathcal{V}$ be as in proposition \ref{prop:R_extension_leaf}; now $\widehat{\mathcal{L}} = \exp_{\hat{y}}(\mathcal{U}) \subset \widehat{N}$ is contained in an integral leaf of $\omega^{-1}(E_1^\perp + \lieu_+ + \BR^tE_1 \mathbb{I})$.  Here the restriction of $\omega^{(0)}$ has values in $\lieu_+$; therefore, the restriction of $\nabla$ to $\mathcal{L} = \pi(\widehat{\mathcal{L}})$ has holonomy in $U_+$.  It follows that $\mathcal{L}$ is totally geodesic and carries a parallel, isotropic vector field $E = \pi_* \left. \omega \right|_{\widehat{\mathcal{L}}}^{-1}(E_1)$.  

For $v \in  \mathcal{V}$, let $\hat{\gamma}_v(t) = \exp_{\hat{y}}(tv)$ and note that $\hat{\gamma}_v(t) \in \widehat{N}$ for $0 \leq t \leq 1$.  Since $\omega^{(0)}$ vanishes on $\hat{\gamma}'_v$, the projection $\gamma_v = \pi \circ \hat{\gamma}_v$ is a $\nabla$-geodesic.  Therefore, under the isomorphism $\pi_* \circ \omega^{-1}_{\hat{y}} : \lieg_{-1} \rightarrow T_y M$, the exponential map of $\nabla$, restricted to a neighborhood of $0$, corresponds to $\left. \pi \circ \exp_{\hat{y}} \right|_{\mathcal{V}}.$

Denote $\hat{X} \in \hat{\lies}_y \cap \lieu_+$ the isotropy of $X$ with respect to $\hat{y}$. Because $\hat{X}$ preserves the complementary subspace $\lieg_{-1}$ to $\liep$ in $\lieg$, the action of $\hat{X}$ on $\mathcal{V} \subset \lieg_{-1}$ is equivalent via $\pi \circ \exp_{\hat{y}}$ to the action of $X$ on $N$.  As the former action is linear, 
$$ X(\gamma_v(t)) = (\pi \circ \exp_{\hat{y}})_{*tv} \hat{X}(tv) = (\pi \circ \exp_{\hat{y}})_{*tv}  (t \hat{X}.v)$$
is a Jacobi field along $\gamma_v$ for any $v \in \mathcal{V}$ (see, for example, \cite[Cor 5.2.5]{do.carmo.riem.geom}).

Now let $v \in \mathcal{U}$ and denote $J(t) = X(\gamma_v(t))$.  Of course, $J(0) = 0$.  As $\hat{X}.v \in \BR E_1$, the first derivative $J'(0) = c_0 E(y)$ for some $c_0 \in \BR$.  Denote by $R$ the curvature tensor of $\nabla$.  In the linear frames determined by $\omega_{\hat{\gamma}_v(t)}$, it corresponds to
$$ \left( d \omega^{(0)} + \frac{1}{2}[ \omega^{(0)}, \omega^{(0)}] \right)_{\hat{\gamma}_v(t)} = \Omega^{(0)}_{\hat{\gamma}_v(t)} -  [ \omega^{(-1)}, \omega^{(1)}]_{\hat{\gamma}_v(t)}$$
Since $\hat{\gamma}_v(t) \in \widehat{\mathcal{L}}$ for all $t$, the vector fields $E$ and $\gamma_v'$ along $\gamma_v$ lift to $\omega^{-1}(E_1)$ and $\hat{\gamma}_v'$, respectively.  Restricted to $\widehat{\mathcal{L}}$, the values of $\omega^{(1)}$ are in $\BR^tE_1 \mathbb{I}$, which annihilates $E_1$ and $v$.  The term
$$ \Omega^{(0)}_{\hat{\gamma}_v(t)} (\omega^{-1}(E_1), \hat{\gamma}_v') = \bar{\kappa}_{\hat{\gamma}_v(t)}(E_1,v)$$
where $\bar{\kappa} = \kappa^{(0)}$ corresponds to the Weyl curvature of $\omega$.
From $\bar{\kappa}^\sharp_{\hat{\gamma}_v(t)} \in \mathbb{V}^+_{Ric}$, we see that $\bar{\kappa}_{\hat{\gamma}_v(t)}(E_1,v) = 0$ for all $t$.  It follows that $R_{\gamma_v(t)}(E,\gamma_v') = 0$ for all $t$.  
Therefore $J(t) = tc_0 E$ is the unique solution of the Jacobi equation along $\gamma_v$ with the given initial conditions.  We conclude that $X$ is isotropic along $\gamma_v$.  Since $v$ was an arbitrary element of $\mathcal{U}$, the proposition follows.  
\end{proof}

\begin{corollary}
\label{cor:isotropic_in_leaf}
Let $x \in \Omega_{\mathcal{F}}$ and let $y \in \overline{H.x}$ have isotropic orbit, and let $\omega_{\hat{y}}(X) \in \lieu_+$ for $X \in \lieh$ and $\hat{y} \in \pi^{-1}(y)$.  Then $X$ is isotropic in the leaf $\mathcal{L}_x$.
\end{corollary}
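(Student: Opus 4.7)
The plan is to apply proposition \ref{prop:isotropic_in_leaf_local} at $\hat{y}$, which gives a neighborhood $\mathcal{U}' \subset E_1^\perp$ of $0$ on which $X$ is isotropic along the local extended leaf $\mathcal{L}^y := \pi \circ \exp_{\hat{y}}(\mathcal{U}')$, and then to transfer this isotropy back to $\mathcal{L}_x$ using the $H$-invariance of the isotropy locus of $X$ together with the analyticity of the foliation $\mathcal{F}$.  Since $y \in \overline{H.x}$, I would pick $h_k \in H$ with $h_k.x \to y$; remark \ref{rmk:lifts_to_R} applied to $q_k := h_k.x \in H.x \subseteq \mathcal{L}_x$ then yields $\hat{q}_k \in \mathcal{R}$ with $\pi(\hat{q}_k) = q_k$ and $\hat{q}_k \to \hat{y}$.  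By proposition \ref{prop:more_about_holonomy}, for each $v \in \mathcal{U}'$ the point $\pi \circ \exp_{\hat{q}_k}(v)$ belongs to $\mathcal{L}_x$, and by continuity of the Cartan exponential it converges to $\pi \circ \exp_{\hat{y}}(v) \in \mathcal{L}^y$, where $X$ is isotropic.

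If one can show that $X$ is isotropic on the open set $\pi \circ \exp_{\hat{q}_k}(\mathcal{U}') \subseteq \mathcal{L}_x$ for some sufficiently large $k$, then analyticity of $\mathcal{L}_x$ (as a leaf of the analytic foliation $\mathcal{F}$), combined with analyticity of the function $g(X,X)|_{\mathcal{L}_x}$, propagates vanishing to all of $\mathcal{L}_x$.  The main obstacle is this transfer step: closedness of the isotropy locus yields only $g(X,X)(\pi \circ \exp_{\hat{q}_k}(v)) \to 0$, not outright vanishing.  To overcome it, I would write $\hat{q}_k = h_k . \hat{x}_k$ with $\hat{x}_k := h_k^{-1} . \hat{q}_k \in \mathcal{R} \cap \pi^{-1}(x)$ (using $H$-invariance of $\mathcal{R}$), so that $\pi \circ \exp_{\hat{q}_k}(v) = h_k . \pi \circ \exp_{\hat{x}_k}(v)$; the $H$-equivariance of the isotropy condition ($X$ commutes with $H$, and $H$ acts conformally) then gives that $X$ is isotropic at $\pi \circ \exp_{\hat{q}_k}(v)$ if and only if at $\pi \circ \exp_{\hat{x}_k}(v) \in \mathcal{L}_x$.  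Combining this reformulation with the analytic dependence in $v$, the uniform convergence (on compacts) of the analytic functions $v \mapsto g(X,X)(\pi \circ \exp_{\hat{q}_k}(v))$ to the identically-zero limit function, and a Jacobi-field analysis along exponential curves from $\hat{x}_k$ in the spirit of proposition \ref{prop:isotropic_in_leaf_local}, one should arrive at the desired vanishing on an open subset of $\mathcal{L}_x$, which suffices.
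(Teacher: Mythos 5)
You correctly identify the central obstacle: knowing $g(X,X)\bigl(\pi\circ\exp_{\hat q_k}(v)\bigr)\to 0$ as $k\to\infty$ is not the same as knowing $X$ is isotropic on an open subset of $\mathcal{L}_x$. Unfortunately, neither of the two devices you propose to close that gap actually does so. The $H$-equivariance step merely trades one limit for another: since $X$ commutes with $h_k$ and $h_k$ acts conformally, isotropy of $X$ at $\pi\circ\exp_{\hat q_k}(v)$ and at $\pi\circ\exp_{\hat x_k}(v)$ are equivalent conditions, but you have no basis to assert either one actually holds for finite $k$. And the ``Jacobi-field analysis in the spirit of proposition~\ref{prop:isotropic_in_leaf_local}'' cannot be run from a base point $\hat x_k$ over $x$: that argument is driven by $X(y)=0$ (giving $J(0)=0$) together with the unipotent linear isotropy at $\hat y$, and neither is available at $x\in\Omega_f$, where $H$ acts freely and hence $X(x)\neq 0$. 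So as written the proposal leaves the decisive step unestablished.

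The paper resolves exactly this step, but by a topological overlap argument rather than a second curvature computation. Using theorem~\ref{thm:gromov_stratification}, one chooses a path $\gamma$ from $y$ into $H.x\subset\mathcal{L}_x$, lifts it continuously to $\hat\gamma$ in $\overline{\mathcal{R}}$ by remark~\ref{rmk:lifts_to_R}, and forms the family of plaques $\widehat{\mathcal{L}}_t=\exp_{\hat\gamma(t)}(\mathcal{V})$ supplied by proposition~\ref{prop:R_extension_leaf}. The key point — absent from your sketch — is a uniform lower bound on the size of the overlaps $\widehat{\mathcal{L}}_t\cap\widehat{\mathcal{L}}_{t'}$: for $t,t'>0$ these plaques are mutually open subsets of the same leaf projecting into $\mathcal{L}_x$, and as $t\to 0$ they converge to $\widehat{\mathcal{L}}_0=\exp_{\hat y}(\mathcal{V})$; the bound forces $\widehat{\mathcal{L}}_t\cap\widehat{\mathcal{L}}_0$ to be nonempty and mutually open for small $t>0$. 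That produces a genuinely open subset of $\mathcal{L}_x$ sitting inside the plaque $\mathcal{L}$ from proposition~\ref{prop:isotropic_in_leaf_local}, on which $X$ is known to be isotropic, and analyticity of $g(X,X)$ along the analytic leaf then propagates vanishing to all of $\mathcal{L}_x$. This overlap estimate is what turns the limit statements you have into actual vanishing, and it is the ingredient your argument still needs.
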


\begin{proof}
By theorem \ref{thm:gromov_stratification}, there are paths connecting $y$ to points of $H.x$ in a neighborhood of $y$, which are contained in $H.x$ for all positive time.  Such a path $\gamma$ is contained in the leaf $\mathcal{L}_x$ of the foliation $\mathcal{F}$ for positive time.  

Let $\mathcal{R}^e$ be the extension of $\mathcal{R}$ over $H.y$ given by proposition \ref{prop:extension_of_R}.
There is a continuous lift $\hat{\gamma}$ of $\gamma$ to $\mathcal{R}^e$ beginning at $\hat{y}$ (see remark \ref{rmk:lifts_to_R}).  The leaves of the extended foliation of $\mathcal{R}^e$ tangent to $\widehat{\mathcal{D}}$ are exponentials of $E_1^\perp + \lieq$, by proposition \ref{prop:R_extension_leaf}.  There is a neighborhood $\mathcal{V}$ of 0 in $E_1^\perp  + \lieq$ such that $\exp_{\hat{\gamma}(t)}$ is defined on $\mathcal{V}$ for all $t$; as in the proof of proposition \ref{prop:R_extension_leaf}, $\exp_{\hat{\gamma}(t)}(\mathcal{V}) \subset \overline{\mathcal{R}}$ for all $t$.  As $t \rightarrow 0$, the plaques $\widehat{\mathcal{L}}_t = \exp_{\hat{\gamma}(t)}(\mathcal{V})$ converge to the plaque $\exp_{\hat{y}}(\mathcal{V})$.  On the other hand, $\widehat{\mathcal{L}}_t$ are in the same leaf for all $t > 0$, so $\widehat{\mathcal{L}}_t \cap \widehat{\mathcal{L}}_{t'}$ is open in each for every $t, t' > 0$.  The sizes of these intersections are bounded below, in the sense that $\exp_{\hat{y}}^{-1}(\widehat{\mathcal{L}}_t \cap \widehat{\mathcal{L}}_{t'}) \subset \lieg$ projected to $\mathcal{V}$ contains a ball around $0$ of radius bounded below independently of $t,t'$, provided both are sufficiently small.
Thus the intersection of $\widehat{\mathcal{L}}_t$ with $\widehat{\mathcal{L}}_0$ is also open in each and contains $\hat{y}$ for sufficiently small $t$.

Let $\mathcal{L}$ be given by proposition \ref{prop:isotropic_in_leaf_local}.
The projection $\pi(\widehat{\mathcal{L}}_t)$ is relatively open in $\mathcal{L}_x$ for all $t$, and $\pi(\widehat{\mathcal{L}}_0)$ is an open neighborhood of $y$ in $\mathcal{L}$, 
so $\mathcal{L}_x \cap \mathcal{L}$ is nonempty and open in each.  
By proposition \ref{prop:isotropic_in_leaf_local}, $X$ is isotropic in this intersection. The property extends along paths $\alpha$ in $\mathcal{L}_x$ by covering them with foliated charts and using the fact that the plaque of $\mathcal{L}_x$ containing $\alpha$ is $C^\omega$ inside each such chart.  Then $X$ is isotropic in all $\mathcal{L}_x$.
\end{proof}

\begin{remark}
  \label{rmk:mutually.open}
  A corollary of the above proof is that when $y \in \overline{H.x}$ with $x \in \Omega_{\mathcal{F}}$ and $y$ having closed isotropic orbit, then the leaf $\mathcal{L}_x\in \mathcal{F}$ and the plaque $\pi \circ \exp_{\hat{y}}(\mathcal{V})$ have mutually open intersection, where $\hat{y}$ and $\mathcal{V}$ are as above.  For such $y$, we will refer to the latter set as a \emph{plaque at $y$}, and denote it $\mathcal{L}_y$.

 Recall from proposition \ref{prop:simult_linear} that there is $\hat{y} \in \pi^{-1}(y)$ with respect to which the full isotropy $\hat{S}_y$ is contained in $G_0$; we may further assume that a maximal unipotent subgroup is contained in $e^{\lieu_+}$.  After conjugating by $g \in P \backslash Q$, none of these unipotents will be in $e^{\lieu_+}$.  Thus a plaque $\mathcal{L}_y$ is the projection of an integral submanifold of $\hat{\mathcal{D}}$.  Any two plaques at $y$ intersect in an open subset of each containing $y$---that is, they have the same germ.
  \end{remark}

\subsection{Too many closed isotropic orbits --- contradiction}
\label{sec:final_contradiction}

Note that isotropic orbits are necessarily 1-dimensional and by proposition \ref{prop:1d_implies_closed} necessarily closed.

Although all holonomy sequences of mixed type correspond to isotropic orbits in the limit, and there are infinitely-many such holonomy sequences by condition (\ref{eqn:negation_finite_volume}), it does not directly follow that there are infinitely-many isotropic orbits, because these holonomy sequences may not be pointwise.  Instead, we make use of the following functions: fix a basis $\{ X_1, \ldots, X_d \}$ of $\lieh$ and a metric $g$ in the given conformal class on $M$, and define the $C^\omega$ function
$$ \varphi(x) = \sum_i g_x(X_i, X_i) \qquad x \in M$$
Note that, because there are no $H$-orbits in $M$ on which the restriction of $g$ has Lorentzian signature, $\varphi(x) = 0$ if and only if $H.x$ is isotropic (and therefore 1-dimensional and closed).  Denote $\Sigma = \varphi^{-1}(0)$.
Next, define an $H$-invariant function
$$ \chi(x) = \mbox{min} \{ \varphi(y) \ : \ y \in \overline{H.x} \}$$ 
It follows from continuity of $\varphi$ that $\chi$ is upper semicontinuous.  There is $C > 0$ such that $\varphi \leq C$.  Then the sets $S_n = \chi^{-1}([1/n,C])$, $n \in \BN$ are closed.  If one of them has nonempty interior, then it contains an $H$-invariant nonempty open set, and
$\varphi^{-n/2} \mbox{vol}_g$, where $n = \dim M$,  restricts to a finite, $H$-invariant volume here.  We may assume by proposition \ref{prop:proof_with_finite_volume} that this is not the case.  Then $\cup_n S_n$ has dense complement by the Baire Category Theorem; this complement is $S_0 = \chi^{-1}(0)$.
Because $\Omega_{\mathcal{F}}$ is open and dense, there is a dense set 
 $$ I = \{ x \in \Omega_{\mathcal{F}} \ : \ \overline{H.x} \ \mbox{contains an isotropic orbit} \}$$
 


The compact, analytic set $\Sigma$ has finitely-many components.  Because analytic sets are stratified by regular, semianalytic submanifolds (see \cite{bierstone.milman}), 
points in each component can be connected by piecewise analytic paths.
Let $\alpha$ be a $C^\omega$ path in $\Sigma$ defined on $(- \epsilon, \epsilon)$.
We will construct a $C^\omega$ lift of $\alpha$ to $\overline{\mathcal{R}}$, along which the vector fields of $\lieh$ evaluate under $\omega$ to a special subspace of $E_1^\perp + \lieq$.  First, since $\lieh(\alpha(t))$ is an isotropic line for all $t$, there is a $C^\omega$ lift $\tilde{\alpha}$ of $\alpha$ to $M' = \widehat{M}/P^+$ such that $\lieh(\alpha(t))$ corresponds to $\BR E_1$ in the conformal frame given by $\tilde{\alpha}(t)$.  The path $\tilde{\alpha}$ has image in $\overline{\mathcal{R}'}$.  

Next let $\hat{\alpha}$ be any $C^\omega$ lift of $\tilde{\alpha}$ to $\widehat{M}$.
As the isotropy at each $\alpha(t)$ contains nontrivial linearizable, unipotent subgroups of dimension $k$, where $H \cong \BR^k \times L$ (see the discussion at the beginning of section \ref{sec:extension_closed_isotropic}), there is a $C^\omega$ path $u(t) \in  \lieu_+$ such that $0 \neq u(t) \in \omega_{\hat{\alpha}(t)}(\lieh)$ modulo $\liep^+$ for all $t$.  The adjoint action of $Q_0$ on $\lieu_+\backslash \{ 0 \}$ is transitive.  For a fixed nonzero $u \in \lieu_+$, there is a section of this action over the path $(u(t),u)$, yielding a $C^\omega$ path $q(t)$ in $Q_0 < Q$ such that, after replacing $\hat{\alpha}(t)$ with $\hat{\alpha}(t).q(t)$, we have $u \in \omega_{\hat{\alpha}(t)}(\lieh)$ modulo $\liep^+$ for all $t \in (- \epsilon,\epsilon)$.
For each $t$, there is a unique lift $\tilde{u}(t)$ of $u$ to $\omega_{\hat{\alpha}(t)}(\lieh)$, which is conjugate in $P$ to $u$, again by linearizability of the isotropy.  The component $\tilde{u}(t)^{(1)}$ is in the image of $\ad u$ for all $t$ (see the proof of proposition \ref{prop:simult_linear}).  Again a section of the action $ \liep^+ \times \lieu_+ \rightarrow \liep^+$ over $(u,\tilde{u}(t)^{(1)})$ gives a $C^\omega$ path $\xi(t) \in \liep^+$ such that $[\xi(t),\tilde{u}(t)] = \tilde{u}(t)^{(1)}$.  Using $\xi(t)$, we can replace $\hat{\alpha}$ with a $C^\omega$ path such that $\omega_{\hat{\alpha}(t)}(\lieh)$ contains $u$ for all $t$.  

Proposition \ref{prop:extension_of_R} implies $\hat{\alpha}(t) \in \overline{\mathcal{R}}$ for each $t$.  By construction, for all $Y \in \lieh$, the evaluation $\omega_{\hat{\alpha}(t)}(Y) = \hat{Y}_t$ is in $\BR E_1$ modulo $\liep$ and commutes with $u$.  Because $u$ is in the root space $\lieg_\beta$, the $\liea$-component of $\hat{Y}^{(0)}_t$ must be in the kernel of $\beta$.  By lemma \ref{lemma:no_dilation_at_limit} and the fact that $\lieh(x)$ is degenerate for all $x \in M$, the $\liea$-component of $\hat{Y}^{(0)}_t$ must be in the kernel of $\alpha$, as well, so it is trivial.  Therefore, for all $t \in (- \epsilon,\epsilon)$ and all $Y \in \lieh$,
\begin{equation}
  \label{eqn.alpha.subspace}
  \hat{Y}_t \in \BR E_1 + \lieu_+ + \mathfrak{c}_{\liem}(u) + \mathfrak{c}_{\lieq_1}(u)
  \end{equation}
where $\mathfrak{c}_{\liem}(u)$ and $\mathfrak{c}_{\lieq_1}(u)$ are the centralizers of $u$ in $\liem$ and $\lieq_1$, respectively.

Let $\mathcal{U}$ be a neighborhood of $0$ in $E_1^\perp + \lieq$ such that $\exp_{\hat{\alpha}(t)}(\mathcal{U})$ is contained in an integral leaf $\widehat{\mathcal{L}}_{\hat{\alpha}(t)}$ of $\widehat{\mathcal{D}}$ for each $t$, as guaranteed by proposition \ref{prop:R_extension_leaf}.  The projections $\pi(\widehat{\mathcal{L}}_{\hat{\alpha}(t)})$ are plaques $\mathcal{L}_{\alpha(t)}$.
Let $A_t = \omega(\hat{\alpha}'(t))$.  Suppose there is $t_0$ such that $A_{t_0}^{(-1)} \notin E_1^\perp$.  Let $X \in \mathfrak{stab}_\lieh(\alpha(t_0))$ have isotropy $u$ with respect to $\hat{\alpha}(t_0)$.
We will compute the first-order Taylor approximation in $t$ at $t_0$ of $\omega_{\hat{\alpha}(t)}(X) = \hat{X}_t$.
Because $L_X \omega(\hat{\alpha}') = 0$, formula (\ref{eqn:curvature_formula}) gives
$$ \Omega_{\hat{\alpha}(t)}(X,\hat{\alpha}') = - \hat{\alpha}'.\omega(X) + [\hat{X},\omega(\hat{\alpha}')]_{\hat{\alpha}(t)}$$
The left-hand side vanishes at $t = t_0$, so 
$$ \left. \DDt \right|_{t_0} \omega_{\hat{\alpha}(t)}(X) = [ u, A_{t_0}]  \equiv [u, A^{(-1)}_{t_0} ] \ \mbox{mod } \liep$$
Since $A^{(-1)}_{t_0}$ is transverse to $E_1^\perp$, the bracket on the right-hand side has the form $V_B + aE_1$ for a nonzero vector $V_B$ in the spacelike subspace $E_1^\perp \cap E_n^\perp$, and $a \in \BR$.
Now 
$$ \omega_{\hat{\alpha}(t)}^{(-1)}(X) = (t-t_0) (V_B + a E_1 + R(t)) \qquad \lim_{t \rightarrow {t_0}} R(t) = 0$$
which contradicts (\ref{eqn.alpha.subspace}).

Next suppose $A^{(-1)}_{t} \in E_1^\perp$ for all $t$, but $A^{(0)}_{t_0} \notin \lieq_0$.  Then
$$ \left. \DDt \right|_{t_0} \omega_{\hat{\alpha}(t)}(X) \equiv  [ u, A^{(-1)}_{t_0} +A^{(0)}_{t_0} ] \ \mbox{mod } \liep^+$$
For a nonzero element $v \in \lieg_{-\beta}$, the brackets $[v,u]$ and $[u,[v,u]]$ are not zero (see lemma \ref{lem:bracket_nondegeneracy}).  By assumption $A^{(0)}_{t_0} = V_B + V'$ with $0 \neq V_B \in \lieg_{-\beta}$ and $V' \in \lieq_0$.  Then  
$$\omega_{\hat{\alpha}(t)}^{(0)}(X) = (t-t_0) (W_B + W' + R(t)) \qquad \lim_{t \rightarrow {t_0}} R(t) = 0$$
where $W_B \in (\liea + \liem)\backslash \mathfrak{c}_{\liea + \liem}(u)$, and $W' \in \lieu_+$, again contradicting (\ref{eqn.alpha.subspace}).

Finally, suppose $A^{(-1)}_{t} \in E_1^\perp$ and $A^{(0)}_t \in \lieq_0$ for all $t$, but $A^{(1)}_{t_0} \notin \lieq_1$.
For $0 \neq v \in \lieg_{\alpha - \beta}$,
the brackets $[v,u]$ and $[u,[v,u]]$ are not zero.  Then
$$\omega_{\hat{\alpha}(t)}^{(1)}(X) = (t-t_0) (\xi_B + \xi' + R(t)) \qquad \lim_{t \rightarrow {t_0}} R(t) = 0$$
where $\xi_B \in \lieq_1 \backslash \mathfrak{c}_{\lieq_1}(u)$ and $\xi' \in \BR ^tE_1 \mathbb{I}$, again contradicting (\ref{eqn.alpha.subspace}).

We have shown that any $C^\omega$ path $\alpha$ in $\Sigma$ lifts to a $C^\omega$ path $\hat{\alpha}$ in $\overline{\mathcal{R}}$ and tangent to $E_1^\perp + \lieq$, and thus contained in an integral submanifold $\hat{\mathcal{L}}$ of $\hat{\mathcal{D}}$.   For all $t \in (- \epsilon, \epsilon)$,  the projection $\pi(\hat{\mathcal{L}})$ intersects the plaques at $\alpha(t)$ in an open neighborhood of $\alpha(t)$ in each.  By the compact, analytic structure of $\Sigma$ referenced above, we conclude that it can be covered by finitely many plaques, say $\mathcal{L}_1, \ldots, \mathcal{L}_k$.  By remark \ref{rmk:mutually.open}, the leaves of all points in $I$ have mutually open intersection with at least one $\mathcal{L}_i$.
As in corollary \ref{cor:isotropic_in_leaf}, to each $\mathcal{L}_i$ can be associated  a nonzero element $X_i \in \lieh$, in the isotropy of a point of $\Sigma$, which is isotropic in each leaf having mutually open intersection with $\mathcal{L}_i$.  By density of $I$, some $X_i$ will be isotropic in a nonempty open subset of $M$ and thus in all of $M$.  We arrive at a contradiction with \cite{frances.ccvf}.

\bibliographystyle{amsalpha}
\bibliography{karinsrefs}

\bigskip

\begin{tabular}{lll}
 Karin Melnick  & \quad\qquad & Vincent Pecastaing \\
Department of Mathematics & \quad\qquad & Laboratoire J.A. Dieudonn\'e \\
4176 Campus Drive & \qquad \qquad &  UMR CNRS 7351 \\
University of Maryland & \quad\qquad & Universit\'e C\^ote d'Azur, Parc Valrose \\
College Park, MD 20742 &\quad \qquad &   06108 Nice, Cedex 2 \\
USA &\quad \qquad &  France \\
\texttt{karin@math.umd.edu} &\quad \qquad & \texttt{vincent.pecastaing@unice.fr} 
\end{tabular}

\end{document}